\newcommand{\PRCB}{T}
\newcommand{\PpCB}{T}
\newcommand{\LC}{\mathcal{L}_n}
\newcommand{\LCR}{\mathcal{L}}
\newcommand{\Li}{\mathcal{L}^{(i)}}
\newcommand{\LCp}{\mathcal{L}_{n}}
\newcommand{\LCRp}{\mathcal{L}}
\newcommand{\Lip}{\mathcal{L}^{(i)}}
\newcommand{\clos}[2]{#1^{(#2)}}
\newcommand{\closp}[2]{#1^{(#2)}}
\newcommand{\tp}{\mathrm{tp}}
\newcommand{\qftp}{\mathrm{qftp}}
\newcommand{\Lstp}{\mathrm{Lstp}}
\newcommand{\acl}{\mathrm{acl}}
\newcommand{\dcl}{\mathrm{dcl}}
\newcommand{\Gal}{\mathrm{Gal}}
\newcommand{\G}{\mathrm{G}}
\newcommand{\bdn}{\mathrm{bdn}}
\newcommand{\dpr}{\mathrm{dp}}
\newtheorem{thm}{Theorem}[section]
\newtheorem{cor}[thm]{Corollary}
\newtheorem{lem}[thm]{Lemma}
\newtheorem{prop}[thm]{Proposition}
\newtheorem{fact}[thm]{Fact}
\theoremstyle{definition}
\newtheorem{para}[thm]{}
\newtheorem{Types:}[thm]{Types:}
\newtheorem*{claim}{Claim}
\newtheorem*{claim1}{Claim 1}
\newtheorem*{claim2}{Claim 2}
\newtheorem*{claim3}{Claim 3}
\newtheorem*{claim4}{Claim 4}
\newtheorem*{claim5}{Claim 5}
\newtheorem{defn}[thm]{Definition}
\newtheorem{notation}[thm]{Notation}
\newtheorem{rem}[thm]{Remark}
\def\rest{{\lower      .25    em      \hbox{$\vert$}}}
\font\helpp=cmsy5
\def\semdp
\def\dnfo{\,\raise.2em\hbox{$\,\mathrel|\kern-.9em\lower.35em\hbox{$\smile$}
$}}
\def\dnf#1{\lower1em\hbox{$\buildrel\dnfo\over{\scriptstyle #1}$}}
\def\dfo{\;\raise.2em\hbox{$\mathrel|\kern-.9em\lower.35em\hbox{$\smile$}
\kern-.7em\hbox{\char'57}$}\;}
\def\df#1{\lower1em\hbox{$\buildrel\dfo\over{\scriptstyle #1}$}}
\def\Ind#1#2{#1\setbox0=\hbox{$#1x$}\kern\wd0\hbox to 0pt{\hss$#1\mid$\hss}     % trucos tï¿œcnicos para crear los sï¿œmbolos.
\lower.9\ht0\hbox to 0pt{\hss$#1\smile$\hss}\kern\wd0}
\def\Notind#1#2{#1\setbox0=\hbox{$#1x$}\kern\wd0\hbox to 0pt{\mathchardef
\nn=12854\hss$#1\nn$\kern1.4\wd0\hss}\hbox to
0pt{\hss$#1\mid$\hss}\lower.9\ht0 \hbox to
0pt{\hss$#1\smile$\hss}\kern\wd0}
\def\ind{\mathop{\mathpalette\Ind{}}}                       % Para colocar el sï¿œmbolo de independencia.
\def\thind{\mathop{\mathpalette\Ind{}}^{\text{\th}}}       % Para colocar el sï¿œmbolo de thorn-independencia.
\def\nthind{\mathop{\mathpalette\Notind{}}^{\text{\th}} }   % Para colocar el sï¿œmbolo de NO thorn-independencia.
\begin{document}

\title{Pseudo real closed fields, pseudo $p$-adically closed fields and NTP$_2$}
\author{Samaria Montenegro\footnote{s.montenegro@uniandes.edu.co; present address: Universidad de los Andes}\\ Universit\'e Paris Diderot-Paris 7\footnote{ 
Partially supported by ValCoMo (ANR-13-BS01-0006) and the Universidad de Costa Rica.}}
\date{}
\maketitle

\begin{abstract}
The main result of this paper is a positive answer to the Conjecture 5.1 of \cite{CKS} by A. Chernikov, I. Kaplan and P. Simon: If $M$ is a PRC field, then $Th(M)$ is NTP$_2$ if and only if $M$ is bounded. In the case of P$p$C fields, we prove that if $M$ is a bounded P$p$C field, then $Th(M)$ is NTP$_2$.
We also generalize this result to obtain that, if $M$ is a bounded PRC or P$p$C field with exactly $n$ orders or $p$-adic valuations respectively, then $Th(M)$ is strong of burden $n$. This also allows us to explicitly compute the burden of types, and to describe forking.  

\smallskip
  \noindent \textbf{Keywords:} Model theory, ordered fields, $p$-adic valuation, real closed fields, $p$-adically closed fields, PRC, P$p$C, NIP, NTP$_2$.
  
 \noindent \textbf{Mathematics Subject Classification:} Primary 03C45, 03C60; Secondary 03C64, 12L12. 
\end{abstract}

\tableofcontents

\section{Introduction} \label{Introduction}

A  \emph{pseudo algebraically closed field} (PAC field) is a field $M$ such that every absolutely irreducible affine variety defined over $M$ has an $M$-rational point.
The concept of a PAC field was introduced by J. Ax in \cite{Ax} and has been extensively studied.
The above definition of PAC field has an equivalent model-theoretic version: $M$ is existentially closed (in the language of rings) in each regular field extension of $M$. 

. 

The notion of PAC field has been generalized  by S. Basarab  in  \cite{Ba0} and then by A. Prestel in \cite{Pre} to ordered fields. 
Prestel calls a field $M$ \emph{pseudo real closed} (PRC) if $M$ is existentially closed (in the language of rings) in each regular field extension $N$ to which all orderings of $M$ extend. 
PRC fields were extensively studied by L. van den Dries in \cite{Van}, Prestel in \cite{Pre}, M. Jarden in \cite{J1}, \cite{J2} and \cite{J3},  Basarab in \cite{Ba} and \cite{Ba1}, and others.

In analogy to PRC fields, C. Grob \cite{Gr}, Jarden and D. Haran \cite{HJ} studied the class of pseudo $p$-adically closed fields. 
A field $M$ is called a \emph{pseudo $p$-adically closed} (P$p$C) if $M$ is existentially closed (in the language of rings) in each regular field extension $N$ to which all the $p$-adic valuations of $M$ can be extended by $p$-adic valuations on $N$. P$p$C fields have also been studied by I. Efrat and  Jarden in \cite{EJ}, Jarden in \cite{J4} and others.

 The class of PRC fields contains strictly the classes of PAC fields of characteristic 0 and real closed fields (RCF) and the class of P$p$C fields contains the $p$-adically closed fields ($p$CF). It is known that the theories RCF and $p$CF are NIP. 
Duret showed in \cite{Du} that the complete theory of a PAC field which is not separably closed is not NIP.
In \cite{ChPi} Z. Chatzidakis and A. Pillay proved that if $M$ is a bounded (i.e. for any $n \in \mathbb{N}$, $M$ has only finitely many extensions of degree $n$) PAC field, then $Th(M)$ is simple. In \cite{Cha0} Chatzidakis proved that if $M$ is a PAC field and $Th(M)$ is simple, then $M$ is bounded.
 PRC and P$p$C fields were extensively studied, but mainly from the perspective of algebra (description of absolute Galois group, etc), elementary equivalence, decidability, etc. Their stability theoretic properties had not been studied.
In this paper we study the stability theoretic properties of the classes of PRC and P$p$C fields.

In Theorem \ref{IPPRC} we generalize the result of Duret and we show that the complete theory of a PRC field which is neither algebraically closed nor real closed is not NIP. In Corollary \ref{nPpCIPC} we show that the complete theory of a bounded P$p$C which is not $p$-adically closed is not NIP. The general case for P$p$C is still in  progress, the main obstacle is that the $p$-adic valuations are not necessarily definable, and that algebraic extensions are not necessarily P$p$C.

The class of NTP$_2$ theories (theories without the tree property of the second kind, see Definition \ref{defIPTP2}) was defined by Shelah in \cite{Shel} in the 1980's and  contains strictly the classes of simple and NIP theories.
Recently the class of NTP$_2$ theories has been particularly studied and contains new important examples, A. Chernikov showed in \cite{Che1} that any ultra-product of $p$-adics is NTP$_2$.
A. Chernikov and M. Hils showed in \cite{CheHil} that a $\sigma$-Henselian valued difference field of equicharacteristic $0$
is NTP$_2$, provided both the residue difference field and the value group (as an ordered difference group) are NTP$_2$.
There are not many more examples of strictly $NTP_2$ theories. 
A. Chernikov, I. Kaplan and P. Simon conjectured in \cite[Conjecture 5.1]{CKS} that if $M$ is a PRC field then $Th(M)$ is NTP$_2$ if and only $M$ is bounded.
Similarly if $M$ is a P$p$C field. 

The main result of this paper is a positive answer to the conjecture by Chernikov, Kaplan and Simon for the case of PRC fields (Theorem \ref{PRCNTP2}).
In fact for bounded PRC fields we obtain a stronger result:
In Theorem \ref{PRCstrong} we show that if $M$ is a bounded PRC field with exactly $n$ orders, then $Th(M)$ is \emph{strong} of burden $n$.
We also show that $Th(M)$ is not rosy (Corollary \ref{PRCnonrosy}) and resilient (Theorem \ref{PRCresilient}). The class of resilient theories contains the class of NIP theories and is contained in the class of NTP$_2$ theories.  It is an open question to know whether NTP$_2$ implies resilience.

The case of P$p$C fields is more delicate, and we obtain only one direction of the conjecture. In Theorem \ref{PpCstrong} we show that the theory of a bounded P$p$C field with exactly $n$ $p$-adic valuations is strong of burden $n$ and in Theorem \ref{PpCresilient} we show that this theory is also resilient.
That unbounded P$p$C fields have $TP_2$ will be discussed in another paper. 
The problem arises again from the fact that an algebraic extension of a P$p$C field is not necessarily P$p$C.

Independently, W. Johnson \cite{WJ} has shown that the model companion of the theory of fields with several independent orderings has NTP$_2$, as well as characterized forking, extension bases, the burden, and several 
other results. He also obtains similar results for the class of existentially closed fields with several valuations, or with several $p$-adic valuations. 
Some of his results follow from ours, since his fields are bounded, and PRC or P$p$C in case of several orderings or $p$-adic valuations. 
His results on fields with several valuations however cannot be obtained by our methods.

The organization of the paper is as follows:
In section \ref{PreliminariesPRC} we give the required preliminaries on ordered fields and pseudo real closed fields. 
In section \ref{SectionPRC} we study the theory of bounded PRC fields from a model theoretic point of view.
We work in a fixed complete theory of a bounded PRC field, and we enrich the language adding constants for an elementary submodel. In section \ref{SDT1v} we study the one variable definable sets; to do that we work with a notion of interval for multi-ordered fields. We define multi-intervals, and a notion of multi-density in multi-intervals (Definition \ref{defmultiinterval}).
Using that we find a useful description of one variable definable sets in terms of multi-intervals and multi-density.
In section \ref{SDTPv} we show that the results obtained in \ref{SDT1v} can be easily generalized to several variables.
We define a notion of multi-cell (Definition \ref{defmulticell}), and we find (Theorem \ref{descomposition2}) a description of definable sets in terms of multi-cells and multi-density.
This ``density theorem'' is extremely important and plays a role in most proofs.
In \ref{AmalgamationTheorems} we show some theorems about amalgamation of types for bounded PRC fields.
The main difficulty is caused by the orderings. 
In section \ref{PRCstability} we study the stability theoretic properties of PRC fields and we show the main theorem on NTP$_2$ and strongness for bounded PRC fields.
In section \ref{BurdenPRCfields} we calculate explicitly the burden of complete types, in \ref{sectionreliance} we show that the theory of a bounded PRC field is resilient, and in section \ref{forkingdividingPRC} we give a description of forking.  
In \ref{Lascarsection} we study the Lascar strong types, in Theorem \ref{LascarPRC} we show that having the same Lascar type equals having the same types. We also show that if $a$ and $b$ have the same Lascar type, then the Lascar distance between $a$ and $b$ is less or equal to two. 
We see in Remark \ref{IndTheoNTP2} that in the case of PRC bounded fields the Amalgamation Theorem (Theorem \ref{thamalgamation}) implies the independence theorem for NTP$_2$ theories showed by Chernikov and Ben Yaacov in \cite{ChBY}.

In section \ref{PreliminariesPpC} are the preliminaries on $p$-adically closed fields, and pseudo $p$-adically closed fields.
In sections \ref{SectionPpC}, \ref{IPppC} and \ref{PpCClasification} we show that some results obtained in section \ref{SectionPRC} and \ref{PRCstability}, can be generalized easily to bounded P$p$C fields.

\section{Preliminaries on pseudo real closed fields} \label{PreliminariesPRC}

In this section we will give all the preliminaries that are required throughout the paper. We assume that the reader is familiar with basic concepts in model theory and algebra. 

\begin{para} \textbf{Notations and Conventions.}\label{Notandconv}
Let $T$ be a theory in a language $\mathcal{L}$, $M$ a model of $T$ and $\phi(\bar{x}, \bar{y})$ an $\mathcal{L}$-formula. For $A \subseteq M$, $\mathcal{L}(A)$ denotes the set of $\mathcal{L}$-formulas with parameters in $A$. If $\bar{a}$ is a tuple in $M$, we denote by $\phi(M, \bar{a}):= \{\bar{b} \in M^{|\bar{x}|}: M \models \phi(\bar{b}, \bar{a})\}$  and by $\tp_{\mathcal{L}}^M(a/A)$ $(\qftp_{\mathcal{L}}^M(a/A))$ the set of $\mathcal{L}(A)$-formulas (quantifier-free $\mathcal{L}(A)$-formulas) $\varphi$, such that $M \models \varphi(a)$. 
Suppose $M$ and $N$ are $\mathcal{L}$-structures and $A \subseteq M,N$. We denote by $M\equiv_A N$ if $M$ is $\mathcal{L}(A)$-elementarily equivalent to $N$.
Denote by $\acl^M_{\mathcal{L}}$ and $\dcl^M_\mathcal{L}$ the model theoretic algebraic and definable closures in $M$. We omit $M$ or $\mathcal{L}$ when the structure or the language is clear.

We denote by $\mathcal{L}_{\mathcal{R}}:= \{+,-,\cdot,0,1\}$ the language of rings.
All fields considered will have characteristic zero.
If $M$ is a field, we denote by $M^{alg}$ its algebraic closure, by $\G(M):= \Gal(M^{alg}/M)$ the absolute Galois group of $M$.  

\end{para}

\subsection{Ordered fields} \label{Orderedfields}\label{PrRCF}

A field $M$ is called \emph{formally real} or just \emph{real} if $M$ can be ordered.
An ordered field $(M,<)$ is \emph{real closed} if it has no proper ordered algebraic extension. Every ordered field $(M,<)$ has an algebraic extension $(\overline{M}^r, \overline{<}^r)$ which is real closed, and unique up to isomorphism over $M$.  We call this extension the \emph{real closure} of $(M,<)$. The absolute Galois group of $\overline{M}^r$, $G(\overline{M}^r)$ is cyclic of order $2$. 
Conversely, given an involution $\sigma$ in $G(M)$, its fixed field $Fix_{M^{alg}}(\sigma)$ is a real closed field and it has a unique ordering $<$ for which the positive elements are exactly the non-zero squares. 
We refer to the restriction of this ordering to $M$ as: \emph{the ordering of $M$ induced by $\sigma$}. 
If $\tau$ and $\sigma$ in $\G(M)$  induce the same ordering on $M$, then $\sigma$ and $\tau$ are conjugates in $\G(M)$ and $Fix_{M^{alg}}(\sigma)$ and $Fix_{M^{alg}}(\tau)$ are isomorphic over $M$.

A field extension $N/M$ is called \emph{totally real} if each order on $M$  extends to some order on $N$.

\begin{para}\textbf{Amalgamation theorem for ordered fields:}\label{Amalord}
If $(M_1,<_1)$ and $(M_2, <_2)$ are extensions of an ordered field $(M,<)$, and if $M_1$, $M_2$ are linearly disjoint over $M$, then $M_1M_2$ has an ordering $<_3$ that extends both $<_1$ and $<_2$.
Moreover, if $M_1$ and $M_2$ are algebraic over $M$, then the extension $<_3$ is unique. A proof of the existence part is given by van den Dries in \cite{Van} (Lemma 2.5) and the uniqueness part is shown in section 1 of \cite{J1}.
\end{para}

\subsection{Pseudo real closed fields}\label{PrPRC}

\noindent \textbf{Regular extensions:} Let $N/M$ be a field extension. We say that $N$ is a \emph{regular extension} of $M$ if $N/M$ is separable and the restriction map: $\G(N)\rightarrow \G(M)$ is onto.  In characteristic $0$ this is equivalent to $N \cap M^{alg}=M$.   

\begin{fact} \label{PRC} \cite[Theorem 1.2]{Pre}
For a field $M$ the following are equivalent:
\begin{enumerate}
	\item $M$ is existentially closed (relative to $\mathcal{L_R}$) in every totally real regular extension $N$ of $M$.
	\item For every absolutely irreducible variety $V$ defined over $M$, if $V$ has a simple 
	 $\overline{M}^r-$rational point for every real closure $\overline{M}^r$ of $M$, then $V$ has an $M$-rational point.
\end{enumerate}
\end{fact}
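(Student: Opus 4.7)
The plan is to prove the two implications separately, both via the translation between existential $\mathcal{L}_{\mathcal{R}}$-statements and rational points on absolutely irreducible varieties, together with the amalgamation of ordered fields recalled in 2.1 and the model-completeness of real closed fields.

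For (1) $\Rightarrow$ (2), let $V$ be an absolutely irreducible variety over $M$ with a simple $\overline{M}^r$-rational point for every real closure $\overline{M}^r$ of $M$. Absolute irreducibility makes the function field $M(V)$ a regular extension of $M$, so it suffices to prove that $M(V)$ is totally real: once this is known, hypothesis (1) applied to the existential statement ``$\bar{x}\in V$'', witnessed by the generic point of $V$ inside $M(V)$, produces a witness in $M$, i.e.\ an $M$-rational point of $V$. To see that $M(V)$ is totally real, fix an order $<$ on $M$ with real closure $\overline{M}^r$, let $P$ be a simple $\overline{M}^r$-rational point of $V$, and consider the completed local ring $\widehat{\mathcal{O}}_{V,P}$. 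Simplicity of $P$ gives an isomorphism $\widehat{\mathcal{O}}_{V,P}\cong \overline{M}^r[[t_1,\ldots,t_d]]$ with $d=\dim V$, which embeds into the iterated Laurent series field $\overline{M}^r((t_1))\cdots((t_d))$. The latter carries an order extending $<$ (take each $t_i$ a positive infinitesimal with $t_d\ll t_{d-1}\ll\cdots\ll t_1$); restricting through the induced embedding $M(V)\hookrightarrow \overline{M}^r(V)\hookrightarrow \overline{M}^r((t_1))\cdots((t_d))$ yields an order on $M(V)$ extending $<$.

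For (2) $\Rightarrow$ (1), let $N/M$ be a totally real regular extension and $\varphi(x)=\exists\bar{y}\,\psi(x,\bar{y})$ an existential $\mathcal{L}_{\mathcal{R}}(M)$-formula with a solution $a\in N$ and witnesses $\bar{b}\in N$. Rewriting $\psi$ in disjunctive normal form and applying the standard trick $g(\bar{z})\neq 0 \Leftrightarrow \exists u\,(g(\bar{z})\cdot u=1)$, we may assume $\psi$ is a conjunction of polynomial equations, so $(a,\bar{b})$ is a rational point of an affine variety defined over $M$. Let $V$ be the Zariski closure of $(a,\bar{b})$ over $M$; regularity of $M(a,\bar{b})/M$ (inherited from $N/M$) implies $V$ is absolutely irreducible. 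For each real closure $\overline{M}^r$ of $M$, the corresponding order on $M$ extends to $N$ by total reality, restricts to $M(a,\bar{b})$, and produces a real closure $R$ of $M(a,\bar{b})$ which, by uniqueness of real closures from 2.1, contains an isomorphic copy of $\overline{M}^r$. Inside $R$ the generic point $(a,\bar{b})$ is a simple point of $V$, and the existential statement ``$V$ has a simple rational point'' transfers from $R$ down to $\overline{M}^r$ by model-completeness of RCF. Hypothesis (2) then gives an $M$-rational point of $V$, whose first coordinate realizes $\varphi$ in $M$.

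The step I expect to be the main obstacle is the construction of the order on $M(V)$ in direction (1) $\Rightarrow$ (2): producing this order from a simple $\overline{M}^r$-rational point requires essentially Hensel-type information, encoded here through the completion of the local ring at $P$ and the iterated Laurent series embedding. In direction (2) $\Rightarrow$ (1), the remaining delicate point is the bookkeeping with real closures, but this is routine once one invokes the uniqueness portion of the amalgamation theorem for ordered fields in 2.1.
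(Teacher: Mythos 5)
The paper itself gives no argument for this statement: it is imported verbatim as Fact~\ref{PRC} from \cite[Theorem 1.2]{Pre}, so there is nothing internal to compare your proof against. Your argument is correct and is essentially the classical proof of Prestel's theorem, with model completeness of RCF playing the role of the Artin--Lang step. In (1)~$\Rightarrow$~(2), the heart is exactly the claim that a simple $\overline{M}^r$-rational point makes $\overline{M}^r(V)$, hence $M(V)$, orderable compatibly with the given ordering of $M$; your route via $\widehat{\mathcal{O}}_{V,P}\cong\overline{M}^r[[t_1,\ldots,t_d]]$ and the ordered iterated Laurent series field works, using the standard injections $\mathcal{O}_{V,P}\hookrightarrow\widehat{\mathcal{O}}_{V,P}$ and $\mathrm{Frac}\,\widehat{\mathcal{O}}_{V,P}\hookrightarrow\overline{M}^r((t_1))\cdots((t_d))$, and the inclusion $M(V)\subseteq\overline{M}^r(V)$, which is where absolute irreducibility enters; the cases where $M$ has no orderings or $\dim V=0$ are covered trivially. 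In (2)~$\Rightarrow$~(1), the reduction to a polynomial system, the regularity of $M(a,\bar b)/M$, generic smoothness of $V$ (available here since the paper works in characteristic $0$, and more generally because geometric integrality gives a separably generated function field), the identification of the relative algebraic closure of $M$ in $R$ with $\overline{M}^r$ by uniqueness of real closures, and the downward transfer by model completeness are all correctly deployed; the only point worth making explicit in a final write-up is that ``$V$ has a simple rational point'' must be phrased as an existential formula (the defining equations together with the nonvanishing of a suitable $(n-\dim V)\times(n-\dim V)$ Jacobian minor) so that the transfer from $R$ to $\overline{M}^r$ is legitimate.
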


Prestel showed in Theorem 1.7 of \cite{Pre} that if $M$ admits only a finite number of orderings, (1) of Fact \ref{PRC} implies that $M$ is existentially closed in $N$ in the language $\mathcal{L_R}$ augmented by predicates for each order $<$ of $M$.

\begin{defn} \label{defnPRC}
A field $M$ of characteristic $0$ satisfying the conditions of Fact \ref{PRC} is called \emph{pseudo real closed} (PRC). 
By Theorem 4.1 of \cite{Pre} we can axiomatize the class of PRC fields in  $\mathcal{L_R}$.
By Theorem 11.5.1 of \cite{EJ} PAC fields cannot be ordered. So in particular PAC fields of characteristic 0 are PRC fields.  
Observe also that the class of PRC fields contains  the class of real closed fields.
\end{defn}

\begin{fact}\label{PRCcaracte}
Let $M$ be a PRC field.
\begin{enumerate}
	\item If $<$ is an order on $M$, then $M$ is dense in $(\overline{M}^r, \overline{<}^r)$ (\cite[Proposition 1.4]{Pre}).
	\item If $<_i$ and $<_j$ are different orders on $M$, then $<_i$ and $<_j$ induce different topologies (\cite[Proposition 1.6]{Pre}).
	\item If $L$ is an algebraic extension of $M$, then $L$ is PRC (\cite[Theorem 3.1]{Pre}).
	\end{enumerate}
\end{fact}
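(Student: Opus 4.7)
The plan is to treat each clause separately, using the PRC characterizations in Fact~\ref{PRC} as the main lever.

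For (1), the key tool is the absolutely irreducible plane conic
\[C_{a,b}\colon y^2 = (x-a)(b-x)\]
over $M$, for $a<b$ in $M$. For any ordering $<'$ of $M$ with real closure $\overline{M}^{r'}$, the product $(x-a)(b-x)$ takes positive values at every $x$ strictly between $a$ and $b$ in $<'$ (regardless of which of $a,b$ is the larger in $<'$), so in $\overline{M}^{r'}$ it is a nonzero square and $C_{a,b}$ has a simple $\overline{M}^{r'}$-rational point with $y\neq 0$. A quick check of the Jacobian (its only zero is the point $((a+b)/2,0)$, which does not lie on $C_{a,b}$) shows $C_{a,b}$ is smooth. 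Fact~\ref{PRC}(2) therefore gives $C_{a,b}(M)\neq\emptyset$, and since a smooth conic with an $M$-rational point is $M$-birational to $\mathbb{P}^1$, we have $|C_{a,b}(M)|=\infty$; choosing $(c,d)\in C_{a,b}(M)$ with $c\neq a,b$ and using $d^2=(c-a)(b-c)>0$ in $<$ forces $a<c<b$. This gives density of $M$ in itself. To upgrade to density in $\overline{M}^r$, fix $\alpha\in\overline{M}^r\setminus M$ with minimal polynomial $f\in M[X]$: $f$ has definite sign on inputs of sufficiently large $<$-magnitude in $M$, so $\alpha$ can be bracketed by elements of $M$, and then bisection using sign-changes of $f$ together with the $M$-density just established (to separate $\alpha$ from the other $\overline{M}^r$-roots of $f$) produces arbitrarily tight $M$-brackets of $\alpha$.

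For (2), suppose for contradiction that $<_i\neq <_j$ induce the same topology on $M$. Pick $a\in M$ with $a>_i 0$ and $a<_j 0$. Both positive cones $P_i,P_j$ are clopen in $M^*$ in the common topology and they differ; applying density from~(1) inside each ordering, together with an independence-of-orderings approximation (in the spirit of Prestel's simultaneous approximation theorem for PRC fields), one produces a sequence in $M$ that $<_i$-converges to $0$ while remaining $<_j$-bounded away from $0$, contradicting the assumed topological agreement.

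For (3), verify Fact~\ref{PRC}(1) for $L$. Let $N/L$ be totally real and regular. Every ordering of $L$ restricts to an ordering of $M$, and any real closure of $(L,<)$ is also a real closure of $(M,<|_M)$. Given a variety $V$ over $L$ absolutely irreducible with a simple point in every real closure of $L$, I would pass to an auxiliary $M$-variety $W$---via Weil restriction in the finite case $L/M$, or a Galois-orbit amalgamation in general---so that $W(M)$ corresponds bijectively to $V(L)$ and so that $W$ is absolutely irreducible with simple points in every real closure of $M$. Applying Fact~\ref{PRC}(2) to $W$ over the PRC field $M$ then yields an $M$-point of $W$, hence an $L$-point of $V$.

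The main obstacle is (3): algebraic extensions do not preserve regularity, so the bridge from $L$ back to $M$ via Weil restriction or Galois descent must be handled carefully to preserve both absolute irreducibility and the simple-point condition across every real closure. A cleaner but heavier route is the Galois-theoretic characterization of PRC---viewing PRC via projectivity properties of the absolute Galois group of $M$, under which closed subgroups inherit the relevant projectivity, so that $\G(L)\leq\G(M)$ immediately gives that $L$ is PRC.
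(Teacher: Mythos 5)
The paper offers no proof of this statement: it is quoted as a known Fact with explicit citations to Prestel (\cite{Pre}, Propositions 1.4 and 1.6 and Theorem 3.1), so the only question is whether your sketch would stand on its own, and it would not. For (1), the conic $y^2=(x-a)(b-x)$ only produces elements of $M$ lying between two given elements of $M$, which is trivially available (take the midpoint) and is not where the difficulty lies; the content of the statement is that every element of $\overline{M}^r\setminus M$ can be approximated within an arbitrary $\varepsilon>0$ of $\overline{M}^r$, and your bisection step fails exactly there: in a non-archimedean ordering the bracket lengths $(v_0-u_0)/2^n$ never drop below a fixed (infinitesimal) $\varepsilon$, so "arbitrarily tight $M$-brackets" are never produced. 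Density in the real closure is genuinely false for general ordered fields (e.g.\ $\mathbb{Q}(t)$ with $t$ infinitesimal), so the PRC hypothesis must carry the load at precisely the step you wave through; Prestel's proof applies the simple-point criterion of Fact \ref{PRC}(2) to a curve manufactured from the minimal polynomial of the element to be approximated, arranged so that any $M$-point is forced into the prescribed interval while simple points still exist in the real closures attached to all the other orderings.

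For (2) the argument is circular: the "simultaneous approximation" you invoke is the approximation theorem for pairwise \emph{distinct} topologies (it appears as \ref{ApTh} in the paper), and whether the two order topologies are distinct is exactly the statement to be proved. For (3), the "cleaner but heavier route" is a non sequitur: being PRC is not a property of the absolute Galois group alone (already in the PAC case projectivity of $\G(M)$ is necessary but not sufficient), so the fact that $\G(L)$ is a closed subgroup of $\G(M)$ and inherits real projectivity cannot by itself yield that $L$ is PRC. The Weil-restriction route is reasonable for finite $L/M$ (orderings of $M$ that do not extend to $L$ cause no trouble, since then every factor of $L\otimes_M\overline{M}^{r'}$ is algebraically closed and the restriction of scalars still has simple points), but the infinite-degree case does not reduce to it naively: a variety over $L$ is defined over a finite subextension $L_0$, and your hypothesis gives simple points only in those real closures of $L_0$ coming from orderings that extend to $L$, not in all real closures of $L_0$, so PRC-ness of $L_0$ cannot be applied directly; closing this is the substantive part of Prestel's Theorem 3.1. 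Since the paper simply cites these results, the citation is the appropriate justification; a self-contained proof would need real arguments at each of the three points above.
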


\begin{lem}\label{PRCacl}
Let $M$ be a PRC field and $A \subset M$. Then $A^{alg}\cap M = \acl^{M}(A)$.
If in addition $M$ has an $\mathcal{L_R}$-definable order, then $\acl^{M}(A)= \dcl^{M}(A)$.
\begin{proof} 
It is clear that if $M$ has a definable order, then $\acl^{M}(A)= \dcl^{M}(A)$.
Obviously $A^{alg}\cap M \subseteq \acl^{M}(A)$.
Let $A_0 = A^{alg} \cap M$ and let $\widetilde{M}$ be a copy of $M$ by an $\mathcal{L_R}(A_0)$-isomorphism $f$,  such that $M$ is linearly disjoint from $\widetilde{M}$ over $A_0$. 
As $M/A_0$ and $\widetilde{M}/A_0$ are regular and $M$ is linearly disjoint from $\widetilde{M}$ over $A_0$, we obtain that $M \widetilde{M}/M$ and $M\widetilde{M}/\widetilde{M}$  are regular.
By \ref{Amalord} $M \widetilde{M}$ is a totally real regular extension of $M$ and of $\widetilde{M}$.
Then $M$ and $\widetilde{M}$ are existentially closed in $M \widetilde{M}$, so there is an elementary extension $M^{*}$ of $M$ and of $\widetilde{M}$ such that $M \widetilde{M}\subseteq M^*$.

Let $\alpha \in \acl^{M}(A)$ and $\widetilde{\alpha} = f(\alpha)$. 
Since $M, \widetilde{M}  \prec M^{*}$, we deduce that $\tp^{M^{*}}(\alpha/A_0) = \tp^{M^{*}}(\widetilde{\alpha}/A_0)$ and $\acl^{M^{*}}(A) \subseteq M$.
Then $\widetilde{\alpha} \in \acl^{M^{*}}(A) \subseteq M$, since $\alpha \in \acl^{M}(A)$ . 
It follows that $\widetilde{\alpha} \in A_0$, then $\alpha = \widetilde{\alpha}$, so $\alpha \in A^{alg} \cap M$. 
Therefore $\acl^{M}(A) \subseteq A^{alg}\cap M$. 
\end{proof}
\end{lem}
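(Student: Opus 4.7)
The inclusion $A^{alg}\cap M \subseteq \acl^M(A)$ is free: field-theoretic algebraicity is witnessed by a polynomial equation, which is a formula with finitely many solutions. The content is the reverse inclusion, and the natural strategy is the standard "make a conjugate" argument, but carried out inside a totally real regular amalgam so that the PRC hypothesis can be invoked.

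More precisely, let $A_0 := A^{alg}\cap M$. Then $A_0$ is relatively algebraically closed in $M$, so $M/A_0$ is a regular extension. I would take an isomorphic copy $\widetilde M$ of $M$ over $A_0$ via some $\mathcal{L_R}(A_0)$-isomorphism $f\colon M\to\widetilde M$, arranged so that $M$ and $\widetilde M$ are linearly disjoint over $A_0$ inside some common ambient field (this is a routine Zorn/transcendence-basis construction). Linear disjointness together with regularity over $A_0$ then gives that the compositum $M\widetilde M$ is regular over each of $M$ and $\widetilde M$. Using the amalgamation statement \ref{Amalord} for ordered fields, each ordering of $M$ (respectively $\widetilde M$) extends to an ordering of $M\widetilde M$, so $M\widetilde M/M$ and $M\widetilde M/\widetilde M$ are totally real regular extensions.

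Now the PRC hypothesis applies to both $M$ and $\widetilde M$: each is existentially closed in $M\widetilde M$. A standard iteration (alternately embedding the current model elementarily into a suitable extension containing the other model) produces an $\mathcal{L_R}$-structure $M^*$ with $M\widetilde M\subseteq M^*$ and with both $M\prec M^*$ and $\widetilde M\prec M^*$. Given such an $M^*$, I would conclude as follows. Let $\alpha\in\acl^M(A)$ and set $\widetilde\alpha := f(\alpha)$. Because $f$ fixes $A_0\supseteq A$ pointwise and both $M$ and $\widetilde M$ are elementary in $M^*$, we get $\tp^{M^*}(\alpha/A)=\tp^{M^*}(\widetilde\alpha/A)$; since algebraicity is preserved in elementary extensions, $\widetilde\alpha\in\acl^{M^*}(A)=\acl^M(A)\subseteq M$. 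Hence $\widetilde\alpha\in M\cap\widetilde M$, which equals $A_0$ by linear disjointness, and since $f|_{A_0}=\mathrm{id}$ we conclude $\alpha=\widetilde\alpha\in A_0=A^{alg}\cap M$.

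The second clause is essentially automatic: if $M$ carries an $\mathcal{L_R}$-definable linear order and $\alpha\in\acl^M(A)$, then $\alpha$ lies in a finite $\mathcal{L_R}(A)$-definable set $F$, and being "the $k$-th element of $F$ with respect to $<$" is an $\mathcal{L_R}(A)$-definition of $\alpha$, so $\acl^M(A)=\dcl^M(A)$. The only delicate step in the whole argument is producing the joint elementary extension $M^*$: one needs to marshal the PRC property, regularity, and total-realness simultaneously, and it is precisely here that the hypothesis on $M$ is used in an essential way.
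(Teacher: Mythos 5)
Your proposal is correct and follows essentially the same route as the paper: pass to $A_0=A^{alg}\cap M$, take a linearly disjoint $\mathcal{L_R}(A_0)$-isomorphic copy $\widetilde M$, use \ref{Amalord} to see $M\widetilde M$ is a totally real regular extension of both, invoke the PRC property to get a common elementary extension $M^*\supseteq M\widetilde M$, and conclude by the conjugate-type argument that $\acl^M(A)\subseteq A_0$. The one step you leave at the level of a gesture (producing $M^*$ with $M\prec M^*$ and $\widetilde M\prec M^*$ simultaneously) is asserted with the same brevity in the paper itself, so no gap relative to the original.
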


Thanks to the last lemma we get easily the exchange principle and we have a good notion of dimension given by the algebraic closure.

\subsection{The theory of PRC fields with $n$ orderings}

\begin{defn}
Let $M$ be a field, $n \geq 1$, and $<_1, \ldots, <_n$ be $n$ orderings on $M$. We call the structure $(M, <_1, \ldots, <_n)$  an \emph{$n$-fold ordered field}. 

An $n$-fold ordered field $(M,<_1, \ldots, <_n)$ is $n$-\emph{pseudo real closed} ($n$-PRC) if:
\begin{enumerate}
 \item $M$ is a PRC field,
 \item if $i \not = j$, then $<_i$ and $<_j$ are different orders on $M$,%$<_i \not = <_j$ for all $i \not = j$,
 \item $<_1, \ldots, <_n$ are the only orderings on $M$.
\end{enumerate}
Observe that an $n$-PRC field with $n=0$ is a PAC field.
\end{defn}

\begin{fact} \label{simpoint}\cite[Proposition 1.4]{J2}
Let $(M, <_1, \ldots, <_n)$ be an $n$-PRC field and let $V$ be an absolutely irreducible variety defined over $M$.
Denote by $\clos{M}{i}$ a fixed real closure of $M$ with respect to $<_i$. For every $1 \leq i \leq n$ take $q_i \in V(\clos{M}{i})$ a simple point. For each $i \in \{1, \ldots, n\}$, let $U_i$ be an $<_i$-open set such that $q_i \in U_i$.
Then $V$ has an $M$-rational point $q \in \displaystyle{\bigcap_{i=1}^n U_i}$
\end{fact}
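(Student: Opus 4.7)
The plan is to produce a regular extension $N$ of $M$ on which each of the orderings $<_1,\ldots,<_n$ extends to an ordering $\prec_i$, and a distinguished tuple $\xi \in N^k$ satisfying both $\xi \in V$ and $\xi \in U_i$ with respect to $\prec_i$ for every $i$. Once such $(N,\prec_1,\ldots,\prec_n)$ is in hand, one invokes the enhancement of Fact \ref{PRC} discussed immediately after it (Prestel's Theorem 1.7 of \cite{Pre}) to conclude that $(M,<_1,\ldots,<_n)$ is existentially closed in $(N,\prec_1,\ldots,\prec_n)$ in the language of rings augmented by predicates for the orderings. The existential formula asserting that some tuple lies in $V \cap \bigcap_i U_i$ is then witnessed in $N$ by $\xi$, and therefore also in $M$, yielding the required $M$-rational point.

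The natural choice is $N := M(V)$ with $\xi = (\xi_1,\ldots,\xi_k)$ a generic point of $V$ over $M$; absolute irreducibility of $V$ gives regularity of $N/M$. To construct $\prec_i$, I use the simplicity of $q_i$: the local ring $\mathcal{O}_{V,q_i}$ over $\clos{M}{i}$ is then regular of dimension $d = \dim V$, and its completion is (by Cohen's structure theorem) a formal power series ring $\clos{M}{i}[[t_1,\ldots,t_d]]$ in a system of local parameters. This power series ring carries a lexicographic-style ordering extending $<_i$ on $\clos{M}{i}$ and making each $t_j$ a positive infinitesimal; restricting such an ordering along the composition $N \hookrightarrow \clos{M}{i}(V) \hookrightarrow \mathrm{Frac}(\clos{M}{i}[[t_1,\ldots,t_d]])$ yields the desired $\prec_i$. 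Because $\xi$ specializes to $q_i$ modulo the maximal ideal, $\xi$ is $\prec_i$-infinitesimally close to $q_i$, so any polynomial inequality in the coordinates that holds strictly at $q_i$ in $<_i$ continues to hold at $\xi$ in $\prec_i$. After shrinking $U_i$ to a basic open neighborhood of $q_i$ cut out by finitely many such strict polynomial inequalities, this gives $\xi \in U_i$ under $\prec_i$.

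The main obstacle is ensuring that the descent through Prestel's existential closure actually applies: the defining inequalities of $U_i$ might involve coefficients from $\clos{M}{i}$ rather than from $M$, whereas the enriched language only affords existential closure of $M$ over formulas with parameters in $M$. This is exactly where the density statement of Fact \ref{PRCcaracte}(1) enters: since $M$ is $<_i$-dense in $\clos{M}{i}$, I can approximate the coefficients of the defining inequalities of $U_i$ by elements of $M$, shrinking $U_i$ slightly to a smaller $<_i$-open neighborhood of $q_i$ that is defined entirely over $M$ and in which $\xi$ still lies with respect to $\prec_i$. The remaining verifications — that $(N,\prec_1,\ldots,\prec_n)$ is a totally real regular extension of $(M,<_1,\ldots,<_n)$, that each $\prec_i$ restricts to $<_i$ on $M$, and the final application of Prestel's theorem — are routine, and the conclusion drops out.
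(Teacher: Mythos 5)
The paper offers no proof of this statement — it is quoted verbatim from Jarden \cite[Proposition 1.4]{J2} — and your argument correctly reproduces the standard proof from that source: order $N=M(V)$ through the completed regular local ring at each simple point $q_i$ (iterated Laurent series over $\clos{M}{i}$) so that the generic point becomes $\prec_i$-infinitesimally close to $q_i$, shrink each $U_i$ to a box with endpoints in $M$ using the density of $M$ in $\clos{M}{i}$ (Fact \ref{PRCcaracte}), and descend the existential statement via Prestel's Theorem 1.7, i.e.\ existential closedness of $M$ in totally real regular extensions in the language with predicates for the $n$ orders. I see no gap; this is essentially the same approach as the cited proof.
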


\begin{fact} \label{embedding2}\cite[Theorem 3.2]{J1}
Let $(M, <_1, \ldots, <_n)$ and $(N, <_1', \ldots, <'_n)$ be two $n$-PRC fields. Let $\xi_i$ and $\sigma_i$ be involutions in $\G(M)$ and $\G(N)$ that induce $<_i$ and $<'_i$ on $M$ and $N$ respectively. Let $L$ be a common subfield of $M$ and $N$. Suppose further that there exists an isomorphism $\varphi: \G(N) \rightarrow \G(M)$ such that:
\begin{enumerate}
	\item[(a)] $\varphi(\sigma)|_{L^{alg}}= \sigma|_{L^{alg}}$ for every $\sigma \in \G(N)$.
	\item[(b)] $\varphi(\sigma_i)= \xi_i$ for $i\in \{1, \ldots, n\}$. 
\end{enumerate}
Then $(M, <_1, \ldots, <_n) \equiv_L (N, <'_1, \ldots, <'_n)$.
\end{fact}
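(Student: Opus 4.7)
My approach is a back-and-forth construction between suitably saturated elementary extensions of the two structures. Passing to $|L|^+$-saturated elementary extensions (which remain $n$-PRC and can be arranged to satisfy the hypothesis with suitably chosen involutions and Galois isomorphism), it suffices to build an $L$-isomorphism of $n$-fold ordered fields $M \to N$, which will witness $M \equiv_L N$. The two hypotheses play complementary roles: condition (a) governs the algebraic part of the construction, and condition (b), by matching order-inducing involutions, makes the resulting map automatically preserve each ordering predicate.

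Throughout, I maintain a finitely generated partial isomorphism $f: F \to F'$ over $L$ together with a lift $\tilde{f}: F^{alg} \to F'^{alg}$ that fixes $L^{alg}$ pointwise (using (a)) and whose conjugation action agrees with $\varphi$, i.e.\ $\sigma \circ \tilde{f} = \tilde{f} \circ \varphi(\sigma)$ for every $\sigma \in \G(N)$. With this setup, preservation of each $<_i$ is automatic: if $b - a = c^2$ with $c \in \mathrm{Fix}_{M^{alg}}(\xi_i)$ nonzero, then $\sigma_i(\tilde{f}(c)) = \tilde{f}(\varphi(\sigma_i)(c)) = \tilde{f}(\xi_i(c)) = \tilde{f}(c)$ by (b), so $\tilde{f}(c) \in \mathrm{Fix}_{N^{alg}}(\sigma_i)$ and $f(b) - f(a) = \tilde{f}(c)^2$ is a nonzero square there, giving $f(a) <'_i f(b)$. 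The extension step has two cases. Algebraic: given $a \in F^{alg} \cap M$, set $b := \tilde{f}(a)$ and note that $\sigma(b) = \tilde{f}(\varphi(\sigma)(a)) = \tilde{f}(a) = b$ for every $\sigma \in \G(N)$, since $\varphi(\sigma) \in \G(M)$ fixes $a$, so $b \in N$. Transcendental: apply Fact \ref{simpoint} to $V = \mathbb{A}^1_N$ with open conditions $U_i$ given by the $<'_i$-intervals corresponding (via $f$) to the cuts of $a$ over $F$, producing $b \in N$ transcendental over $F'$ realizing all prescribed cuts simultaneously; then extend $\tilde{f}$ to an $L^{alg}$-fixing, $\varphi$-compatible isomorphism $F(a)^{alg} \to F'(b)^{alg}$.

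The main obstacle is the transcendental step. Fact \ref{simpoint} delivers the geometric half and is the exact multi-ordered analog of the PAC embedding property; this is where the $n$-PRC hypothesis (in particular clause (3), that these are the only orderings) is essential, since otherwise hidden cuts could escape control. The Galois-theoretic half, i.e.\ extending $\tilde{f}$ compatibly with $\varphi$ onto the enlarged algebraic closure, is more delicate: among the many $L^{alg}$-isomorphisms $F(a)^{alg} \to F'(b)^{alg}$ extending the current $\tilde{f}$, one must single out one whose conjugation realizes $\varphi$ on the new Galois quotient. This uses the flexibility of absolute Galois groups of finitely generated transcendental extensions together with the fact that $\varphi$ is already defined globally on $\G(N)$ and thus fully constrains the required lift. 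Iterating along a chain of length $|L|^+$ with standard back-and-forth bookkeeping absorbs all of $M$ and $N$, yielding the required $L$-isomorphism.
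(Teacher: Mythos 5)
The paper gives no proof of this statement: it is quoted as a fact from Jarden \cite{J1} (Theorem 3.2), so there is nothing in-paper to compare your argument with, and I have to assess it on its own terms; it has two genuine gaps, located exactly at the points you describe as ``arrangeable'' or ``delicate''. The first is the reduction to saturated models. You pass to $|L|^+$-saturated elementary extensions $M^*\succ M$, $N^*\succ N$ and assert that the hypothesis ``can be arranged'' there. When $M$ and $N$ are bounded this is harmless, since the restriction maps $\G(M^*)\to \G(M)$ and $\G(N^*)\to\G(N)$ are then isomorphisms; but the Fact is stated for arbitrary $n$-PRC fields, and in the unbounded case these restrictions are surjective and far from injective, and there is no evident isomorphism $\G(N^*)\to\G(M^*)$ satisfying (a) and (b) over $L$ --- producing one is essentially the theorem being proved, applied to $M^*,N^*$, so as written the step is circular. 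This is not cosmetic: your invariant $\sigma\circ\tilde f=\tilde f\circ\varphi(\sigma)$ for $\sigma\in\G(N)$ stops making sense as soon as $F'$ contains elements of $N^*\setminus N$, because then $F'^{alg}\not\subseteq N^{alg}$ and $\G(N)$ does not act on it. The known proofs (Jarden's, and the Jarden--Kiehne argument for PAC fields it follows) are deliberately asymmetric precisely to avoid this: one proves an embedding lemma for a small field-with-Galois-data into a sufficiently saturated PRC field, using only the original $\varphi$ together with the restriction maps, and never lifts $\varphi$ to the big models. (A smaller bookkeeping point: to conclude by a global back-and-forth you also need special or saturated models of equal cardinality, not merely $|L|^+$-saturated ones.)

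The second gap is the transcendental step. Choosing $b$ by Fact \ref{simpoint} applied to $\mathbb{A}^1$ (plus saturation) matches only the $<_i$-cuts of $a$ over $F$. But in a PRC field which is not real closed, cuts do not determine types: the type of $a$ over $F$ also records, for each $h\in F[x]$, whether $h(a)$ is a square in the ambient model, and more generally which finite extensions of $F(a)$ meet it --- and this is exactly the data that any $\varphi$-compatible extension of $\tilde f$ to $F(a)^{alg}\to F'(b)^{alg}$ would force to agree on the two sides. For a $b$ chosen only by cuts this agreement generally fails, and then the extension of $\tilde f$ you assert simply does not exist; observing that $\varphi$ ``fully constrains the required lift'' identifies the constraint but does not produce a point $b$ meeting it. Securing such a $b$ is the real content of Jarden's theorem, and it needs a Galois-theoretic construction of the kind this paper itself carries out in Proposition \ref{lemamalgamation} --- a subgroup with prescribed restriction behaviour, its fixed field realized as a totally real regular extension (via Lemma \ref{Calg} and linear disjointness), and only then the PRC property applied to a variety encoding the algebraic data of $a$ over $F$, not to $\mathbb{A}^1$ with interval conditions. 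So your skeleton (saturate, then back-and-forth with Galois bookkeeping) is the right general shape, but the two load-bearing steps are missing.
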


\begin{cor}\label{EqelemPRC} \cite[Corollary 3.3]{J1}
Let $(M, <_1, \ldots, <_n) \subseteq (N, <'_1, \ldots, <'_n)$ two $n$-PRC fields. If the restriction map $\G(N)\rightarrow \G(M)$ is an isomorphism, then $(M, <_1, \ldots, <_n) \prec (N, <'_1, \ldots, <'_n)$.
\end{cor}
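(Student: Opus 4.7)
The plan is to deduce this corollary directly from Fact \ref{embedding2} by taking $L = M$ and letting $\varphi$ be the given restriction isomorphism $\G(N) \to \G(M)$. Since $M \subseteq N$ (with the orderings restricting correctly, i.e.\ $<'_i \restriction M = <_i$), an elementary equivalence over $M$ will give the desired elementary submodel relation.

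First I would verify hypothesis (a) of Fact \ref{embedding2}: we must show $\varphi(\sigma)\restriction M^{alg} = \sigma\restriction M^{alg}$ for all $\sigma \in \G(N)$. But this is immediate, since $\varphi$ is by assumption \emph{the} restriction map $\sigma \mapsto \sigma\restriction M^{alg}$; applying restriction to $M^{alg}$ a second time is a no-op. Next I would handle hypothesis (b): for each $i$, pick any involution $\sigma_i \in \G(N)$ that induces $<'_i$ on $N$, and set $\xi_i := \varphi(\sigma_i) = \sigma_i\restriction M^{alg}$, which is again an involution in $\G(M)$ (or trivial, but then $\sigma_i$ would already be trivial on $M^{alg}$, and since $\varphi$ is an isomorphism it cannot be trivial on $N^{alg}$ without being trivial everywhere). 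Then by construction $\varphi(\sigma_i) = \xi_i$, so the only remaining point is to check that $\xi_i$ induces the correct ordering $<_i$ on $M$.

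The main (small) technical step is this last verification. The ordering on $M$ induced by $\xi_i$ has positive cone consisting of nonzero squares in $\mathrm{Fix}_{M^{alg}}(\xi_i)$ intersected with $M$, while the restriction of $<'_i$ to $M$ has positive cone the nonzero squares in $\mathrm{Fix}_{N^{alg}}(\sigma_i)$ intersected with $M$. One direction is obvious; for the other, if $a \in M$ is a square of some $b \in \mathrm{Fix}_{N^{alg}}(\sigma_i)$, then $b$ is algebraic over $M$, hence lies in $M^{alg} \cap \mathrm{Fix}_{N^{alg}}(\sigma_i) = \mathrm{Fix}_{M^{alg}}(\xi_i)$. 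Thus the two orderings on $M$ agree, and since by assumption $<'_i\restriction M = <_i$, we conclude $\xi_i$ induces $<_i$.

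With both hypotheses of Fact \ref{embedding2} verified, we obtain $(M, <_1, \ldots, <_n) \equiv_M (N, <'_1, \ldots, <'_n)$, which — given the already assumed inclusion of structures — is exactly the statement that $M$ is an elementary substructure of $N$. The main obstacle here is essentially notational/conceptual: being careful that the restriction maps between absolute Galois groups, between fixed fields, and between orderings all interact coherently; there is no hard model-theoretic or algebraic work beyond invoking Fact \ref{embedding2}.
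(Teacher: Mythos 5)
Your proof is correct, and it is exactly the intended derivation: the paper gives no argument of its own here (it simply cites Jarden's Corollary 3.3, which is obtained from his Theorem 3.2, i.e.\ Fact \ref{embedding2}, precisely by taking $L=M$ and $\varphi$ the restriction map). Your verification of hypotheses (a) and (b), including the check that $\xi_i=\sigma_i|_{M^{alg}}$ induces $<'_i\restriction M=<_i$ via the equality $\mathrm{Fix}_{M^{alg}}(\xi_i)=M^{alg}\cap \mathrm{Fix}_{N^{alg}}(\sigma_i)$, is accurate, and $\equiv_M$ together with $M\subseteq N$ indeed yields $M\prec N$.
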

\section{Bounded pseudo real closed fields} \label{SectionPRC}

In this section we study some model theoretic properties of bounded  PRC fields. In section \ref{DensityTheorem} we give a useful description of definable sets, this description is in some way a generalization to multi-ordered fields of cellular decomposition for real closed fields. In \ref{AmalgamationTheorems} we show some results about amalgamation of types. 

\begin{para}\label{ApTh} 
\textbf{Approximation Theorem} \cite[Theorem 4.1]{PRZI} Let $F$ be a field  and $\tau_1, \ldots, \tau_n$ be different topologies on $F$ which are induced by orders or non trivial valuations. For each $i \in \{1,\ldots,n\}$, let $U_i$  be a non-empty $\tau_i$-open subset of $F$.
Then $\displaystyle{\bigcap_{i=1}^n U_i \not = \emptyset}$.
\end{para}

\begin{rem}\label{bddfiniteorders}
If $M$ is a bounded PRC field, then $M$ has only finitely many orders.
\begin{proof}
Let $m \in \mathbb{N}$ be such that $M$ has exactly $m$ extensions of degree $2$. 
Suppose by contradiction that there exists $k > m$ and $\{<_i\}_{1 \leq i \leq k}$, distinct orderings on $M$. For each $1 \leq i \leq k$ choose $a_i$ such that $a_i>_i 0$, $a_i<_j 0$ for all $j \not = i$ (they exist by  \ref{ApTh} and Fact \ref{PRCcaracte}). Then the extensions $M(\sqrt{a_i})$, $1 \leq i \leq k$, are proper and linearly disjoint over $M$. Indeed, $\sqrt{a_i}$ belongs to the real closure $\clos{M}{i}$ of $M$ with respect to $<_i$, but does not belong to $\clos{M}{j}$ for $j \not = i$. This contradicts the fact that $M$ has exactly $m$ extensions of degree $2$.
\end{proof}
\end{rem}

\begin{para} \textbf{Notation \& Setting.}\label{PRCB}
In this section we fix a bounded PRC field $K$, which is not real closed and a countable elementary substructure $K_0$ of $K$.
%By Lemma 1.22 of \cite{Cha0} 
By Lemma 1.22 of \cite{Cha0} the restriction map: $\G(K)\rightarrow \G(K_0)$ is an isomorphism, and $K_0^{alg}K= K^{alg}$.

By Remark \ref{bddfiniteorders} there exists $n \in \mathbb{N}$ such that $K$ has exactly $n$ distinct orders.
So $K$ is an $n$-PRC field. 
In this section we will work over $K_0$, thus we denote by $\mathcal{L}$ the language of rings with constant symbols for the elements of $K_0$, $\Li:= \mathcal{L} \cup \{<_i\}$ and $\mathcal{L}_n:= \mathcal{L} \cup \{<_1, \ldots, <_n\}$.
We let $\PRCB:=Th_{\LC}(K)$.
If $M$ is a model of $T$, we denote by $\clos{M}{i}$ the real closure of $M$ with respect to $<_i$.

Observe that if $n= 0$, $K$ is PAC. In this case the properties of $\PRCB$ are well known: by Corollary 4.8 of \cite{ChPi} $\PRCB$ is simple and by Corollary 3.1 of \cite{Hrus} $\PRCB$ has elimination of imaginaries. \bf{Therefore we will suppose always that $n \geq 1$}.
\end{para}

\begin{para}\label{MCPRC}
Observe that $\PRCB$ is model complete:  Let $M_1, M_2 \models \PRCB$ such that $M_1 \subseteq M_2$. 
As the restriction maps $\G(M_1)\rightarrow \G(K_0)$ and $\G(M_2)\rightarrow \G(K_0)$ are isomorphisms, it follows that the restriction map $\G(M_2)\rightarrow \G(M_1)$ is also an isomorphism. 
Then by Corollary $\ref{EqelemPRC}$ $M_1 \prec M_2$.
\end{para}

\begin{lem} \label{Deforders}
Let $(M,<_1, \ldots, <_n)$ be a model of  $\PRCB$. Then for all $i \in \{1, \ldots, n\}$ we can define the order $<_i$ by an existential $\LCR$-formula.
\begin{proof}
Let $i \in \{1, \ldots, n\}$. Then $M\models a>_i0$ if and only if $\clos{M}{i} \models \exists \alpha (\alpha^2 = a \wedge \alpha \not = 0)$.
Let $\sigma_i \in \Gal(M^{alg}/\clos{M}{i})$, $\sigma_i \neq id$. 
Define $M_2$ as the composite field of all the extensions of $M$ of degree $2$ and let $\widetilde\sigma_i = \sigma_i|_{M_2}$.  
In $M$ we can interpret without quantifiers in the language $\LCR$ the structure $({M}_2, +, \cdot,  \widetilde{\sigma_{1}}, \ldots, \widetilde{\sigma}_n)$, with the action of the automorphism $\widetilde{\sigma_i}$, for all $1 \leq i \leq n$. 
The reader can refer to Appendix 1 of $\cite{Cha}$ for more details.
Therefore we can define the formula ``$a >_i 0$" as follows:

\[M\models a >_i 0 \;\mbox{if and only if} \; M_2 \models \displaystyle{\exists \alpha (\alpha \not = 0 \wedge \widetilde{\sigma_i}(\alpha)=\alpha \wedge {\alpha}^2 = a}).\]
\end{proof}
\end{lem}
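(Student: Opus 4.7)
The plan is to express positivity under $<_i$ as ``being a nonzero square in the real closure'' and then push this condition down from $M^{alg}$ to a finite Galois extension $M_2/M$ that is quantifier-free interpretable in $M$ over parameters from $K_0$.

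First I would record the crucial equivalence: since $\clos{M}{i}$ is real closed with order defined by squares, $a >_i 0$ holds iff $a$ is a nonzero square in $\clos{M}{i}$. Choosing a nontrivial $\sigma_i \in \Gal(M^{alg}/\clos{M}{i})$, this becomes: there exists $\alpha \in M^{alg}$ with $\alpha^2 = a$, $\alpha \neq 0$, and $\sigma_i(\alpha)=\alpha$. The issue, of course, is that $M^{alg}$ is not first-order accessible; the point is that when $a \in M$, the witness $\alpha$ lies in the compositum $M_2$ of all quadratic extensions of $M$.

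Next I would exploit boundedness. Since $K$ (hence $M$) has only finitely many degree-$2$ extensions, $M^\times/(M^\times)^2$ is finite, say of order $2^k$; moreover, since $K_0 \prec M$, representatives $c_1,\dots,c_k$ for the nontrivial square classes can be chosen in $K_0$, and for each $i$ the sign of each $c_j$ with respect to $<_i$ is determined already in $K_0$. Thus $M_2 = M(\sqrt{c_1},\dots,\sqrt{c_k})$ is finite Galois over $M$ with $\Gal(M_2/M) \cong (\mathbb{Z}/2)^k$, and the involution $\widetilde{\sigma_i} := \sigma_i|_{M_2}$ is the unique element that sends $\sqrt{c_j}$ to $\sqrt{c_j}$ when $c_j >_i 0$ and to $-\sqrt{c_j}$ otherwise. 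Then $a >_i 0$ iff there exists $\alpha \in M_2$ with $\alpha \neq 0$, $\alpha^2 = a$, and $\widetilde{\sigma_i}(\alpha) = \alpha$: the forward direction uses $\sqrt{a} \in \clos{M}{i} \cap M_2 = \mathrm{Fix}(\widetilde{\sigma_i})$, and the reverse direction uses that any $\widetilde{\sigma_i}$-fixed element of $M_2$ lies in $\clos{M}{i}$.

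To conclude, I would interpret $M_2$ in $M$ without quantifiers by fixing the $M$-basis $\{\sqrt{c_1}^{\epsilon_1}\cdots\sqrt{c_k}^{\epsilon_k}\mid \epsilon_j \in \{0,1\}\}$, identifying $M_2$ with $M^{2^k}$; the multiplication on $M_2$ and the $M$-linear map $\widetilde{\sigma_i}$ then become polynomial maps with coefficients in $K_0$ (via the $c_j$'s and the data of their signs under $<_i$). Substituting into the equivalence above produces a formula of the shape $\exists \bar{x} \in M^{2^k}\, \bigl(\bar{x} \neq \bar{0} \wedge P(\bar{x}) = a \wedge Q_i(\bar{x}) = \bar{x}\bigr)$ for explicit polynomials $P, Q_i$ over $K_0$, which is existential in $\LCR$. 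The only real obstacle is checking that this interpretation is genuinely uniform across all models of $\PRCB$, which reduces to the observation that boundedness of $K_0$ together with $K_0 \prec M$ fixes both the square classes and the signs of the representatives across all such $M$.
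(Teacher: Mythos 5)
Your proof is correct and follows essentially the same route as the paper: characterize $a>_i 0$ as being a nonzero square in $\clos{M}{i}$ fixed by the involution $\widetilde{\sigma_i}$, and transfer this to an existential $\LCR$-formula via a quantifier-free interpretation of $M_2$ (the compositum of the quadratic extensions) in $M$. The only difference is that you spell out the interpretation explicitly through a Kummer basis $\sqrt{c_1},\ldots,\sqrt{c_k}$ with $c_j\in K_0$ and check uniformity over models of $\PRCB$ by hand, where the paper simply cites Appendix 1 of \cite{Cha}; aside from the slight slip of calling $c_1,\ldots,c_k$ representatives of the nontrivial square classes (they form a basis of the square class group), the argument is fine.
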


Observe that Lemma \ref{Deforders} implies that if $A \subseteq M$ and $a$ is a tuple in $M$, then $\tp_{\LCR}(a/A) \vdash \tp_{\LC}(a/A)$ and $ \acl_{\LCR}(A) =\acl_{\LC}(A) = \dcl_{\LC}(A)= \dcl_{\LCR}(A)$. 

\begin{cor}\label{PRCRmc}
$Th_{\LCR}(K)$ is model complete.
\begin{proof}
Use \ref{MCPRC} and observe that: $x<_i y \leftrightarrow (y-x)>_i 0$, and $\neg(x <_i y) \leftrightarrow y <_i x \vee y = x$.
 
\end{proof}

\end{cor}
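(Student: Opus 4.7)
The goal is to reduce model completeness of $Th_{\LCR}(K)$ to the $\LC$-model completeness established in \ref{MCPRC}, by showing that any $\LCR$-embedding between models is automatically an $\LC$-embedding once we equip each model with its (unique) $n$ orderings.

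So suppose $M_1 \subseteq M_2$ are both models of $Th_{\LCR}(K)$. Since being a bounded PRC field with exactly $n$ orderings is captured by the $\LCR$-theory of $K$ (the number of orderings is controlled by the number of degree-$2$ extensions, which is $\LCR$-expressible), each $M_j$ carries exactly $n$ orderings $<_1^{M_j}, \ldots, <_n^{M_j}$. By Lemma \ref{Deforders}, there is an existential $\LCR$-formula $\phi_i^+(x)$ such that for any model $M$ of $\PpCB$ and any $a \in M$, $M \models \phi_i^+(a)$ iff $a >_i 0$ in $M$. Expand each $M_j$ to an $\LC$-structure by interpreting $<_i$ via $\phi_i^+$; the result is a model of $T = Th_{\LC}(K)$.

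The crux is to show that $M_1 \subseteq M_2$ as $\LC$-structures, i.e., for $a \in M_1$, $M_1 \models \phi_i^+(a) \iff M_2 \models \phi_i^+(a)$. The forward direction is immediate since $\phi_i^+$ is existential and is therefore preserved upward under $\LCR$-substructure. For the converse, use the hint $\neg(x <_i y) \leftrightarrow (y <_i x) \vee (y = x)$: applied to $x = 0, y = a$, this yields $\neg \phi_i^+(a) \leftrightarrow \phi_i^+(-a) \vee (a = 0)$, which is again an existential $\LCR$-formula. Hence $\neg \phi_i^+$ is also preserved upward, which gives the contrapositive of the converse. Thus the two interpretations of $<_i$ agree on $M_1$, and the inclusion $M_1 \subseteq M_2$ is an $\LC$-embedding.

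Now \ref{MCPRC} gives $M_1 \prec_{\LC} M_2$, and restricting to the sublanguage $\LCR \subseteq \LC$ yields $M_1 \prec_{\LCR} M_2$, as required. The whole argument is essentially bookkeeping; the only non-formal ingredient is the observation that since both $\phi_i^+$ and its negation are existentially $\LCR$-definable, the ordering relation is in fact $\Delta_1$ over $\LCR$ and so transfers both ways between $\LCR$-substructures—making the reduction to the $\LC$-case completely harmless.
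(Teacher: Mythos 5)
Your proof is correct and is essentially the paper's intended argument: the paper's terse hint ($x<_iy\leftrightarrow (y-x)>_i0$ and $\neg(x<_iy)\leftrightarrow y<_ix\vee y=x$) is exactly your observation that, via Lemma \ref{Deforders}, both each order and its complement are existentially $\LCR$-definable, so $\LCR$-inclusions between models are $\LC$-embeddings and \ref{MCPRC} applies. (Only a cosmetic slip: you cite the theory as the P$p$C one where you mean $T=Th_{\LC}(K)$.)
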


\begin{para}\label{typePrc2}
\textbf{Types.} By Fact \ref{embedding2} we can describe the types in $\PRCB$ in a simple form:
Let $M$ be a model of $\PRCB$, $A$ a subfield of $M$ (containing $K_0$) and $a$ and $b$ tuples from $M$. As $K_0^{alg}M=M^{alg}$ and $K_0 \subseteq A$, we obtain that $(A(a))^{alg}= A^{alg}\acl(A(a))$ and $(A(b))^{alg}= A^{alg}\acl(A(b))$. 
It follows that $\tp^M(a/A)=\tp^M(b/A)$ if and only if there is an $\LCR$-isomorphism $\varphi$ between $\acl(A(a))$ and $\acl(A(b))$, which sends $a$ to $b$ and is the identity on $A$. 
\end{para}

\subsection{Density theorem for PRC bounded fields}  \label{DensityTheorem}
\begin{lem} \label{simple}(Folklore)
Let $F$ be a large algebraically closed field, let $d \in \mathbb{N}$, denote by $\mathbb{A}^d$ the $d$-dimensional affine space. 
Let $M$ be a small subfield of $F$, let $W \subseteq \mathbb{A}^d$  be a Zariski closed set defined over $M$. Then there exist $m \in \mathbb{N}$ and absolutely irreducible varieties $W_1, \ldots, W_m$ defined over $M$ such that:
\[ \bar{x} \in W(M) \; \;  \mbox{if and only if} \;  \; \bar{x} \in  \bigcup_ {j \in \{1, \ldots, m\}} W_j^{sim}(M),\]  
where  $W_j^{sim}(M):= \{\bar{x} \in W(M): \bar{x} \; \mbox{is a simple point of } \; W  \}$. 
\begin{proof}

We can suppose that $W(M)$ is Zariski dense in $W$ since if $V = \overline{W(M)}^{z}$ is the Zariski closure of $W(M)$, then $V$ is defined over $M$ and $V(M)=W(M)$. 
Let $W_1, \ldots, W_m$ be the absolutely irreducible components of $W$. 
Then $W(M) = \displaystyle \bigcup_{j=1}^m{W_j(M)}$, and $W_j= \overline{W_j(M)}^z$.

Any $M$-automorphism of $F$ will fix $W_j(M)$ pointwise for all $j \in \{1, \ldots, m\}$, hence fix $\overline{W_j(M)}^z$ setwise. Therefore for all $\sigma \in Aut(F/M)$,  $\sigma(W_j)=\sigma(\overline{W_j(M)}^z)= \overline{W_j(M)}^z = W_j$, so $W_j$ is defined over $M$ for all $j \in \{1, \ldots, m\}$.  

Let $W_j^{sing}(M) = \{\bar{x} \in W_j(M): \bar{x} \; \mbox{is a singular point of } \; W_j  \}$. 
Fix $j \in \{1, \ldots, m \}$, we have that $W_j(M) =W_j^{sim}(M)  \cup W_j^{sing}(M)$, $W_j^{sing}(M)$ is a closed set in the Zariski topology and its dimension is less than the dimension of $W_j(M)$.

The result follows by induction on the dimension.
  
\end{proof}
\end{lem}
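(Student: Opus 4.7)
The plan is induction on $\dim W$, after a harmless reduction to the case where $W(M)$ is Zariski-dense in $W$. First, I would replace $W$ by $V := \overline{W(M)}^z$. Since $W(M)$ is pointwise fixed by $\mathrm{Aut}(F/M)$, so is its Zariski closure, hence $V$ is defined over $M$; moreover $V(M) = W(M)$, so this substitution does not change what we are trying to describe. From this point on we may assume $W(M)$ is Zariski-dense in $W$.

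Next, I would decompose $W = W_1 \cup \cdots \cup W_m$ into its absolutely irreducible components over $F$ and argue that each $W_j$ is defined over $M$. The group $\mathrm{Aut}(F/M)$ permutes the set of components, so it suffices to exhibit, for each $j$, a point $x_j \in W(M) \cap W_j$ lying on no other component: such an $x_j$ is fixed by every $\sigma \in \mathrm{Aut}(F/M)$ and pins down $W_j$ as the unique component through $x_j$, forcing $\sigma(W_j)=W_j$. If no such $x_j$ existed, then $W(M) \cap W_j \subseteq \bigcup_{i \neq j} W_i$, whence $W(M) \subseteq \bigcup_{i \neq j} W_i$; taking Zariski closures yields $W \subseteq \bigcup_{i \neq j} W_i$, contradicting that $W_j$ is an irreducible component of $W$.

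For the inductive step, I would write $W_j(M) = W_j^{sim}(M) \cup W_j^{sing}(M)$, where $W_j^{sing}$ is the singular locus of $W_j$, a Zariski-closed subset defined over $M$ of strictly smaller dimension than $W_j$. Setting $Z := \bigcup_{j=1}^{m} W_j^{sing}$, we get
\[W(M) \;=\; \Bigl(\bigcup_{j=1}^{m} W_j^{sim}(M)\Bigr) \,\cup\, Z(M),\]
where $Z$ is Zariski closed, defined over $M$, with $\dim Z < \dim W$. Induction applied to $Z$ furnishes absolutely irreducible $M$-varieties $V_1, \ldots, V_\ell$ with $Z(M) = \bigcup_{k=1}^{\ell} V_k^{sim}(M)$, and concatenating the two unions yields the desired decomposition.

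I expect the main obstacle to be the second step, namely verifying that the absolutely irreducible components descend to $M$. The density of $W(M)$ in $W$ enters exactly here, in an essential way, to guarantee the existence of Galois-fixed points that single out each component; everything else is a routine application of Galois descent and dimension induction.
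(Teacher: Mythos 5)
Your proof is correct and follows essentially the same route as the paper: reduce to the case where $W(M)$ is Zariski-dense in $W$, use Galois descent (via the density of the $M$-points) to see that the absolutely irreducible components are defined over $M$, and then induct on dimension through the singular loci. The only minor difference is how invariance of each component under $Aut(F/M)$ is checked: the paper observes that $W_j=\overline{W_j(M)}^z$ is fixed setwise because $W_j(M)$ is fixed pointwise, while you exhibit an $M$-point lying on $W_j$ and on no other component — both are immediate consequences of the same density argument.
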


\begin{rem}\label{simplerem}
Note that if $W$ is defined over $A = A^{alg}\cap M$, then the $W_j$ are defined over $A$ for all $j \in \{1, \ldots, m\}$: The absolutely irreducible components $W_j$ of $W$ are defined over $A^{alg}$, and by Lemma \ref{simple} are also defined over $M$, hence the $W_j$ are defined over $A^{alg}\cap M=  A$.
\end{rem}

\subsubsection{Density theorem for one variable definable sets} \label{SDT1v}

\begin{defn}\label{defmultiinterval}
Let $(M, <_1, \ldots, <_n)$ be a model of $\PRCB$ (see \ref{PRCB}).
\begin{enumerate}
\item A subset of $M$ of the form $I= \displaystyle{\bigcap_{i=1}^n (I^i\cap M)}$ with $I^i$ a non-empty $<_i$-open interval in $\clos{M}{i}$ is called a \emph{multi-interval}.
Observe that by  \ref{ApTh} (Approximation Theorem) and Fact \ref{PRCcaracte} every multi-interval is non empty.

\item A definable subset $S$ of a multi-interval $I = \displaystyle{\bigcap_{i=1}^n (I^i\cap M)}$ is called \emph{multi-dense} in $I$ if for any multi-interval $J \subseteq I$, $J \cap S \not = \emptyset.$
Note that multi-density implies $<_i$- density in $I^i$, for all $i \in \{1, \ldots,n\}$.

\end{enumerate}

\end{defn}

\begin{rem}\label{definM}
Let $(M, <_1, \ldots, <_n)$ be a model of $\PRCB$. Let $i \in \{1, \ldots,n\}$ and $a \in \clos{M}{i} \setminus M$ such that $a \in \acl^{\clos{M}{i}}(c)$, with $c$ a tuple in $M$. Then $A= \{x \in M: x <_i a\}$ is definable  in $M$ by a quantifier-free $\Li(c)$-formula.
\begin{proof}
By quantifier elimination of the theory of real closed fields (RCF) and the fact that $\acl^{\clos{M}{i}}(c)= \dcl^{\clos{M}{i}}(c)$, we can find a quantifier-free $\Li$-formula $\phi(x,c)$, such that $\clos{M}{i}\models \forall x (x <_i a \leftrightarrow \phi(x,c))$.
Then $x \in A$ if and only if $M\models \phi(x,c)$.
 \end{proof}

\begin{prop} \label{thmdensite}
Let $(M, <_1, \ldots, <_n)$ be a model of $\PRCB$. 
Let $\phi(x, \bar{y})$ be an $\LC$-formula, $\bar{a}$ a tuple in $M$ and $b \in M$ such that $M\models \phi(b, \bar{a})$ and $b \notin \acl(\bar{a})$.
Then there is a multi-interval $I= \displaystyle{\bigcap_{i=1}^n (I^i\cap M)}$ such that:

\begin{enumerate}
	\item $b \in I$,
	\item $\{x \in I: M \models \phi(x, \bar{a})\}$ is multi-dense in $I$,
	\item $I^i \subseteq \clos{M}{i}$ has its extremities in $\dcl^{\clos{M}{i}}_{\Li} (\bar{a})\cup \{\pm \infty\}$, for all $1 \leq i \leq n$, 
        \item the set $I^i \cap M$ is definable in $M$ by a quantifier-free $\Li(\bar{a})$-formula, for all $1 \leq i \leq n$.
\end{enumerate}   

\begin{proof}

By Corollary \ref{PRCRmc} there exists a quantifier-free $\LCR(\bar{a})$-formula $\psi(x, \bar{y})$  such that $M \models\forall x( \phi(x, \bar{a}) \leftrightarrow \exists\bar{y}\psi(x, \bar{y}))$. 

As we can define the relation $\neq$ with an existential formula, we can suppose that $\psi(x, \bar{y})$ is a positive formula, i.e defines an algebraic set $W$ defined over $\acl(\bar{a})$. 

Then $M\models \phi(x, \bar{a})$ is equivalent to $\exists \bar{y} \;(x, \bar{y})\in W(M)$.

Let $d$ be the arity of $\bar{y}$. 
As $M\models \phi(b, \bar{a})$ we can find $\bar{y_0} \in M^{d}$ such that $(b,\bar{y_0}) \in W(M)$. 
By Lemma \ref{simple} and Remark \ref{simplerem}, there exists an absolutely irreducible variety $V$ defined over $\acl(\bar{a})$ such that $(b, \bar{y_0})$ is a simple point of $V$ and $V(M)\subseteq W(M)$. 

For each $i \in \{1, \ldots,n\}$ we define:
\[A_i := \{x \in \clos{M}{i}: \exists(y_1, \ldots, y_{d}) \in {(\clos{M}{i})}^d (x, y_1 \ldots, y_{d}) \; \mbox{is a simple point of} \; V \}.\]

Observe that $b \in A_i$ and that $A_i$ is $\Li$-definable in $\clos{M}{i}$ with parameters in $\acl(\bar{a})$. 
By o-minimality of $\clos{M}{i}$ there is an $<_i$-interval $I^i \subseteq \clos{M}{i}$, with extremities in $\dcl^{\clos{M}{i}}(\bar{a}) \cup \{\pm \infty\}$, such that $b \in I^i$ and $I^i \subseteq A_i$. 
By Remark \ref{definM}, $I^i \cap M$ is definable in $M$ by a quantifier-free $\Li$-formula. 

Define $I := \displaystyle{\bigcap_{i=1}^n}(I^i\cap M)$ and $S:= \displaystyle{\{x \in I: M \models \phi(x, \bar{a})\}}.$

\begin{claim} $S$ is multi-dense in $I$:
\begin{proof}
Let $J \subseteq I$ be a non-empty multi-interval; we need to show that $J \cap S \not = \emptyset$.
Let $z \in J$; since $z \in A_i$ for all $i \in \{1, \ldots, n\}$, there are $y^{(i)} \in (\clos{M}{i})^d$ such that each $q_i:= (z, y^{(i)})$ is a simple point of $V$.
By Fact $\ref{simpoint}$ we can find $q_0:= (z_0, y_0) \in V(M)$ such that $q_0$ is arbitrary $<_i$-close to $q_i$ for all $i\in \{1, \ldots, n\}$.
In particular we can find $z_0 \in J$.
Then we obtain that $\exists \bar{y} \; (z_0,\bar{y}) \in V(M)$, and then $M \models \phi(z_0, \bar{a})$. 
\end{proof}
\end{claim}

\end{proof}
\end{prop}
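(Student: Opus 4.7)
The plan is to first reduce the definable set $\phi(M,\bar{a})$ to (a projection of) an algebraic variety. By model completeness of $Th_{\LCR}(K)$ (Corollary \ref{PRCRmc}), there is a quantifier-free $\LCR(\bar{a})$-formula $\psi(x,\bar{y})$ with $M \models \forall x(\phi(x,\bar{a}) \leftrightarrow \exists \bar{y}\,\psi(x,\bar{y}))$. Since $u \neq v$ is itself existentially definable in $\LCR$, one may take $\psi$ to be positive, so it defines a Zariski closed set $W \subseteq \mathbb{A}^{d+1}$ over $\acl(\bar{a})$ (using Lemma \ref{PRCacl} to arrange the parameters in $\acl(\bar{a}) = \acl(\bar{a})^{alg} \cap M$). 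The hypothesis $M\models \phi(b,\bar{a})$ gives some $\bar{y_0}\in M^d$ with $(b,\bar{y_0})\in W(M)$, and by Lemma \ref{simple} with Remark \ref{simplerem} there is an absolutely irreducible variety $V$ defined over $\acl(\bar{a})$ with $(b,\bar{y_0})$ a simple point of $V$ and $V(M)\subseteq W(M)$.

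\textbf{Building the intervals $I^i$ via o-minimality.} For each $i \in \{1,\ldots,n\}$, I would set
\[ A_i := \{\,z\in\clos{M}{i} : \exists \bar{y}\in (\clos{M}{i})^d,\ (z,\bar{y}) \text{ is a simple point of } V\,\}. \]
This set is $\Li(\bar{a})$-definable in the o-minimal structure $\clos{M}{i}$ and contains $b$. Because $b\notin \acl(\bar{a}) = \acl(\bar{a})^{alg}\cap M$ (Lemma \ref{PRCacl}), $b$ is transcendental over $\bar{a}$ as a field element, and in particular $b \notin \dcl^{\clos{M}{i}}(\bar{a})$; so $b$ is not an endpoint of any constituent interval of $A_i$, and o-minimality yields an open interval $I^i\subseteq A_i$ around $b$ with endpoints in $\dcl^{\clos{M}{i}}(\bar{a})\cup\{\pm\infty\}$. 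Remark \ref{definM} then makes $I^i\cap M$ quantifier-free $\Li(\bar{a})$-definable in $M$. Setting $I := \bigcap_{i=1}^n (I^i\cap M)$, conditions (1), (3), (4) of the proposition are immediate.

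\textbf{Multi-density via the PRC simple-point property.} To prove (2), let $J = \bigcap_{i=1}^n (J^i\cap M) \subseteq I$ be an arbitrary sub-multi-interval. Pick any $z\in J$; since $z\in I^i\subseteq A_i$, there exists $y^{(i)}\in(\clos{M}{i})^d$ with $q_i := (z,y^{(i)})$ a simple point of $V$ in $(\clos{M}{i})^{d+1}$. Now choose $<_i$-open neighborhoods $U_i\ni q_i$ in $(\clos{M}{i})^{d+1}$ whose projections to the first coordinate are contained in $J^i$. Applying Fact \ref{simpoint} to $V$ and the family $(U_i)_i$ produces a point $q_0 = (z_0,y_0) \in V(M) \cap \bigcap_i U_i$; then $z_0\in J$ and $(z_0,y_0)\in V(M)\subseteq W(M)$, whence $M\models\phi(z_0,\bar{a})$, as required.

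\textbf{Main obstacle.} The engine of the proof is Fact \ref{simpoint}, the PRC simple-point approximation property, which simultaneously converts the real-closure simple points $q_i$ into an $M$-point arbitrarily close to each; this is exactly what turns ordinary $<_i$-density in each $I^i$ into genuine multi-density. The other delicate point is parameter control: one must arrange $V$ (and hence $A_i$) to be defined over $\acl(\bar{a})$ rather than merely over $M$, so that the o-minimal endpoints of $I^i$ land in $\dcl^{\clos{M}{i}}(\bar{a})$; this is handled by Remark \ref{simplerem} together with Lemma \ref{PRCacl}.
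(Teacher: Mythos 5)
Your proposal is correct and follows essentially the same route as the paper's proof: model completeness to reduce to a positive existential formula, Lemma \ref{simple} with Remark \ref{simplerem} to pass to an absolutely irreducible variety $V$ over $\acl(\bar{a})$ with $(b,\bar{y_0})$ simple, o-minimality of each $\clos{M}{i}$ to extract the intervals $I^i$, and Fact \ref{simpoint} to convert simple points in the real closures into $M$-points inside any sub-multi-interval, giving multi-density. The only differences are small explicit justifications (why $b$ is not an endpoint, and the choice of neighborhoods $U_i$ projecting into $J^i$) that the paper leaves implicit.
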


\begin{thm} \label{descomposition}
Let $(M, <_1, \ldots, <_n)$ be a model of $\PRCB$,  let $\phi(x, \bar{y})$ be an $\LC$-formula and let $\bar{a}$ be a tuple in $M$.
Then there are a finite set $A\subseteq \phi(M, \bar{a})$, $m \in \mathbb{N}$ and $I_1, \ldots, I_m$, with $I_j = \displaystyle{\bigcap_{i=1}^n (I^i_{j}\cap M)}$ a multi-interval such that:
\begin{enumerate}
\item $A \subseteq \acl(\bar{a})$,
\item $\phi(M, \bar{a}) \subseteq \displaystyle{\bigcup_{j=1}^m I_ j \cup A},$
\item$\{x\in I_j:M \models \phi(x, \bar{a})\}$ is multi-dense in $I_{j}$, for all  $1 \leq j \leq  m$,
\item $I^i_{j} \subseteq \clos{M}{i}$ has its extremities in $\dcl^{\clos{M}{i}}_{\Li} (\bar{a})\cup \{\pm \infty\}$, for all  $1 \leq j \leq  m$  and $1 \leq i \leq n$, 
\item the set $I^i_{j}\cap M$ is definable in $M$ by a quantifier-free $\Li(\bar{a})$-formula, for all $1 \leq j \leq  m$ and $1 \leq i \leq n$.

 \end{enumerate}
\end{thm}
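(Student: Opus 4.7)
The plan is to run the construction of Proposition \ref{thmdensite} uniformly in the starting point, using Lemma \ref{simple} to pin down a finite list of candidate absolutely irreducible varieties and o-minimality of each $\clos{M}{i}$ to extract finitely many candidate multi-intervals from each such variety.

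By Corollary \ref{PRCRmc}, $\phi(x, \bar{a})$ is equivalent in $M$ to an existential $\LCR(\bar{a})$-formula $\exists \bar{y}\,\psi(x, \bar{y})$ with $\psi$ quantifier-free and $\bar{y}$ of some length $d$; absorbing inequations via $z \ne 0 \leftrightarrow \exists t\,(zt = 1)$ we may take $\psi$ positive, so that it cuts out a Zariski closed set $W \subseteq \mathbb{A}^{1+d}$ defined over $\acl(\bar{a})$. Applying Lemma \ref{simple} together with Remark \ref{simplerem} to $W$ produces absolutely irreducible varieties $V_1, \ldots, V_k$ over $\acl(\bar{a})$ with $W(M) = \bigcup_{j=1}^k V_j^{sim}(M)$; hence $M \models \phi(b, \bar{a})$ iff $(b, \bar{y}_0) \in V_j^{sim}(M)$ for some $j$ and $\bar{y}_0 \in M^d$. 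For each $j$ and each $i$ set
\[A_{i,j} := \{x \in \clos{M}{i} : \exists \bar{y} \in (\clos{M}{i})^d,\ (x, \bar{y}) \in V_j^{sim}\},\]
an $\Li(\bar{a})$-definable subset of $\clos{M}{i}$. By o-minimality of $\clos{M}{i}$ it decomposes as $A_{i,j} = \bigsqcup_{l=1}^{N_{i,j}} I^i_{j,l} \sqcup F^i_j$ with each $I^i_{j,l}$ an open interval whose extremities lie in $\dcl^{\clos{M}{i}}_{\Li}(\bar{a}) \cup \{\pm\infty\}$ and $F^i_j \subseteq \dcl^{\clos{M}{i}}_{\Li}(\bar{a})$ finite; by Remark \ref{definM} each $I^i_{j,l} \cap M$ is then quantifier-free $\Li(\bar{a})$-definable.

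For each $j$ and each tuple $(l_1, \ldots, l_n)$, form the multi-interval $\bigcap_{i=1}^n (I^i_{j,l_i} \cap M)$, and let $I_1, \ldots, I_m$ be the complete (finite) list of multi-intervals so obtained as $j$ and the tuples vary. If $I_s$ arises from index $j$, multi-density of $\phi(M, \bar{a}) \cap I_s$ in $I_s$ follows exactly as in the density claim inside the proof of Proposition \ref{thmdensite}: for any multi-sub-interval $J \subseteq I_s$ and any $z \in J$, each $z \in A_{i,j}$ yields a simple point $q_i = (z, y^{(i)}) \in V_j(\clos{M}{i})$, and then Fact \ref{simpoint} produces a simple point $(z_0, y_0) \in V_j(M)$ with $z_0 \in J$, hence $z_0 \in J \cap \phi(M, \bar{a})$. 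Distributing the intersection over the decomposition $A_{i,j} = \bigsqcup_l I^i_{j,l} \sqcup F^i_j$ yields $\bigcap_i (A_{i,j} \cap M) = \bigcup_s I_s^{(j)} \cup \widetilde{F}_j$, where $\widetilde{F}_j$ collects those summands in which at least one factor comes from some $F^i_j$ and is therefore finite. Setting $A := \phi(M, \bar{a}) \cap \bigcup_j \widetilde{F}_j$ we obtain a finite set with $A \subseteq \bigcup_{i,j} \dcl^{\clos{M}{i}}_{\Li}(\bar{a}) \cap M \subseteq \acl(\bar{a})$, and
\[\phi(M, \bar{a}) \ \subseteq\ \bigcup_{j=1}^k \bigcap_{i=1}^n (A_{i,j} \cap M) \ \subseteq\ \bigcup_{s=1}^m I_s \cup A,\]
as required.

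The main obstacle is the coherence of the density argument across the different orderings: one may only intersect interval components $I^i_{j,l_i}$ arising from a common absolutely irreducible variety $V_j$, since otherwise the simple points $q_i$ live on unrelated varieties and Fact \ref{simpoint} cannot be invoked to produce a single simple $M$-point witnessing all of them simultaneously. Indexing the candidate multi-intervals by $j$, and only mixing the choices of $l_i$ within a fixed $j$, resolves this while keeping the family of multi-intervals finite.
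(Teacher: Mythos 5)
Your proposal is correct and follows essentially the same route as the paper: reduce via Corollary \ref{PRCRmc} and Lemma \ref{simple} to projections of simple points of finitely many absolutely irreducible varieties over $\acl(\bar{a})$, decompose each projection in each $\clos{M}{i}$ by o-minimality, take all intersections of interval components, and prove multi-density with Fact \ref{simpoint}. Your explicit insistence on only mixing interval components attached to a common variety $V_j$ is precisely what the paper's phrase ``working with each $W_j$ separately'' encodes, so the two arguments coincide up to bookkeeping.
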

\begin{proof}

As in Proposition \ref{thmdensite} using Corollary \ref{PRCRmc} and Lemma \ref{simple}, there are  $r \in \mathbb{N}$ and  absolutely irreducible varieties $W_1, \ldots, W_r$ defined over $\acl(\bar{a})$ such that:

\[M \models \forall x (\phi(x, \bar{a}) \leftrightarrow  \exists \bar{y} \; \;  (x, \bar{y}) \in \bigcup_ {j \in \{1, \ldots, r\}} W_j^{sim}(M) ).\]

Working with each $W_j$ separately, we can suppose that there is an absolutely irreducible variety  $W$ defined over $\acl(\bar{a})$ such that:
\[M \models \forall x (\phi(x, \bar{a}) \longleftrightarrow \exists \bar{y}(x, \bar{y}) \in W^{sim}(M)).\]

Let $d= |\bar{y}|$, for each $i\in \{1, \ldots,n\}$ define:
\[A_i := \{x \in \clos{M}{i}: \exists\bar{y} \in {(\clos{M}{i})}^d (x,\bar{y}) \; \mbox{is a simple point of} \; W \}.\]

By o-minimality of $\clos{M}{i}$ there are $r_i \in \mathbb{N}$, $I^i_{1}, \ldots, I^i_{r_i}$ $<_i$-open intervals in $\clos{M}{i}$ with extremities in $\dcl^{\clos{M}{i}}_{\Li}(\bar{a}) \cup \{\pm \infty\}$, and a finite set $C_i$ such that $A_i =\displaystyle{\bigcup_{j=1}^{r_i}I^i_{j}}\cup C_i$ and $C_i \subseteq{\acl(\bar{a})} $. By Remark \ref{definM}, the set $I^{i}_j \cap M$, with $1 \leq i \leq n$, $1 \leq j \leq r_i$, is definable in $M$ by a quantifier-free $\Li(\bar{a})$-formula.

By the proof of Proposition \ref{thmdensite}, $\phi(M, \bar{a}) \subseteq \displaystyle{\bigcup_{\sigma \in J} \bigcap_{i=1}^n (I^i_{\sigma(i)}\cap M) \cup \bigcup_{i=1}^n (C_i\cap M)}$, where $J = \{\sigma:\{1, \ldots, n\} \rightarrow \{1, \ldots, \max\{r_1 ,\ldots, r_n\}\}, \sigma(i)\leq r_i\}$.

For each $\sigma \in J$, define $I_{\sigma}:= \displaystyle {\bigcap_{i=1}^n (I^i_{\sigma(i)}\cap M)}$ and $S_{\sigma}:=  \{x \in  I_{\sigma}:M \models \phi(x,\bar{a})\}$.
\begin{claim}
For all $\sigma \in J$, $S_{\sigma}$ is multi-dense in $I_{\sigma}:$
\begin{proof}
Fix $\sigma \in J$. Let $U_{\sigma}$ be a multi-interval such that $U_{\sigma} \subseteq I_{\sigma}$, we need to show that $U_{\sigma} \cap S_{\sigma} \not = \emptyset.$
Let $z \in U_{\sigma}$. Then $z \in A_i$ for all $i \in \{1, \ldots, n\}$. 
So there is $y^{(i)} \in (\clos{M}{i})^{d}$, such that $q_i:=(z, y^{(i)})$ is a simple point of $W$. 
By Fact $\ref{simpoint}$ we can find $q_0:= (z_0, \bar{y_0}) \in W(M)$ such that $q_0$ is arbitrary $<_i$-close to $q_i$ for all $i\in \{1, \ldots, n\}$, in particular we can find $z_0 \in U_{\sigma}$  which satisfy $\phi$.
\end{proof}
\end{claim}
\end{proof}

\end{rem}

\subsubsection{Density theorem for several variable definable sets.} \label{SDTPv}

The proof of Proposition \ref{thmdensite} and Theorem \ref{descomposition2} can be easily generalized to several variables.
We assume the reader is familiar with the concept of cells and cell decomposition in o-minimal theories. 
See chapter 3 of \cite{Van2} for more details.

\begin{defn}
If $(M,<)$ is an ordered field, and $r \in \mathbb{N}$, then a \emph{box in $M^r$} is a set of the form $I_1 \times\ldots \times I_r$, where $I_j$ is an non-empty $<$-open interval, for all  $j\in \{1, \ldots, r\}$.
\end{defn}

\begin{rem}\label{ApThC}
Let $F$ be a field and $\tau_1, \ldots, \tau_n$ distinct topologies on $F$ induced by orders or valuations. Let $r \in \mathbb{N}$.
For each $i \in \{1, \ldots,n\}$, let $U^i$ be a non-empty $\tau_i$-open set in $F^r$ (endowed with the product topology). 
Then $\displaystyle \bigcap_{i=1}^n U^i \not = \emptyset.$
\begin{proof}
Let $i \in \{1, \ldots,n\}$. 
As $U^i$ is $\tau_i$-open in $F^r$ there exist $I^i_1, \ldots, I^i_r$ non-empty $\tau_i$-open subsets of $F$ such that $I^i_1 \times \ldots \times I^i_r \subseteq U^i$.
By the Approximation Theorem (\ref{ApTh}) for all $t \in \{1, \ldots,r\}$, $\displaystyle{\bigcap_{i=1}^nI_t^i} \not = \emptyset$.
If $V_t= \displaystyle{\bigcap_{i=1}^nI_t^i}$, then $\emptyset \not =V_1 \times \ldots \times V_r \subseteq \displaystyle \bigcap_{i=1}^n U^i.$  
\end{proof}
\end{rem}

\begin{defn}\label{defmulticell}
Let $(M, <_1, \ldots, <_n)$ be a model of $\PRCB$ and let $r \in \mathbb{N}$.
\begin{enumerate}
 \item A subset of $M^r$ of the form $C= \displaystyle{\bigcap_{i=1}^n (C^i\cap M^r)}$ with $C^i$ a non-empty $<_i$-open cell in $(\clos{M}{i})^r$ is called a \emph{multi-cell in $M^r$}. Observe that by Remark \ref{ApThC} every multi-cell is not empty.
 \item If $C= \displaystyle{\bigcap_{i=1}^n (C^i\cap M^r)}$ is a multi-cell in $M^r$, then $C$ is called a \emph{multi-box in $M^r$} if $C^i$ is a $<_i$-box in $(\clos{M}{i})^r$, for all $i \in \{1, \ldots,n\}$.
 \item A definable subset $S$ of a multi-cell $C = \displaystyle{\bigcap_{i=1}^n (C^i\cap M^r)}$ in $M^r$ is called \emph{multi-dense} in $C$ if for any multi-box $J \subseteq C$ in $M^r$, $J \cap S \not = \emptyset.$
Note that multi-density in $C$ implies $<_i$- density in $C^i$, for all $i \in \{1. \ldots,n\}$.
\end{enumerate}
\end{defn}

\begin{thm}\label{descomposition2} \label{thmdensite2}
Let $(M, <_1, \ldots, <_n)$ be a  model of  $\PRCB$ and let $r \in \mathbb{N}$.
Let $\phi(x_1, \ldots, x_r, \bar{a})$ be an $\LC$-formula.
Then there are a set $V$, $m \in \mathbb{N}$, and $C_1, \ldots, C_m$ with $C_j= \displaystyle{\bigcap_{i=1}^n (C^i_j\cap M^r)}$ a multi-cell in $M^r$ such that:

\begin{enumerate}
\item $\phi(M, \bar{a}) \subseteq \displaystyle{\bigcup_{j=1}^m C_ j \cup V }$,
\item the set $V$ is contained in some proper Zariski closed subset of $M^r$ which is definable over $\acl(\bar{a})$,
\item $\{(x_1, \ldots,x_r) \in C_j:M \models \phi(x_1, \ldots,x_r, \bar{a})\}$ is multi-dense in $C_j$, for all $1 \leq j \leq m$,
\item the set $C^i_j\cap M^r$ is definable in $M$ by a quantifier-free $\Li(\bar{a})$-formula, for all $1 \leq j \leq m$, $1 \leq i \leq n$.
 \end{enumerate}
 \begin{proof}
  As in the proof of Theorem \ref{descomposition} we can suppose that there is an absolutely irreducible variety $W$ defined over $\acl(\bar{a})$ such that:
\[M \models \forall (x_1, \ldots,x_r) (\phi(x_1, \ldots,x_r, \bar{a}) \longleftrightarrow \exists \bar{y}(x_1, \ldots,x_r, \bar{y}) \in W^{sim}(M)).\]

Let $d = |\bar{y}|$, for each $i \in \{1, \ldots,n\}$ we define 
\[A_i := \{(x_1, \ldots, x_r) \in (\clos{M}{i})^r: \exists(\bar{y}) \in {(\clos{M}{i})}^d (x_1, \ldots, x_{r}, \bar{y}) \; \mbox{is a simple point of} \; W \}.\]

Observe that $A_i$ is $\Li(\bar{a})$-definable. 
By cell decomposition in $\clos{M}{i}$, there are $r_i\in \mathbb{N}$, pairwise disjoint $<_i$-open cells $C^i_1, \ldots, C^i_{r_i}$, and proper Zariski closed subsets $V^i$ of $(\clos{M}{i})^r$ such that:
\begin{enumerate}
 \item $A_i =\displaystyle{\bigcup_{j=1}^{r_i}C^i_j}\cup V^i$,
\item $ \displaystyle{\bigcup_{j=1}^{r_i}C^i_j} \cap V^i = \emptyset$,
\item the sets $V^i,C^i_1, \ldots C^i_{r_i}$ are quantifier-free $\Li(\bar{a})$-definable.
 \end{enumerate}

Let $V:=\displaystyle{\bigcup_{i=1}^n (V^i\cap M^r)}$. Then $V$ is $\LC(\bar{a})$-definable in $M$ and $\dim(V)<r$.

Let $J = \{\sigma:\{1, \ldots, n\} \rightarrow \{1, \ldots, \max\{r_1 ,\ldots, r_n\}\}, \sigma(i)\leq r_i\}$. 
For each $\sigma \in J$ define $C_{\sigma}:= \displaystyle{\bigcap_{i=1}^n (C^i_{\sigma(i)}\cap M^r)}$. 
Then $\phi(M, \bar{a}) \subseteq \displaystyle{\bigcup_{\sigma \in J}C_{\sigma} \cup V}$.

Exactly as the proof in Theorem \ref{descomposition} we have that:

$\{(x_1, \ldots,x_r) \in C_{\sigma}: M \models \phi(x_1, \ldots,x_r,\bar{a}) \}$ is multi-dense in $C_{\sigma}$ for all $\sigma \in J$.
 \end{proof}
\end{thm}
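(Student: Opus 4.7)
The plan is to adapt the proof of Theorem \ref{descomposition} to the $r$-variable setting, replacing o-minimal interval decomposition by o-minimal cell decomposition in each $\clos{M}{i}$. First, by model completeness of $Th_{\LCR}(K)$ (Corollary \ref{PRCRmc}), I rewrite $\phi(\bar{x},\bar{a})$ as an existential $\LCR(\bar a)$-formula; since $\neq$ is existentially definable, its positive part defines an algebraic set $W'$ over $\acl(\bar a)$, so $M\models \phi(\bar b,\bar a)$ iff some $\bar y_0\in M^d$ makes $(\bar b,\bar y_0)\in W'(M)$. By Lemma \ref{simple} and Remark \ref{simplerem} one may, after a finite union, pass from $W'$ to a single absolutely irreducible variety $W$ defined over $\acl(\bar a)$ such that $M\models \phi(\bar x,\bar a) \leftrightarrow \exists \bar y\,(\bar x,\bar y)\in W^{sim}(M)$.

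Next, for each $i\in\{1,\ldots,n\}$ I define
\[ A_i := \{\bar x\in (\clos{M}{i})^r : \exists \bar y\in(\clos{M}{i})^d\text{ with }(\bar x,\bar y)\text{ a simple point of }W\}, \]
which is $\Li(\bar a)$-definable in the o-minimal structure $\clos{M}{i}$. By cell decomposition in $\clos{M}{i}$, write
\[ A_i \;=\; \bigcup_{j=1}^{r_i} C^i_j \;\cup\; V^i, \]
where the $C^i_j$ are pairwise disjoint $<_i$-open (hence full-dimensional) cells, $V^i$ is a union of lower-dimensional cells, and each of $C^i_j$ and $V^i$ is quantifier-free $\Li(\bar a)$-definable in $\clos{M}{i}$. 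The analogue of Remark \ref{definM}, applied coordinatewise, ensures that $C^i_j\cap M^r$ and $V^i\cap M^r$ are quantifier-free $\Li(\bar a)$-definable in $M$. Moreover $V^i$ lies in a proper Zariski-closed subset of $(\clos{M}{i})^r$ defined over $\acl(\bar a)$: indeed the generic fibre of the projection $W\to \mathbb{A}^r$ consists of simple points wherever a certain Jacobian does not vanish, so $V^i$ is contained in the $\acl(\bar a)$-definable Zariski closed set $Z\subseteq \mathbb{A}^r$ cut out by that Jacobian condition, and we may replace $V^i$ by $Z(\clos{M}{i})$. Set $V:=\bigcup_{i=1}^n V^i\cap M^r$; then $V$ sits inside a proper Zariski-closed $\acl(\bar a)$-subset of $M^r$.

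Then, exactly as in Theorem \ref{descomposition}, let $J$ be the set of functions $\sigma:\{1,\ldots,n\}\to \mathbb{N}$ with $\sigma(i)\leq r_i$, and for each $\sigma\in J$ set
\[ C_\sigma := \bigcap_{i=1}^n (C^i_{\sigma(i)}\cap M^r). \]
If $\bar b\in \phi(M^r,\bar a)\setminus V$, then for each $i$ there is $\bar y^{(i)}\in (\clos{M}{i})^d$ with $(\bar b,\bar y^{(i)})\in W^{sim}$, so $\bar b\in A_i\setminus V^i$, hence $\bar b\in C^i_{\sigma(i)}$ for some $\sigma(i)$; this yields $\phi(M^r,\bar a)\subseteq \bigcup_{\sigma\in J}C_\sigma\cup V$, giving (1), (2), and (4). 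For multi-density (3), fix $\sigma\in J$ and a multi-box $U\subseteq C_\sigma$, pick $\bar z\in U$, and witness $\bar z\in A_i$ by simple points $q_i=(\bar z,\bar y^{(i)})$ of $W$ in $\clos{M}{i}$. Choosing $<_i$-open neighbourhoods of $q_i$ in $W$ whose $\bar x$-projections lie in the $<_i$-open box cut out of $U$, Fact \ref{simpoint} produces an $M$-rational simple point $(\bar z_0,\bar y_0)\in W(M)$ with $\bar z_0\in U$, and such a $\bar z_0$ satisfies $\phi(\cdot,\bar a)$.

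The main obstacle I anticipate is the clean identification of the exceptional set: one must check that the lower-dimensional cells arising from cell decomposition of $A_i$ really do fit inside a single proper $\acl(\bar a)$-definable Zariski-closed subset of $\mathbb{A}^r$, independent of $i$, so that condition (2) holds as stated. This is where the irreducibility of $W$ and the description of $A_i$ as the image of $W^{sim}$ under a projection are crucial; everything else is a direct transcription of the one-variable argument, with the Approximation Theorem for products (Remark \ref{ApThC}) replacing its one-variable counterpart to guarantee non-emptiness of multi-boxes.
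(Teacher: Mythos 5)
Your proposal is correct and follows essentially the same route as the paper's proof: reduce via model completeness and Lemma \ref{simple} to a single absolutely irreducible $W$, project its simple locus into each $\clos{M}{i}$, apply cell decomposition there, collect the low-dimensional parts into $V$, and prove multi-density of each $C_\sigma$ using Fact \ref{simpoint}. The obstacle you flag about the exceptional set is not a real one: each $V^i$ is a semialgebraic set of dimension less than $r$ definable over $\acl(\bar a)$, so it lies in its Zariski closure, a proper $\acl(\bar a)$-definable Zariski closed set, and the finite union over $i$ of these closures is still proper since $\mathbb{A}^r$ is irreducible.
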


\begin{lem}\label{lemqftdense}
Let $(M, <_1, \ldots, <_n)$ be a  model of  $\PRCB$.
Let $A \subseteq M$ and let $\bar{a}$ be a tuple of $M$ such that $trdeg(A(\bar{a})/A)= |\bar{a}|$.
For all $i \in \{1, \ldots,n\}$, let $\bar{b}_i \in M^{|\bar{a}|}$ be such that $\qftp_{\Li}(\bar{b_i}/A)= \qftp_{\Li}(\bar{a}/A)$, and let $U^i$ be a non-empty $<_i$-open set in $(\clos{M}{i})^{|\bar{a}|}$ such that $\bar{b_i} \in U^i$.
Then the type $p(\bar{x}):= \{\bar{x} \in \displaystyle{\bigcap_{i=1}^n U^i}\} \cup \tp_{\LC}(\bar{a}/A) \cup \{\bar{x} \not = \bar{a}\}$ is consistent. 
\begin{proof}
Let $U:= \displaystyle{\bigcap_{i=1}^n U^i}$.
By compactness it is enough to show that if $\psi(\bar{x}) \in \tp_{\LC}(\bar{a}/A)$, then $M \models \exists \bar{x}(\bar{x} \in U \wedge \psi(\bar{x})\wedge \bar{x} \not= \bar{a})$.
%By Proposition \ref{thmdensite2}  
As $trdeg(A(\bar{a})/A)= |\bar{a}|$, by Proposition \ref{thmdensite2} there exists a multi-cell $C:= \displaystyle{\bigcap_{i=1}^n (C^i \cap M^{|\bar{a}|})}$ in $M^{|\bar{a}|}$ such that:
\begin{enumerate}
 \item $\bar{a} \in C$,
 \item $\{\bar{x} \in C: M \models \psi(\bar{x})\}$ is multi-dense in $C$,
 \item $C^i \cap M^{|\bar{a}|}$ is definable in $M$ by a quantifier free $\Li(A)$-formula, for all $i \in \{1, \ldots, n\}$.
\end{enumerate}

Let $i \in \{1, \ldots,n\}$. Since $\bar{a} \in C^i  \cap M^{|\bar{a}|}$, $C^i  \cap M^{|\bar{a}|}$ is quantifier free $\Li(A)$-definable and $\qftp_{\Li}(\bar{b_i}/A)= \qftp_{\Li}(\bar{a}/A)$, we deduce that $\bar{b_i} \in C^i \cap M^{|\bar{a}|}$.
Let $V^i:=  U^i \cap C^i \cap  M^{|\bar{a}|}$. Observe that $\bar{b_i} \in V^i$ and that $V^i$ is an $<_i$-open set in $M^{|\bar{a}|}$. Let $V:= \displaystyle{\bigcap_{i=1}^n{V^i}}$, by Remark \ref{ApThC} $V \not = \emptyset$. Since $V \subseteq C$ and every $V^i$ is $<_i$-open in $M^{|\bar{a}|}$, by multi-density of $\psi(\bar{x})$ in $C$ there exists $\bar{c} \in V$, $\bar{c} \not = \bar{a}$ such that $M \models \psi(\bar{c})$. Then $M \models \bar{c} \in U \wedge \psi({\bar{c}}) \wedge \bar{c} \not = \bar{a}$, since $V \subseteq U$.
\end{proof}
\end{lem}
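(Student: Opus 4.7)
The statement asks for consistency of $p(\bar{x})$. By compactness and closure of $\tp_{\LC}(\bar{a}/A)$ under conjunction, it suffices to show that for every $\psi(\bar{x}) \in \tp_{\LC}(\bar{a}/A)$ there is $\bar{c} \in U := \bigcap_{i=1}^n U^i$ with $\bar{c}\neq\bar{a}$ and $M\models\psi(\bar{c})$. My plan is to apply the density theorem (Theorem \ref{descomposition2}) to $\psi$, obtain a multi-cell around $\bar{a}$ on which $\psi$ is multi-dense, and then exploit the common quantifier-free $\Li$-type of $\bar{a}$ and $\bar{b}_i$ to transfer cell-membership from $\bar{a}$ to each $\bar{b}_i$.

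\medskip

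\noindent\textbf{Main steps.} Fix $\psi(\bar{x}) \in \tp_{\LC}(\bar{a}/A)$ and apply Theorem \ref{descomposition2} with parameters in $A$. The exceptional set $V$ there lies in a proper Zariski closed subset defined over $\acl(A)$; since $\mathrm{trdeg}(A(\bar{a})/A)=|\bar{a}|$, the tuple $\bar{a}$ avoids $V$ and must therefore lie in one of the multi-cells $C=\bigcap_{i=1}^n(C^i\cap M^{|\bar{a}|})$ on which $\psi$ is multi-dense, with each $C^i\cap M^{|\bar{a}|}$ quantifier-free $\Li(A)$-definable. Because $\qftp_{\Li}(\bar{b}_i/A)=\qftp_{\Li}(\bar{a}/A)$ and $\bar{a}\in C^i\cap M^{|\bar{a}|}$, we immediately deduce $\bar{b}_i\in C^i\cap M^{|\bar{a}|}$. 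Setting $V^i:=U^i\cap C^i\cap M^{|\bar{a}|}$, each $V^i$ is a non-empty $<_i$-open subset of $M^{|\bar{a}|}$ containing $\bar{b}_i$. Shrinking each $V^i$ to a $<_i$-box $B^i$ around $\bar{b}_i$ and applying Remark \ref{ApThC} to the multi-box $B:=\bigcap_{i=1}^n(B^i\cap M^{|\bar{a}|})\subseteq C$, we obtain a non-empty multi-box contained in $U\cap C$. Multi-density of $\psi$ in $C$ then yields $\bar{c}\in B$ with $M\models\psi(\bar{c})$, and a further small shrinking of the $B^i$ (they are Hausdorff-open in each topology) lets us arrange $\bar{c}\neq\bar{a}$.

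\medskip

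\noindent\textbf{Main obstacle.} The crux is clause (4) of Theorem \ref{descomposition2}: the o-minimal cells produced by cell decomposition in each $\clos{M}{i}$ must pull back to \emph{quantifier-free} $\Li(A)$-definable subsets of $M^{|\bar{a}|}$, for only then does the shared quantifier-free $\Li$-type of $\bar{a}$ and $\bar{b}_i$ suffice to transfer membership. Without this quantifier-free descent (itself resting on Remark \ref{definM} and quantifier elimination in RCF) the argument would collapse, since the $\bar{b}_i$ are only assumed to share the \emph{quantifier-free} $\Li$-type of $\bar{a}$ and not its full $\LC$-type. Everything else reduces to routine bookkeeping with the approximation theorem and multi-density.
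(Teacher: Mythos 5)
Your proposal is correct and follows essentially the same route as the paper's own proof: compactness reduction to a single $\psi$, the several-variable density theorem to get a quantifier-free $\Li(A)$-definable multi-cell around $\bar{a}$ on which $\psi$ is multi-dense, transfer of cell-membership to each $\bar{b}_i$ via the shared quantifier-free $\Li$-type, and the approximation theorem plus multi-density to produce $\bar{c}\in U$ with $\psi(\bar{c})$ and $\bar{c}\neq\bar{a}$. Your explicit shrinking to a multi-box (and the remark that $\bar{a}$ avoids the Zariski-closed exceptional set by the transcendence-degree hypothesis) only makes precise steps the paper leaves implicit.
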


\subsection{Amalgamation theorems for PRC bounded fields} \label{AmalgamationTheorems}

\begin{lem}\label{Calg}
Let $F_1$ and $F_2$ be regular extensions of a field $k$. Let $H\leq \G(F_1F_2)$ be a closed subgroup, let $\pi:\G(F_1F_2)\to \G(F_1)$ be the restriction map, and suppose that $\pi|_{H}$ is an isomorphism.
Let $\rho\in \G(F_1F_2)$ be such that $\rho$ fixes $F_1^{alg}\cap Fix(H)$ and assume that for every $\sigma\in H$, $\pi(\sigma)=\pi(\rho^{-1}\sigma\rho)$.
Then there is $\tilde{\rho}\in \G(F_1^{alg}F_2)$ such that $\tilde \rho^{-1}\sigma\tilde {\rho}=\rho^{-1}\sigma\rho$ for all $\sigma\in H$.
\begin{proof}
The condition $\pi(\sigma)=\pi(\rho^{-1}\sigma\rho)$ for all $\sigma\in H$ implies that $\pi(\rho)$ centralizes $\pi(H)$.
We have that $Fix(\pi(H))=Fix(H)\cap F_1^{alg}$ and that $Fix(\rho) \cap F_1^{alg}= Fix(\pi(\rho))$. By hypothesis $Fix(H)\cap F_1^{alg} \subseteq Fix(\rho) \cap{F_1^{alg}}$. So $\pi(\rho) \in \pi(H)$. 
Let $\tau\in H$ be such that $\pi(\rho)=\pi(\tau)$, and consider $\tilde \rho=\tau^{-1}\rho$. Since $\pi(\tau)$
centralizes $\pi(H)$ and $\pi$ is an isomorphism, $\tau\sigma\tau^{-1}= \sigma$ for all $\sigma \in H$, and therefore ${\tilde\rho}^{-1}\sigma\tilde{\rho} = \rho^{-1}\sigma\rho$, for all $\sigma \in H$ and clearly $\pi(\tilde{\rho})= 1.$
\end{proof}
\end{lem}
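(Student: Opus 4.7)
The plan is to construct $\tilde\rho$ in the form $\tau^{-1}\rho$, where $\tau\in H$ is chosen so that $\tau$ centralizes $H$ and $\pi(\tau)=\pi(\rho)$. If such a $\tau$ can be found, then $\tilde\rho:=\tau^{-1}\rho$ satisfies $\pi(\tilde\rho)=1$, so $\tilde\rho$ fixes $F_1^{alg}$; and since $\tilde\rho$ also fixes $F_2$ (being a product of elements of $\G(F_1F_2)$), it lies in $\G(F_1^{alg}F_2)$. Moreover, for every $\sigma\in H$,
\[
\tilde\rho^{-1}\sigma\tilde\rho \;=\; \rho^{-1}(\tau\sigma\tau^{-1})\rho \;=\; \rho^{-1}\sigma\rho,
\]
as required. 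So everything reduces to producing the right $\tau$.

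To produce such a $\tau$, I would first observe that the hypothesis $\pi(\sigma)=\pi(\rho^{-1}\sigma\rho)=\pi(\rho)^{-1}\pi(\sigma)\pi(\rho)$ for all $\sigma\in H$ says precisely that $\pi(\rho)$ centralizes $\pi(H)$ in $\G(F_1)$. Because $\pi|_H$ is an isomorphism, any preimage of $\pi(\rho)$ lying inside $H$ will automatically centralize $H$. Hence the only real task is to show that $\pi(\rho)\in \pi(H)$; once this is done, one simply picks $\tau\in H$ with $\pi(\tau)=\pi(\rho)$ and forms $\tilde\rho=\tau^{-1}\rho$.

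The main technical point, and the step I expect to carry the actual content, is therefore the Galois-theoretic claim that $\pi(\rho)\in \pi(H)$. Since $\pi$ is restriction to $F_1^{alg}$, the fixed field of $\pi(H)$ inside $F_1^{alg}$ is exactly $F_1^{alg}\cap Fix(H)$, and the hypothesis on $\rho$ says that $\pi(\rho)$ fixes this field. Because $\pi|_H$ is an isomorphism of profinite groups, it is a homeomorphism onto its image, so $\pi(H)$ is closed in $\G(F_1)$; the Galois correspondence then forces $\pi(\rho)\in \pi(H)$. Combined with the centralizer argument above, this yields the desired $\tilde\rho$.
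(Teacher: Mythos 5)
Your proposal is correct and follows essentially the same route as the paper: show $\pi(\rho)$ centralizes $\pi(H)$, use the hypothesis that $\rho$ fixes $F_1^{alg}\cap Fix(H)=Fix(\pi(H))$ together with closedness of $\pi(H)$ and the Galois correspondence to conclude $\pi(\rho)\in\pi(H)$, then take $\tau\in H$ with $\pi(\tau)=\pi(\rho)$ and set $\tilde\rho=\tau^{-1}\rho$. The centralizing and conjugation computations you give are exactly those in the paper's proof.
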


\begin{prop}\label{lemamalgamation}
 Let $(M, <_1, \ldots, <_n)$ be a model of $\PRCB$ and $E = \acl(E) \subseteq M$. Let $a_1, a_2, c_1,c_2$ be tuples of $M$ such that $E(a_1)^{alg}\cap E(a_2)^{alg}=E^{alg}$ and $\tp_{\LC}(c_1/E)=\tp_{\LC}(c_2/E)$. 
 Assume that there is $c$ realizing $\qftp_{\LC}(c_1/E(a_1)) \cup \qftp_{\LC}(c_2/E(a_2)) \cup \tp_{\LC}(c_1/E)$, such that $c$ is $ACF$-independent from $\{a_1,a_2\}$ over $E$.
Then $\tp_{\LC}(c_1/E(a_1)) \cup \tp_{\LC}(c/E(a_2)) \cup \qftp_{\LC}(c/E(a_1, a_2))$ is consistent. 
\begin{proof}

Take $c$ in some elementary extension $N$ of $M$ such that $c$ is $ACF$-independent from $\{a_1,a_2\}$ over $E$ and realizes $\qftp^N_{\LC}(c_1/E(a_1)) \cup \qftp^N_{\LC}(c_2/E(a_2)) \cup \tp^N_{\LC}(c_1/E)$.
For all $1 \leq i \leq n$, fix a real closure $\clos{N}{i}$ of $N$ for $<_i$. If $A \subset \clos{N}{i}$, then we set $\clos{A}{i}= A^{alg} \cap N^{(i)}$.

As $\qftp_{\LC}(c/E(a_1))= \qftp_{\LC}(c_1/E(a_1))$ there exists an $\LC$-isomorphism  
$\Phi:E(a_1,c_1) \rightarrow E(a_1, c)$, which fixes $E(a_1)$ and sends $c_1$ to $c$. 

We denote $C:= \acl(E(c))$, $A_1:=\acl(E(a_1))$, $A_2:=\acl(E(a_2))$ and $C_1:=\acl(E(c_1))$.  

\begin{claim1}
We can extend $\Phi$ to an $\LCR(A_1^{alg})$-isomorphism $\tilde{\Phi}: (A_1C_1)^{alg}\rightarrow (A_1C)^{alg}$ such that $\tilde{\Phi}|_{A_1C_1}$ is an $\LC$-isomorphism and $\tilde{\Phi}(C_1)=C$. 
\end{claim1}
\begin{proof} 
In $\clos{N}{i}$ $\acl_{\Li}=\dcl_{\Li}$, so we can extend $\Phi$ uniquely to an $\Li$-isomorphism $\Phi^{(i)}: \clos{E(a_1,c_1)}{i} \rightarrow \clos{E(a_1,c)}{i}$.
Note that for all $1 \leq i \leq n$, $\Phi^{(i)}$ is the identity on $\clos{A_1}{i}$.

Since $\tp_{\LC}(c_1/E)=\tp_{\LC}(c/E)$, there is an elementary $\LC$-isomorphism $\xi: C_1 \rightarrow C$ which fixes $E$ and sends $c_1$ to $c$. As above we can extend $\xi$ to an $\Li$-isomorphism $\xi^{(i)}: \clos{C_1}{i} \rightarrow \clos{C}{i}$.

As in $\clos{N}{i}$ $\acl_{\Li}=\dcl_{\Li}$, we get that $\Phi^{(i)}|_{\clos{C_1}{i}}=\xi^{(i)}$ for all $1 \leq i \leq n$. This implies that for all $1 \leq i \leq n$, $\Phi^{(i)}|_{C_1} = \xi$, it follows that $\Phi^{(i)}|_{A_1C_1} = \Phi^{(j)}|_{A_1C_1}$ for all $ i,j \in \{1, \ldots, n\}$.
Therefore $\Phi^{(i)}|_{A_1C_1}: A_1C_1 \rightarrow A_1C$ is an $\LC$-isomorphism and $\Phi^{(i)}(C_1)=C$.

Since $C_1/E$ and $C/E$ are regular extensions and $A_1$ is ACF-independent from $C_1$ and from $C$ over $E$, we have that $C_1A_1$ and $CA_1$ are linearly disjoint from $A_1^{alg}$ over $A_1$. So we can extend $\Phi^{(i)}|_{A_1C_1}$ to an $A_1^{alg}$-isomorphism $\tilde{\Phi}: (A_1C_1)^{alg} \rightarrow (A_1C)^{alg}$.
\end{proof}

Let $D:= \tilde{\Phi}(\acl(A_1C_1))\subseteq (A_1C)^{alg}$. 
The map $\Psi: \G(\acl(A_1C_1))\rightarrow \G(D)$ defined by $\Psi(\tau)= \tilde\Phi\tau\tilde\Phi^{-1}$, is an isomorphism inducing the identity on $\G(A_1)$.

Let  $\pi: \G(\acl(A_1C_1))\rightarrow \G(E)$ the restriction map.
As $\G(D) \cong \G(\acl(A_1C_1)) \cong \G(E)$, there exists $\theta: \G(E)\rightarrow \G(D)$ such that $\Psi = \theta \circ \pi$.

Let $K:= A_1C$, $F:= \acl(A_1A_2)\acl(A_2C)$, $\widetilde{K}:= A_1^{alg}C$, $\widetilde{F}:=(A_1A_2)^{alg}(A_2C)^{alg}$ and $\widetilde{D}:= (A_1C)^{alg}$. 

Define $S := \{(\sigma, \theta(\sigma|_{{E}^{alg}})): \sigma \in \Gal(\widetilde{F}/F)\}$.

\begin{claim2} $S$ is isomorphic to a subgroup $S'$ of $\Gal(\widetilde{F}\widetilde{D}/FD)$ which projects onto $\Gal(\widetilde{F}/F)$ and onto $\G(D)$.

\begin{proof}
By Lemma 2.5 (2) of \cite{Cha} $(A_1A_2)^{alg}(A_2C)^{alg}\cap (A_1C)^{alg}= (C^{alg}(A_1^{alg}\cap A_2^{alg}))A_1^{alg} = A_1^{alg}C^{alg}$.
Since $E^{alg}M = M^{alg}$ it follows that $E^{alg}C= C^{alg}$.
Therefore $\widetilde{F} \cap \widetilde{D} = A_1^{alg}C= \widetilde{K}$. As $\widetilde{D}/\widetilde{K}$ is a Galois extension, $\widetilde{F}$ and $\widetilde{D}$ are linearly disjoint over $\widetilde{K}$.

Since $F/A_1$ is a regular extension, $F$ is linearly disjoint from $A_1^{alg}$ over $A_1$. As $A_1 \subseteq K \subseteq F$, $F$ is linearly disjoint from $KA_1^{alg}$ over $K$. 
Since $KA_1^{alg}= A_1^{alg}C=\widetilde{K}$, $F$ is linearly disjoint from $\widetilde{K}$ over $K$.
Similarly, as $D/A_1$ is a regular extension and $A_1 \subseteq K \subseteq D$, we get that $D$ is linearly disjoint from $\widetilde{K}$ over $K$.

By Lemma 2.6 of [8] applied to $(K,F,D,\widetilde{K},\widetilde{F},\widetilde{D})$ in place of $(K,L,M,K_1,L_1,M_1)$ we obtain that:
$\Gal(\widetilde{F}\widetilde{D}/FD)\simeq \{(\sigma, \tau)\in \Gal(\widetilde{F}/F) \times \Gal(\widetilde{D}/D): \sigma|_{\widetilde{K}}= \tau|_{\widetilde{K}} \}$.

Let $\sigma \in \Gal(\widetilde{F}/F)$.
Since $\widetilde\Phi(x)=x$ for all $x \in A_1^{alg}$, and $\sigma$ and $\theta(\sigma|_{E^{alg}})$ are the identity on $C$, we deduce that $\sigma$ and $\theta(\sigma|_{E^{alg}})$ agree on $K_1$.
Then $S$ is isomorphic to a subgroup $S'$ of $\Gal(\widetilde{F}\widetilde{D}/FD)$.

\end{proof}

\end{claim2}

By claim $2$, $L= Fix(S')$ is a regular extension of $D$ and $F$.
As $N/\acl(A_1A_2)$ and $N/\acl(A_2C)$ are regular extensions and contain $F$, it follows that $L / \acl(A_1A_2)$ and $L/\acl(A_2C)$  are regular extensions.

\begin{claim3}
 Each of the $n$ orders on $F$ extends to an order on $L$.
\begin{proof}
 Fix $1 \leq i \leq n$ and let $H:= \G(\clos{N}{i}) \cong \G(\clos{F}{i})$.
It suffices to show that $L$ it is contained in $Fix(\widetilde{H})$ with $\widetilde{H}$ a conjugate of $H$.
Let $\pi_1$ and $\pi_2$ be the restriction maps of $\G(N)$ to $\G(A_1C_1)$ and $\G(A_1C)$ respectively.

Denote by $H_1:= \{\pi_1(\sigma): \sigma \in H\}$ and $H_2:= \{\pi_2(\sigma): \sigma \in H\}$.
Since $\widetilde{\Phi}|_{A_1C_1}$ is an $\LC(A_1)$-isomorphism and $\widetilde{\Phi}(C_1)=C$, it follows that $\Psi(H_1)$ and $H_2$ are conjugate in $\G(A_1C)$. In fact, $\widetilde\Phi(\clos{A}{i}_1 C_1) = \clos{A}{i}_1 C$, since $\widetilde{\Phi}|_{A_1^{alg}}= id$. 
So we can find $\rho \in \G(\clos{A}{i}_1 C)$ such that $\rho^{-1}\pi_2(\sigma)\rho = \Psi(\pi_1(\sigma)) =  \theta(\sigma|_{E^{alg}})$, for all $\sigma \in H$. Then $(\rho^{-1}\sigma \rho)|_{A_1^{alg}}= \sigma|_{A_1^{alg}}$, for all $\sigma \in H.$

Applying Lemma $\ref{Calg}$ with $F_1= A_1$ and $F_2=C$, we can suppose that $\rho \in \G(A_1^{alg}C)$.
By Lemma 2.5 (2) of \cite{Cha} we obtain that $(A_1A_2)^{alg}(A_2C)^{alg}$ and $(A_1C)^{alg}$ are linearly disjoint over $A_1^{alg}C$, therefore we may extend $\rho$ to an element $\tilde{\rho} \in \G((A_1A_2)^{alg}(A_2C)^{alg})$ (recall that $A_1^{alg}C \supset C^{alg}$).
Let  $\widetilde{H}:= \tilde{\rho}^{-1}H\tilde{\rho}$.

Then we have for all $\sigma \in H$ we have that:

\[\pi_1(\tilde{\rho}^{-1}\sigma\tilde{\rho})= \rho^{-1}\pi_1(\sigma)\rho = \Psi(\pi_2(\sigma)) = \theta(\sigma|_{E^{alg}}).\]

The image of $\widetilde{H}$ by the restriction map inside $\Gal((A_1A_2)^{alg}(A_2C)^{alg}(A_1C)^{alg}/A_1A_2C)$ is contained in $S$. Hence $Fix(\widetilde{H}) \supseteq Fix(S)=L$.
\end{proof}
\end{claim3}

Let $\widetilde{<_1}, \ldots, \widetilde{<_n}$ be $n$ orders on $L$ extending the $n$ orders on $F$. Since $L/\acl(A_1A_2)$ and $M/\acl(A_1A_2)$ are regular, we can suppose that they are linearly disjoint, so by Fact \ref{Amalord}, for all $1 \leq i \leq n$, $<_i$ and $\widetilde{<_i}$ have a common extension to an ordering on $LM$. Hence $LM$ is a totally real extension of $M$.

Since $LM/ M$ is regular, by Fact $\ref{PRC}$, $M$ is existentially closed in $LM$, so there is an elementary extension $M^*$ of $M$ such that $M \subseteq LM \subseteq M^*$.

\begin{claim4}
\begin{enumerate}
\item $D = (A_1C)^{alg} \cap M^* = \acl^{M^*}(A_1C)$
\item $\acl^{N}(A_1C_1) = (A_1C_1)^{alg} \cap N = (A_1C_1)^{alg} \cap M^* = \acl^{M^*}(A_1C_1)$
\item $\acl^{N}(A_2C) = (A_2C)^{alg} \cap N = (A_2C)^{alg} \cap M^* = \acl^{M^*}(A_2C)$
\end{enumerate}
\begin{proof} 
(1): We have that $D \subseteq D^{alg} \cap M^*$. Suppose that $D \not= D^{alg} \cap M^*$, and let $\alpha \in D^{alg} \cap M^*$, $\alpha \not\in D$. Since $D^{alg}= E^{alg}D$, there exists $\beta \in E^{alg}\setminus E$ such that $D(\beta)= D(\alpha) \subset M^*$, so $\beta \in M^*$ and this is a contradiction. 
Hence $D = D^{alg} \cap M^*$. As $D \subseteq (A_1C)^{alg}$ the claim follows.

(2) and (3): Clear because $M\prec N, M^{*}$ and by claim 2
\end{proof}
\end{claim4}

\begin{claim5}
We have that $c$ realizes $\tp_{\LC}^{M^*}(c_1/E(a_1)) \cup \tp_{\LC}^{M^*}(c/E(a_2)) \cup \qftp_{\LC}^{M^*}(c/E(a_1,a_2))$.
\end{claim5}
\begin{proof}

Define $\Phi':= \widetilde{\Phi}|_{\acl^N(A_1C_1)}$. Then $\Phi': \acl^N(A_1C_1)\rightarrow D$ satisfies that $\Phi'(c_1)=c$ and $\Phi'|_{A_1}= id$.  
By Claim 4 (1) and (2) $\Phi': \acl^{M^*}(A_1C_1) \rightarrow \acl^{M^*}(A_1C)$, therefore by \ref{typePrc2} $\tp^{M^*}(a_1/E(a_1))= \tp^{M^*}(c/E(a_1))$. Then  $c$ realizes $\tp^{M^{*}}(c_1/E(a_2))$.

\end{proof}
\end{proof}
\end{prop}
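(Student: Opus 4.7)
The plan is to realize $c$ inside an elementary extension $M^{*}$ of $M$ obtained by embedding a carefully constructed totally real regular extension of $M$ via the PRC property (Fact \ref{PRC}). I start by picking $c$ inside some elementary extension $N$ of $M$ realizing the given partial type, and I write $A_1 = \acl(Ea_1)$, $A_2 = \acl(Ea_2)$, $C_1 = \acl(Ec_1)$, $C = \acl(Ec)$. The ultimate target is a field $L$ containing $F = \acl(A_1 A_2)\acl(A_2 C)$ which is regular over $F$, carries extensions of the $n$ orderings of $F$, and is built in such a way that the ``transport'' isomorphism $\widetilde{\Phi}$ sending $A_1 C_1$ to $A_1 C$ can be promoted to identify the algebraic closures (so that types, which by \ref{typePrc2} are encoded by $\LCR$-isomorphism types of algebraic closures, come out right).

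The first step is to set up the isomorphisms. From $\qftp(c_1/E(a_1)) = \qftp(c/E(a_1))$ I get an $\LC$-isomorphism $\Phi:E(a_1,c_1)\to E(a_1,c)$ fixing $E(a_1)$. From $\tp(c_1/E)=\tp(c/E)$, using that in each real closure $\acl_{\Li}=\dcl_{\Li}$, this $\Phi$ extends uniquely to $\Li$-isomorphisms on real-algebraic closures; these restrict to a common $\LC$-isomorphism $A_1 C_1 \to A_1 C$, which in turn, by the regularity of $C_1/E$ and $C/E$ together with the ACF-independence hypothesis, lifts to an $\LCR(A_1^{alg})$-isomorphism $\widetilde{\Phi}:(A_1 C_1)^{alg}\to (A_1 C)^{alg}$. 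Setting $D=\widetilde{\Phi}(\acl(A_1 C_1))$, one obtains an isomorphism $\Psi:\G(\acl(A_1 C_1))\to \G(D)$ inducing the identity on $\G(A_1)$, and hence a map $\theta:\G(E)\to\G(D)$ with $\Psi=\theta\circ\pi$.

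The second step is to use Galois-theoretic amalgamation to produce $L$. With the notation $\widetilde{K}=A_1^{alg}C$, $\widetilde{F}=(A_1 A_2)^{alg}(A_2 C)^{alg}$, $\widetilde{D}=(A_1 C)^{alg}$, I apply Lemma 2.5(2) and Lemma 2.6 of \cite{Cha} to identify $\Gal(\widetilde{F}\widetilde{D}/FD)$ with a fibre product over $\Gal(\widetilde{K}/K)$. The ``diagonal'' $S=\{(\sigma,\theta(\sigma|_{E^{alg}})):\sigma\in\Gal(\widetilde{F}/F)\}$ then lifts to a subgroup $S'$ of $\Gal(\widetilde{F}\widetilde{D}/FD)$, and I take $L=\Fix(S')$. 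Linear disjointness and the regularity of $N/\acl(A_1 A_2)$ and $N/\acl(A_2 C)$ force $L$ to be a regular extension of $\acl(A_1 A_2)$ and of $\acl(A_2 C)$.

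The crucial and most delicate step is to show that each of the $n$ orderings on $F$ extends to $L$. Fix $i$ and let $H=\G(\clos{N}{i})\cong \G(\clos{F}{i})$. Because $\widetilde{\Phi}$ transports the involution structure on $A_1 C_1$ to that on $A_1 C$, the restriction maps $\pi_1,\pi_2$ of $H$ to $\G(A_1 C_1)$ and $\G(A_1 C)$ satisfy $\Psi(\pi_1(\sigma))$ and $\pi_2(\sigma)$ are conjugate in $\G(A_1 C)$; Lemma \ref{Calg} lets me realise the conjugator $\rho$ inside $\G(A_1^{alg} C)$, and linear disjointness extends it to some $\widetilde{\rho}\in\G(\widetilde{F})$ with $\widetilde{\rho}^{-1}H\widetilde{\rho}$ mapping into $S$, so $\Fix(\widetilde{\rho}^{-1}H\widetilde{\rho})\supseteq L$. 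This is the main obstacle, and it is exactly the place where the Galois hypothesis (boundedness gives $\G(M)\cong \G(K_0)$ and hence an abundance of isomorphisms of the sort controlled in Fact \ref{embedding2}) is being used. Once $L$ carries extensions of the $n$ orderings, I take $LM$ with $L$ and $M$ linearly disjoint over $\acl(A_1 A_2)$, apply the ordered-field amalgamation \ref{Amalord} to find common extensions of each $<_i$ to $LM$, use that $M$ is PRC and $LM/M$ is totally real regular to embed $LM$ into an elementary extension $M^{*}\succ M$, and finally conclude via \ref{typePrc2} applied to $\widetilde{\Phi}$ that $c$ realises $\tp_{\LC}(c_1/E(a_1))\cup \tp_{\LC}(c/E(a_2))\cup \qftp_{\LC}(c/E(a_1,a_2))$ in $M^{*}$.
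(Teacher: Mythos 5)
Your proposal is correct and follows essentially the same route as the paper's own proof: the same construction of $\widetilde{\Phi}$, $D$, $\Psi$, $\theta$, the same fibre-product argument via Lemma 2.5(2) and Lemma 2.6 of \cite{Cha} giving $S'$ and $L=Fix(S')$, the same use of Lemma \ref{Calg} to extend the orderings of $F$ to $L$, and the same final step embedding $LM$ into an elementary extension via Fact \ref{PRC} and concluding with \ref{typePrc2}. No substantive difference to report.
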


\begin{thm}\label{thamalgamation}
 Let $(M, <_1, \ldots, <_n)$ be a model of  $\PRCB$. Let $E = \acl(E) \subseteq M$. Let $a_1, a_2, c_1,c_2$ be tuples of $M$ such that $E(a_1)^{alg}\cap E(a_2)^{alg}=E^{alg}$ and $\tp_{\LC}(c_1/E)=\tp_{\LC}(c_2/E)$. Assume that there is $c$ $ACF$-independent of $\{a_1,a_2\}$ over $E$ realizing $\qftp_{\LC}(c_1/E(a_1)) \cup \qftp_{\LC}(c_2/E(a_2))$.
Then $\tp_{\LC}(c_1/Ea_1) \cup \tp_{\LC}(c_2/Ea_2) \cup \qftp_{\LC}(c/E(a_1,a_2))$ is consistent.
\begin{proof} 
Observe that $trdeg(E(c_j)/E) = trdeg(E(c)/E)= trdeg(E(c,a_j)/E(a_j))$, for $j =1,2$.
Remark that if $d \in E(c)^{alg}\cap M = \acl(Ec) = \dcl(Ec)$, then $\tp(d/Ec)$ is isolated by a quantifier-free $\Li$-formula.
Then we can suppose that $trdeg(E(c)/E)= |c|= |c_j|$.    

\begin{claim}
We can suppose that $\tp_{\LC}(c/E)= \tp_{\LC}(c_1/E)$.
\begin{proof}
Suppose that $M$ is sufficiently saturated. 
We need to show that $\qftp_{\LC}(c/E(a_1)) \cup \qftp_{\LC}(c/E(a_2)) \cup \tp_{\LC}(c_1/E)$ is realized by some $c^*$ $ACF$-independent of $\{a_1,a_2\}$ over $E$. 
By compactness it is enough to show that if $\psi_j(x,a_j) \in \qftp_{\LC}(c/E(a_j))$, for $j=1,2$ and $\phi(x) \in \tp_{\LC}(c_1/E)$, then $\psi_1(x,a_1) \wedge \psi_2(x,a_2) \wedge \phi(x)$ is realized for some $c^*$  which is $ACF$-independent from $\{a_1, a_2\}$ over $E$.

Since $\phi(x) \in \tp_{\LC}(c_1/E)$, there is a multi cell $C:= \displaystyle{\bigcap_{i=1}^n(C^i \cap M^{|c|})}$ in $M^{|c|}$ such that $c_1 \in C$, $\{x \in C: M \models \phi(x) \}$ is multi-dense in $C$ and the set $C \cap M^{|c|}$ is definable in $M$ by a quantifier-free $\LC(E)$-formula. Then $c \in C$, since $\qftp_{\LC}(c/E)= \qftp_{\LC}(c_1/E)$.

For $j=1,2$, as $\psi_j(x, a_j) \in \qftp_{\LC}(c/E(a_j))$, there is a multi-cell $U_{j} := \displaystyle{\bigcap_{i=1}^n{(U^i_{j}}\cap M^{|c|})}$ in $M^{|c|}$ such that $c \in U_{j}$ and $U_j \subseteq \psi_j(M, a_j)$.
Then for all $i \in \{1, \ldots,n\}$,  $\displaystyle{U^i_1 \cap U^i_{2}} \cap C^i \not = \emptyset$. By saturation we can find for all $i \in \{1, \ldots,n\}$ an $<_i$-cell $D^i$ in $({\clos{M}{i}})^{|c|}$ such that $D^i \subseteq \displaystyle{U^i_1 \cap U^i_{2}} \cap C^i$.

Let $D := \displaystyle{\bigcap_{i=1}^n (D^i \cap M^{|c|})}$. By multi-density there is $c^* \in D$, $ACF$-independent of $\{a_1, a_2\}$ over $E$, and such that $M \models \phi(c^*)$. Then $M \models \phi(c^*) \wedge \psi(c^*, a_1) \wedge \psi(c^*,a_2)$.

\end{proof}
\end{claim}

Since $c$ realizes $\qftp_{\LC}(c_1/E(a_1)) \cup \qftp_{\LC}(c_2/E(a_2)) \cup \tp_{\LC}(c_1/E)$, and is $ACF$-independent of $\{a_1,a_2\}$ over $E$, by Proposition \ref{lemamalgamation} there is $c'$ realizing $\tp_{\LC}(c_1/E(a_1)) \cup \tp_{\LC}(c/E(a_2)) \cup \qftp_{\LC}(c/E(a_1,a_2)).$

In particular $c'$ realizes $\qftp_{\LC}(c_2/E(a_2)) \cup \qftp_{\LC}(c_1/E(a_1)) \cup \tp(c_2/E)$. By Proposition \ref{lemamalgamation} again there is $c''$ $ACF$-independent of $\{a_1,a_2\}$ over $E$, realizing $\tp_{\LC}(c_2/E(a_2)) \cup \tp_{\LC}(c'/E(a_1)) \cup \qftp(c'/E(a_1,a_2))$. Therefore $c''$ realizes $\tp_{\LC}(c_2/E(a_2)) \cup \tp_{\LC}(c_1/E(a_1)) \cup \qftp_{\LC}(c/E(a_1,a_2))$.
\end{proof}
\end{thm}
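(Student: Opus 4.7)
The plan is to reduce to Proposition \ref{lemamalgamation}, which has exactly the same statement except for the extra hypothesis that $c$ realizes $\tp_{\LC}(c_1/E)$ (not merely $\qftp_{\LC}(c_1/E)$). So the work is to bootstrap from the quantifier-free type to the full type, and then to apply Proposition \ref{lemamalgamation} twice in succession to upgrade the quantifier-free types over $E(a_1)$ and $E(a_2)$ to full types one at a time.

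First, a harmless reduction. Since $\acl_{\LC} = \dcl_{\LC}$ (Lemma \ref{Deforders}), any element of $\acl(E(c))\setminus E(c)$ has its type over $Ec$ isolated by a quantifier-free $\LC$-formula; so after enlarging $c$, $c_1$, $c_2$ in a matching way we may assume $\mathrm{trdeg}(E(c)/E) = |c| = |c_j|$, and similarly the transcendence hypothesis is preserved for $c_1, c_2$.

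The main step is the following claim: we may replace $c$ by a $c^*$ which is $\mathrm{ACF}$-independent from $\{a_1,a_2\}$ over $E$, realizes $\qftp_{\LC}(c_1/E(a_1)) \cup \qftp_{\LC}(c_2/E(a_2))$, and in addition satisfies $\tp_{\LC}(c_1/E)$. By compactness (and saturation of $M$) it suffices, given $\phi(x)\in \tp_{\LC}(c_1/E)$ and quantifier-free formulas $\psi_j(x,a_j)$ satisfied by $c$, to produce such a $c^*$ satisfying $\phi(x)\wedge \psi_1(x,a_1)\wedge \psi_2(x,a_2)$. By the density theorem (Theorem \ref{descomposition2}) applied to $\phi$, there is a multi-cell $C = \bigcap_{i=1}^n (C^i\cap M^{|c|})$, quantifier-free $\LC(E)$-definable, containing $c_1$ in which $\phi(M)$ is multi-dense. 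Since $\qftp_{\LC}(c/E) = \qftp_{\LC}(c_1/E)$ (both equal $\qftp_{\LC}(c_j/E)$), we have $c\in C$. The formulas $\psi_j(x,a_j)$ define, by the same theorem, multi-cells $U_j\subseteq \psi_j(M,a_j)$ containing $c$. Now $C\cap U_1\cap U_2$ is non-empty (it contains $c$), and in each $\clos{M}{i}$ the intersection $C^i \cap U_1^i\cap U_2^i$ is an $<_i$-open set containing $c$; by Remark \ref{ApThC} we may find an $<_i$-cell $D^i$ inside it, and then by multi-density of $\phi$ in $C$ we obtain $c^*\in \bigcap_i(D^i\cap M^{|c|})$ satisfying $\phi$ and $\mathrm{ACF}$-independent from $\{a_1,a_2\}$ over $E$ (here we use that avoiding finitely many Zariski-closed proper subvarieties is itself a multi-dense condition).

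With the claim in hand, apply Proposition \ref{lemamalgamation} to $c^*$: this yields $c'$ realizing $\tp_{\LC}(c_1/E(a_1)) \cup \tp_{\LC}(c^*/E(a_2)) \cup \qftp_{\LC}(c^*/E(a_1,a_2))$. In particular $c'$ realizes $\qftp_{\LC}(c_1/E(a_1))\cup \qftp_{\LC}(c_2/E(a_2))$ and $\tp_{\LC}(c_2/E)$ (since $\tp_{\LC}(c^*/E)=\tp_{\LC}(c_1/E)=\tp_{\LC}(c_2/E)$), and, possibly after moving it by an automorphism fixing $E(a_1,a_2)$, it is $\mathrm{ACF}$-independent from $\{a_1,a_2\}$ over $E$. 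Apply Proposition \ref{lemamalgamation} once more, this time swapping the roles so that the full type over $E(a_2)$ is the target: we obtain $c''$ realizing $\tp_{\LC}(c_2/E(a_2)) \cup \tp_{\LC}(c'/E(a_1)) \cup \qftp_{\LC}(c'/E(a_1,a_2))$, whence $c''$ realizes the required $\tp_{\LC}(c_1/E(a_1)) \cup \tp_{\LC}(c_2/E(a_2)) \cup \qftp_{\LC}(c/E(a_1,a_2))$.

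I expect the main obstacle to be the density step: one must carefully combine the multi-cell from Theorem \ref{descomposition2} for $\phi$ over $E$ with the multi-cells coming from the quantifier-free formulas over $E(a_j)$, and verify that their intersection genuinely supports a multi-dense realization which is also ACF-generic over $E(a_1,a_2)$. The two subsequent applications of Proposition \ref{lemamalgamation} are formal once this is in place; the hypothesis $E(a_1)^{alg}\cap E(a_2)^{alg} = E^{alg}$ and the regularity arguments inside \ref{lemamalgamation} do all the Galois-theoretic heavy lifting.
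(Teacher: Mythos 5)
Your proposal is correct and follows essentially the same route as the paper's proof: the same reduction to $\mathrm{trdeg}(E(c)/E)=|c|$, the same density-theorem argument (multi-cell for $\phi$ over $E$ intersected with the multi-cells from the $\psi_j(x,a_j)$) to upgrade $c$ to a $c^*$ realizing $\tp_{\LC}(c_1/E)$, and then two successive applications of Proposition \ref{lemamalgamation}. The only cosmetic difference is your remark about moving $c'$ by an automorphism to regain ACF-independence, which is unnecessary since $\qftp_{\LC}(c/E(a_1,a_2))$ already forces full transcendence degree over $E(a_1,a_2)$.
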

 
\begin{cor}\label{IT2}
Let $(M, <_1, \ldots, <_n)$ be a model of  $\PRCB$. Let $E = \acl(E) \subseteq M$ and $a_1, a_2, c$ tuples of $M$ such that:
$\tp_{\LC}(a_1/E)= \tp_{\LC}(a_2/E)$, $c$ is ACF-independent of $\{a_1,a_2\}$ over $E$, and $\qftp_{\LC}(c,a_1/E) = \qftp_{\LC}(c,a_2/E) $. 
Suppose that $E(a_1)^{alg}\cap E(a_2)^{alg}= E^{alg}$.
Then there exists a tuple $c^{*}$ in some elementary extension $M^{*}$ of $M$ such that:
\begin{enumerate} 
\item $\qftp_{\LC}(c^*/E(a_1,a_2)) = \qftp_{\LC}(c/E(a_1,a_2)),$
\item $\tp_{\LC}(c^{*}, a_1/E) = \tp_{\LC}(c^{*}, a_2/E),$ 
\item $\tp_{\LC}(c^{*}, a_1/E)= \tp_{\LC}(c, a_1/E).$
\end{enumerate}
\begin{proof}
As $\tp_{\LC}(a_1/E)= \tp_{\LC}(a_2/E)$, we can find $c_2$ such that $\tp_{\LC}(c_2,a_2/E)= \tp_{\LC}(c,a_1/E)$.
Since $c$ realizes $\qftp_{\LC}(c/E(a_1)) \cup \qftp_{\LC}(c_2/E(a_2))$, by Theorem \ref{thamalgamation} there is some $c^{*}$ realizing $\tp_{\LC}(c/E(a_1)) \cup \tp_{\LC}(c_2/E(a_2)) \cup \qftp_{\LC}(c/E(a_1,a_2))$.  This implies that $\tp_{\LC}(c^{*}, a_1/E) = \tp_{\LC}(c^{*}, a_2/E),$  and $\tp_{\LC}(c^{*}, a_1/E)= \tp_{\LC}(c, a_1/E)$.
\end{proof}

\end{cor}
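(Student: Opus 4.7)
The plan is to reduce the statement directly to Theorem \ref{thamalgamation} by setting $c_1 := c$ and constructing an appropriate $c_2$. Concretely, I would first use the hypothesis $\tp_{\LC}(a_1/E)=\tp_{\LC}(a_2/E)$ to pick (in a sufficiently saturated extension of $M$) a tuple $c_2$ with
\[
\tp_{\LC}(c_2,a_2/E) \;=\; \tp_{\LC}(c,a_1/E).
\]
Taking projections gives $\tp_{\LC}(c_2/E)=\tp_{\LC}(c/E)=\tp_{\LC}(c_1/E)$, so the first hypothesis of Theorem \ref{thamalgamation} holds.

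Next I would verify that $c$ itself realizes $\qftp_{\LC}(c_1/E(a_1))\cup \qftp_{\LC}(c_2/E(a_2))$. The part over $E(a_1)$ is automatic since $c_1=c$. For the part over $E(a_2)$, combine the choice of $c_2$ with the assumption $\qftp_{\LC}(c,a_1/E)=\qftp_{\LC}(c,a_2/E)$ to get
\[
\qftp_{\LC}(c_2,a_2/E)\;=\;\qftp_{\LC}(c,a_1/E)\;=\;\qftp_{\LC}(c,a_2/E),
\]
hence $\qftp_{\LC}(c_2/E(a_2))=\qftp_{\LC}(c/E(a_2))$, which is realized by $c$. Together with the hypotheses $E(a_1)^{alg}\cap E(a_2)^{alg}=E^{alg}$ and that $c$ is ACF-independent of $\{a_1,a_2\}$ over $E$, all assumptions of Theorem \ref{thamalgamation} are met.

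Applying Theorem \ref{thamalgamation} then yields $c^{*}$ in some elementary extension $M^{*}$ of $M$ realizing
\[
\tp_{\LC}(c/Ea_1)\;\cup\;\tp_{\LC}(c_2/Ea_2)\;\cup\;\qftp_{\LC}(c/E(a_1,a_2)).
\]
Clause (1) is immediate from the third component, and clause (3) is exactly the first component $\tp_{\LC}(c^{*},a_1/E)=\tp_{\LC}(c,a_1/E)$. For clause (2), chain together $\tp_{\LC}(c^{*},a_2/E)=\tp_{\LC}(c_2,a_2/E)=\tp_{\LC}(c,a_1/E)=\tp_{\LC}(c^{*},a_1/E)$, where the middle equality is the defining property of $c_2$.

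There is no genuine obstacle here beyond bookkeeping: the corollary is essentially a symmetrised repackaging of Theorem \ref{thamalgamation}. The only point that deserves care is noticing that the hypothesis $\qftp_{\LC}(c,a_1/E)=\qftp_{\LC}(c,a_2/E)$ (together with the conjugation of $a_1$ and $a_2$ over $E$) is exactly what is needed to make the same tuple $c$ serve simultaneously as a common realisation of $\qftp_{\LC}(c_1/E(a_1))$ and $\qftp_{\LC}(c_2/E(a_2))$, which is the precise shape of the hypothesis used by Theorem \ref{thamalgamation}.
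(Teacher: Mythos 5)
Your proposal is correct and follows essentially the same route as the paper's own proof: choose $c_2$ with $\tp_{\LC}(c_2,a_2/E)=\tp_{\LC}(c,a_1/E)$, observe that $c$ itself realizes $\qftp_{\LC}(c/E(a_1))\cup\qftp_{\LC}(c_2/E(a_2))$, and apply Theorem \ref{thamalgamation}. The only difference is that you spell out the quantifier-free bookkeeping step that the paper leaves implicit, which is a welcome clarification rather than a deviation.
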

 \section{PRC fields and their stability theoretic properties} \label{sectionNTP2PRC} \label{PRCstability}

We give all necessary preliminaries about NIP, NTP$_2$ and strong theories and also some useful lemmas about indiscernible sequences.
For more details on NTP$_2$ and strong theories see  \cite{Che1} and \cite{CK}.

Fix $\mathcal{L}$ a language and $T$ a complete $\mathcal{L}$-theory. We work inside a monster model $\mathbb{M}$ of $T$.

\begin{defn}\label{defIPTP2} 
Let $\phi(\bar{x}, \bar{y})$ be an $\mathcal{L}$-formula.
\begin{enumerate}
 \item We say that  $\phi(\bar{x}, \bar{y})$ has the \emph{independence property $(IP)$} if for any $m \in \mathbb{N}$ there is a family of tuples $\{b_l: l < m\}$ in $\mathbb{M}^{|\bar{y}|}$ such that for each $A \in \mathcal{P}(m)$ there is a tuple $a_A \in \mathbb{M}^{|\bar{x}|}$, such that $\mathbb{M} \models \phi(a_A, b_l)$ if and only if  $l \in A$.
A formula $\phi(\bar{x}, \bar{y})$ is \emph{NIP} if it does not have the IP. A theory is called \emph{NIP} if no formula has IP.
\item We say that  $\phi(\bar{x}, \bar{y})$ has TP$_2$ if there are $(a_{l ,j})_{l, j < \omega}$ and $k \in \omega$ such that:
\begin{enumerate}
	\item $\{\phi(\bar{x}, a_{l, j})_{j \in \omega}\}$ is $k$-inconsistent for all $l< \omega .$
	\item For all $f:\omega \rightarrow \omega, \{\phi(\bar{x}, a_{l, f(l)}): l \in \omega\}$ is consistent.	
\end{enumerate}
A formula $\phi(\bar{x}, \bar{y})$ is \emph{NTP$_2$}  if it does not have TP$_2$.
A theory is called \emph{NTP$_2$} if no formula has TP$_2$.
\end{enumerate}
By Proposition 5.31 of \cite{Sim} if $T$ is NIP, then it is NTP$_2$.
\end{defn}

\begin{defn}
Let $p(x)$ be a (partial) type. An \emph{inp-pattern of depth $\lambda$ in $p(x)$}, where $\lambda$ is a finite or infinite cardinal, consists of $(\bar{a_l}, \phi_{l}(x,y_l),k_l)_{l <\lambda}$ with $\bar{a_l}=(a_{l,j})_{j \in \omega}$ and $k_l \in \omega$ such that:
\begin{enumerate}
\item $\{\phi_l(x, a_{l,j})\}_{j <\omega}$ is $k_{l}$-inconsistent, for each $l< \lambda$.
\item $\{\phi_l(x, a_{l,f(l)})\}_{l <\lambda} \cup p(x)$ is consistent, for any $f: \lambda \rightarrow \omega$.
\end{enumerate}
The \emph{burden} of a partial type $p(x)$ is the supremum of the depths of inp-patterns in it. 
We denote the burden of $p$ by $bdn(p)$ and by $bdn(\bar{a}/A)$ the burden of $\tp(\bar{a}/A)$.

\begin{para} \label{factbdntypes} By  Theorem 2.5 of \cite{Che1} if $bdn(b/A) < \kappa$ and $bdn(a/Ab) < \lambda$, with $\kappa$ and $\lambda$ finite or infinite cardinals, then $bdn(a,b/A)< \lambda \times \kappa$.
\end{para}
\end{defn}

\begin{defn}
Consider a set of sequences ${(a_l)}_{l \in \omega}$, with $a_l = (a_{l, j})_{j < \kappa}$.  We say that they
are \emph{mutually indiscernible} over a set $C$ if $a_l$ is indiscernible over $\{C(a_{l'})\}_{l' \not = l}$.
\end{defn}

\begin{fact}\label{Strong1} \cite[Lemma 2.2]{Che1}\label{strongindis}
For a (partial) type $p(x)$ over a small set $E$, the following are equivalent:
\begin{enumerate}
\item There is an inp-pattern of depth $\lambda$ in $p(x)$.
\item There is an array $(\bar{a}_{l})_{l<\lambda}$ with rows mutually indiscernible over $E$ and $\phi_l(x, y_{l})$ for $l <\lambda$ such that:
\begin{enumerate}
\item $\{\phi_{l}(x,a_{l,j})\}_{j<\omega}$ is inconsistent for every $l <\lambda$.
\item $p(x) \cup \{\phi_l(x, a_{l,f(l)})\}_{l < \lambda}$ is consistent, for any $f: \lambda \rightarrow \omega$.
\end{enumerate}
\end{enumerate}
\end{fact}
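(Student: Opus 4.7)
The direction (2)$\Rightarrow$(1) is immediate. If $\{\phi_l(x,a_{l,j})\}_{j<\omega}$ is inconsistent, compactness yields a finite inconsistent subset, say of size $k_l$, and indiscernibility of the $l$-th row upgrades this to $k_l$-inconsistency of every $k_l$-subset. Clause (b) of (2) is already stronger than the path-consistency in (1).

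For (1)$\Rightarrow$(2), starting from an inp-pattern $(\bar{a}_l,\phi_l,k_l)_{l<\lambda}$ witnessing depth $\lambda$ in $p(x)$, the plan is to run the standard Ramsey--plus--compactness extraction of mutually $E$-indiscernible sequences, while tracking that the two inp-pattern conditions are preserved. First, I would use compactness to stretch each row $\bar{a}_l$ to an arbitrarily long index set: since both $k_l$-inconsistency of a row and consistency of any finite path $\{\phi_l(x,a_{l,f(l)})\}_{l\in F}\cup p(x)$ are finitary conditions, they persist in any elongation. Then I would extract, simultaneously in all $\lambda$ many rows, sequences $(\bar{b}_l)_{l<\lambda}$ which are mutually indiscernible over $E$ and whose ``local configurations'' are realized in the original array; concretely, for any finite $F\subseteq\lambda$ and any finite index sets in each row $l\in F$, the $\mathcal{L}(E)$-type of the corresponding tuple of $\bar{b}$'s should coincide with the type of some such tuple from the original $\bar{a}$'s. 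For finite $\lambda$ this is the textbook iterated Ramsey extraction: extract $\bar{b}_0$ indiscernible over $E$ based on $\bar{a}_0$, then $\bar{b}_1$ indiscernible over $E\cup\bar{b}_0$ based on $\bar{a}_1$, and so on. For arbitrary $\lambda$, I would reduce to the finite case by compactness, expressing mutual indiscernibility plus the ``local realization'' constraint as a partial type in the elements $b_{l,j}$ and noting that every finite fragment is satisfiable by the finitary extraction.

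It then remains to verify that $(\bar{b}_l,\phi_l,k_l)_{l<\lambda}$ is still an inp-pattern. The $k_l$-inconsistency of $\{\phi_l(x,b_{l,j})\}_{j<\omega}$ follows because the $\mathcal{L}(E)$-type of any $k_l$ distinct columns in $\bar{b}_l$ matches the $\mathcal{L}(E)$-type of $k_l$ distinct columns of $\bar{a}_l$, which is $k_l$-inconsistent. For the path condition, given $f:\lambda\to\omega$ and a finite $F\subseteq\lambda$, the tuple $(b_{l,f(l)})_{l\in F}$ has the same $\mathcal{L}(E)$-type as some $(a_{l,g(l)})_{l\in F}$, so $p(x)\cup\{\phi_l(x,b_{l,f(l)})\}_{l\in F}$ is consistent; compactness then handles the full path.

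The main obstacle is the simultaneous extraction for potentially infinite $\lambda$: one must arrange mutual indiscernibility, not just row-by-row indiscernibility, while ensuring that every finite cross-row configuration of the new array is actually witnessed in the original. Once this is set up as a partial type and the stretching step provides enough room to apply Ramsey inside each finite fragment, the verification of both inp-pattern conditions on the extracted array is then a routine bookkeeping step.
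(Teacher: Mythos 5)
The paper itself gives no proof of this Fact; it is quoted from \cite[Lemma 2.2]{Che1}, and your outline follows the standard argument for that lemma: (2)$\Rightarrow$(1) by compactness plus row indiscernibility, and (1)$\Rightarrow$(2) by stretching the rows, extracting a mutually $E$-indiscernible array all of whose finite configurations are realized (in type over $E$) in the original array, and then transferring $k_l$-inconsistency and path consistency; the transfer of path consistency indeed works because $p$ is a type over $E$.

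There is, however, a concrete defect at the key step. The iterated extraction you describe for finitely many rows --- take $\bar{b}_0$ indiscernible over $E$ based on $\bar{a}_0$, then $\bar{b}_1$ indiscernible over $E\cup\bar{b}_0$ based on $\bar{a}_1$, and so on --- only makes each row indiscernible over the \emph{earlier} rows; nothing forces $\bar{b}_0$ to be indiscernible over $E\cup\bar{b}_1\cup\bar{b}_2\cup\ldots$, so the resulting rows need not be mutually indiscernible, which is exactly what (2) demands. Since your compactness reduction for infinite $\lambda$ rests on the finite fragments being satisfiable by this finitary extraction, the gap propagates to the general case. The standard repair: at stage $l$ extract $\bar{b}_l$ indiscernible over $E\cup\{\bar{b}_{l'}:l'<l\}\cup\{\bar{a}_{l'}:l'>l\}$ and finitely based on $\bar{a}_l$ over that \emph{same} set; then the indiscernibility already achieved for the rows $\bar{b}_{l'}$ with $l'<l$ (which was over a base containing $\bar{a}_l$) survives the replacement of $\bar{a}_l$ by $\bar{b}_l$, because every finite tuple from $\bar{b}_l$ has, over $E$ together with all the other rows, the same type as some finite tuple from $\bar{a}_l$, so any counterexample to indiscernibility would pull back to the original array. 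With this correction, your bookkeeping for the two inp-pattern conditions (inconsistency of each row and consistency of every path with $p$) goes through as written.
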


\begin{defn} 
$T$ is called \emph{strong} if there is no inp-pattern of infinite depth in it. Clearly, if $T$ is strong then it is NTP$_2$.
\end{defn}

\begin{fact}

\begin{enumerate}\label{strongari}\label{strondisjun}
 \item In the definition of strong it is enough to look at types in one variable. \cite[Theorem 2.5]{Che1} 
 \item If $(\bar{a_l}, \phi_{l,0}(x,y_{l,0}) \lor \phi_{l,1}(x,y_{l,1})   ,k_l)_{l < \lambda}$ is an inp-pattern, then $(\bar{a_l}, \phi_{l,f(l)}(x,y_{l,f(l)}),k_l)_{l < \lambda}$ is an inp-pattern for some $f: \lambda \rightarrow\{0,1\}$. \cite[Lemma 7.1]{Che1}
\end{enumerate}
 
\end{fact}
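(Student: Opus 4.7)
My plan is to derive both items directly from the two results already in hand: Fact~\ref{factbdntypes} (subadditivity of burden) and Fact~\ref{strongindis} (the mutually indiscernible version of inp-patterns).

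For~(1), I would argue by induction on the arity of the variable tuple that every partial type is strong, assuming every type in one variable is. Given a type $p(\bar{x})$ with $\bar{x}=(x_1,\dots,x_n)$, realized by $\bar{a}=(\bar{a}',a_n)$ over $A$, Fact~\ref{factbdntypes} yields $\bdn(\bar{a}/A)<\bdn(\bar{a}'/Aa_n)\times \bdn(a_n/A)$. By the inductive hypothesis (applied to $\bar{a}'$ over the enlarged base $Aa_n$) together with the one-variable assumption (applied over $A$ to $a_n$), both factors are finite natural numbers, and so is their cardinal product. Hence no inp-pattern of infinite depth exists in $p(\bar{x})$, which is exactly strongness.

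For~(2), I would combine Fact~\ref{strongindis} with an automorphism argument. First, apply Fact~\ref{strongindis} to the given inp-pattern with disjunctive row formulas $\psi_l(x,z_l):=\phi_{l,0}(x,y_{l,0})\lor\phi_{l,1}(x,y_{l,1})$: this provides an inp-pattern of the same depth whose rows $(\bar{a}_l)_{l<\lambda}$ are mutually indiscernible over a chosen base $E$ containing all the parameters involved. Row inconsistency for each single disjunct is then automatic: any $k_l$ instances of $\phi_{l,i}(x,a_{l,j})$ imply the corresponding $k_l$ instances of $\psi_l(x,a_{l,j})$, which are inconsistent by hypothesis.

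The heart of the argument is column consistency. Pick a realization $c$ of the left column $\{\psi_l(x,a_{l,0})\}_{l<\lambda}$. For each $l$, the element $c$ satisfies at least one of $\phi_{l,0}(x,a_{l,0})$, $\phi_{l,1}(x,a_{l,0})$; let $f(l)\in\{0,1\}$ record such a choice. Given any path $g\colon\lambda\to\omega$, mutual indiscernibility of the rows over $E$ implies that $(a_{l,0})_{l<\lambda}$ and $(a_{l,g(l)})_{l<\lambda}$ realize the same type over $E$ (move one row at a time: the indiscernibility of row $l$ over the remaining rows provides an $E$-automorphism sending $a_{l,0}$ to $a_{l,g(l)}$ while fixing the rest pointwise, and the infinite-$\lambda$ case is handled by compactness). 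An $E$-automorphism witnessing this type-equality transports $c$ to a realization of $\{\phi_{l,f(l)}(x,a_{l,g(l)})\}_{l<\lambda}$, giving column consistency for the refined single-disjunct pattern. The one delicate point to verify is this joint type-equality across all rows simultaneously; once that bookkeeping is cleared, the rest is a direct reading-off.
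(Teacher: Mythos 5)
Note first that the paper does not prove this Fact at all: both items are quoted from Chernikov \cite{Che1}, so your attempt has to be judged on its own merits. For item (1) there is a genuine gap. Your induction rests on the claim that the hypothesis "every $1$-type is strong" makes $\bdn(a_n/A)$ and $\bdn(\bar a'/Aa_n)$ "finite natural numbers". That conflates strength with finite burden: strength of a type only says it admits no inp-pattern of \emph{infinite} depth, while its burden is a supremum and can perfectly well equal $\aleph_0$ (inp-patterns of arbitrarily large finite depth, none infinite). So the finiteness of the two factors is unjustified. Nor can the argument be repaired by weakening the bound: from strength you only get $\bdn<\aleph_1$ for each factor, and Fact \ref{factbdntypes} then yields $\bdn(\bar a/A)<\aleph_1$, i.e. $\leq\aleph_0$ --- which does not exclude an inp-pattern of depth exactly $\aleph_0$, precisely what strength must rule out. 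What the reduction to one variable actually needs is the pattern-level content of Chernikov's Theorem 2.5 (from an inp-pattern of depth $\kappa\times\lambda$ in $\tp(ab/A)$ one extracts an inp-pattern of depth $\kappa$ in $\tp(b/A)$ or one of depth $\lambda$ in $\tp(a/Ab)$; take $\kappa=\lambda=\aleph_0$), and this is strictly stronger than the burden inequality quoted in \ref{factbdntypes}.

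Item (2) is essentially correct and is the standard argument: pass to mutually indiscernible rows via Fact \ref{strongindis}, realize the leftmost column by some $c$, let $f(l)$ record which disjunct $c$ satisfies, and transport $c$ along an automorphism over $E$ using that $(a_{l,g(l)})_{l<\lambda}$ and $(a_{l,0})_{l<\lambda}$ have the same type over $E$ (this is checked on finite subtuples, one row at a time); row inconsistency passes to each disjunct since $\phi_{l,i}(x,a)\vdash\psi_l(x,a)$, and inconsistency of an indiscernible row is automatically $k$-inconsistency for some $k$. One caveat: after invoking Fact \ref{strongindis} you have replaced the array, so what you prove is the statement for the extracted rows, not for the original $\bar a_l$. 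This is not cosmetic: without mutual indiscernibility the statement with the same rows can fail (e.g.\ a depth-one pattern $\psi=\phi_0\vee\phi_1$ in which, for even $j$, only the $\phi_0$-instance of $\psi(x,a_{0,j})$ is satisfiable and, for odd $j$, only the $\phi_1$-instance; then neither choice of $f$ keeps every path consistent). Since in this paper the fact is only ever used after applying Fact \ref{strongindis}, your version suffices for all the applications, but you should state the conclusion for the mutually indiscernible array (or add that hypothesis), as Chernikov does.
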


\begin{lem}(Folklore)\label{interindis} 
Let $({a_l})_ {l \in \omega}$ be an indiscernible sequence over $E$. Then $\displaystyle{\bigcap_{l \in \omega}} \dcl(E(a_l)) = \dcl(E(a_0)) \cap \dcl(E(a_1))$.
\begin{proof}

Clearly $\displaystyle{\bigcap_{l \in \omega}\dcl(E(a_l))} \subseteq \dcl(E(a_0)) \cap \dcl(E(a_1))$.

Let $\alpha \in \dcl(E(a_0)) \cap \dcl(E(a_1))$. Let $\varphi_1(x, a_0)$ and $\varphi_2(x, a_1)$ be the formulas that define $\alpha$ with parameters in $E(a_0)$ and $E(a_1)$ respectively.

Then $\mathbb{M} \models \varphi_1(\alpha, a_0) \wedge \varphi_2(\alpha, a_1) \wedge \exists^{=1}x\varphi_1(x, a_0) \wedge \exists^{=1}x\varphi_2(x, a_1)$.

By indiscernibility $\mathbb{M} \models \exists x (\varphi_1(x, a_0) \wedge \varphi_2(x, a_k)) \wedge \exists^{=1}x\varphi_1(x, a_0) \wedge \exists^{=1}x\varphi_2(x, a_k)$, for all $k \geq 1.$ 
Since $\mathbb{M} \models \varphi_1(\alpha, a_0) \wedge \exists^{=1}x\varphi_1(x, a_0)$, we get that $\mathbb{M} \models \varphi_2(\alpha, a_k)$, for all $k \geq 1$. Then $\alpha \in \dcl(E(a_k))$ for all $k \geq 1$, since $\mathbb{M} \models \exists^{=1}x\varphi_2(x, a_k)$.
\end{proof}
\end{lem}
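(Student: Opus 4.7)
The plan is to prove the nontrivial inclusion $\dcl(E(a_0)) \cap \dcl(E(a_1)) \subseteq \bigcap_{l \in \omega}\dcl(E(a_l))$, since the reverse inclusion is immediate from the fact that $E \cup \{a_0\}$ and $E \cup \{a_1\}$ are among the sets $E \cup \{a_l\}$.

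Fix $\alpha \in \dcl(E(a_0)) \cap \dcl(E(a_1))$. By definition of $\dcl$, I can choose $\mathcal{L}(E)$-formulas $\varphi_1(x, y)$ and $\varphi_2(x, y)$ (with parameters from $E$ suppressed) such that $\varphi_1(x, a_0)$ isolates $\alpha$ over $E(a_0)$ and $\varphi_2(x, a_1)$ isolates $\alpha$ over $E(a_1)$; that is, both conjunctions $\varphi_j(\alpha, a_{j-1}) \wedge \exists^{=1}x\,\varphi_j(x, a_{j-1})$ hold in $\mathbb{M}$ for $j=1,2$. In particular, the two-variable $\mathcal{L}(E)$-formula
\[
\psi(y_0, y_1) \;:=\; \exists x\bigl(\varphi_1(x, y_0) \wedge \varphi_2(x, y_1)\bigr) \wedge \exists^{=1}x\,\varphi_1(x, y_0) \wedge \exists^{=1}x\,\varphi_2(x, y_1)
\]
is satisfied by the pair $(a_0, a_1)$.

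The key step is indiscernibility: since $(a_l)_{l \in \omega}$ is indiscernible over $E$, the $2$-type $\tp(a_0, a_1/E)$ coincides with $\tp(a_0, a_k/E)$ for every $k \geq 1$, so $\mathbb{M} \models \psi(a_0, a_k)$ for all $k \geq 1$. This gives an element $\beta_k$ in $\mathbb{M}$ with $\varphi_1(\beta_k, a_0) \wedge \varphi_2(\beta_k, a_k)$. But $\alpha$ is the unique solution of $\varphi_1(x, a_0)$, so $\beta_k = \alpha$. Hence $\varphi_2(\alpha, a_k)$ holds, and combined with $\exists^{=1}x\,\varphi_2(x, a_k)$, the formula $\varphi_2(x, a_k)$ isolates $\alpha$ over $E(a_k)$, giving $\alpha \in \dcl(E(a_k))$ for every $k \geq 1$. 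Together with the case $k=0$, which is part of the hypothesis, we conclude $\alpha \in \bigcap_{l \in \omega}\dcl(E(a_l))$.

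There is no real obstacle here: the statement is folklore, and the proof is just a careful bookkeeping of the definability witnesses together with one invocation of indiscernibility on a fixed $\mathcal{L}(E)$-formula in two variables. The only subtlety worth flagging is the reliance on uniqueness (via $\exists^{=1}$), which is exactly what lets us transport $\alpha$ itself across the different pairs $(a_0, a_k)$ rather than merely asserting the existence of some common witness.
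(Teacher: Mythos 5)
Your proof is correct and follows essentially the same route as the paper: you package the witnesses into the two-variable formula $\psi(y_0,y_1)$, transport it from $(a_0,a_1)$ to $(a_0,a_k)$ by indiscernibility over $E$, and use uniqueness of the solution of $\varphi_1(x,a_0)$ to identify the common witness with $\alpha$, so that $\varphi_2(x,a_k)$ defines $\alpha$ over $E(a_k)$. No issues.
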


Note that in particular this implies for all $0< l< j$ in $\mathbb{N}$:
\[\dcl(E(a_l))\cap \dcl(E(a_j)) = \dcl(E(a_0)) \cap \dcl(E(a_l)).\]

\begin{lem}(Folklore)\label{algdisj} 
Let $(a_l)_{l \in \omega}$ be an indiscernible sequence over $E$. Let $F = \dcl(E(a_{0}))\cap \dcl(E(a_{1}))$. 
Then $(a_{l})_{l \in \omega}$ is indiscernible over $F$.
\begin{proof}
If $k_0 < \ldots < k_r$, we want to show that: \[\tp(a_{0},\ldots, a_{r}/F)= \tp(a_{k_0},\ldots, a_{k_r}/F).\]
Let $\bar{\beta} \subseteq F$, and $\varphi(\bar{x},\bar{\beta})\in \tp(a_{0}, \ldots, a_{r}/F)$.
Then $\bar{\beta} \subseteq \dcl(E(a_{0}))\cap \dcl(E(a_{1}))$. 
By Lemma \ref{interindis}, $\bar{\beta} \subseteq \dcl(E(a_{k}))$ for all $k \in \omega$.

Let $\psi(\bar{x}, a_{0})$ the formula that defines $\bar{\beta}$ over $E(a_{0})$. Then by indiscernibility $\psi(\bar{x}, a_{k})$ defines $\bar{\beta}$ over $E(a_{k})$ for all $k \in \omega$.
Let $k > k_r$, Then
\[\mathbb{M}\models \exists \bar{x} (\varphi(a_{0}, \ldots, a_{r}, \bar{x}) \wedge \psi(\bar{x}, a_{k}) ).\]
Then by indiscernibility: 
\[\mathbb{M}\models \exists \bar{x} (\varphi(a_{k_0}, \ldots, a_{k_r}, \bar{x}) \wedge \psi(\bar{x}, a_{k}) ).\]
As $\bar{\beta}$  is the only tuple that satisfies $\psi(\bar{x}, a_{k})$, we have $\mathbb{M}\models\varphi(a_{k_0}, \ldots, a_{k_r}, \bar{\beta})$. 
\end{proof}
\end{lem}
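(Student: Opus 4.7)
The plan is to leverage Lemma \ref{interindis} to see that each tuple $\bar{\beta}$ from $F$ is not merely in $\dcl(E(a_l))$ for every $l$, but is uniformly definable there: a single formula $\psi(\bar{x}, a_l)$ defines $\bar{\beta}$ over $E(a_l)$ simultaneously for all $l$. Once this uniformity is established, transferring a formula with parameters from $F$ across increasing index tuples is a routine application of $E$-indiscernibility via an auxiliary index.

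For the uniformity, pick a formula $\psi(\bar{x}, a_0)$ defining $\bar{\beta}$ over $E(a_0)$, so that $\exists^{=1}\bar{x}\,\psi(\bar{x}, a_0)$. By $E$-indiscernibility $\exists^{=1}\bar{x}\,\psi(\bar{x}, a_l)$ holds for each $l$, yielding a unique realizer $\bar{\beta}_l$; combining the defining formula $\theta(\bar{x}, a_1)$ of $\bar{\beta}$ over $E(a_1)$ (which exists thanks to Lemma \ref{interindis}) with indiscernibility of the pairs $(a_0, a_1)$ and $(a_0, a_l)$ gives $\bar{\beta}_l = \bar{\beta}$, so that $\psi(\bar{x}, a_l)$ defines $\bar{\beta}$ uniformly in $l$.

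To conclude, fix $k_0 < \cdots < k_r$ and $\varphi(\bar{x}, \bar{\beta}) \in \tp(a_0, \ldots, a_r / F)$ with $\bar{\beta} \subseteq F$. Choose an auxiliary index $k > \max(r, k_r)$ and form the $\mathcal{L}(E)$-formula
\[
\chi(y_0, \ldots, y_r, y) \;\equiv\; \exists \bar{z}\,\bigl(\varphi(y_0, \ldots, y_r, \bar{z}) \wedge \psi(\bar{z}, y)\bigr).
\]
Since $\chi(a_0, \ldots, a_r, a_k)$ holds with witness $\bar{\beta}$, $E$-indiscernibility gives $\chi(a_{k_0}, \ldots, a_{k_r}, a_k)$; uniqueness of the realizer of $\psi(\bar{z}, a_k)$ forces the new witness to again be $\bar{\beta}$, yielding $\varphi(a_{k_0}, \ldots, a_{k_r}, \bar{\beta})$. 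The only delicate step is the uniformity claim; everything else is formal once Lemma \ref{interindis} is in hand, which is why the lemma is classified as folklore.
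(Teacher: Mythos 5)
Your proposal is correct and follows essentially the same route as the paper: apply Lemma \ref{interindis} to get $\bar{\beta}\in\dcl(E(a_k))$ for all $k$ with a uniform defining formula, then transfer $\exists\bar{z}\,(\varphi(\bar{y},\bar{z})\wedge\psi(\bar{z},a_k))$ for an auxiliary large index $k$ and use uniqueness of the realizer to recover $\bar{\beta}$. Your explicit justification of the uniformity step (via the second defining formula over $E(a_1)$ and indiscernibility of pairs) merely spells out what the paper compresses into ``by indiscernibility,'' so the two arguments coincide in substance.
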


\subsection{Independence property, NTP$_2$ and strength}

\begin{thm}\label{IPPRC}
Let $M$ be a PRC field which is neither algebraically closed nor real closed. Then $Th_{\mathcal{L_R}}(M)$ has the independence property. More generally $Th_{\mathcal{L_R}}(M)$ has the $IP_n$ property, for all $n \in \mathbb{N}$ (see Definition 2.1 of \cite{Hem}).
\begin{proof}
By Properties \ref{PRCcaracte} (3) $M(\sqrt{-1})$ is a PRC field. Since $M(\sqrt{-1})$ has no orderings, it is a PAC field. 
Observe that as $M$ is not real closed, $M(\sqrt{-1})$ is neither separably closed nor real closed. By Corollary 6.5 of \cite{Du} $M(\sqrt{-1})$ has the IP, more generally by Corollary 7.4 of \cite{Hem} $M(\sqrt{-1})$ has the $IP_n$ property. 
Then $M$ has the $IP_n$ property, since $M(\sqrt{-1})$ is interpretable in $M$.
\end{proof}
\end{thm}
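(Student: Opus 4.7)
The strategy is to reduce to the known result for PAC fields by passing to the extension $K:=M(\sqrt{-1})$. The idea is that this extension sits inside $M$ in an interpretable way, and simultaneously kills the only obstruction (the orderings of $M$) to being PAC. Once we are in the PAC setting and not separably closed, the Duret/Hempel theorems give us IP (indeed $IP_n$) for free, and we pull this back to $M$ by interpretability.

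More concretely, I would proceed in the following steps. First, observe that $K = M(\sqrt{-1})$ is an algebraic extension of $M$, so by Fact \ref{PRCcaracte}(3) it is again a PRC field. Since $K$ contains $\sqrt{-1}$, no ordering on $K$ exists, so by the characterization of PRC fields (existentially closed in every totally real regular extension, which vacuously means in every regular extension) $K$ is in fact a PAC field. Next, I need to verify that $K$ is not separably closed; this is where the hypothesis that $M$ is not real closed is used. If $K$ were separably closed, then $[K:M]\leq 2$ together with the Artin–Schreier theorem would force $M$ to be either algebraically closed or real closed, contradicting the assumption (and clearly $K$ is not real closed since it contains $\sqrt{-1}$).

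Now I invoke Duret's theorem (Corollary 6.5 of \cite{Du}): the complete theory of a PAC field which is neither separably closed nor real closed has the independence property. For the stronger $IP_n$ statement, I use instead Hempel's generalization (Corollary 7.4 of \cite{Hem}), which gives $IP_n$ for all $n$ under the same hypothesis. Thus $Th_{\mathcal{L_R}}(K)$ has $IP_n$ for every $n$.

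Finally I transfer this to $M$. Since $K = M(\sqrt{-1})$, the field $K$ is interpretable in $M$ in the language of rings (elements of $K$ are coded by pairs $(a,b)\in M^2$ with the usual polynomial rules for addition and multiplication). Therefore any $\mathcal{L_R}$-formula witnessing $IP_n$ in $K$ can be translated to an $\mathcal{L_R}$-formula witnessing $IP_n$ in $M$, which yields the conclusion. The only delicate point in the whole argument is the verification that $K$ fails to be separably closed; everything else is a packaging of existing results.
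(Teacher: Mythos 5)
Your proposal is correct and follows essentially the same route as the paper: pass to $M(\sqrt{-1})$, which is PRC by Fact \ref{PRCcaracte}(3) and hence PAC since it admits no orderings, check via Artin--Schreier (using both hypotheses on $M$) that it is not separably closed, apply Duret's and Hempel's results, and pull back $IP_n$ to $M$ by interpretability. Your spelled-out verification that $M(\sqrt{-1})$ is not separably closed is just a more detailed version of the one-line observation in the paper.
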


\begin{lem}\label{NIPreducts}
Let $\mathcal{L}$ be a language and let $T$ be an $\mathcal{L}$-theory. Suppose that for any $\mathcal{L}$-atomic formula $\varphi(\bar{x}, \bar{y})$, there are $\mathcal{L}' \subseteq \mathcal{L}$ and an $\mathcal{L'}$-theory $T'$ such that: $\varphi(\bar{x}, \bar{y})$ is an $\mathcal{L}'$-formula, $T'$ is NIP and for all $M \models T$, there is $M' \models T'$ such that $M|_{\mathcal{L'}} \subseteq M'$.
Then in $T$ every quantifier-free $\mathcal{L}$-formula is NIP.
 \begin{proof}
Every quantifier-free formula $\phi{(\bar{x}, \bar{y})}$ is a Boolean combination of atomic formulas.
Since every Boolean combination of NIP formulas is NIP (Lemma 2.9 of \cite{Sim}), it is enough to show that every atomic formula is NIP.

Let $\varphi(\bar{x}, \bar{y})$ be an atomic formula and let $M$ be a model of $T$. Let $\mathcal{L}'$, $T'$ and $M'$ satisfy the hypothesis of the Lemma for the formula $\varphi(\bar{x}, \bar{y})$. 
Suppose that $\varphi(\bar{x}, \bar{y})$ has the independence property.
Then for all $m \in \mathbb{N}$, there is a family of tuples $\{b_l: l<m\}$ and $\{a_A: A \in \mathcal{P}(m)\}$ in $M$ such that $M\models \varphi(a_A, b_l)$ if and only if $l \in A$.  
Then $M'\models \varphi(a_A, b_l)$ if and only if $l \in A$, since $M|_{\mathcal{L'}} \subseteq M'$. That contradicts the fact that $T'$ is NIP.
\end{proof}
\end{lem}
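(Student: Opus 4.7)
The plan is to reduce to the atomic case and then transfer the failure of NIP across the inclusion $M|_{\mathcal{L}'}\subseteq M'$. First I would recall that every quantifier-free $\mathcal{L}$-formula $\phi(\bar{x},\bar{y})$ is a Boolean combination of $\mathcal{L}$-atomic formulas, and appeal to the standard closure property (e.g.\ \cite[Lemma 2.9]{Sim}) that a Boolean combination of NIP formulas is NIP. This reduces the lemma to showing that every atomic $\mathcal{L}$-formula is NIP in $T$.

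Next, I would fix an atomic $\mathcal{L}$-formula $\varphi(\bar{x},\bar{y})$ and apply the hypothesis to obtain $\mathcal{L}'\subseteq\mathcal{L}$ and an NIP $\mathcal{L}'$-theory $T'$ such that $\varphi$ is an $\mathcal{L}'$-formula and every model of $T$ embeds (as an $\mathcal{L}'$-structure) into some model of $T'$. Suppose for contradiction that $\varphi$ has IP in $T$. Then for every $m\in\mathbb{N}$ there is a model $M\models T$ together with tuples $\{b_l:l<m\}$ and $\{a_A:A\in\mathcal{P}(m)\}$ in $M$ such that $M\models\varphi(a_A,b_l)$ iff $l\in A$.

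The key transfer step uses that $\varphi$ is atomic: truth of atomic $\mathcal{L}'$-formulas is preserved under $\mathcal{L}'$-embeddings. Pick $M'\models T'$ with $M|_{\mathcal{L}'}\subseteq M'$; then the same tuples, now viewed inside $M'$, witness $M'\models\varphi(a_A,b_l)\Longleftrightarrow l\in A$. Since $m$ was arbitrary, $\varphi$ has IP in $T'$, contradicting the hypothesis that $T'$ is NIP.

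The statement is essentially a general-nonsense observation, so I do not foresee a genuine obstacle; the only point that requires care is precisely the atomic/substructure step above. If one tried to run the same argument for an arbitrary quantifier-free formula directly, one would need preservation under $\mathcal{L}'$-substructures, which holds for atomic and negated-atomic formulas, hence for Boolean combinations pointwise, but it is cleaner to invoke the Boolean closure for NIP at the outer layer and only use substructure preservation for atomic formulas, as above.
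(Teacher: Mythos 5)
Your proposal is correct and follows essentially the same route as the paper: reduce to atomic formulas via the Boolean-combination closure of NIP (Simon, Lemma 2.9), then transfer an IP witness for an atomic formula from $M$ to $M'$ using preservation of atomic formulas under the inclusion $M|_{\mathcal{L}'}\subseteq M'$, contradicting that $T'$ is NIP. The only cosmetic difference is that you quantify over models of $T$ when extracting IP witnesses, while the paper works inside a single model; the argument is otherwise identical.
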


\begin{cor}\label{SQNIP}
Let $n \geqslant 1$. In $n$-PRC every quantifier-free $\LC$-formula is NIP.  
\begin{proof}
By Lemma \ref{NIPreducts} using the fact that in $n$-PRC the atomic formulas are of the form $p(\bar{x}, \bar{y}) >_{i} 0$,  with $i \in \{1, \ldots, n\}$ and $p(\bar{x}, \bar{y}) \in \mathbb{Q}[\bar{x}, \bar{y}]$, and that RCF is NIP. 
\end{proof}
\end{cor}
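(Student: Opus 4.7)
The plan is to apply Lemma \ref{NIPreducts} directly. The main task is to identify, for each atomic $\LC$-formula, an appropriate NIP reduct.

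First I would observe that the atomic formulas of $\LC = \mathcal{L} \cup \{<_1, \ldots, <_n\}$ fall into two kinds: polynomial equations $p(\bar{x}, \bar{y}) = 0$ with $p \in \mathbb{Q}[\bar{x}, \bar{y}]$ (possibly with coefficients from $K_0$, viewed as constants in $\mathcal{L}$), and order inequalities $p(\bar{x}, \bar{y}) >_i 0$ for some $i \in \{1, \ldots, n\}$. For a polynomial equation, one may take $\mathcal{L}' = \mathcal{L}$ and $T' = \mathrm{ACF}_0$ (or any NIP theory of fields extending $\mathcal{L}$), using the inclusion $M \subseteq M^{alg}$.

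For an atomic formula of the form $p(\bar{x}, \bar{y}) >_i 0$, I would set $\mathcal{L}' = \mathcal{L} \cup \{<_i\}$ and $T' = \mathrm{RCF}$ (with the appropriate constants). Given a model $(M, <_1, \ldots, <_n)$ of $n$-PRC, its reduct $M|_{\mathcal{L}'}$ is the ordered field $(M, <_i)$, which embeds as an ordered field in its real closure $\clos{M}{i}$. Since $\clos{M}{i}$ is a model of $\mathrm{RCF}$ and $\mathrm{RCF}$ is well known to be NIP (indeed o-minimal), the hypotheses of Lemma \ref{NIPreducts} are satisfied for $\varphi(\bar{x}, \bar{y}) \equiv (p(\bar{x}, \bar{y}) >_i 0)$.

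Applying Lemma \ref{NIPreducts} then gives that every quantifier-free $\LC$-formula is NIP in $n$-PRC. There is no real obstacle here; the whole point is that each atomic formula only mentions one of the orderings, so one can reduce to the NIP theory RCF one $<_i$ at a time, and Boolean combinations of NIP formulas remain NIP.
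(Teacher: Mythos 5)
Your proposal is correct and follows essentially the same route as the paper: reduce each atomic formula to a known NIP theory (RCF for the formulas $p(\bar{x},\bar{y}) >_i 0$) via Lemma \ref{NIPreducts} and then close under Boolean combinations. Your extra remark handling polynomial equations via $\mathrm{ACF}_0$ is a harmless refinement of the same argument (one could also note that $p=0$ is a Boolean combination of order atomic formulas), so there is no substantive difference from the paper's proof.
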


\begin{fact} \label{factNIP} \cite[Proposition 2.8]{Sim}
The formula $\varphi(x, y)$ is NIP if and only if for any indiscernible sequence $(a_l : l \in \mathbb{N})$ and any tuple $b$, there is some $k \in \mathbb{N}$ such that $\varphi(a_l, b) \leftrightarrow \varphi(a_j, b)$, for all $l, j \geq k$.
\end{fact}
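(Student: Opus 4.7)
The plan is to argue the two directions separately, using that NIP of $\varphi(x,y)$ is equivalent to NIP of the dual formula $\varphi^{*}(y,x):=\varphi(x,y)$ (a standard symmetric characterization of IP via alternation trees).

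For the forward implication I would proceed by contraposition. Suppose $(a_l)_{l \in \mathbb{N}}$ is indiscernible and $b$ is a tuple for which $\varphi(a_l, b)$ does not eventually stabilize. By passing to a subsequence (which remains indiscernible) I may assume $\varphi(a_{2l}, b)$ holds and $\varphi(a_{2l+1}, b)$ fails for every $l$. Then for each $n$ and each $A \subseteq \{0,\ldots,n-1\}$ define the strictly increasing map $\sigma_A : \{0,\ldots,n-1\} \to \mathbb{N}$ by $\sigma_A(l) = 2l$ if $l \in A$ and $\sigma_A(l) = 2l+1$ otherwise. Indiscernibility gives $(a_0, \ldots, a_{n-1}) \equiv (a_{\sigma_A(0)}, \ldots, a_{\sigma_A(n-1)})$, so some automorphism $\pi$ of $\mathbb{M}$ sends the second tuple to the first; setting $b_A := \pi(b)$ one reads off $\mathbb{M} \models \varphi(a_l, b_A) \Leftrightarrow l \in A$ for $l < n$. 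Varying $n$ and appealing to compactness produces an IP pattern for the dual formula, which contradicts NIP of $\varphi$.

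For the reverse direction, assume $\varphi$ has IP; by compactness one obtains $(b_l)_{l < \omega}$ and tuples $(c_A)_{A \subseteq_{\mathrm{fin}} \omega}$ with $\varphi(c_A, b_l) \Leftrightarrow l \in A$. A standard Ramsey-plus-compactness extraction yields an indiscernible sequence $(b'_l)_{l < \omega}$ whose Ehrenfeucht-Mostowski type is a limit of EM-types of finite subsequences of $(b_l)$. Consider the partial type
\[ p(x) := \{\varphi(x, b'_{2l}) : l < \omega\} \cup \{\lnot \varphi(x, b'_{2l+1}) : l < \omega\}. \]
For any finite fragment involving $b'_{l_0}, \ldots, b'_{l_k}$, the parameter-free formula asserting the existence of a witness with the prescribed alternation pattern holds of every $(b_{m_0}, \ldots, b_{m_k})$ with $m_0 < \cdots < m_k$ (via the appropriate $c_A$), hence of $(b'_{l_0}, \ldots, b'_{l_k})$ by the EM-type condition. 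Thus $p$ is consistent, and any realization $b^{*}$ witnesses that $\varphi(b^{*}, b'_l)$ fails to stabilize along the indiscernible sequence $(b'_l)$.

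The main obstacle is the forward direction: indiscernibility of $(a_l)$ is only over $\emptyset$ and says nothing directly about $b$, so the crucial move is to apply the automorphism to $b$ rather than to the $a_l$'s in order to manufacture the IP pattern. The reverse direction is more routine but requires the Ramsey extraction to be arranged so that the combinatorial witness survives the passage to the limit EM-type.
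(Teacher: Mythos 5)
The paper does not prove this Fact — it is quoted from \cite{Sim} (Proposition 2.8) — so the benchmark is the standard argument there, and your proposal is essentially that argument: alternation plus an automorphism of the monster in one direction, extraction of an indiscernible sequence realizing a limit of EM-types plus compactness in the other. Both constructions are carried out correctly (the strictly increasing maps $\sigma_A$, the transfer of the existential ``alternation pattern'' formula from increasing tuples of $(b_l)$ to tuples of $(b'_l)$, etc.).

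There is, however, one point you must close, and it is exactly where the duality you invoke but do not prove gets used. Definition \ref{defIPTP2} in this paper places the shattered family $\{b_l\}$ in the $y$-variable and the witnesses $a_A$ in the $x$-variable, whereas the Fact plugs the indiscernible sequence into the $x$-slot. Your forward direction produces parameters $b_A$ with $\varphi(a_l,b_A)\leftrightarrow l\in A$, which under Definition \ref{defIPTP2} is an IP pattern for the opposite formula $\varphi^{*}(y,x)$, so contradicting NIP of $\varphi$ really does require the lemma that $\varphi$ and $\varphi^{*}$ have IP simultaneously (Lemma 2.5 of \cite{Sim}, or a Sauer--Shelah counting argument); you assert it but give no proof or citation. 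More seriously, your reverse direction, as written, ends with an indiscernible sequence $(b'_l)$ in the \emph{second} coordinate and a single $|x|$-tuple $b^{*}$ such that $\varphi(b^{*},b'_l)$ alternates. That is the alternation criterion for $\varphi^{*}$, not the conclusion of the Fact, which demands an indiscernible sequence of $|x|$-tuples in the first coordinate. The repair is routine but has to be said: by the duality lemma $\varphi^{*}$ also has IP, i.e.\ there is a family $(a_l)$ of $|x|$-tuples and, for each finite $A$, a $|y|$-tuple $b_A$ with $\varphi(a_l,b_A)\leftrightarrow l\in A$; running your extraction and EM-transfer argument on this family yields an indiscernible sequence $(a'_l)$ of $|x|$-tuples and a realization $b$ of $\{\varphi(a'_{2l},y)\}_l\cup\{\neg\varphi(a'_{2l+1},y)\}_l$, which is the statement actually required. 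With that rerouting (and a proof or citation for the duality of IP), your argument is complete.
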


\begin{lem}\label{lemsq}
Let $n \geq 1$ and let $M$ be a bounded PRC field with exactly $n$ orders, which is not real closed. Let $T:=Th_{\LC}(M)$ (see \ref{PRCB} for the definition of $\LC$).
Let $E =\acl(E)\subset M$ and $(a_j)_{j\in \omega}$ an indiscernible sequence over $E$.
Let $\phi(x, \bar{y})$ be an $\LC$-formula and $I$ a multi-interval definable
over $E$ such that $\{x \in I:M \models \phi(x, a_0)\}$ is multi-dense in $I$.
Then for all multi-intervals $J\subseteq I$ definable over $E$, $\{x \in J\} \cup \{\phi(x, a_0) \wedge \phi(x,a_1)\}$ is consistent.
\begin{proof}

\begin{claim1} We can suppose that $E(a_{0})^{alg} \cap E(a_{1})^{alg} = E^{alg}$.
\begin{proof}

Denote by $A_0:=\acl(E(a_0))$ and by $A_1:= \acl(E(a_1))$.

By Lemma \ref{algdisj} and the fact that $\acl=\dcl$ we can suppose that $A_0\cap A_1 = E$.

Since $A_0A_1$ is a regular extension of $A_0$ and of $A_1$, by Lemma 2.1 of \cite{Cha} $A_0^{alg} \cap {A_1}^{alg} = {(A_0 \cap A_1)}^{alg}$.
 
As $A_0^{alg} = E(a_{0})^{alg}$, $A_1^{alg}= E(a_{1})^{alg}$ and $A_0\cap A_1=E$, we get that $E(a_{0})^{alg} \cap E(a_{1})^{alg} = E^{alg}$.     
\end{proof}
\end{claim1}

Let $J = \displaystyle\bigcap_{i=1}^n{(J^i\cap M)}$ be a multi-interval definable over $E$ such that $J \subseteq I$.

\begin{claim2}\label{indis0}
There exists $\widetilde{c}$ in some elementary extension $N$ of $M$ such that $\widetilde{c} \in J$,  $\widetilde{c} \notin \acl(Ea_{j}:j\in \omega)$ and for all $j \in \omega$, $\qftp_{\LC}(\widetilde{c}, a_{0}/E)= \qftp_{\LC}(\widetilde{c}, a_{j}/E)$.
\begin{proof}
 By compactness it is enough to show that if $\psi_1(x,y), \ldots, \psi_m(x,y)$ are quantifier-free $\LC(E)$-formulas, then the type:
 
 \[ \{x \in J\} \cup \{p(x)\neq0\}_ {p \in E(a_j:j\in \omega)[x], p\not=0} \cup  \{ \psi_l(x,a_{0}) \leftrightarrow \psi_l(x,a_{j})\}_{_{0 < j, 1\leq l \leq m }}\]
 is consistent.
 
Let $d \in J$, $d \notin \acl(Ea_{j}: j\in \omega)$; by Corollary \ref{SQNIP} and Fact \ref{factNIP}, for all $1 \leq l \leq m$ there exists $k_l \in \mathbb{N}$ such that $\psi_l(d,a_{j_1}) \leftrightarrow \psi_l(d,a_{j_2})$ for all $k_l \leq j_1 < j_2$. Let $k= \max\{k_1, \ldots, k_m\}$.

Then $d$ realizes the type: 
\[ \{x \in J\} \cup \{p(x)\neq0\}_ {p \in E(a_j:j\in \omega)[x], p\not=0} \cup  \{ \psi_l(x,a_{j_1}) \leftrightarrow \psi_l(x,a_{j_2})\}_{_{k \leq j_1<j_2, 1 \leq l\leq m}}\]

Since $J$ is definable with parameters in $E$, $p(x) \in E(a_j:j\in \omega)[x]$ and ${(a_{j})}_{j \in \omega}$ is indiscernible over $E$, it follows that the type:
\[ \{x \in J\} \cup \{p(x)\neq0\}_ {p \in E(a_j:j\in \omega)[x], p\not=0} \cup  \{ \psi_l(x,a_{0}) \leftrightarrow \psi_l(x,a_{j})\}_{_{0 < j, 1 \leq l\leq m }}\]
is consistent.
\end{proof}
\end{claim2}

\begin{claim3}
There exists $c$ in some elementary extension $N$ of $M$ such that $c \in J$,  $c \notin \acl(Ea_{j}:j\in \omega)$, $N \models \phi(c, a_{0})$, and for all $j \in \omega$, $\qftp_{\LC}(c, a_{0}/E)= \qftp_{\LC}(c, a_{j}/E)$.

\end{claim3}
\begin{proof}

By compactness it is enough to show that if $\psi_1(x,y), \ldots, \psi_m(x,y)$ are quantifier-free $\LC(E)$-formulas, then the type:
\[ \{x \in J\} \cup \{p(x)\neq0\}_ {p \in E(a_j:j\in \omega)[x], p\not=0}  \cup  \{ \psi_l(x,a_{0}) \leftrightarrow \psi_l(x,a_{j})\}_{_{0 < j, 1 \leq l\leq m}} \cup \{\phi(x,a_0)\}\]
is consistent.
Define $\theta(x,y_1,y_2):= \displaystyle{\bigwedge_{l=1}^m \psi_l(x,y_1) \leftrightarrow \psi_l(x,y_2)}$.

Let $N \succeq M$ be $|E|^+$-saturated, and $\widetilde{c} \in N$ satisfying claim 2. 

Then $N \models \theta(\widetilde{c},a_0,a_j)$, for all $0 < j \in \omega.$ 
As $\theta(x, a_0,a_j)$ is a quantifier-free $\LC$-formula and $\widetilde{c}\notin \acl(E(a_j): j\in \omega)$, there exist for all $i \in \{1, \ldots, n\}$, $<_i$-intervals  $B_{j}^{i}\subseteq \clos{N}{i}$ qf-definable over $E(a_0,a_j)$, such that $\widetilde{c} \in \displaystyle{\bigcap_{i = 1}^n(B_{j}^{i}\cap N)}$ and  $\displaystyle{\bigcap_{i = 1}^n(B_{j}^{i}}\cap N) \subseteq \theta(N,a_0,a_j)$.

Let $1 \leq i\leq n$; as $N$ is $|E|^+$-saturated, there exists an $<_i$-interval $B^{i}\subseteq \clos{N}{i}$ containing $\widetilde{c}$, such that $B^{i}\cap N \subseteq \displaystyle{\bigcap_{j \in \omega} (B_{j}^{i}\cap N)}$.

As $\widetilde{c}\in J :=\displaystyle{\bigcap_{i=1}^n (J^{i}\cap N)}$, we can assume that $B^{i}\subseteq J^{i}$, for all $1 \leq i \leq n$. 

Then $B :=\displaystyle{\bigcap_{i = 1}^n (B^{i}\cap N)} \subseteq J \subseteq I$ and $\displaystyle{B} \subseteq \displaystyle{\bigcap_{l=1}^m\theta(N,a_0, a_j)}$,  for all $j \in \omega$.

By multi-density of $\phi(x, a_0)$ in $I$ and saturation, there exists $c \in B$, $c \notin \acl(Ea_{j}:j \in \omega)$, such that $N \models \phi(c, a_0)$. 
As $c \in B$, $\psi_l(c,a_0) \leftrightarrow  \psi_l(c,a_j)$, for all $j \in \omega$, $1 \leq l \leq m$.

\end{proof}

 By Corollary \ref{IT2} there is $c^{*}$ in some elementary extension $N^*$ of $N$, such that $\tp(c^{*},a_0/E)= \tp(c^{*},a_1/E)$ and $\tp(c^{*},a_0/E)=\tp(c,a_0/E)$.
So $N^*\models \phi(c^{*},a_0)\wedge \phi(c^{*},a_1)$ and since $c \in J$, $J$ is definable with parameters in $E$, and $tp(c^*/E)=tp(c/E)$, we obtain that $c^{*} \in J$.
Then $c^*$ realizes $\{x \in J\} \cup \{\phi(x, a_0) \wedge \phi(x, a_1)\}$.

\end{proof}
\end{lem}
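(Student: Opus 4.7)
\smallskip\noindent\textit{Proof plan.}
The strategy is to combine the amalgamation machinery of \ref{IT2} with the NIP-ness of quantifier-free $\LC$-formulas (Corollary \ref{SQNIP}). The idea is: first produce, using multi-density and quantifier-free NIP, a candidate $c$ in $J$ satisfying $\phi(c, a_0)$ whose \emph{quantifier-free} type over $E(a_0)$ agrees with its quantifier-free type over $E(a_1)$; then upgrade this to full type equality via Corollary \ref{IT2}, which produces some $c^*$ with $\tp_{\LC}(c^*, a_0/E)=\tp_{\LC}(c^*, a_1/E)=\tp_{\LC}(c, a_0/E)$, so that $\phi(c^*, a_0)\wedge\phi(c^*, a_1)$ holds and $c^* \in J$ (the last because $J$ is $E$-definable and $\tp(c^*/E)=\tp(c/E)$).

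Before doing this, I would reduce to the case $E(a_0)^{alg}\cap E(a_1)^{alg}=E^{alg}$, so that the hypotheses of Corollary \ref{IT2} can be met. By Lemma \ref{algdisj} applied to the indiscernible sequence, I may replace $E$ by $F=\dcl(E(a_0))\cap\dcl(E(a_1))$, keeping indiscernibility; since $F=\acl(F)$ by Lemma \ref{PRCacl} and since the extensions $\acl(E(a_0))/F$ and $\acl(E(a_1))/F$ are regular and linearly disjoint, the algebraic closures $E(a_0)^{alg}$ and $E(a_1)^{alg}$ intersect in $F^{alg}$ (using the standard fact that if two regular extensions are linearly disjoint over a common base then so are their algebraic closures). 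This gives the desired disjointness hypothesis.

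To produce $c$, I would first, by compactness, find $\widetilde{c} \in J$ in a sufficiently saturated elementary extension $N$ such that $\widetilde{c}\notin\acl(E(a_j):j<\omega)$ and $\qftp_{\LC}(\widetilde{c}, a_0/E)=\qftp_{\LC}(\widetilde{c}, a_j/E)$ for every $j$. Consistency of this type follows because, for any finitely many quantifier-free $\LC(E)$-formulas $\psi_l(x,y)$, picking any $d\in J$ transcendental over $E(a_j:j<\omega)$ and invoking Corollary \ref{SQNIP} together with Fact \ref{factNIP} guarantees that each $\psi_l(d, a_j)$ is eventually constant in $j$; indiscernibility then transfers to the ``$a_0$ versus $a_j$'' pattern. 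Having such $\widetilde{c}$, I next use a saturation argument to shrink: around $\widetilde{c}$ there is, in each real closure $\clos{N}{i}$, a small $<_i$-open interval $B^i$ contained in $J^i$ and also contained in every $B_j^i$ where $B_j^i$ witnesses the quantifier-free type equality between $(\widetilde{c}, a_0)$ and $(\widetilde{c}, a_j)$. The resulting multi-interval $B=\bigcap_i (B^i\cap N)$ lies inside $J\subseteq I$, so by multi-density of $\phi(x, a_0)$ in $I$ I can pick $c\in B$, transcendental over $E(a_j:j<\omega)$, with $\phi(c, a_0)$.

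The main technical obstacle is precisely this intersection step: I must arrange that the quantifier-free agreement with \emph{every} $a_j$ survives after passing to a single multi-interval around $c$ small enough to fit inside each witnessing multi-interval for each $j\in\omega$, and simultaneously inside $J$, while preserving multi-density so that $\phi(\cdot, a_0)$ can still be realized. Once such $c$ is produced, Corollary \ref{IT2} (applied with $a_1, a_2$ there as $a_0, a_1$ here) yields $c^*$ satisfying $\phi(c^*, a_0)\wedge\phi(c^*, a_1)$ and lying in $J$, completing the proof.
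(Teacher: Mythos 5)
Your plan is correct and follows essentially the same route as the paper's proof: the same reduction to $E(a_0)^{alg}\cap E(a_1)^{alg}=E^{alg}$ via Lemma \ref{algdisj}, the same compactness argument producing $\widetilde{c}$ from Corollary \ref{SQNIP} and Fact \ref{factNIP}, the same saturation/shrinking step inside $J$ combined with multi-density to get $c$ with $\phi(c,a_0)$, and the same final application of Corollary \ref{IT2} to produce $c^*\in J$ realizing $\phi(x,a_0)\wedge\phi(x,a_1)$. The ``intersection step'' you flag as the main obstacle is handled in the paper exactly as you describe, by $|E|^+$-saturation of $N$.
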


\begin{thm}\label{strongdensity}
Let  $n \geq 1$ and let $M$ be a bounded PRC field with exactly $n$ orders which is not real closed, and let $T:=Th_{\LC}(M)$.
Let $E =\acl(E)\subset M$ and $(a_j)_{j\in \omega}$ an indiscernible sequence over $E$.
Let $\phi(x, \bar{y})$ be an $\LC(E)$-formula and $I:= \bigcap_{i=1}^n I^i$ a multi-interval definable
over $E$ such that $\{x \in I:M \models \phi(x, a_0)\}$ is multi-dense in $I$.
Then the type $p(x):=\{\phi(x, a_{j})\}_{j \in \omega}$ is consistent.

\begin{proof}

Define $\psi(x, y_1,y_2):= \phi(x, y_1) \wedge \phi(x,y_2)$.
By Theorem \ref{descomposition} there are a finite set $B \subseteq M$, $m \in \mathbb{N}$ and $I_1,\ldots, I_m$, with $I_j:= \displaystyle{\bigcap_{i=1}^n I_j^i}$ a multi-interval such that:
\begin{enumerate}
 \item $\psi(M,a_0,a_1) \subseteq \displaystyle\bigcup_{j=1}^m{I_j} \cup B$,
 \item $\{x \in I_j: M \models \psi(x, a_0,a_1)\}$ is multi-dense in $I_j$, for all $1 \leq j \leq m$.
\end{enumerate}
Denote by $\clos{E}{i}:= E^{alg} \cap \clos{M}{i}$.

\begin{claim}
There is $j \in \{1, \ldots, m\}$ such that for all $i \in \{1, \ldots, n\}$, $|I^i_j \cap \clos{E}{i}|\geq 2$.
\begin{proof}
Suppose that for all $j \in \{1, \ldots, m\}$, there is $i_j \in \{1, \ldots, n\}$ such that $|I^{i_j}_j \cap \clos{E}{i_j}|\leq 1$. So we can find an $<_{i_j}$-interval $J^{i_j}\subseteq I$ with extremities in $\clos{E}{i_j}$ such that $J^{i_j} \cap I^{i_j} =  \emptyset$. 
Observe that if $i_{j}= i_{l}$, for some $j,l \in \{1, \ldots, m\}$, then we can choose $J^{i_{j}}= J^{i_{l}}$.

Let $J:= \displaystyle{\bigcap_{j=1}^m J^{i_j}} \cap \displaystyle{\bigcap_{i \not = i_j} \clos{M}{i}}$.
Then $J$ is a multi interval, definable over $E$, and $J \subseteq I$.
By Lemma \ref{lemsq}  there exists $c \in J$ such that $M \models \psi(c, a_0,a_1)$. 
This implies that $J \cap \displaystyle{\bigcup_{j=1}^m {I_j}} \not = \emptyset$.
Thus there is $j \in \{1, \ldots, m\}$ such that $J \cap I_j \not = \emptyset$, whence $J^{i_j} \cap I_j^{i_j} \not = \emptyset$ which gives the desired contradiction.

\end{proof}
\end{claim}

Take $x^i_1 \not = x^i_2 \in I^i_j \cap \clos{E}{i}$ for each $1 \leq i \leq n$. Define $y_i:= \min_{<_i}\{x_1, x_2\}$, $z_i:= \max_{<_i}\{x_1, x_2\}$ and let $\widetilde{I_1}:= \displaystyle\bigcap_{i=1}^n((y_i,z_i)_i\cap M)$. 
Then $\widetilde{I_1}$ is a multi-interval definable over $E$.

Define $b_{j}:= (a_{2j},a_{2j+1})$; then $(b_j)_{j\in \omega}$ is indiscernible over $E$. 
Then $\{x \in \widetilde{I_1}: M \models \psi(x, b_0)\}$ is multi-dense in $\widetilde{I_1}$, since $\widetilde{I_1}  \subseteq I_j$. 

Repeating this process with the formula $\psi(x, b_0)$ and the multi-interval $\widetilde{I_1}$,  we find a multi-interval $\widetilde{I_2}$, definable over $E$ such that that $\{x \in \widetilde{I_2}:M \models \phi(x, a_0) \wedge \phi(x,a_1) \wedge \phi(x, a_2) \wedge \phi(x,a_3)\} $ is multi-dense in $\widetilde{I_2}$, etc.
This shows that $p(x)$ is finitely consistent.   
\end{proof}
\end{thm}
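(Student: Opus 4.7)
The plan is to establish finite consistency of $p(x)$ by iteration, producing a decreasing chain of multi-intervals $\widetilde{I}_0 \supseteq \widetilde{I}_1 \supseteq \widetilde{I}_2 \supseteq \cdots$, each definable over $E$, with $\widetilde{I}_0 = I$, such that $\bigwedge_{j < 2^k} \phi(x, a_j)$ is multi-dense in $\widetilde{I}_k$. The inductive step is organized by grouping the sequence in pairs: setting $b_j := (a_{2j}, a_{2j+1})$ yields a new sequence indiscernible over $E$, and $\psi(x, b_j) := \phi(x, a_{2j}) \wedge \phi(x, a_{2j+1})$ replaces $\phi$ at the next round. Passing from $\widetilde{I}_k$ to $\widetilde{I}_{k+1}$ then reduces to one abstract key step, namely: given an $\LC(E)$-formula $\psi(x, \bar{y})$, a sequence $(c_j)_{j<\omega}$ indiscernible over $E$, and a multi-interval $J$ definable over $E$ with $\psi(x, c_0)$ multi-dense in $J$, produce a sub-multi-interval $\widetilde{J} \subseteq J$ definable over $E$ in which $\psi(x, c_0) \wedge \psi(x, c_1)$ is multi-dense.

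To carry out the key step, I would apply Theorem \ref{descomposition} to the formula $\psi(x, c_0) \wedge \psi(x, c_1)$, obtaining a decomposition of its solution set in $M$ into multi-intervals $J_1, \ldots, J_m$ (plus a finite exceptional set $B \subseteq \acl(Ec_0c_1)$) with the formula multi-dense in each $J_j$ and with endpoints of $J_j^i$ lying only in $\dcl^{\clos{M}{i}}(Ec_0c_1)$. The task becomes to locate a ``good'' $J_j$ satisfying $|J_j^i \cap \clos{E}{i}| \geq 2$ for every $i \in \{1, \ldots, n\}$: for such $j$, picking two distinct points of $\clos{E}{i}$ inside each $J_j^i$ gives a sub-multi-interval $\widetilde{J} \subseteq J_j$ whose extremities all lie in $\clos{E}{i}$, hence is definable over $E$, and to which multi-density of $\psi(x, c_0) \wedge \psi(x, c_1)$ restricts automatically.

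The main obstacle is this ``good $J_j$ exists'' claim, which I would prove by contradiction via Lemma \ref{lemsq}. Suppose the claim fails: for each $j \leq m$ choose $i_j$ with $|J_j^{i_j} \cap \clos{E}{i_j}| \leq 1$. Since $J^{i_j}$ has endpoints in $\clos{E}{i_j}$ and $\clos{E}{i_j}$ is dense in itself as an ordered field, $J^{i_j}$ contains infinitely many points of $\clos{E}{i_j}$, while $J_j^{i_j}$ omits all but possibly one; hence I can select an $<_{i_j}$-open subinterval of $J^{i_j}$ with extremities in $\clos{E}{i_j}$ disjoint from $J_j^{i_j}$. Intersecting these choices coordinatewise (keeping $J^i$ in coordinates with no associated $j$) produces a multi-interval $J' \subseteq J$ definable over $E$ that is disjoint from $\bigcup_{j=1}^m J_j$, so $\{x \in J' : M \models \psi(x, c_0) \wedge \psi(x, c_1)\} \subseteq B$ is finite. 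Shrinking $J'$ further to avoid the finitely many elements of $B$, Lemma \ref{lemsq} applied to $\psi(x, c_0)$ (multi-dense in $J \supseteq J'$) and the indiscernible sequence $(c_j)$ produces a point of $J'$ satisfying $\psi(x, c_0) \wedge \psi(x, c_1)$, a contradiction. Iterating the key step delivers finite consistency of $p(x)$, whence compactness yields full consistency. The essential difficulty is that Theorem \ref{descomposition} only provides endpoints over $\acl(Ec_0c_1)$; transferring multi-density down to an $E$-definable multi-interval is precisely where Lemma \ref{lemsq} becomes indispensable.
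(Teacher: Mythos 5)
Your proposal is correct and follows essentially the same route as the paper: decompose $\phi(x,a_0)\wedge\phi(x,a_1)$ via Theorem \ref{descomposition}, use Lemma \ref{lemsq} to show by contradiction that some multi-interval of the decomposition meets each $\clos{E}{i}$ in at least two points, shrink it to an $E$-definable multi-interval where the conjunction remains multi-dense, and iterate on the paired sequence $b_j=(a_{2j},a_{2j+1})$ to obtain finite consistency. The only detail to watch is the case $i_j=i_l$ for distinct bad indices, where you must choose one common $E$-definable subinterval avoiding both $J_j^{i_j}$ and $J_l^{i_l}$ (as the paper notes) rather than intersecting independently chosen ones, which could be empty; your explicit shrinking to avoid the finite exceptional set $B$ is a small extra precision over the paper's argument.
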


Theorem \ref{strongdensity} can be easily generalized to several variables:

\begin{thm}\label{lemsq+v}
Let $n\geq 1$ and let $M$ be a bounded PRC field with exactly $n$ orders, which is not real closed. Let $T:=Th_{\LC}(M)$. Let $E =\acl(E)\subset M$ and $(a_j)_{j\in \omega}$ an indiscernible sequence over $E$.
Let $\phi(x_1, \ldots,x_r, \bar{y})$ be an $\LC(E)$-formula and $C$ a multi-cell in $M^r$ definable over $E$ such that $\{(x_1, \ldots,x_r) \in C: \phi(x_1, \ldots,x_r, a_0)\}$ is multi-dense in $C$.
Then $p(x_1, \ldots,x_r):= \{\phi(x_1, \ldots,x_r, a_j)\}_{j \in \omega}$ is consistent.
\begin{proof}
We will first show that Lemma \ref{lemsq} can be generalized to several variables.
\begin{claim}
If $J\subseteq C$ is a multi-box in $M^r$ definable over $E$, then there exists $c=(c_1, \ldots, c_r)$ in some elementary extension $N$ of $M$ such that: $c \in J$, $trdeg(E(c)/E)=r$, $N \models \phi(c,a_0)$ and for all $j \in \omega$, $\qftp_{\LC}(c, a_{0}/E)= \qftp_{\LC}(c, a_{j}/E)$.

\begin{proof}
Let $J:=\displaystyle{\bigcap_{i=1}^n (J^{i}\cap M^r)} \subseteq C$ be a multi-box in $M^r$ definable over $E$.
As in Claim 1 of Lemma \ref{lemsq} we can suppose that $E(a_0)^{alg} \cap E(a_1)^{alg}= E^{alg}$.

Let $\psi_1(\bar{x}, \bar{y}), \ldots, \psi_m(\bar{x}, \bar{y})$ be quantifier-free $\LC(E)$-formulas, and consider the type:
\[q(\bar{x}):= \{\bar{x} \in J\} \cup \{p(\bar{x})\neq0\}_ {p \in E(a_j:j\in \omega)[\bar{x}], p\not=0}  \cup  \{ \psi_l(\bar{x},a_{0}) \leftrightarrow \psi_l(\bar{x},a_{j})\}_{_{0 < j, 1 \leq l\leq m}} \cup \{\phi(\bar{x},a_0)\}\]
By compactness it is enough to show that $q(\bar{x})$ is consistent.

Define $\theta(\bar{x},y_1,y_2):= \displaystyle{\bigwedge_{l=1}^m \psi_l(\bar{x},y_1) \leftrightarrow \psi_l(\bar{x},y_2)}$, for $1 \leq l \leq m$ .

Exactly as in Claim 2 of Lemma \ref{lemsq} there exists $\tilde{c}= (\tilde{c_1}, \ldots,\tilde{c_r})$ in some elementary extension $N$ of $M$ such that: $\tilde{c} \in J$, $trdeg(E(\tilde{c})/E)=r$, and for all $j \in \omega$, $\qftp(\tilde{c},a_0/E)= \qftp(\tilde{c},a_j/E)$.
Suppose that $N$ is $E^+$-saturated.

Then $N \models \displaystyle{\bigcap_{l=1}^m\theta(\tilde{c},a_0,a_j)}$, for all $j \in \omega.$ 

As $\theta(\bar{x}, a_0,a_j)$ is quantifier-free, using cell decomposition in each real closure $\clos{N}{i}$ for all $1 \leq i \leq n$ and the fact that $trdeg(E(\tilde{c})/E)=r$,  there exists for all $i \in \{1, \ldots, n\}$, $<_i$-open cells  $B_{j}^{i}\subseteq (\clos{N}{i})^r$ definable over $E(a_0,a_j)$, such that $\widetilde{c} \in \displaystyle{\bigcap_{i = 1}^n(B_{j}^{i}\cap N^r)}$ and  $\displaystyle{\bigcap_{i = 1}^n(B_{j}^{i}\cap N^r)} \subseteq \theta(N,a_0,a_j)$. 
By saturation and the fact that each $(B^i_{j}\cap N^r)$ is $<_i$-open (in the product topology), there exists an $<_i$-box $B^{i}$ containing $\widetilde{c}$, such that $B^{i}\cap N^r \subseteq \displaystyle{\bigcap_{j \in \omega}(B_{j}^{i}\cap N^r)}$.

As $\widetilde{c}\in J^i$, for all $1 \leq i \leq n$, we can assume by taking the intersection that $B^{i}\subseteq J^{i}$. 

Then $B :=\displaystyle{\bigcap_{i = 1}^n (B^{i}\cap N^r)} \subseteq J \subseteq C$ and $\displaystyle{B} \subseteq \displaystyle{\bigcap_{l=1}^m\theta(N,a_0, a_j)}$,  for all $j \in \omega$. 

By multi-density of $\phi(\bar{x}, a_0)$ in $C$ and saturation, there exists $c=(c_1, \ldots,c_r) \in B$ such that $trdeg(E(c)/E)=r$, and $N \models \phi(c, a_0)$. 
As $c \in B$, we get that $\psi_l(c,a_0) \leftrightarrow  \psi_l(c,a_j)$, for all $j \in \omega$, $1 \leq l \leq m$.
Then $c$ realizes $q(\bar{x})$. 
\end{proof}
\end{claim}
As in Lemma \ref{lemsq} using Theorem \ref{IT2}, for all multi-box $J \subseteq C$ definable over $E$, $\{\bar{x} \in J\} \cup \{\phi(\bar{x}, a_0) \wedge \phi(\bar{x}, a_1)\}$ is consistent.

Define $\psi(\bar{x}, y_1,y_2):= \phi(\bar{x}, y_1) \wedge \phi(\bar{x},y_2)$.
By Theorem \ref{descomposition2} there are a set $V \subseteq M^r$, $m \in \mathbb{N}$, and $C_1, \ldots, C_m$ with $C_j= \displaystyle{\bigcap_{i=1}^n (C^i_j\cap M^r)}$ a multi-cell such that:
\begin{enumerate}
\item $\psi(M, a_0,a_1) \subseteq \displaystyle{\bigcup_{j=1}^m C_ j \cup V }$,
\item the set $V$ is contained in some proper Zariski closed subset of $M^r$, which is definable over $\acl(a_0,a_1)$,
\item $\{x \in C_j: M \models \psi(\bar{x}, a_0,a_1)\}$ is multi-dense in $C_j$ for all $1 \leq j \leq m$,
\item the set $C^i_j \cap M^r$ is definable in $M$ by a quantifier-free $\Li(a_0,a_1)$-formula, for all $1 \leq j \leq m$.
\end{enumerate}

If $J \subseteq C$ is a multi-box in $M^r$ definable over $E$, then there is $\bar{x} \in J$ such that $\phi(\bar{x}, a_0) \wedge \phi(\bar{x},a_1)$.
So there exists $j \leq m$ such that $J \cap C_j \not = \emptyset$.
As in Theorem \ref{strongdensity} there exists $j \leq m$ and multi-cell $J \subseteq C_j$, definable over $E$ such that $\psi(\bar{x}, a_0,a_1)$ is multi-dense in $J$.  
The rest of the proof is as in Theorem \ref{strongdensity}.
\end{proof}
\end{thm}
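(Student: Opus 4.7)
The plan is to lift the proof of Theorem \ref{strongdensity} from one variable to several by systematically replacing multi-intervals with multi-cells and sub-multi-intervals with multi-boxes. All the ingredients used in the one-variable proof already have several-variable analogues in the paper: the density decomposition is Theorem \ref{descomposition2}, the approximation theorem for products of open sets is Remark \ref{ApThC}, and Corollaries \ref{SQNIP} and \ref{IT2} hold for tuples of any arity.

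The first thing I would do is establish the several-variable analogue of Lemma \ref{lemsq}: for any multi-box $J \subseteq C$ definable over $E$, the partial type $\{\bar x \in J\} \cup \{\phi(\bar x,a_0) \wedge \phi(\bar x,a_1)\}$ is consistent. Following Lemma \ref{lemsq}, one reduces to $E(a_0)^{alg} \cap E(a_1)^{alg} = E^{alg}$ via Lemma \ref{algdisj} and regularity. Using Corollary \ref{SQNIP} together with Fact \ref{factNIP}, one produces in a sufficiently saturated extension a generic $\tilde c \in J$ with $trdeg(E(\tilde c)/E) = r$ on which any prescribed finite collection of quantifier-free $\LC(E)$-formulas $\psi_l(\bar x,a_0)$ and $\psi_l(\bar x,a_j)$ are equivalent for all $j$. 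Because these equivalences are quantifier-free, cell decomposition in each $\clos{N}{i}$ plus saturation supplies a multi-box $B \subseteq J$ containing $\tilde c$ on which the equivalences hold simultaneously for every $j$. Multi-density of $\phi(\bar x,a_0)$ in $C \supseteq B$ yields some $\bar c \in B$ realizing $\phi(\bar c,a_0)$ with $\qftp_{\LC}(\bar c,a_0/E) = \qftp_{\LC}(\bar c,a_j/E)$ for every $j$, and Corollary \ref{IT2} amalgamates this into a $\bar c^* \in J$ realizing $\phi(\bar c^*,a_0) \wedge \phi(\bar c^*,a_1)$.

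Next, apply Theorem \ref{descomposition2} to $\psi(\bar x,\bar y_1,\bar y_2) := \phi(\bar x,\bar y_1) \wedge \phi(\bar x,\bar y_2)$ evaluated at $(a_0,a_1)$, producing multi-cells $C_1,\dots,C_m$ and a lower-dimensional set $V$ with $\psi(M,a_0,a_1) \subseteq \bigcup_j C_j \cup V$ and $\psi(\bar x,a_0,a_1)$ multi-dense in each $C_j$. Imitating the contradiction argument of Theorem \ref{strongdensity}, if for each $j$ there were some order $i_j$ in which the trace of $C^{i_j}_j$ on $(\clos{E}{i_j})^r$ were too thin, then one could shrink $C$ in direction $i_j$ to produce a multi-box $\widetilde J \subseteq C$ definable over $E$ that avoids every $C_j$, contradicting the previous paragraph. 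From the surviving index $j$ one extracts a multi-box $\widetilde J \subseteq C \cap C_j$ definable over $E$ in which $\psi(\bar x,a_0,a_1)$ is multi-dense.

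Finally, set $b_j := (a_{2j},a_{2j+1})$, indiscernible over $E$, and rerun the same argument with formula $\psi$, cell $\widetilde J$, and sequence $(b_j)$. Iterating yields, for every $k$, a multi-box on which $\bigwedge_{j<2^k}\phi(\bar x,a_j)$ is multi-dense; compactness then gives consistency of $p(\bar x)$. The main obstacle is the extraction step in the second paragraph above: in one variable the endpoints of the $I_j$ in each $\clos{E}{i}$ form a finite set and the avoidance construction is almost immediate, but for multi-cells in several variables one must simultaneously control the structure of each $C^i_j$ relative to $(\clos{E}{i})^r$ across all orders, so the reduction to a sub-multi-box definable over $E$ requires a careful direction-by-direction choice together with the several-variable approximation of Remark \ref{ApThC}.
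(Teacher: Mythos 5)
Your proposal follows essentially the same route as the paper's proof: you generalize Lemma \ref{lemsq} to several variables (producing, via Corollaries \ref{SQNIP} and \ref{IT2}, a realization of $\{\bar x \in J\}\cup\{\phi(\bar x,a_0)\wedge\phi(\bar x,a_1)\}$ for every $E$-definable multi-box $J\subseteq C$), then apply Theorem \ref{descomposition2} to $\phi(\bar x,y_1)\wedge\phi(\bar x,y_2)$, extract an $E$-definable multi-box inside one of the resulting multi-cells by the avoidance argument of Theorem \ref{strongdensity}, and iterate with the pairs $b_j=(a_{2j},a_{2j+1})$ to get finite consistency. This matches the paper's argument step for step, and your remark about the extraction step being the delicate point is precisely where the paper also defers to the one-variable argument.
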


\begin{defn}(\cite[Definition 2.1]{Ons})
\begin{enumerate}
\item We say that a formula $\delta(x, a)$ \emph{strongly divides} over $A$ if $\tp(a/A)$ is non algebraic and $\{\delta(x, a')\}_{a' \models \tp(a/A)}$ is $k$-inconsistent for some $k \in \mathbb{N}$.
\item  A formula $\delta(x,a)$ \emph{\th-divides}(\emph{thorn divides}) over $A$ if we can find some tuple $c$ such that $\delta(x,a)$ strongly divides over $Ac$.
\item A formula \emph{\th-forks} (\emph{thorn forks}) over $A$ if it implies a (finite) disjunction of formulas which \th-divide over $A$.
\item The type $p(x)$ \emph{\th-divides} if there is a formula in $p(x)$ which \th-divides; similarly for \th-forking.
\item We say that \emph{$a$ is \th-independent of $b$} over $A$, denoted by $a \thind_A b$, if $\tp(a/Ab)$ does not \th-fork over $A$.   

\end{enumerate}
Observe that in (1), $k$-inconsistency means: if $a_1, \ldots, a_k$ realize the $\tp(a/A)$ and the sets $\delta(M, a_1), \ldots, \delta(M,a_k)$ are distinct, then their intersection is empty.

\end{defn}

\begin{defn}\cite[Theorem 3.7]{EaOn}
 A theory is called \emph{rosy} if there is some $\kappa$ such that there are no \th-forking chains of length $\kappa$. That is for all $b$ one can not find $(a_j)_{j \in \kappa}$ such that for $\alpha < \kappa$ one has $\tp(b/ (a_j)_{j \leq \alpha})$ \th-forks over $(a_j)_{j < \alpha}$.
\end{defn}

\begin{thm} \label{PACNTP2}
If $M$ is an unbounded PAC field, then $Th_{\mathcal{L_R}}(M)$ has TP$_2$ and is not rosy.
\begin{proof}
Suppose that $M$ is an unbounded PAC field which is sufficiently saturated. 
As in the proof of Theorem 3.9 of $\cite{Cha0}$, we can assume that there are infinitely many finite algebraic extensions $\{L_j\}_{j \in \omega}$ of $M$, which are linearly disjoint over $M$ and with Galois group over $M$ isomorphic to some fixed simple group $G$. Let $r= |G|$.
For each $j \in \omega$, take $\alpha_j$ such that $L_j=M(\alpha_j)$.   Let $g(\bar{Y},X)\in \mathbb{Z}[\bar{Y},X]$  and $\bar{a}_j:= (a_{1j}, \ldots, a_{rj}) \in M^r$ be such that $g(\bar{a}_{j}, X)= X^r+ a_{1j}X^{r-1}+ \ldots + a_{rj}$ is the minimal polynomial of $\alpha_j$ over $M$. Define $\widetilde{L}$ as the field composite of $\{L_j: j \in \omega\}$. 

Choose an element $t$ transcendental over $M$ and $n>4$ such that $G$ embeds into $A_n$ (the group of even permutations of $n$ elements). 
Observe that such an $n$ exists: $G$ embeds into $S_r$ the group of permutations on $r$ elements,  which in turn embeds into $A_{2r}$. 

Let $k$ be the prime field of $M$.  Let $\{b_{l}: l \in \omega\} \subseteq M$ be algebraically independent over $k$.
As in Lemma 3.8 of $\cite{Cha0}$, using Theorem A of \cite{Pp}, we can find an algebraic extension $E_1$ of $k(b_1,t)$ such that $E_1/k(b_1)$ and $E_1/k(t)$ are regular, and $\Gal(E_1/k(b_1,t))\cong A_n$. Take $\beta_1$ integral over $k[b_1, t]$ such that $E_ 1=k(b_1,t)(\beta_1)$. Let $p(b_1,t, X) \in k[b_1,t, X]$ be the minimal polynomial of $\beta_1$ over $k(b_1,t)$. 
For all $l \in \omega$ define $E_l$ as the field extension of $k(b_l,t)$ generated by a root of $p(b_l,t, X)$.

Since $E_l$ is a regular extension of $k(b_l)$, and is algebraically independent from $M$ over $k(b_l)$ the field $M_l:=ME_l$ is a regular extension of $M$ and $\Gal(M_l/M(t)) \cong A_n$. 
Define $\widetilde{M}$ as the field composite of $\{M_l: l \in \omega\}$.
As in Lemma 3.8 of $\cite{Cha0}$, $\widetilde{M}$ is a regular extension of $M$.

Let $\varphi(t, b_l, \bar{a_j})$ be the formula that says ``The extension generated by a root of $p(b_l, t, X)$ contains a root of $g(\bar{a_{j}}, X)=0"$. Note that such a formula exists, because in $M$ we can interpret finite Galois extensions of $M$ (see Appendix 1 of $\cite{Cha}$ for more details).

Observe that for all $l \in \omega$, $\{\varphi(t, b_l, \bar{a}_j): j \in \omega\}$ is $(\frac{n!}{2r}+1)$- inconsistent: 
Otherwise there would exist $l \in \omega$ and $j_1, \ldots ,j_s$ with $s = (\frac{n!}{2r}+1)$ such that $M_l \supseteq  L_{j_1}\cdots L_{j_s}$. But $[M_l: M(t)]= \frac{n!}{2}$, $[L_j:M]= r$ for all $l,j \in \omega$ and  $\{L_j\}_{j \in \omega}$ is linearly disjoint over $M$. Then $sr< \frac{n!}{2}$ which contradicts the definition of $s$.

\begin{claim}
If $f: \omega \rightarrow \omega$, then $\{\varphi(t, b_l, \bar{a}_{f(l)}): l \in \omega\}$ is consistent.
\begin{proof}
For each $l$, fix an embedding $h_l: \Gal(L_{f(l)}/M)\rightarrow \Gal(M_l/M(t))$ and let $S_l:= \{(\sigma, h_l(\sigma)): \sigma \in \Gal(L_{f(l)}/M)\}$. Then $S_l$ is a subgroup of $\Gal(L_{f(l)}M_l/M(t)) \cong \Gal(L_{f(l)}/M)\times \Gal(M_l/M(t))$, because $M^{alg} \cap M_l = M$.
Define $P_l:= Fix(S_l) \subseteq L_{f(l)}M_l$. 
By definition of $S_l$, $L_{f(l)}M_l= L_{f(l)}P_l=M_lP_l$.

Then $\Gal(\widetilde{L}\widetilde{M}/M(t)) \cong \displaystyle{\prod_{j \in \omega}\Gal(L_j/M) \times \prod_{l \in \omega}\Gal(M_l/M(t))}$.

Define $S:= \{((\sigma_j)_{j \in \omega}, (\tau_{l})_{l \in \omega})  \in \Gal(\widetilde{L}\widetilde{M}/M(t)):\forall l \in \omega, \tau_l= h_l(\sigma_{f(l)})\}$.
If $\widetilde{P}$ is the field composite of $\{P_l: l \in \omega\}$, then $\widetilde{P}= Fix(S)$.

Note that $\Gal(\widetilde{P}/M(t))$ projects onto $\Gal(\widetilde{L}/M)$.
Indeed, let $(\sigma_j)_{j\in \omega} \in \Gal(\widetilde{L}/M)$ and for each $l \in \omega$ let $\tau_{l}= h_l(\sigma_{f(l)})$. 
Then $((\sigma_j)_{j \in \omega}, (\tau_l)_{l \in \omega}) \in \Gal(\widetilde{P}/M(t))$ is an extension of $(\sigma_j)_{j\in \omega}$.

This implies that $\widetilde{P}$ is a regular extension of $M$. 
Then by Fact \ref{PRC}  there exists an elementary extension $M^*$ of $M$ such that $\widetilde{P} \subseteq M^*$.

Since $\widetilde{P}M_l \supseteq L_{f(l)}$ and $\widetilde{P} \subseteq M^*$, it follows that $M^* \models \exists t \displaystyle{\bigwedge_{l \in \omega}\varphi(t, b_l, a_{f(l)})}$. Then $\{\varphi(t, b_l, a_{f(l)}): l \in \omega\}$ is consistent in $M$, since $M^*$ is an elementary extension of $M$.
\end{proof}
\end{claim}

Therefore the formula $\varphi(t; x, y)$ has TP$_2$.

To see that $Th_{\mathcal{L_R}}(M)$ is not rosy we need to show that for any cardinal $\kappa$, there exists $c$ and $(d_l)_{l < \kappa}$ such that for all $\alpha < \kappa$, $c\nthind_{(d_l)_{l < \alpha}}d_{\alpha}$.

Let $\kappa$ be a cardinal, construct as before $(b_l)_{l \in \kappa}$ and $(\bar{a_j})_{j \in \kappa}$. Let $\phi(t,b_l,\bar{a}_j):= \varphi(t, b_l, \bar{a}_j) \wedge$ ``the extension generated by a root of $g(\bar{a}_j,X)$ is Galois, 
with Galois group isomorphic to $G$''.
Let $\alpha < \kappa$. 

We claim that $\{\phi(t, b_{\alpha}, \bar{a}): \bar{a} \models \tp(\bar{a}_\alpha/b_{l},a_{l}: l< \alpha)\}$ is inconsistent: observe that if $\beta_1, \beta_2 \models \tp(\bar{a}_\alpha/b_{l},a_{l}: l< \alpha)$ and $\phi(t, b_{\alpha}, \beta_1) \not = \phi(t, b_{\alpha}, \beta_2)$, then $L_{\beta_1} \not = L_{\beta_2}$, where $L_{\beta_1}$ and $L_{\beta_2}$ are extensions generated by a root of $g(\beta_1,X)=0$ and $g(\beta_2,X)=0$ respectively; but as above $M_\alpha$ can only contain finitely many distinct sub extensions with Galois group $G$.

By definitions and properties of rosy theories, the formula $\phi(t,b_{\alpha}, \bar{a}_{\alpha})$ strongly divides over $(b_{j}a_{l}: j \leq \alpha, l< \alpha)$.
Let $(d_l)_{l \in \kappa}:= (b_l, \bar{a}_l)_{l \in \kappa}$. As in the Claim, $\{\phi(t, d_l): l \in \kappa\}$ is consistent, and we let $c$ realize $\{\phi(t, d_l): l \in \kappa\}$.
Since $\phi(t,d_{\alpha}) \in \tp(c/ (d_l)_{l \leq \alpha})$; $c\nthind_{(d_l)_{l < \alpha}}d_{\alpha}$.

\end{proof}
\end{thm}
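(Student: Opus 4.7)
The plan is to witness TP$_2$ by an existential formula $\varphi(t;x,y)$ whose rows are $k$-inconsistent (for some bounded $k$) while any diagonal path is consistent; the consistency will come from the PAC property applied to a carefully built regular extension of $M$, and the $k$-inconsistency from a finiteness-of-subfields count. The non-rosy part will piggyback on the same configuration.

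First, I would exploit unboundedness. Following the construction in \cite{Cha0}, since $M$ is PAC and unbounded one can find a simple finite group $G$ and a countable family of Galois extensions $L_j/M$, all with $\Gal(L_j/M)\cong G$, that are pairwise (indeed totally) linearly disjoint over $M$; write $\alpha_j$ for a primitive element of $L_j/M$ and $g(\bar a_j,X)\in\mathbb Z[\bar y,X]$ for its minimal polynomial, so that the parameters $\bar a_j$ lie in $M$ and vary definably. Then embed $G$ into some $A_n$ and, using the Pop-type theorem cited in \cite{Cha0}, produce a regular extension $E_1/k(b_1,t)$ with group $A_n$ defined by a polynomial $p(b_1,t,X)$; transporting this by specializing the first variable, let $E_l$ be the extension of $k(b_l,t)$ cut out by $p(b_l,t,X)$, and set $M_l=ME_l$. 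The $\{b_l\}$ are algebraically independent over the prime field, so $\widetilde M:=\prod_l M_l$ is regular over $M$, and each $\Gal(M_l/M(t))\cong A_n$.

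Now let $\varphi(t;b_l,\bar a_j)$ be the $\mathcal L_{\mathcal R}$-formula expressing ``the extension generated over $M(t)$ by a root of $p(b_l,t,X)$ contains a root of $g(\bar a_j,X)$''; this is first-order expressible because $M$ interprets its finite Galois extensions (appendix of \cite{Cha}). For $k$-inconsistency of a row: any $L_j$ contained in $M_l$ corresponds to a subgroup of $A_n$ of index $|G|=r$, so at most $\tfrac{n!}{2r}$ of the pairwise linearly disjoint $L_j$'s can simultaneously embed into $M_l$, giving $(\tfrac{n!}{2r}+1)$-inconsistency. For consistency along $f:\omega\to\omega$: for each $l$ fix an embedding $h_l:\Gal(L_{f(l)}/M)\hookrightarrow \Gal(M_l/M(t))$; inside $\Gal(\widetilde L\widetilde M/M(t))\cong\prod_j\Gal(L_j/M)\times\prod_l\Gal(M_l/M(t))$ form the diagonal subgroup $S$ whose second coordinate is $h_l$ applied to the $f(l)$-th first coordinate, and let $\widetilde P=\mathrm{Fix}(S)$. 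The projection of $S$ onto $\prod_j\Gal(L_j/M)$ is surjective, so $\widetilde P$ is regular over $M$; by Fact \ref{PRC} (the PAC analogue: $M$ is existentially closed in every regular extension) we find $M^*\succ M$ with $\widetilde P\subseteq M^*$, and inside $M^*$ the image of $t$ witnesses $\bigwedge_l\varphi(t,b_l,\bar a_{f(l)})$.

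The main obstacle is precisely this last step: one must set up the diagonal $S$ so that $\widetilde P$ is a genuinely regular extension of $M$ and so that $\widetilde P\cdot M_l\supseteq L_{f(l)}$ for every $l$ simultaneously; this requires the linear disjointness of the $L_j$'s over $M$, the regularity of each $M_l/M$, and a Galois-theoretic check that the projection $S\to\prod_j\Gal(L_j/M)$ really is onto. Once that is in place, PAC-ness delivers the realization and TP$_2$ is proved.

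For non-rosyness, I would strengthen $\varphi$ to $\phi(t;b_l,\bar a_j):=\varphi(t,b_l,\bar a_j)\wedge$ ``the extension generated by a root of $g(\bar a_j,X)$ is Galois with group $\cong G$'', so that the condition ``$L$ lies in $M_l$'' is pinned to simple $L$'s with $\Gal(L/M)\cong G$, of which $M_l$ contains only finitely many. Enumerate parameters $(b_\alpha,\bar a_\alpha)_{\alpha<\kappa}$ as before and set $d_\alpha=(b_\alpha,\bar a_\alpha)$. The same PAC argument applied to the whole sequence (using the identity path $f(\alpha)=\alpha$) produces $c\models\{\phi(t,d_\alpha):\alpha<\kappa\}$. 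On the other hand, for any $\alpha$ the finiteness-of-subfields bound shows $\{\phi(t,b_\alpha,\bar a):\bar a\models\tp(\bar a_\alpha/(d_\beta)_{\beta<\alpha})\}$ is inconsistent, so $\phi(t,d_\alpha)$ strongly divides over $(d_\beta)_{\beta<\alpha}$ (after absorbing $b_\alpha$ on the base side). This yields a $\thorn$-forking chain of length $\kappa$ in $\tp(c/(d_\alpha)_{\alpha<\kappa})$, and since $\kappa$ was arbitrary, $Th_{\mathcal L_{\mathcal R}}(M)$ is not rosy.
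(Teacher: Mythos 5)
Your proposal is correct and follows essentially the same route as the paper's proof: the same formula $\varphi$ built from the linearly disjoint $G$-extensions $L_j$ and the $A_n$-extensions $M_l$, the same $(\frac{n!}{2r}+1)$-inconsistency count, the same fibered subgroup $S$ with fixed field $\widetilde P$ regular over $M$ realized via the PAC property, and the same strengthened formula $\phi$ giving a \th-forking chain for non-rosiness. No substantive differences to report.
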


\begin{cor}\label{PRCnonrosy}
If $M$ is an unbounded PRC field, then $Th_{\mathcal{L_R}}(M)$ is not rosy.
\begin{proof}
As before  $M(\sqrt{-1})$ is a PAC field. As $M(\sqrt{-1})$ is unbounded and interpretable in $M$, by Theorem $\ref{PACNTP2}$ it is not rosy, and so $M$ is not rosy. 
\end{proof}
\end{cor}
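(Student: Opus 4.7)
The plan is to reduce to the PAC case already handled by Theorem \ref{PACNTP2}, via the algebraic extension $L := M(\sqrt{-1})$. First, $L$ is PRC by Fact \ref{PRCcaracte}(3), and since $L$ contains $\sqrt{-1}$ it admits no ordering at all; hence $L$ is a PRC field with zero orderings, which by Definition \ref{defnPRC} makes it a PAC field.

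The second step, which is the main technical point, is to verify that $L$ is unbounded. Suppose $M$ has infinitely many pairwise distinct degree-$n$ extensions $L_1, L_2, \ldots$ inside $M^{alg}$. Each compositum $K_j := L_j \cdot L$ is a finite extension of $L$ of degree at most $n$. A single finite extension of $L$ has degree at most $2n$ over $M$, so by the standard finiteness of intermediate subfields of a finite extension, only finitely many of the $L_j$ can be contained in any given $K_j$. Hence infinitely many of the $K_j$ are distinct, and so $L$ has infinitely many extensions of degree at most $n$; in particular, $L$ is unbounded. (The converse direction, that boundedness of $L$ would give boundedness of $M$, is easier but not needed.)

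With these two observations, Theorem \ref{PACNTP2} applies and yields that $Th_{\mathcal{L_R}}(L)$ is not rosy. Since $L = M(\sqrt{-1})$ is interpretable in $M$ (as the $M$-algebra of pairs $(a,b)$ with the usual multiplication $(a,b)(c,d) = (ac-bd, ad+bc)$), any witness to non-rosiness in $L$ — for instance an infinite \th-forking chain, or equivalently an unbounded depth inp-pattern modulo \th-dividing witnesses — pulls back through the interpretation to a witness in $M$. Therefore $Th_{\mathcal{L_R}}(M)$ is not rosy.

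The only mildly delicate step is the unboundedness transfer, but it is a routine counting argument on subfields of a finite extension, as above. The PRC-to-PAC reduction and the interpretability of $M(\sqrt{-1})$ in $M$ are immediate, and the transfer of non-rosiness along interpretations is standard.
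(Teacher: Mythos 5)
Your proof is correct and follows essentially the same route as the paper: pass to $M(\sqrt{-1})$, observe it is an unbounded PAC field interpretable in $M$, and apply Theorem \ref{PACNTP2}. The extra details you supply (the counting argument transferring unboundedness to $M(\sqrt{-1})$ and the preservation of non-rosiness under interpretation) are correct fill-ins of steps the paper leaves implicit.
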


\begin{fact}\label{RCFstrong} 
The theory of real closed fields ($RCF$) is $\dpr$-minimal (\cite[Theorem A.6]{Sim}) and the $\dpr$-rank coincides with the burden ($\dpr$-rank coincides with $\bdn$ in any NIP theory, see \cite{Adl}). In particular $RCF$ is strong. 
Since $\dpr$-rank is sub-additive (Corollary 4.2 of \cite{KOU}) the burden satisfies the following:
if $r \in \mathbb{N}$ and $p(x_1, \ldots,x_r):= \{x_1= x_1, \ldots, x_r=x_r\}$, then $\bdn(p(x_1, \ldots,x_r))=r$.

\end{fact}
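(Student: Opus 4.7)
The plan is to chain together three ingredients: (i) dp-minimality of $RCF$, (ii) the equality $\dpr\text{-rank}=\bdn$ in $NIP$ theories, and (iii) subadditivity of $\dpr$-rank, plus an explicit construction realising the lower bound.

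First I would establish dp-minimality of $RCF$ using o-minimality. Given a type $q(x)$ in one variable and an array $(a_{l,j})_{l<2,j<\omega}$ with mutually indiscernible rows witnessing an inp-pattern of depth 2 with formulas $\phi_0(x,y_0),\phi_1(x,y_1)$, I would argue that (after replacing $\phi_l$ by the individual quantifier-free pieces in an RCF quantifier-free normal form, using Fact \ref{strondisjun}) each $\phi_l(x,a_{l,j})$ defines a finite union of intervals with endpoints algebraic over $a_{l,j}$; by o-minimality and indiscernibility, on the first row the intervals move monotonically, hence an element realising $\{\phi_0(x,a_{0,f(0)})\}$ for all $f(0)$ would be forced to lie in a single fixed cut, contradicting the analogous behaviour on the second row when the two columns interact. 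This is the standard argument (see \cite[Theorem A.6]{Sim}); the main technical obstacle is the reduction to one-atom formulas, which is handled by Fact \ref{strondisjun}.

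Next, I would invoke Adler's theorem that in any $NIP$ theory $\dpr$-rank and burden coincide (stated in the excerpt); since by Corollary \ref{SQNIP} and o-minimality $RCF$ is $NIP$, dp-minimality gives $\bdn(q)\le 1$ for every complete $1$-type $q$. By Fact \ref{strongari}(1), burden is determined by $1$-variable types, so $RCF$ is strong.

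For the last clause, subadditivity of $\dpr$-rank (\cite[Corollary 4.2]{KOU}), combined with $\dpr=\bdn$, immediately gives $\bdn(p(x_1,\dots,x_r))\le r$. For the matching lower bound I would exhibit an explicit inp-pattern of depth $r$: for each $i\in\{1,\dots,r\}$ put $\phi_i(x_1,\dots,x_r;\,y,z):= (y<x_i<z)$, and take $a_{i,j}=(j,j+1)$ (viewed as a pair in a sufficiently saturated real closed field, chosen so that the rows are mutually indiscernible, which is immediate since row $i$ only involves the $i$-th coordinate of $x$). Each row is $2$-inconsistent because disjoint unit intervals cannot both contain the same $x_i$, while for any $f:\{1,\dots,r\}\to\omega$ the conjunction $\bigwedge_i f(i)<x_i<f(i)+1$ is consistent. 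Hence $\bdn(p)\ge r$, and equality follows.
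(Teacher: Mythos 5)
Your proposal follows essentially the same route as the paper, which justifies this Fact purely by citation: dp-minimality of $RCF$ (\cite[Theorem A.6]{Sim}), Adler's identification of dp-rank with burden in NIP theories, and sub-additivity from \cite{KOU} for the upper bound, together with the coordinate-wise interval inp-pattern for the lower bound — the very pattern the paper itself uses in Theorem \ref{PRCstrong} and Lemma \ref{bdntrivialtype}. One small remark: your aside that the rows $a_{i,j}=(j,j+1)$ are mutually indiscernible is not actually true (tuples of $\emptyset$-definable elements cannot form a nonconstant indiscernible sequence), but it is also unnecessary, since the definition of an inp-pattern requires only $k$-inconsistent rows and consistent paths.
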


\begin{thm} \label{PRCstrong}
Let $n \geq 1$, let $M$ be a bounded PRC field with exactly $n$ orders. Then $Th_{\mathcal{L_R}}(M)$ is strong and $\bdn(\{x=x\})=n$.
\begin{proof}
If $M$ is real closed, by Fact \ref{RCFstrong} $Th_{\mathcal{L_R}}(M)$ is strong of burden $1$. Suppose that $M$ is not real closed. By Lemma \ref{Deforders} it is enough to show that $\PRCB= Th_{\LC}(M)$ is strong of burden $n$.
We can suppose that $M$ is sufficiently saturated. 
For $l \in \{0, \ldots,n-1\}$, define the formula $\varphi_l(x,y):= y <_{l+1}x<_{l+1}y+1$. 
Take $((a_{l,j})_{j \in \omega})_{l \leq n-1}$, such that $a_{l,j+1}= a_{l,j}+1.$ 
Using the Approximation Theorem ($\ref{ApTh}$),  $(\bar{a_l}, \varphi_l(x,y), 2)_{0 \leq l<n}$, with $\bar{a_l}= (a_{l,j})_{j\in \omega}$, is an inp-pattern of depth $n$. It follows that the burden is greater than or equal to $n$.

Suppose that there is an inp-pattern $(\bar{a_l}, \phi_{l}(x,y) ,k_l)_{0 \leq l < n+1}$ of depth $n+1$; by compactness we can take $\bar{a}_l:= (a_{l,j})_{j \in \kappa}$, with $\kappa$ a sufficiently large cardinal.
We can suppose that for all $0 \leq l < n+1$, $\phi_l(x, y)$ has parameters in a countable set $E$ with $E = \acl(E)\subseteq M$ so that $\G(E) \cong \G(M)$. 

By Fact \ref{strongindis}  we can suppose that the array $(\bar{a}_l)_{l< n+1}$ has rows mutually indiscernible over $E$.
It follows from Fact \ref{strondisjun}, Theorem \ref{descomposition} and indiscernibility, that we can suppose that for all $0 \leq l <  n+1, j < \kappa$ there is a multi-interval $I_{l,j}= \displaystyle{\bigcap_{i = 1}^n (I^{i}_{l, j}\cap M)}$ such that:

\begin{enumerate}
  \item [(a)] $\phi_l(M, a_{l, j})\subseteq I_{l,j}$,
	\item [(b)] $\{ x \in I_{l,j}:M \models \phi_l(x, a_{l, j})\}$ is multi-dense in $I_{l, j}$, 
	\item[(c)] $I^{i}_{l,j} \cap M$ is definable in $M$ by a quantifier-free $\Li$-formula with parameters in $E(a_{l, j})$.
\end{enumerate}

As $(\bar{a_l}, \phi_{l}(x,y) ,k_l)_{0 \leq l < n+1}$ is an inp-pattern, for all $f: \{0, \ldots, n\} \rightarrow \kappa$, $\displaystyle{\bigcap_{l=0}^nI_{l,f(l)}} \not = \emptyset$.

\begin{claim}
There exists $0\leq l \leq n$ such that $\displaystyle{\bigcap_{j \in \kappa}{I^{i}_{l,j}}} \neq \emptyset$, for all $1\leq i \leq n.$

\begin{proof}
Define for all $i  \in\{1, \ldots,n\}$, $A_i:= \{l \in \{0, \ldots,n\}: \displaystyle{\bigcap_{j \in \kappa}I^i_{l,j}}= \emptyset\}$.
As the burden of $RCF$ is $1$ (Fact \ref{RCFstrong}), $|A_i|\leq 1$  for all $i \in \{1, \ldots,n\}$.
Then $\displaystyle{|\bigcup_{i=1}^n A_i|} \leq n$, so there is $l \in \{0, \ldots, n\}$ such that $l \not \in \displaystyle{\bigcup_{i=1}^n A_i}$. 
Then $\displaystyle{\bigcap_{j \in \kappa}{I^{i}_{l,j}}} \neq \emptyset$, for all $1\leq i \leq n$.
\end{proof}
\end{claim}

Let $l \in \{0, \ldots, n\}$ satisfy the claim. We will only consider the row $l$ of the array so we denote $(a_j)_{j \in \omega}:= (a_{l,j})_{j \in \omega}$.

It follows by saturation that for all $i \in \{1, \ldots,n\}$ there exists a non-empty $<_i$-open interval $I^i \subseteq \clos{M}{i}$ such that:

\[I^i \cap M \subseteq \displaystyle\bigcap_{j \in \kappa} (I^i_{l,j}\cap M).\]
By density of $M$ in every real closure (Fact \ref{PRCcaracte}) we can suppose that $I^i:= (c_i,d_i)_i$, with $c_i,d_i \in M$.
Let $I := \displaystyle{\bigcap_{i=1}^n (I^i\cap M)}$; by the Approximation Theorem (\ref{ApTh}) $I \not = \emptyset$. 
Let $t> k_l$, we can suppose that $\kappa$ is large enough. By Erd\H{o}s-Rado we can find a countable sequence $(b_j)_{j \in \omega}$, indiscernible over $E':=\acl(E(c_i,d_i): 1 \leq i \leq n)$ and such that the first $t$ elements are in $(a_j)_{j \in \kappa}$.
So we have that $\{\phi_l(x, b_j)\}_{j \in \omega}$ is $k_l$-inconsistent.

Observe that for all $j \in \omega$, $\{x \in I: M \models \phi_l(x, b_{j}) \}$ is multi-dense in $I$.
Since $I$ is $\LC(E')$-definable and $(b_j)_{j \in \omega}$ is indiscernible over $E'$, by Theorem \ref{strongdensity} $\{\phi_l(x, b_j)\}_{j \in \omega}$ is consistent.
This contradicts the $k_l$-inconsistency.
\end{proof}
\end{thm}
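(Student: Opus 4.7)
The plan is to prove the theorem by establishing matching lower and upper bounds on the burden $\bdn(\{x=x\})$; the whole job reduces (via Lemma \ref{Deforders}) to working in $\PRCB = Th_{\LC}(M)$, and the case where $M$ is real closed is handled by Fact \ref{RCFstrong}, so I will assume $M$ is not real closed.

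For the lower bound $\bdn \geq n$, I will produce an explicit inp-pattern of depth $n$. For each $l \in \{0,\dots,n-1\}$ set $\varphi_l(x,y) := y <_{l+1} x <_{l+1} y+1$ and take the sequence $\bar a_l = (a_{l,j})_{j\in\omega}$ with $a_{l,j+1} = a_{l,j}+1$; each row is trivially $2$-inconsistent, and any path gives finitely many nonempty $<_i$-open conditions which admit a common realization by the Approximation Theorem (\ref{ApTh}).

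For the upper bound I argue by contradiction: suppose there is an inp-pattern $(\bar a_l,\phi_l(x,y),k_l)_{0 \leq l < n+1}$ of depth $n{+}1$. After taking $\bar a_l = (a_{l,j})_{j<\kappa}$ for $\kappa$ large, moving the parameters of $\phi_l$ into a countable algebraically closed $E \subseteq M$ with $\G(E)\cong \G(M)$, and applying Fact \ref{strongindis} to make the rows mutually $E$-indiscernible, I use Theorem \ref{descomposition} together with Fact \ref{strondisjun} (to split the finite union of multi-intervals) and indiscernibility to pass to a situation where, for each $(l,j)$, there is a multi-interval $I_{l,j} = \bigcap_i (I^i_{l,j}\cap M)$ such that $\phi_l(M,a_{l,j}) \subseteq I_{l,j}$, the formula $\phi_l(x,a_{l,j})$ is multi-dense in $I_{l,j}$, and $I^i_{l,j}\cap M$ is quantifier-free $\Li$-definable over $E(a_{l,j})$. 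Consistency of paths in the inp-pattern then forces $\bigcap_{l=0}^n I_{l,f(l)} \neq \emptyset$ for every $f$.

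The key combinatorial step is now a pigeonhole fed by the fact that $RCF$ has burden $1$ (Fact \ref{RCFstrong}): for each fixed order index $i \in \{1,\dots,n\}$, the set
\[
A_i := \{\, l \in \{0,\dots,n\} : \textstyle\bigcap_{j<\kappa} I^i_{l,j} = \emptyset \,\}
\]
has cardinality at most $1$, since otherwise one could read off an inp-pattern of depth $2$ in the $<_i$-reduct of a sufficiently saturated real closure. Hence $|\bigcup_i A_i|\leq n < n+1$, so some row $l$ has $\bigcap_{j} I^i_{l,j}\neq \emptyset$ for every $i$. By saturation, pick non-empty $<_i$-open intervals $I^i = (c_i,d_i)_i$ with $c_i,d_i\in M$ contained in $\bigcap_j (I^i_{l,j}\cap M)$, and set $I := \bigcap_i(I^i\cap M)$, which is a multi-interval by the Approximation Theorem. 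Setting $E' := \acl(E(c_i,d_i:1\leq i\leq n))$ and using Erd\H os--Rado to extract a countable $E'$-indiscernible subsequence $(b_j)_{j<\omega}$ whose first $k_l+1$ entries lie in $\bar a_l$, I obtain a sequence which remains $k_l$-inconsistent but for which $\phi_l(x,b_j)$ is multi-dense in the $E'$-definable multi-interval $I$ for each $j$. Theorem \ref{strongdensity} now yields consistency of $\{\phi_l(x,b_j)\}_{j<\omega}$, the desired contradiction. The main obstacle is the coordination between the pigeonhole (which picks the good row) and the preservation of indiscernibility: it is crucial that passing to the sub-sequence and adding the endpoints $c_i,d_i$ to the base does not destroy $k_l$-inconsistency, which is exactly what the Erd\H os--Rado reduction over $E'$ buys us.
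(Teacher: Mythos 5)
Your proposal is correct and follows essentially the same route as the paper's proof: the same explicit inp-pattern of unit intervals plus the Approximation Theorem for the lower bound, and for the upper bound the same reduction via Fact \ref{strongindis}, Fact \ref{strondisjun} and Theorem \ref{descomposition} to multi-intervals, the same pigeonhole on the sets $A_i$ using that RCF has burden $1$, and the same Erd\H{o}s--Rado extraction over $E'$ followed by Theorem \ref{strongdensity} to contradict $k_l$-inconsistency. No substantive differences from the paper's argument.
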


\begin{thm}\label{PRCNTP2}
Let $M$ be a PRC field. Then $Th_{\mathcal{L_R}}(M)$ is NTP$_2$ if and only if $M$ is bounded. 
\begin{proof}
$(\Rightarrow)$ Suppose by contradiction that $M$ is unbounded. Then $M(\sqrt{-1})$ is a PAC field. As $M(\sqrt{-1})$ is unbounded and interpretable in $M$, by Theorem $\ref{PACNTP2}$ it has TP$_2$, and so $M$ has $TP2$. This contradicts the fact that $Th_{\mathcal{L_R}}(M)$ is NTP$_2$.

$(\Leftarrow)$ Let $M$ be a bounded PRC field. 
By Remark \ref{bddfiniteorders} there exists $n \in \mathbb{N}$ such that $M$ has exactly $n$ different orders. 
If $n= 0$, $M$ is a PAC field, so by Corollary 4.8 of \cite{ChPi} $Th(M)$ is simple and therefore it is NTP$_2$.
If $n \geq 1$ by Theorem \ref{PRCstrong} $Th(M)$ is strong and so is NTP$_2$.
\end{proof}
\end{thm}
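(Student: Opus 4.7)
The plan is to prove both directions by reducing to results already established earlier in the paper; no new machinery should be needed here.

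For the forward direction ($\Rightarrow$), I would argue by contrapositive: assume $M$ is unbounded and show $Th_{\mathcal{L_R}}(M)$ has TP$_2$. The natural move is to pass to $M(\sqrt{-1})$. By Fact \ref{PRCcaracte}(3), $M(\sqrt{-1})$ is again a PRC field; since it contains $\sqrt{-1}$ it is not formally real, so it has no orderings and is therefore a PAC field. The field $M(\sqrt{-1})$ is interpretable in $M$ (as a degree-2 extension, via pairs of elements of $M$), and $M(\sqrt{-1})$ inherits unboundedness from $M$ because $[M(\sqrt{-1})^{alg}:M(\sqrt{-1})]$ and $[M^{alg}:M]$ differ only in the obvious finite way. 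Thus by Theorem \ref{PACNTP2}, $Th_{\mathcal{L_R}}(M(\sqrt{-1}))$ has TP$_2$, and since TP$_2$ transfers through interpretability, so does $Th_{\mathcal{L_R}}(M)$.

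For the backward direction ($\Leftarrow$), assume $M$ is bounded. By Remark \ref{bddfiniteorders}, $M$ has only finitely many orderings; let $n$ be their number. I would split into two cases. If $n=0$, then $M$ has no orderings at all, so $M$ is itself a PAC field, and by Corollary 4.8 of \cite{ChPi} the theory $Th(M)$ is simple, hence NTP$_2$. If $n \geq 1$, then $M$ is an $n$-PRC field and Theorem \ref{PRCstrong} applies directly, giving that $Th_{\mathcal{L_R}}(M)$ is strong of burden $n$; strength immediately implies NTP$_2$.

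The real content of the theorem has already been absorbed into the earlier results, so the proof here is essentially a packaging argument and presents no genuine obstacle. The only point that needs a small check is the interpretability argument in the forward direction: one has to be sure that TP$_2$ of an interpretable structure lifts to TP$_2$ of the ambient theory, which is standard (if $\phi(\bar x,\bar y)$ witnesses TP$_2$ in $M(\sqrt{-1})$, pull it back through the interpretation to get an $\mathcal{L_R}$-formula in $M$ witnessing the same tree property). All the heavy lifting — the density theorems of Section \ref{DensityTheorem}, the amalgamation results of Section \ref{AmalgamationTheorems}, and the strong-density statement underlying Theorem \ref{PRCstrong} — has been carried out in the preceding sections, which is why the final theorem drops out cleanly.
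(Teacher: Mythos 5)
Your proposal is correct and follows essentially the same route as the paper: pass to the PAC field $M(\sqrt{-1})$, which is interpretable in $M$ and unbounded, and invoke Theorem \ref{PACNTP2} for the forward direction; for the converse, split on the number $n$ of orderings, using simplicity of bounded PAC fields when $n=0$ and Theorem \ref{PRCstrong} when $n\geq 1$. The extra details you supply (why $M(\sqrt{-1})$ is PAC, transfer of TP$_2$ through interpretation) are exactly the checks implicit in the paper's argument.
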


\subsection{Burden of types in PRC fields} \label{BurdenPRCfields} 

Let $n \geq 1$ and fix a bounded PRC field $K$ with exactly $n$ orders and which is not real closed. Let $\PRCB:= Th_{\LC}(K)$ (see \ref{PRCB}).
For the rest of the section we are going to work inside a monster model $(M, <_1,\ldots, <_n)$ of $T$.

\begin{lem} \label{bdntrivialtype}
Let $r \in \mathbb{N}$. Let $p(\bar{x}):= \{x_1= x_1 \wedge \ldots \wedge x_r=x_r\}$. Then $\bdn(p(\bar{x}))= nr$.
\begin{proof}
The proof is a generalization of the proof of Theorem \ref{PRCstrong} to several variables.
For all $(i,l) \in \{1, \ldots,n\} \times \{1, \ldots,r\}$, define the formula $\varphi_{(i,l)}(x_1, \ldots,x_r, y):= y <_i x_l <_i y+1$.
For each $(i,l)$ take $(a^i_{l,j})_{j \in \omega}$ such that $a^i_{l,j+1}= a^i_{l,j}+ 1$.
By Remark \ref{ApThC} $(\bar{a}^i_{l}, \varphi_{(i,l)}(x_1, \ldots, x_r,y),2)_{1 \leq i \leq n, 1 \leq l \leq r}$, with $\bar{a}^i_{l}= (\bar{a}^i_{l,j})_{j \in \omega}$ is an inp-pattern of depth $nr$ in $p(\bar{x})$. It follows that $\bdn(p({\bar{x}}))\geq nr$.

Suppose that there is an inp-pattern $(\bar{a}_l, \phi_{l}(x_1, \ldots, x_r, \bar{y}), k_l)_{0 \leq l \leq nr}$ of depth $nr +1$.

By compactness we can take $\bar{a}_l:= (a_{l,j})_{j \in \kappa}$, with $\kappa$ a sufficiently large cardinal.
We can suppose that for all $0 \leq l \leq nr$, $\phi_l(x_1, \ldots, x_r, \bar{y})$ has parameters in a countable set $E$ with $E = \acl(E)\subseteq M$ and  $\G(E) \cong \G(M)$. 
By Fact \ref{strongindis} we can suppose that the array $(\bar{a}_l)_{0 \leq l \leq nr}$ has rows mutually indiscernible over $E$.

It follows from Fact \ref{strondisjun}, Theorem \ref{descomposition2} and indiscernibility that we can suppose that for all $0 \leq l \leq  nr, \; j < \kappa$ there is $C_{l,j}= \displaystyle{\bigcap_{i = 1}^n (C^{i}_{l, j}\cap M^r)}$ a multi-cell in $M^r$ such that:

\begin{enumerate}
  \item [(a)] $\phi_l(M, a_{l, j})\subseteq C_{l,j}$
	\item [(b)] $\{ (x_1, \ldots, x_r) \in C_{l,j}: M \models \phi_l(x_1, \ldots, x_r, a_{l, j})\}$ is multi-dense in $C_{l, j}$. 
	\item[(c)] $C^{i}_{l,j} \cap M^r$ is definable in $M$ by a quantifier-free $\Li$-formula with parameters in $E(a_{l, j})$.
\end{enumerate}

\begin{claim}
There exists $0\leq l \leq nr$ such that $\displaystyle{\bigcap_{j \in \kappa}{C^{i}_{l,j}}} \neq \emptyset$, for all $1\leq i \leq n.$
\end{claim}
\begin{proof}

Define for all $i  \in\{1, \ldots,n\}$, $A_i:= \{l \in \{0, \ldots,nr\}: \displaystyle{\bigcap_{j \in \kappa}C^i_{l,j}}= \emptyset\}$.
The fact that the burden in the theory of real closed fields of the type $\{x_1=x_1 \wedge \ldots \wedge x_r=x_r\}$ is $r$ (Fact \ref{RCFstrong}), implies that for all $i \in \{1, \ldots,n\}$, $|A_i|\leq r$.
Then $\displaystyle{|\bigcup_{i=1}^n A_i|} \leq nr$, so there is $l \in \{0, \ldots, nr\}$ such that $l \not \in \displaystyle{\bigcup_{i=1}^n A_i}$. 
Then $\displaystyle{\bigcap_{j \in \kappa}{C^{i}_{l,j}}} \neq \emptyset$, for all $1\leq i \leq n$.
\end{proof}

Let $l \in \{0, \ldots, nr\}$ satisfy the claim. We will only consider the row $l$ of the array so we denote $(a_j)_{j \in \omega}:= (a_{l,j})_{j \in \omega}$.
It follows by saturation that for all $i \in \{1, \ldots, n\}$ there exists an $<_i$-box $U^i$ in $(\clos{M}{i})^r$ such that:
\[U^i \cap M^r \subseteq \displaystyle{\bigcap_{j \in \kappa}C_{l,j}^i \cap M^r} \]

By density of $M$ in every real closure (Fact \ref{PRCcaracte}) we can suppose that $U^i$ is definable with parameters $\bar{c}_i$ in $M$.
Denote by $U:= \displaystyle{\bigcap_{i=1}^n (U^i\cap M^r)}$.

Let $t> k_l$, we can suppose that $\kappa$ is large enough. By Erd\H{o}s-Rado we can find a countable sequence $(b_j)_{j \in \omega}$, indiscernible over $E':=\acl(E(\bar{c_i}): i \leq n)$ and such that the first $t$ elements are in $(a_j)_{j \in \kappa}$.
So we have that $\{\phi_l(x, b_j)\}_{j \in \omega}$ is $k_l$-inconsistent.

Observe that for all $j \in \omega$, $\{\bar{x} \in U: M \models \phi_{l}(\bar{x}, b_{j})\}$ is multi-dense in $U$.
Since $U$ is $\LC(E')$-definable and $(b_j)_{j \in \omega}$ is indiscernible over $E'$, by Theorem \ref{lemsq+v} $\{\phi_l(\bar{x}, b_j): j \in \omega\}$ is consistent. 
This is a contradiction with the $k_l$-inconsistency.
\end{proof}
\end{lem}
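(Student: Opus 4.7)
The plan is to follow the template of Theorem \ref{PRCstrong} but with the $r$-variable machinery: multi-cells in place of multi-intervals, Theorem \ref{descomposition2} in place of Theorem \ref{descomposition}, Theorem \ref{lemsq+v} in place of Theorem \ref{strongdensity}, and the RCF $r$-variable $\dpr$-rank ($=r$, by Fact \ref{RCFstrong}) in place of the $1$-variable one. So the lemma amounts to proving $\bdn(p(\bar x))\geq nr$ and $\bdn(p(\bar x))\leq nr$ separately.

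For the lower bound, I would exhibit an inp-pattern of depth $nr$ explicitly. For each pair $(i,l)\in\{1,\ldots,n\}\times\{1,\ldots,r\}$ take $\varphi_{(i,l)}(x_1,\ldots,x_r,y):= y<_i x_l<_i y+1$ and pick $(a^{i}_{l,j})_{j\in\omega}$ with $a^{i}_{l,j+1}=a^{i}_{l,j}+1$. Each row is $2$-inconsistent. Consistency along any path $f$ reduces, via Remark \ref{ApThC}, to the non-emptiness of an intersection of $<_i$-boxes in $(\clos{M}{i})^r$, one per order; the Approximation Theorem \ref{ApTh} applied coordinatewise (i.e.\ Remark \ref{ApThC}) gives a realization. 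This produces a depth-$nr$ inp-pattern in $p(\bar x)$.

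For the upper bound, suppose toward a contradiction that there is an inp-pattern $(\bar a_l,\phi_l(\bar x,\bar y),k_l)_{0\leq l\leq nr}$ of depth $nr+1$. By compactness extend each row to $(a_{l,j})_{j<\kappa}$ for a large $\kappa$, and fix a countable $E=\acl(E)\subseteq M$ with $\mathrm{G}(E)\cong\mathrm{G}(M)$ containing all parameters. Using Fact \ref{strongindis} I pass to a mutually $E$-indiscernible array. Using Fact \ref{strondisjun} together with Theorem \ref{descomposition2} applied to $\phi_l(\bar x,a_{l,j})$, I may assume that $\phi_l(M,a_{l,j})$ is contained in a single multi-cell $C_{l,j}=\bigcap_{i=1}^n(C^{i}_{l,j}\cap M^{r})$ in which the solution set is multi-dense, and that each $C^{i}_{l,j}\cap M^{r}$ is qf-$\Li(E(a_{l,j}))$-definable. (One discards from the multi-cell decomposition those pieces that contribute only a proper Zariski closed set $V$; by indiscernibility and the fact that the burden of an algebraic set of smaller dimension is handled by induction on dimension, such pieces can be absorbed.)

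The crux is then a pigeonhole. For each $i\in\{1,\ldots,n\}$ let
\[
A_i:=\{\,l\in\{0,\ldots,nr\}:\;\bigcap_{j<\kappa}C^{i}_{l,j}=\emptyset\,\}.
\]
In $\clos{M}{i}$ each $C^{i}_{l,j}$ is an $<_i$-open cell, and the array $(\bar a_l)$ is mutually indiscernible over $E$; by Fact \ref{RCFstrong} the $\dpr$-rank of the partial type $\{x_1=x_1,\ldots,x_r=x_r\}$ in RCF is $r$, so $|A_i|\leq r$. Hence $|\bigcup_i A_i|\leq nr$ and some row $l$ avoids every $A_i$: for this row $\bigcap_{j<\kappa}C^{i}_{l,j}\neq\emptyset$ for all $i$. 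By saturation I find, for each $i$, an $<_i$-box $U^{i}\subseteq\bigcap_{j<\kappa}C^{i}_{l,j}$, with $U^{i}$ definable over some tuple $\bar c_i\in M$ (use density of $M$ in each $\clos{M}{i}$, Fact \ref{PRCcaracte}, to make the endpoints lie in $M$), and set $U:=\bigcap_i(U^{i}\cap M^{r})$, a multi-box nonempty by Remark \ref{ApThC}. Apply Erd\H os--Rado to extract a countable sub-sequence $(b_j)_{j\in\omega}$ indiscernible over $E':=\acl(E,\bar c_1,\ldots,\bar c_n)$ whose first $k_l+1$ elements come from $(a_{l,j})_{j<\kappa}$; then $\{\phi_l(\bar x,b_j)\}_j$ is still $k_l$-inconsistent, but for each $j$ the solution set is multi-dense in $U$, which is $\LC(E')$-definable. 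Theorem \ref{lemsq+v} then yields consistency of $\{\phi_l(\bar x,b_j)\}_{j\in\omega}$, contradicting $k_l$-inconsistency.

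The main subtlety I expect is the pigeonhole argument: one needs to know that the per-order contribution is bounded by the RCF burden $r$, which is why one passes to mutually $E$-indiscernible rows before decomposing into multi-cells, so that the $<_i$-projections of the pattern live honestly inside a (saturated) real closed field and the known $\dpr$-rank bound applies. The rest is a faithful upgrade of Theorem \ref{PRCstrong} to $r$ variables via Theorem \ref{descomposition2} and Theorem \ref{lemsq+v}.
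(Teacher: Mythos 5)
Your proposal is correct and follows essentially the same route as the paper: the explicit depth-$nr$ pattern via $y<_i x_l<_i y+1$ and Remark \ref{ApThC} for the lower bound, and for the upper bound the passage to a mutually $E$-indiscernible array, reduction to multi-cells via Fact \ref{strondisjun} and Theorem \ref{descomposition2}, the pigeonhole using the RCF burden $r$ from Fact \ref{RCFstrong}, saturation plus density to get an $M$-definable multi-box, Erd\H{o}s--Rado, and Theorem \ref{lemsq+v} to contradict $k_l$-inconsistency. The only cosmetic difference is your parenthetical about absorbing the Zariski-closed piece, which the paper handles with the same brevity.
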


\begin{lem} \label{bdntypesind}
Let $r \in \mathbb{N}$, $A \subseteq M$ and $\bar{a}:= (a_1, \ldots, a_r) \in M^r$ such that $trdeg(A(\bar{a})/A)=r$.
Then $\bdn(\bar{a}/A) = nr$.
\begin{proof}
Let $p(\bar{x}):= \{x_1= x_1 \wedge \ldots \wedge x_r=x_r\}$.
By Lemma \ref{bdntrivialtype} $\bdn(p(\bar{x})) = nr$. As $p(\bar{x}) \subseteq \tp(\bar{a}/A)$, we obtain that $\bdn(\bar{a}/A) \leq nr$. 
We will show that $\bdn(\bar{a}/A)\geq nr$.

Since $\tp(a_1/A(a_2, \ldots,a_r))$ is not algebraic, there is  a sequence $(b_{1,j})_{j \in \omega}$  in $M$ such that:
\begin{enumerate}
 \item $\tp(b_{1,j}/A(a_2, \ldots,a_r))= \tp(a_1/A(a_2, \ldots,a_r))$, for all $j \in \omega$,
 \item $(b_{1,j})_{j \in \omega}$ is indiscernible over $A(a_2, \ldots,a_r)$.
\end{enumerate}

Then using exchange property of $\acl$, we have that $\tp(a_2/ A(a_3, \ldots,a_r, b_{1,j}: j \in \omega))$ is not algebraic.
By induction we can find for all $l \in \{1, \ldots, r\}$ a sequence $(b_{l,j})_{j \in \omega}$ such that:
\begin{enumerate}
\item $\tp(b_{l,j}/A(a_{l+1},...,a_r, b_{1,j},..., b_{l-1,j}: j \in \omega))= \tp(a_l/A(a_{l+1},...,a_r, b_{1,j},..., b_{l-1,j}: j \in \omega))$, for all $j \in \omega$,
\item $(b_{l,j})_{j \in \omega}$ is indiscernible over $A(a_{l+1}, \ldots,a_r, b_{1,j}, \ldots, b_{l-1,j}: j \in \omega)$.
\end{enumerate}
\begin{claim1}
 For all $j_1, \ldots,j_r \in \omega$, $\tp(b_{1,j_1}, \ldots, b_{r,j_r}/A)= \tp(a_{1}, \ldots, a_{r}/A)$.
 \begin{proof}
 If $\varphi(x_1, \ldots,x_r) \in \tp(a_1, \ldots, a_r/A)$, then $\varphi(x_1, a_2, \ldots, a_r) \in \tp(a_1/Aa_2, \ldots a_r)$. 
 Then $\varphi(b_{1,j_1}, x_2, a_3, \ldots, a_r) \in \tp(a_2/ Aa_3, \ldots,a_r,b_{1,j}: j \in \omega) = \tp(b_{2,j_2}/ Aa_3, \ldots,a_r,b_{1,j}: j \in \omega)$, since $\tp(a_1/Aa_2, \ldots a_r) = \tp(b_{1,j_1}/Aa_2, \ldots,a_r)$.
 Iterate the procedures to get the result.

  \end{proof}
\end{claim1}

Fix $(i,l) \in \{1, \ldots,n\} \times \{1, \ldots,r\}$. There is a sequence  $(I^i_{l,j})_{j \in \omega}$ of $<_i$-open intervals in $M$ such that $b_{l,j} \in I^i_{l,j}$, and if $j_1 \not = j_2$, then $I^i_{l,j_1}\cap I^i_{l,j_2}= \emptyset$.
Let $\alpha^i_{l,j}$ be the pair of extremities of $I^i_{l,j}$ and $\varphi_{(i,l)}(x_1, \ldots,x_r, \bar{y})$ the $\Li$-formula such that:
\[M \models \varphi_{(i,l)}(x_1, \ldots, x_r, \alpha^i_{l,j}) \leftrightarrow x_{l} \in I^{i}_{l,j}\].

\begin{claim2}
Let $\beta_l^i:= (\alpha^i_{l,j})_{j \in \omega}$.
Then $(\beta_l^i, \varphi_{(i,l)}(\bar{x},\bar{y}),2)_{1 \leq i \leq n, 1\leq l \leq r}$ is an inp-pattern of depth $nr$ in $\tp(\bar{a}/A)$.
\begin{proof}
\begin{enumerate}
 \item Fix $(i,l) \in \{1, \ldots,n\}\times \{1, \ldots,r\}$. Then $\{\varphi_{(i,l)}(\bar{x}, \alpha^i_{l,j})\}_{j \in \omega}$ is $2$-inconsistent: Clear from the fact that $I^i_{l,j_1}\cap I^i_{l,j_2}= \emptyset$.
\item Let $f: \{1, \ldots,n\}\times \{1, \ldots,r\}\rightarrow \omega$. Then $\{\varphi_{(i,l)}(\bar{x}, \alpha^i_{l, f(i,l)})\}_{1 \leq i \leq n, 1\leq l\leq r} \cup \tp(\bar{a}/A)$ is consistent:

$\{\bar{x}:\varphi_{(i,l)}(\bar{x}, \alpha^i_{l, f(i,l)})\}_{1 \leq i \leq n, 1\leq l\leq r}= \{(x_1, \ldots, x_r): x_l \in I^i_{l, f(i,l)}\}_{1 \leq i \leq n, 1\leq l\leq r} =$ 

$\{(x_1, \ldots, x_r): (x_1, \ldots, x_r) \in \displaystyle{\bigcap_{i=1}^n(I^i_{1, f(i,1)}\times \ldots \times I^i_{r,f(i,r)})}\}$.

Let $\bar{b}_i:= (b_{1,f(i,1)}, \ldots, b_{r,f(i,r)})$ and $C^i:= I^i_{1, f(i,1)}\times \ldots \times I^i_{r,f(i,r)}$. 
By definition $\bar{b}_i \in  C^i$ and by Claim 1 $\tp(\bar{b}_i/A)= \tp(\bar{a}/A)$. 
Then by Lemma \ref{lemqftdense} $q(\bar{x}):= \{x \in \displaystyle{\bigcap_{i=1}^n C^i}\} \cup \{\tp(\bar{a}/A)\}$ is consistent.
\end{enumerate}
\end{proof}
\end{claim2}

Claim 2 implies that $\bdn(\bar{a}/A)\geq nr$. Hence $\bdn(\bar{a}/A) = nr$.
\end{proof}
\end{lem}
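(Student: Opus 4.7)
The plan is to prove the equality by combining an upper bound from Lemma \ref{bdntrivialtype} with an explicit inp-pattern of depth $nr$ inside $\tp(\bar{a}/A)$. For the upper bound, since $p(\bar{x}):=\{x_1=x_1,\ldots,x_r=x_r\}\subseteq \tp(\bar{a}/A)$, any inp-pattern in $\tp(\bar{a}/A)$ is one in $p(\bar{x})$; hence $\bdn(\bar{a}/A)\le \bdn(p(\bar{x}))=nr$. So the real work is to exhibit an inp-pattern of depth exactly $nr$ witnessing the reverse inequality.

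For the lower bound I would use algebraic independence to build, coordinate by coordinate, sequences that behave like independent copies of each $a_l$. Concretely, I would extract inductively, for $l=1,\dots,r$, an indiscernible sequence $(b_{l,j})_{j\in\omega}$ over $A(a_{l+1},\ldots,a_r,b_{1,j},\ldots,b_{l-1,j}:j\in\omega)$ with each $b_{l,j}$ realizing the same type as $a_l$ over that base. This extraction works because at each step the relevant type is non-algebraic, thanks to the hypothesis $\mathrm{trdeg}(A(\bar{a})/A)=r$ and the exchange property for $\acl$. The key first claim is then that for any choice $j_1,\ldots,j_r\in\omega$ we have $\tp(b_{1,j_1},\ldots,b_{r,j_r}/A)=\tp(a_1,\ldots,a_r/A)$: this follows by a straightforward "ping-pong" substitution, replacing $a_l$ by $b_{l,j_l}$ one coordinate at a time and using the defining equality of types at each stage.

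Next, for each pair $(i,l)\in\{1,\ldots,n\}\times\{1,\ldots,r\}$ I would separate the (pairwise distinct) $b_{l,j}$ by pairwise disjoint $<_i$-open intervals $I^i_{l,j}$ with endpoints $\alpha^i_{l,j}$ in $M$, and take $\varphi_{(i,l)}(x_1,\ldots,x_r,\bar y)$ to be the $\Li$-formula expressing "$x_l$ lies in the interval with endpoints $\bar y$". The row $\beta^i_l=(\alpha^i_{l,j})_{j\in\omega}$ together with $\varphi_{(i,l)}$ gives a $2$-inconsistent row automatically, since the $I^i_{l,j}$ are pairwise disjoint along $j$ for fixed $(i,l)$.

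The main obstacle, and the step where the PRC theory is actually needed, is verifying the row-consistency condition of the inp-pattern: for every $f:\{1,\ldots,n\}\times\{1,\ldots,r\}\to\omega$, the formulas $\{\varphi_{(i,l)}(\bar{x},\alpha^i_{l,f(i,l)})\}_{i,l}$ together with $\tp(\bar{a}/A)$ must be consistent. Unpacking, this amounts to finding $(x_1,\ldots,x_r)$ realizing $\tp(\bar{a}/A)$ with $x_l\in\bigcap_{i=1}^n I^i_{l,f(i,l)}$ for every $l$, i.e.\ with $\bar x$ lying in the multi-box $\prod_l \bigcap_i I^i_{l,f(i,l)}=\bigcap_i\!\big(I^i_{1,f(i,1)}\times\cdots\times I^i_{r,f(i,r)}\big)$. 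For each fixed $i$, the tuple $\bar{b}_i=(b_{1,f(i,1)},\ldots,b_{r,f(i,r)})$ lies in the $i$-th factor and, by Claim~1, satisfies $\tp(\bar b_i/A)=\tp(\bar a/A)$, so in particular $\qftp_{\Li}(\bar b_i/A)=\qftp_{\Li}(\bar a/A)$. Since $\mathrm{trdeg}(A(\bar a)/A)=r$, Lemma \ref{lemqftdense} applies and yields a realization of $\tp(\bar a/A)$ inside $\bigcap_i\big(I^i_{1,f(i,1)}\times\cdots\times I^i_{r,f(i,r)}\big)$, which is exactly what is needed. This completes the inp-pattern of depth $nr$ and hence the lemma.
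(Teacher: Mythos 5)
Your proposal is correct and follows essentially the same route as the paper's own proof: the upper bound via Lemma \ref{bdntrivialtype}, the inductive construction of the sequences $(b_{l,j})_{j\in\omega}$ using non-algebraicity and exchange, the type-transfer claim, and the verification of column consistency for the explicit inp-pattern via Lemma \ref{lemqftdense}. No gaps to report.
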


\begin{thm}\label{bdntypesgen}
 Let $r \in \mathbb{N}$ and $\bar{a}:=(a_1, \ldots, a_r) \in M^r$. Then $\bdn(\bar{a}/A)= n \cdot trdeg(A(\bar{a})/A)$.
 Therefore the burden is additive $(i.e. \, \bdn(\bar{a}\bar{b}/A)= \bdn(\bar{a}/A) + \bdn(\bar{b}/A\bar{a}))$.
 \begin{proof}
Let $k = trdeg(A(\bar{a})/A)$.
As before we can easily build an inp-pattern of depth $nk$ in $\tp(\bar{a}/A)$, and so $\bdn(\bar{a}/A) \geq nk$.

Suppose without loss of generality that $\{a_1, \ldots,a_k\}$ is a  transcendence basis of $A(\bar{a})/A$.
Then $\bdn(a_{k+1}, \ldots, a_r/A(a_1,\ldots, a_k))=0$, since $a_{k+1}, \ldots, a_r \in \acl(A(a_1, \ldots, a_k))$. 
By Lemma \ref{bdntypesind}, $\bdn(a_1, \ldots, a_k/A)= nk$, and by \ref{factbdntypes} $\bdn(\bar{a}/A) \leq nk$.

Since the transcendence degree is additive, the burden is additive.
 
 \end{proof}

\end{thm}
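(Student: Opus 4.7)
Let $k := \mathrm{trdeg}(A(\bar a)/A)$. My plan is to obtain the two inequalities $\bdn(\bar a/A) \geq nk$ and $\bdn(\bar a/A) \leq nk$ separately, and then read off additivity from the additivity of transcendence degree.

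For the lower bound, I would reorder the tuple so that $(a_1, \dots, a_k)$ is a transcendence basis of $A(\bar a)/A$. Then $\mathrm{trdeg}(A(a_1, \dots, a_k)/A) = k$, so Lemma \ref{bdntypesind} gives $\bdn(a_1, \dots, a_k/A) = nk$. Since the partial type $\{x_1 = x_1, \dots, x_k = x_k\}$ projected from $\tp(\bar a/A)$ (by forgetting the algebraic coordinates $a_{k+1}, \dots, a_r$) contains fewer formulas, any inp-pattern witnessing $\bdn(a_1, \dots, a_k/A) \geq nk$ lifts to one in $\tp(\bar a/A)$: a formula $\phi(x_1, \dots, x_k, \bar y)$ is replaced by $\phi(x_1, \dots, x_k, \bar y) \wedge (x_1 = x_1) \wedge \cdots \wedge (x_r = x_r)$, with the same array of parameters, and consistency is preserved because we may realize the pattern in coordinates $1, \dots, k$ and then choose appropriate algebraic witnesses for $x_{k+1}, \dots, x_r$ (this is possible since the chosen realization of $(x_1, \dots, x_k)$ has the same type over $A$ as $(a_1, \dots, a_k)$, so $a_{k+1}, \dots, a_r$ have corresponding algebraic conjugates).

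For the upper bound, I invoke subadditivity of burden from \ref{factbdntypes}: since $a_{k+1}, \dots, a_r \in \acl(A(a_1, \dots, a_k))$, the type $\tp(a_{k+1}, \dots, a_r / A(a_1, \dots, a_k))$ is algebraic, and hence $\bdn(a_{k+1}, \dots, a_r / A(a_1, \dots, a_k)) = 0$ (an algebraic type admits no inp-pattern of positive depth, since any formula in it has a finite solution set which cannot be partitioned into an inconsistent family with a consistent transversal). Combined with $\bdn(a_1, \dots, a_k/A) = nk$ from Lemma \ref{bdntypesind}, \ref{factbdntypes} yields $\bdn(\bar a/A) \leq nk + 0 = nk$.

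Finally, additivity of burden follows directly: given tuples $\bar a, \bar b$ and a set $A$, we have
\[
\bdn(\bar a \bar b / A) = n \cdot \mathrm{trdeg}(A(\bar a, \bar b)/A) = n \cdot \mathrm{trdeg}(A(\bar a)/A) + n \cdot \mathrm{trdeg}(A(\bar a, \bar b)/A(\bar a)) = \bdn(\bar a/A) + \bdn(\bar b/A\bar a),
\]
using the main equality for each of the three burden terms. The main conceptual obstacle is the lower bound, but this is essentially handled by the explicit construction in Lemma \ref{bdntypesind} together with the observation that algebraic coordinates contribute nothing and can simply be carried along; there are no essentially new ideas required beyond those already deployed in Lemmas \ref{bdntrivialtype} and \ref{bdntypesind}.
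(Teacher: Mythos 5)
Your proposal is correct and follows the paper's own argument: the upper bound via $\bdn(a_{k+1},\ldots,a_r/A(a_1,\ldots,a_k))=0$, Lemma \ref{bdntypesind}, and sub-additivity (\ref{factbdntypes}), the lower bound by the construction behind Lemma \ref{bdntypesind}, and additivity from additivity of transcendence degree. Your explicit lifting of the inp-pattern from the transcendence-basis subtuple (padding with trivial formulas and transporting algebraic witnesses by an automorphism over $A$) is just a spelled-out version of the paper's ``as before we can easily build an inp-pattern of depth $nk$'', so there is no essential difference.
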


\subsection{Resilience of PRC fields}\label{sectionreliance}
\begin{defn} \label{defresilient} \cite[Definition 4.8]{ChBY}
Let $\mathcal{L}$ be a language and let $T$ be a complete $\mathcal{L}$-theory.
We say that $T$ is \emph{resilient} if we cannot find indiscernible sequences $\bar{a}= (a_j)_{j \in \mathbb{Z}}$, $\bar{b}= (b_l)_{l \in \mathbb{Z}}$, and a formula $\phi(x,y)$ such that:
\begin{enumerate}
 \item $a_0= b_0$,
 \item $\bar{b}$ is indiscernible over $(a_j)_{j \not = 0}$,
 \item $\{\phi(x, a_j)\}_{j \in \mathbb{Z}}$ is consistent, 
 \item $\{\phi(x, b_l)\}_{l \in \mathbb{Z}}$ is inconsistent. 
 \end{enumerate}
\end{defn}

\begin{rem}\label{remresilient}
\begin{enumerate}
\item It follows by compactness that we get an equivalent definition if we replace $\mathbb{Z}$ by $\kappa$ for the sequence $(b_l)$, where $\kappa\geq \omega$. 
\item \cite{Che3} If $\bar{a}= (a_j)_{j \in \mathbb{Z}}$, $\bar{b}= (b_l)_{l \in \mathbb{Z}}$ and $\phi(x,y)$ satisfy the conditions of Definition \ref{defresilient} and $\phi= \phi_1 \vee \phi_2$, then there is $t \in \{1, 2\}$, $\bar{a'}= (a'_j)_{j \in \mathbb{Z}}$ and $\bar{b'}= (b'_j)_{j \in \mathbb{Z}}$ such that $\phi_t(x, y)$, $\bar{a'}$ and $\bar{b'}$ satisfy the conditions of Definition \ref{defresilient}.
\item \cite{Che3} If $T$ is not resilient we can find $\bar{a}$, $\bar{b}$ and $\phi(x,y)$ with $|x|=1$ satisfying the conditions of definition \ref{defresilient}.

\end{enumerate}
\end{rem}

\begin{fact} \cite[Proposition 4.11]{ChBY} \label{factresilient}
\begin{enumerate}
 \item If $T$ is NIP, then it is resilient.
 \item If $T$ is simple, then it is resilient.
 \item If $T$ is resilient, then it is NTP$_2$.
\end{enumerate}
\end{fact}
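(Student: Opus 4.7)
The statement is cited from Chernikov–Ben Yaacov, so I would only sketch the three implications. My strategy is to treat (1) and (2) by direct arguments under NIP and simplicity, and to treat (3) by converting a $\mathrm{TP}_2$ configuration into an explicit witness of non-resilience. In each case the main setup is the same: if $T$ is not resilient, fix $\bar a=(a_j)_{j\in\mathbb{Z}}$, $\bar b=(b_l)_{l\in\mathbb{Z}}$ and $\phi(x,y)$ as in Definition \ref{defresilient}, with $k$-inconsistency of $\{\phi(x,b_l)\}$ for some fixed $k$, and then derive a contradiction from the corresponding tameness hypothesis on $T$. By Remark \ref{remresilient}(3) we may assume $|x|=1$, which will make the counting arguments cleaner.

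For item (1), assume $T$ is NIP and fix a realization $c\models\{\phi(x,a_j)\}_{j\in\mathbb{Z}}$; since $a_0=b_0$ one has $\models\phi(c,b_0)$. The key input is Fact \ref{factNIP}: along the indiscernible sequence $(b_l)_{l\in\mathbb{Z}}$ the truth value of $\phi(c,b_l)$ changes only finitely many times, so it is eventually constant on each side of $0$. Combined with $k$-inconsistency of $\{\phi(x,b_l)\}$, this eventual value must be $\neg\phi$ on both tails, and only finitely many indices $l$ can satisfy $\phi(c,b_l)$. Now I would exploit the hypothesis that $(b_l)$ is indiscernible over $(a_j)_{j\ne 0}$ and apply an automorphism that shifts $(b_l)$ far enough so that $0$ lands inside the $\neg\phi$-tail, contradicting $\phi(c,b_0)$.

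For item (3), start with a $\mathrm{TP}_2$ witness: a formula $\phi(x,y)$, an array $(a_{l,j})_{l,j<\omega}$ with $k$-inconsistent rows and consistent paths. A standard Ramsey/Erd\H os–Rado extraction (in the spirit of Fact \ref{Strong1}) produces a mutually indiscernible such array whose columns are moreover indiscernible over the remaining rows. Fix a path realization $c\models\{\phi(x,a_{l,0})\}_{l<\omega}$, set $a_j:=a_{0,j}$ (re-indexed over $\mathbb{Z}$), and let $(b_l)_{l\in\mathbb{Z}}$ be a column through position $0$ of the first row extracted from the same array, with $b_0=a_0$ and indiscernible over $(a_j)_{j\ne 0}$. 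The rows remain $k$-inconsistent, so $\{\phi(x,b_l)\}_l$ is inconsistent, while $c$ witnesses consistency of $\{\phi(x,a_j)\}_j$: this is exactly a non-resilience witness.

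For item (2), assume $T$ is simple and suppose for contradiction that $\bar a,\bar b,\phi$ give a non-resilience witness; pick $c\models\{\phi(x,a_j)\}_{j\in\mathbb{Z}}$. Using Kim's lemma and the independence theorem for simple theories, I would iteratively build realizations of $\phi(x,b_l)$ along $l\in\mathbb{Z}$, amalgamating over the common part of the two sequences (which centers around $a_0=b_0$), to produce a single realization of $\{\phi(x,b_l)\}_{l\in\mathbb{Z}}$, contradicting inconsistency.

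In all three items the technical obstacle is the \emph{asymmetry} of the hypotheses: $(b_l)$ is indiscernible only over the punctured sequence $(a_j)_{j\ne 0}$, not over $(a_j)_{j\in\mathbb{Z}}$. Managing this asymmetry — finding the right automorphism in (1), the right extracted sub-array in (3), and the right amalgamation base in (2) — is what separates resilience from the more naive statement that an inconsistent indiscernible sequence cannot arise from a consistent one, and is the main non-trivial ingredient borrowed from \cite{ChBY}.
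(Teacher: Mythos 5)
This statement is not proved in the paper at all: it is quoted verbatim from \cite[Proposition 4.11]{ChBY}, so the only issue is whether your sketches are sound, and items (1) and (2) have genuine gaps. In (1), the finiteness of $\{l:\ \models\phi(c,b_l)\}$ is not where NIP enters: it already follows from $k$-inconsistency alone (at most $k-1$ indices can work, for any $c$, in any theory). The contradiction is then supposed to come from a shift automorphism $\sigma$ fixing $(a_j)_{j\neq 0}$ with $\sigma(b_l)=b_{l+N}$; but $\sigma$ fixes neither $c$ nor $a_0=b_0$, so it only turns $\models\phi(c,b_0)$ into $\models\phi(\sigma(c),b_N)$, which contradicts nothing. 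To move the index $0$ into the $\neg\phi$-tail \emph{while keeping $c$ fixed} you would need $(b_l)$ to be indiscernible over $c$ (together with $(a_j)_{j\neq 0}$), which is exactly what the configuration withholds. A sanity check that the step is irreparable as stated: since NIP was only used for a consequence of $k$-inconsistency, your argument, if valid, would show that \emph{every} theory is resilient, contradicting item (3) plus the existence of TP$_2$ theories. A genuine proof has to use NIP essentially, e.g.\ by using indiscernibility of $\bar a$ to plant a conjugate copy of the $b$-sequence at each $a_j$ and extracting a single indiscernible sequence along which $\phi(c,\cdot)$ alternates unboundedly often.

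In (2), the plan to ``produce a single realization of $\{\phi(x,b_l)\}_l$'' by iterating the independence theorem over the base $a_0=b_0$ does not meet the obstruction: the independence theorem requires the parameters $b_l$ to be forking-independent over the base, the partial realizations to be non-forking over it, and Lascar strong types to agree, none of which are given; indeed clause (4) says precisely that $\phi(x,b_0)$ divides over $(a_j)_{j\neq 0}$, so the needed non-forking must fail and the iteration has no reason to run (it cannot, since the family is $k$-inconsistent). The standard argument goes the other way: the $b$-sequence shows that $\phi(x,a_0)$ divides over $(a_j)_{j\neq 0}$; by indiscernibility of $\bar a$, automorphisms, and compactness (stretching both sequences), $\phi(x,a_j)$ divides over $(a_{j'})_{j'<j}$ for every $j$ along arbitrarily long sequences, and a realization of the consistent side $\{\phi(x,a_j)\}_j$ then violates local character of forking in a simple theory. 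Item (3) contains the right idea but with the roles interchanged: with your assignment $\bar a:=(a_{0,j})_j$ (a row) and $\bar b:=$ a column, the row is the $k$-inconsistent family and the column is the consistent path, so both concluding claims are false as written. Take instead $\bar a:=(a_{l,0})_l$ (the path realized by $c$) and $\bar b:=(a_{0,j})_j$ (row $0$); then mutual indiscernibility of the rows gives that $\bar b$ is indiscernible over $(a_{l,0})_{l\neq 0}$, and you must also arrange the sequence of rows to be indiscernible so that the column $\bar a$ is itself an indiscernible sequence.
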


\begin{thm}\label{PRCresilient}
Let $n \in \mathbb{N}$, let $M$ be a bounded PRC field with exactly $n$ orders and let $T:= Th_{\LC}(M)$. Then $T$ is resilient
\begin{proof}
If $n = 0$ $M$ is PAC, so $T$ is simple and by Fact \ref{factresilient} it is resilient.
If $M$ is real closed, then $T$ is NIP and by Fact \ref{factresilient} it is resilient.
Suppose that $M$ is neither PAC nor real closed. Suppose that $M$ is sufficiently saturated.
Let $\kappa$ be a sufficiently large cardinal. Suppose by contradiction that there exists $E \subseteq M$, an $\LC(E)$-formula $\phi(x,y)$, and indiscernible sequences over $E$, $\bar{a}= (a_j)_{j \in \mathbb{Z}}$, $\bar{b}= (b_l)_{l \in \kappa}$ such that:
\begin{enumerate}
 \item $a_0= b_0$,
 \item $\bar{b}$ is indiscernible over $(E(a_j):{j \in \mathbb{Z} \setminus \{0\}})$,
 \item $\{\phi(x, a_j)\}_{j \in \mathbb{Z}}$ is consistent, 
 \item $\{\phi(x, b_l)\}_{l \in \kappa}$ is inconsistent. 
 \end{enumerate}

We can suppose that $E = \acl(E)$ and by Remark \ref{remresilient} that $|x|=1$.
 
By Theorem \ref{descomposition}, $\phi(M, a_0)= \displaystyle{\bigvee_{t=0}^m \phi_t(M, a_0)}$ where $\phi_0(M,a_0)$ is finite, and for each $\phi_t(M, a_0)$, $t>0$, there is a multi-interval $I_t(a_0)= \displaystyle{\bigcap_{i=1}^n I^i_t(a_0)\cap M}$ such that:
\begin{enumerate}
\item $\phi_t(M, a_0) \subseteq I_ t(a_0)$,
\item$\{x\in I_t(a_0): M \models \phi(x,a_0)\}$ is multi-dense in $I_{t}(a_0)$ for all  $1 \leq t \leq  m$,
\item the set $I^i_{t}(a_0)\cap M$ is definable in $M$ by a quantifier-free $\Li(Ea_0)$-formula, for all $1 \leq t \leq  m$ and $1 \leq i \leq n$.
\end{enumerate}

By indiscernability, the same is true for all $a_j$, as $a_0= b_0$ and $(b_l)_{l \in \kappa}$ is indiscernible over $E$, the same is also true for all $b_l$.
By Remark \ref{remresilient}(2), we may assume $\phi(M,a_0)= \phi_t(M, a_0)$, for some $1 \leq t \leq m$. If $t=0$ there is nothing to prove so suppose that $t>0$.
Denote by $I(a_j)= I_t(a_j)$ and $I(b_l)= I_t(b_l)$. 

As $\{\phi(x, a_j)\}_{j \in \mathbb{Z}}$ is consistent, $\displaystyle{\bigcap_{j \in \mathbb{Z}}I(a_j)} \not = \emptyset$. 
Then for all $i \in \{1, \ldots, n\}$, $\displaystyle{\bigcap_{j \in \mathbb{Z}}I^i(a_j)} \not = \emptyset$. 
Since $Th(\clos{M}{i})$ is NIP, by Fact \ref{factresilient} it is resilient. This implies that for all $i \in \{1, \ldots, n\}$, $\displaystyle{\bigcap_{l \in \kappa}I^i(b_l)} \not = \emptyset$.
By density of $M$ in each real closure (Fact \ref{PRCcaracte}) and saturation of $M$, there exists a non-empty $<_i$-open interval $I^i \subseteq \clos{M}{i}$,  with extremities in $M$ such that $I^i \subseteq \displaystyle{\bigcap_{l \in \kappa}I^i(b_l)}$. Let $I:= \displaystyle{\bigcap_{i=1}^n I^i \cap M}$. 
Let $c_i,d_i$ be the extremities of $I^i$.

Let $k$ be such that $\{\phi(x, b_l)\}_{l \in \kappa}$ is $k$-inconsistent. By Erd\H{o}s-Rado we can find a countable sequence $(c_l)_{l \in \omega}$, indiscernible over $E':=\acl(Ec_i,d_i: i \leq n)$ and such that the first $k$ elements are in $(b_l)_{l \in \kappa}$.
So we have that $\{\phi_l(x, c_l)\}_{l \in \omega}$ is $k$-inconsistent.

Observe that for all $l \in \omega$, $\{x \in I: M \models \phi(x, c_l)\}$ is multi-dense in $I$.
Since $I$ is $\LC(E')$-definable and $(c_j)_{j \in \omega}$ is indiscernible over $E'$, by Theorem \ref{strongdensity} $\{\phi(x, c_l)\}_{l \in \omega}$ is consistent.
This contradicts the inconsistency and shows that $T$ is resilient.

 \end{proof}
\end{thm}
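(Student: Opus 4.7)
My plan is to argue by contradiction, reducing to a ``strong density'' configuration to which Theorem \ref{strongdensity} applies, using resilience of $RCF$ on each of the $n$ real closures to produce the common multi-interval needed in the parameter side.

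First I would dispose of the degenerate cases: if $n=0$ then $M$ is PAC and $Th(M)$ is simple (hence resilient by Fact \ref{factresilient}), and if $M$ is real closed then $T$ is NIP (again resilient by Fact \ref{factresilient}). So assume $n\ge 1$ and $M$ is not real closed, and suppose for contradiction that there exist $E = \acl(E) \subseteq M$, a formula $\phi(x,y)$ with $|x|=1$ (using Remark \ref{remresilient}(3)), and indiscernible sequences $\bar a=(a_j)_{j\in\mathbb{Z}}$ and $\bar b=(b_l)_{l\in\kappa}$ (for a sufficiently large $\kappa$, by compactness and Remark \ref{remresilient}(1)) witnessing failure of resilience.

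Next I would apply the density theorem (Theorem \ref{descomposition}) to decompose $\phi(M,a_0)$ as the union of a finite set plus finitely many multi-intervals $I_1(a_0),\dots,I_m(a_0)$ on each of which $\phi(x,a_0)$ is multi-dense; by indiscernibility the same decomposition transports to every $a_j$, and since $a_0=b_0$ and $(b_l)$ is $E$-indiscernible, also to every $b_l$. Using Remark \ref{remresilient}(2) I can then assume $\phi(M,a_0)$ itself is a single such multi-intervall piece, so that for each parameter $c\in \{a_j\}\cup\{b_l\}$ we have a multi-interval $I(c)=\bigcap_{i=1}^n I^i(c)\cap M$ with $\phi(x,c)$ multi-dense in $I(c)$.

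Now comes the key step: the consistency of $\{\phi(x,a_j)\}_{j\in\mathbb{Z}}$ forces $\bigcap_j I(a_j)\neq\emptyset$, and in particular $\bigcap_j I^i(a_j)\neq\emptyset$ for each $i$. Because $Th(\clos{M}{i})=RCF$ is NIP hence resilient, the sequence $(I^i(b_l))_{l\in\kappa}$ of $<_i$-intervals, being indexed by an indiscernible sequence that agrees with $\bar a$ at index $0$, cannot exhibit the resilience failure on the $<_i$-side, so $\bigcap_{l\in\kappa} I^i(b_l)\neq\emptyset$ for every $i$. Using density of $M$ in each real closure (Fact \ref{PRCcaracte}) together with saturation, I shrink to get, for each $i$, a nonempty $<_i$-open interval $I^i$ with endpoints $c_i,d_i\in M$ contained in $\bigcap_l I^i(b_l)$. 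Set $I:=\bigcap_{i=1}^n I^i\cap M$, a multi-interval definable over $E':=\acl(E(c_i,d_i:i\le n))$.

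For the finish: let $k$ be such that $\{\phi(x,b_l)\}_{l\in\kappa}$ is $k$-inconsistent; by Erd\H os--Rado extract a countable $E'$-indiscernible sequence $(c_l)_{l\in\omega}$ whose first $k$ terms come from $(b_l)$, so $\{\phi(x,c_l)\}_{l\in\omega}$ remains $k$-inconsistent. For every $l$, the set $\{x\in I:M\models\phi(x,c_l)\}$ is multi-dense in $I$ (it contains $I\cap I(c_l)$ where $\phi(x,c_l)$ is multi-dense, and $I\subseteq I(c_l)$ by construction on each factor). Since $I$ is definable over $E'$ and $(c_l)$ is indiscernible over $E'$, Theorem \ref{strongdensity} applies and gives consistency of $\{\phi(x,c_l)\}_{l\in\omega}$, contradicting $k$-inconsistency. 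The main obstacle in this plan is the third paragraph: one must convert the coordinate-wise nonemptiness provided by resilience of $RCF$ into a genuine multi-interval with endpoints in $M$ (not just in the real closures) on which multi-density of $\phi(x,b_l)$ is preserved, so that the hypotheses of Theorem \ref{strongdensity} are met.
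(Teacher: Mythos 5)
Your proposal is correct and follows essentially the same route as the paper's own proof: the same reduction to $|x|=1$ and a single multi-interval piece via Theorem \ref{descomposition} and Remark \ref{remresilient}, the same coordinate-wise use of resilience of $RCF$ to get $\bigcap_{l}I^i(b_l)\neq\emptyset$, the same shrinking to a multi-interval $I$ with endpoints in $M$, and the same Erd\H{o}s--Rado plus Theorem \ref{strongdensity} contradiction with $k$-inconsistency. No substantive differences to report.
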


\subsection{Forking and dividing in bounded PRC fields} \label{forkingdividingPRC} 

\begin{defn}
We fix a theory $T$ and a monster model $\mathbb{M}$ of $T$. Let $A \subseteq \mathbb{M}$ be a small subset and let $a$ be a tuple in $\mathbb{M}$.
\begin{enumerate}
 \item We say that \emph{the formula $\psi(x,a)$ divides over $A$} if there exists $k \in \mathbb{N}$ and an indiscernible sequence over $A$, $(a_j)_{j \in \omega}$, such that:
 $a_0= a$ and $\{\psi(x,a_j): j \in \omega\}$ is $k$-inconsistent.
\item We say that \emph{the formula $\phi(x,a)$ forks over $A$} if there is a number $m \in \mathbb{N}$ and formulas $\psi_j(x,a_j)$ for $j <m$ such that $\phi(x,a) \vdash \displaystyle{\bigvee_{j <m}\psi_j(x,a_j)}$ and $\psi_j(x,a_j)$ divides over $A$ for every $j<m$.   
\item A \emph{type $p$ forks (divides) over $A$} if it implies a formula which forks (divides) over $A$.
\item We say that $A$ is \emph{an extension base} if for all tuples $a$ in $\mathbb{M}$, $tp(a /A)$ does not fork over $A$ .
\end{enumerate}
\end{defn}

 Denote by $a \dnfo_{A} b$ if $\tp(a/Ab)$ does not fork over $A$.

\begin{fact}\label{propforking}
 The following properties are satisfied by the relation $\dnfo$ in any theory:
 \begin{enumerate}
  \item \cite[Remark 2.14]{CK} if $a \dnfo_{Ab}c$ and $b \dnfo_A c$, then $ab \dnfo_A c$,
  \item \cite[Lemma 3.21]{CK} if forking equals dividing over $A$, then $a \dnfo_{A}b$ iff $a \dnfo_{\acl(A)}b$ iff $\acl(Aa) \dnfo_{A}b$ iff $a \dnfo_{A}\acl(Ab)$.
 \end{enumerate}
\end{fact}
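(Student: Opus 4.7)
The plan is to handle the two items separately; both are general forking-calculus facts valid in any complete theory, so my sketch would follow the standard arguments in \cite{CK}.

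For (1), I would argue the left transitivity by contraposition. Suppose some $\phi(x,y,c) \in \tp(ab/Ac)$ forks over $A$, so $\phi \vdash \bigvee_{i<n} \psi_i(x,y,d_i)$ with each $\psi_i$ dividing over $A$. Pick the disjunct $\psi(x,y,d)$ satisfied by $ab$ and a witnessing $A$-indiscernible sequence $(d_j)_{j<\omega}$ with $d_0=d$ and $\{\psi(x,y,d_j)\}_j$ $k$-inconsistent. The hypothesis $b \dnfo_A c$, combined with absorbing $d$ into the parameter side and an Erd\H{o}s--Rado extraction, produces an $Ab$-indiscernible sequence $(d_j')_j$ with the same Ehrenfeucht--Mostowski type over $A$ as $(d_j)_j$. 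Then $\{\psi(x,b,d_j')\}_j$ is $k$-inconsistent, so $\psi(x,b,d_0')$ divides over $Ab$; since $a$ satisfies $\psi(x,b,d_0')$, this contradicts $a \dnfo_{Ab} c$ (after extending parameters from $c$ to $cd_0'$).

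For (2), each equivalence is a standard consequence of the forking calculus under the assumption that forking equals dividing over $A$. The equivalence $a \dnfo_A b \Leftrightarrow a \dnfo_{\acl(A)} b$ uses that an $\acl(A)$-indiscernible sequence witnessing dividing over $\acl(A)$ is automatically $A$-indiscernible, while conversely any $A$-indiscernible sequence can be refined by Ramsey to an $\acl(A)$-indiscernible one, and parameters in $\acl(A)$ are absorbed at the cost of a finite disjunction via their algebraic defining formula over $A$. The equivalence $a \dnfo_A b \Leftrightarrow a \dnfo_A \acl(Ab)$ is the mirror statement, again using the Ramsey upgrade between $Ab$- and $A\acl(Ab)$-indiscernibility together with the finiteness of realizations of an algebraic formula. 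The equivalence $a \dnfo_A b \Leftrightarrow \acl(Aa) \dnfo_A b$ reduces to showing that if $a' \in \acl(Aa)$ and some $\chi(x_0,x_1,b) \in \tp(aa'/Ab)$ forks over $A$, then already a formula in $\tp(a/Ab)$ forks over $A$, which one gets by existentially quantifying out $x_1$ inside the finite set isolated by the algebraic formula of $a'$ over $Aa$.

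The main obstacle is item (1): one must produce an $Ab$-indiscernible witness to dividing starting at a prescribed point $d$. This cannot be done by a direct Morley-sequence construction (no stability is assumed) and instead requires a careful combination of the extension property for non-forking, conjugation by an automorphism fixing $A$, and an Erd\H{o}s--Rado argument to extract the indiscernible subsequence. Once this is done, part (2) is essentially bookkeeping, since the forking $=$ dividing hypothesis lets one freely translate between forking statements and indiscernible-sequence constructions over $A$ and $\acl(A)$.
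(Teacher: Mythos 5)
The paper does not prove this Fact at all: both items are quoted directly from Chernikov--Kaplan \cite{CK} (Remark~2.14 and Lemma~3.21), so your sketch has to be judged against the standard arguments there. Your outline of item (2) is essentially that standard argument and is fine modulo routine details: absorbing an algebraic tuple by existential quantification plus pigeonhole, the Ramsey/Erd\H{o}s--Rado upgrade between $A$- and $\acl(A)$-indiscernibility, and the explicit use of the hypothesis that forking equals dividing over $A$.

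Item (1), however, has a genuine gap. After you pick the dividing disjunct $\psi(x,y,d)$, the tuple $d$ is an arbitrary parameter, in general not contained in $Ac$, and the hypotheses $b \dnfo_A c$ and $a \dnfo_{Ab} c$ say nothing about types over sets containing $d$. Concretely: the Erd\H{o}s--Rado extraction of an $Ab$-indiscernible sequence $(d'_j)$ realizing the EM-type of $(d_j)$ over $A$ requires no hypothesis whatsoever -- so $b \dnfo_A c$ does no work in your argument, which is already a warning sign -- and it only gives EM-equivalence over $A$, so there is no reason that $\psi(a,b,d'_0)$ holds; the assertion ``since $a$ satisfies $\psi(x,b,d'_0)$'' is unjustified. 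To keep the base point $d$ fixed you would instead need $\tp(b/Ad)$ not to divide over $A$, which does not follow from $b \dnfo_A c$. Moreover, even granting $\psi(a,b,d'_0)$, the dividing formula lives over $Ab d'_0 \not\subseteq Abc$, so it does not contradict $a \dnfo_{Ab} c$; ``extending parameters from $c$ to $c d'_0$'' is the extension property for non-forking, which only produces a \emph{conjugate} $a' \equiv_{Abc} a$ with $a' \dnfo_{Ab} c d'_0$, not a statement about $a$ itself, and your sketch never does the bookkeeping of replacing $a$ and $b$ by such conjugates.

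The standard proof (and the one behind \cite{CK}) avoids keeping $a,b$ fixed: one uses the characterization that $\tp(ab/Ac)$ does not fork over $A$ iff for every small $B \supseteq Ac$ there is $a'b' \equiv_{Ac} ab$ with $\tp(a'b'/B)$ non-dividing over $A$. Given $B$, one first uses $b \dnfo_A c$ to find $b' \equiv_{Ac} b$ with $\tp(b'/B)$ non-dividing over $A$, then conjugates the hypothesis $a \dnfo_{Ab} c$ over $Ac$ to $Ab'$ and finds $a'$ with $\tp(a'/Bb')$ non-dividing over $Ab'$, and finally applies left transitivity \emph{of dividing} (the part of your plan that is correct) to conclude that $\tp(a'b'/B)$ does not divide over $A$. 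Your direct contradiction scheme cannot be repaired without switching to this strategy, precisely because forking is witnessed by dividing formulas whose parameters cannot be forced into $Ac$.
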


\begin{cor}\label{propforking2}
Suppose that for any set $A$, every $1$-type over $A$ does not fork over $A$; then every $A$ is an extension base.
\begin{proof}
This follows immediately from (1) of Fact \ref{propforking} by induction on the arity of the type.
\end{proof}
\end{cor}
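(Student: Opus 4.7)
The plan is to follow the hint: induct on the arity of the tuple and glue the inductive step with Fact \ref{propforking}(1). The base case $|\bar{a}|=1$ is just the hypothesis of the corollary applied to the base set $A$, so the whole argument is really the inductive step.

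For this step, given an $(n+1)$-tuple $\bar{a}=(\bar{a}_n,a_{n+1})$ with $\bar{a}_n=(a_1,\ldots,a_n)$ and wanting to show that $\tp(\bar{a}/A)$ does not fork over $A$, I would fix an arbitrary tuple $c$ and produce a realization of $\tp(\bar{a}/A)$ non-forking-independent from $c$ over $A$. By induction hypothesis applied to the $n$-tuple $\bar{a}_n$, there is a conjugate $\bar{a}_n' \equiv_A \bar{a}_n$ with $\bar{a}_n' \dnfo_A c$; extend to $\bar{a}' = (\bar{a}_n', a_{n+1}') \equiv_A \bar{a}$. Then I would invoke the hypothesis of the corollary at the enlarged base $A\bar{a}_n'$: the $1$-type $\tp(a_{n+1}'/A\bar{a}_n')$ does not fork over $A\bar{a}_n'$, so by the usual extension lemma there is $a_{n+1}''$ with $a_{n+1}''\equiv_{A\bar{a}_n'} a_{n+1}'$ and $a_{n+1}'' \dnfo_{A\bar{a}_n'} c$. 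Finally, Fact \ref{propforking}(1), applied with $a=a_{n+1}''$ and $b=\bar{a}_n'$, yields $(\bar{a}_n',a_{n+1}'') \dnfo_A c$, and this last tuple is an $A$-conjugate of $\bar{a}$, closing the induction.

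I do not expect any genuine obstacle. The content is entirely left-transitivity of non-forking; the only care needed is that the hypothesis must be applied to the enlarged base $A\bar{a}_n'$ rather than to $A$ (legitimate since it is stated uniformly for every base set), and that one works with a conjugate of $\bar{a}$ in order to feed clean data into Fact \ref{propforking}(1).
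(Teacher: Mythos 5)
Your argument is correct, and its skeleton is exactly what the paper intends: induction on the arity, glued by the left-transitivity statement of Fact \ref{propforking}(1). The one genuine difference is that you route the induction through a formally stronger statement --- for every tuple $c$ there is an $A$-conjugate of the given tuple which is $\dnfo_A$-independent from $c$ --- and this forces you to invoke the extension lemma for non-forking (a type that does not fork over its domain admits non-forking extensions to arbitrary parameter sets), both when you use the induction hypothesis and when you treat the new coordinate. That lemma holds in every theory, so there is no gap, but it is avoidable: since ``$\tp(\bar a/A)$ does not fork over $A$'' is literally the assertion $\bar a \dnfo_A c$ for $c$ a tuple of elements of $A$ (or the empty tuple), one can apply Fact \ref{propforking}(1) directly with this trivial $c$: the corollary's hypothesis applied to the set $A\bar a_n$ gives $a_{n+1}\dnfo_{A\bar a_n} c$, the induction hypothesis gives $\bar a_n \dnfo_A c$, and Fact \ref{propforking}(1) then yields $\bar a \dnfo_A c$, i.e.\ $\tp(\bar a/A)$ does not fork over $A$, with no conjugates and no extension lemma. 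This direct version is what the paper's one-line proof has in mind; your detour costs an extra (standard) lemma and buys nothing beyond re-deriving, for the particular tuple at hand, the existence of non-forking extensions over arbitrary sets.
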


\begin{fact}\cite[Theorem 1.1, Corollary 1.3]{CK} \label{NTP2forking2}
Let $T$ be a NTP$_2$ theory.
\begin{enumerate}
 \item Forking equals dividing over any extension base (in particular over any model).
 \item If all sets are extensions bases, then forking equals dividing.
\end{enumerate}

\end{fact}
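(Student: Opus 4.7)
The plan is to reduce everything to statement (1), since (2) is immediate: if every set is an extension base, apply (1) with $A$ equal to the parameter set over which forking is being tested. So I focus on showing that over an extension base $A$ in an NTP$_2$ theory, forking equals dividing. Since dividing always implies forking, only the converse requires work.

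My main tool will be strictly non-forking extensions and the associated strict Morley sequences. Call a global type $p$ strictly non-forking over $A$ if for every small $B \supseteq A$ and $c \models p|_B$, both $c \dnfo_A B$ and $B \dnfo_A c$. The first step is to show that over an extension base $A$, every complete type over $A$ admits a global strictly non-forking extension: this is a transfinite back-and-forth using the extension base property on each side together with compactness to alternately secure non-forking in each direction. Iterating, one produces for any $a$ a strict Morley sequence $(a_j)_{j<\omega}$ in $\tp(a/A)$, namely $a_0 = a$, the sequence is $A$-indiscernible, and the ``strictness'' is carried along so that each tail is strictly non-forking with each initial segment over $A$.

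The technical heart is an NTP$_2$ analogue of Kim's lemma: if $A$ is an extension base and $\psi(x,b)$ divides over $A$, then \emph{every} strict Morley sequence $(b^j)_{j<\omega}$ in $\tp(b/A)$ makes $\{\psi(x,b^j)\}_{j<\omega}$ inconsistent. I would argue by contradiction: should a strict Morley sequence fail to witness dividing, one combines the sequence with an actual dividing witness and uses strict non-forking on both sides to assemble a two-dimensional $A$-indiscernible array $(c^{r,j})_{r,j<\omega}$, with mutually indiscernible rows, whose rows are $k$-inconsistent (from the dividing witness) while some column/path remains consistent (from the failed strict Morley witness). That is exactly an inp-pattern for $\psi$, contradicting NTP$_2$. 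I expect this to be the main obstacle, since it requires preserving both the horizontal $k$-inconsistency and the vertical consistency simultaneously, which is precisely why strict non-forking rather than ordinary non-forking is needed to run the extraction.

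Granted this Kim-style lemma, the final step is short. Assume $\varphi(x,a) \vdash \bigvee_{i<n} \psi_i(x,b_i)$ with each $\psi_i(x,b_i)$ dividing over $A$. Let $d$ enumerate $(a, b_0, \ldots, b_{n-1})$ and take a strict Morley sequence $(d^j)_{j<\omega} = (a^j, b_0^j, \ldots, b_{n-1}^j)_{j<\omega}$ in $\tp(d/A)$. If $\{\varphi(x,a^j)\}_{j<\omega}$ were consistent, realized by some $c$, then for each $j$ pick $i(j)<n$ with $c \models \psi_{i(j)}(x, b_{i(j)}^j)$; by pigeonhole fix a single $i$ and an infinite $J \subseteq \omega$ with $c \models \psi_i(x,b_i^j)$ for all $j \in J$. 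But $(b_i^j)_{j \in J}$ is itself a strict Morley sequence in $\tp(b_i/A)$, contradicting the Kim-style lemma applied to the dividing formula $\psi_i(x,b_i)$. Hence $\{\varphi(x,a^j)\}_{j<\omega}$ is inconsistent, $\varphi(x,a)$ divides over $A$, and (1) follows; then (2) is immediate.
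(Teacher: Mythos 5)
This statement is quoted as a Fact from Chernikov--Kaplan \cite{CK} and is not proved in the paper itself. Your sketch faithfully reproduces the argument of \cite{CK}: existence of strictly non-forking (hence strict Morley) sequences over extension bases, the NTP$_2$ analogue of Kim's lemma proved by assembling an array whose rows are $k$-inconsistent and whose paths are consistent (contradicting NTP$_2$), and the final pigeonhole reduction from forking to dividing; so it is correct and takes essentially the same route as the cited source.
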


\begin{fact}\cite[Corollary 2.6]{Sim0} \label{RCFextensionsbases}
In the theory of real closed fields all sets are extensions bases and forking equals dividing.
\end{fact}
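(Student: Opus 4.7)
The plan is to leverage the o-minimality (hence dp-minimality, hence NIP) of $RCF$ to derive both statements separately, and then combine them via the general machinery of \ref{propforking2} and \ref{NTP2forking2}.

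First I would establish that every set $A$ is an extension base. By Corollary \ref{propforking2} it suffices to show that every complete $1$-type $p(x)\in S_1(A)$ does not fork over $A$. By quantifier elimination and o-minimality, $p$ is determined by the cut it induces on $(A,<)$: the pair $(L,R)$ with $L=\{a\in A:(x>a)\in p\}$ and $R=\{a\in A:(x<a)\in p\}$. Using this cut I would construct a global extension $\tilde{p}$ of $p$ to the monster $\mathbb{M}$ that is $\acl(A)$-invariant: for each $b\in\mathbb{M}$ one decides whether $(x<b)$ or $(b<x)$ lies in $\tilde p$ by asking on which side of the cut $(L,R)$ the element $b$ sits, the choice being canonical because the cut is fixed by every automorphism fixing $\acl(A)$ pointwise. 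Since invariant types do not divide over their base, $\tilde p$ is a non-dividing (in particular non-forking) extension of $p$, so $p$ does not fork over $A$.

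Next, since $RCF$ is NIP (being o-minimal), it is in particular NTP$_2$ by Proposition 5.31 of \cite{Sim}. Combining this with the fact that all sets are extension bases and with Fact \ref{NTP2forking2}(2), we conclude that forking equals dividing in $RCF$.

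The main technical subtlety lies in the construction of the invariant extension $\tilde p$: when the cut induced by $p$ on $A$ is \emph{irrational} (no element of $A$ lies immediately next to the cut), one must consistently place every non-algebraic element of $\mathbb{M}$ on one side of the cut in a way stable under $\mathrm{Aut}(\mathbb{M}/\acl(A))$. This is handled by the o-minimality of the monster itself: the cut on $A$ extends uniquely to a cut on $\acl(A)$ (either staying irrational, or splitting at an algebraic element, yielding at most two candidate global types, each of which is $\acl(A)$-invariant), and from there to $\mathbb{M}$ by placing each element on the side where it sits with respect to that cut. Any such choice is fixed setwise by $\mathrm{Aut}(\mathbb{M}/\acl(A))$, yielding the required invariant global extension.
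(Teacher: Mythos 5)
The paper gives no argument for this statement (it is quoted from Simon's paper), so your proposal can only be judged on its own terms. Your overall scheme is sound and close to the actual source: reduce to $1$-types via Corollary \ref{propforking2}, produce for each $p\in S_1(A)$ a global extension invariant under $\mathrm{Aut}(\mathbb{M}/\acl(A))$ (hence containing no formula dividing over $A$, because the members of an $A$-indiscernible sequence have the same type over $\acl(A)$), and then deduce forking $=$ dividing from NIP $\Rightarrow$ NTP$_2$ together with Fact \ref{NTP2forking2}(2). Two steps, however, are wrong as written, and they sit exactly at the heart of the construction.

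First, a complete $1$-type over $A$ is \emph{not} determined by the cut it induces on $(A,<)$: over $A=\{1,2\}$ the types of an element just below $\sqrt{2}$ and of one just above it induce the same cut on $A$ but are distinct. What is true is that $p$ has a unique extension to $\dcl(A)=\acl(A)$ (the relative real closure, which is a model), and it is this extension of the \emph{type} --- not the cut on $A$, which has many extensions --- that singles out a cut $(L,R)$ over $\acl(A)$; your sentence that ``the cut on $A$ extends uniquely to a cut on $\acl(A)$'' fails for the same reason. Second, and more seriously, your recipe does not decide the position of an element $b$ of the monster with $L<b<R$, and the claim that ``any such choice is fixed setwise by $\mathrm{Aut}(\mathbb{M}/\acl(A))$'' is false: putting conjugate realizations of the cut on different sides yields a non-invariant completion. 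You must take one of the two one-sided completions, e.g.\ declare $x>b$ exactly when $b<r$ for all $r\in R$; this is a Dedekind cut of $\mathbb{M}$ defined from the $\acl(A)$-subset $R$, hence invariant, consistent by density of the order, and complete because every definable subset of the line is a finite union of points and intervals with endpoints in the model; it extends $p$. With that repair (plus the easy remark that algebraic $1$-types do not fork, since $\tp(c/\mathbb{M})$ for $c\in\acl(A)$ is $\acl(A)$-invariant) your argument is correct. Note also a shortcut avoiding cuts altogether: RCF has definable Skolem functions, so $\dcl(A)\models \mathrm{RCF}$, and any $p\in S_1(A)$ extends to a global type finitely satisfiable in $\dcl(A)$, which is already the required invariant extension.
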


\begin{notation}\label{notTheory}
Let $n \geq 1$, as in \ref{PRCB} we fix a bounded PRC field $K$, which is not real closed and has exactly $n$ orders and let $\PRCB:=Th_{\LC}(K)$. 
Let $M$ be a monster model of $\PRCB$, let $a$ be a tuple of $M$ and $A, B \subseteq M$. 
Denote by $a\dnfo^i_AB$ if $\tp^{\clos{M}{i}}(a/AB)$ does not fork over $A$ and by $a\dnfo^{ACF}_AB$ if $a$ is $ACF$-independent of $B$ over $A$.
 \end{notation}

\begin{thm}\label{forkequalsdiv}
In $\PRCB$ all sets are extension bases and forking equals dividing.

\begin{proof}
By Theorem \ref{PRCNTP2} $T$ is NTP$_2$, and by Fact \ref{NTP2forking2} it is enough to show that all sets are extensions bases.
Suppose by contradiction that there exists $A \subseteq M$ and a tuple $a$ in $M$ such that $\tp(a/A)$ forks over $A$.
We can suppose that $a \not \in \acl(A)$. 
By Corollary \ref{propforking2} we can also suppose that $|a|=1$.

Then there are $\phi(x) \in \tp(a/A)$, $m \in \mathbb{N}$ and  $\psi_j(x,a_j)$ for $j <m$ such that:
$\phi(x) \vdash \displaystyle{\bigvee_{j <m}\psi_j(x,a_j)}$ and $\psi_j(x,a_j)$ divides over $A$ for every $j<m$.
Observe that since $a \not \in \acl(A)$ and $\phi(x) \in \tp(a/A)$, we obtain that $|\phi(M)|= \infty$ and therefore there is $j<m$ such that $|\psi_j(M,a_j)|= \infty$.

For each $j < m$, by Theorem \ref{descomposition} there are a finite set $A_j \subseteq \psi_j(M,a_j)$, $t_j \in \mathbb{N}$ and multi-intervals $I_1, \ldots, I_{t_j}$, definable with parameters in $\acl(A(a_j))$ such that:
$A_j \subseteq  \acl(Aa_j)$, $\psi_j(M, a_j) \subseteq \displaystyle{\bigcup_{l=1}^{t_j}I_l} \cup A_j$ and $\{x \in I_l: M \models \psi_j(x,a_j)\}$ is multi-dense in $I_l$, for all $l \leq t_j$.
Then $\psi_j(x,a_j)$ is equivalent to $\displaystyle{\bigvee_{l=1}^{t_j}(\psi_j(x,a_j) \land x \in I_l}) \vee x \in A_j$. 
Observe that since $\psi_j(x,a_j)$ divides over $A$, $\psi_j(x,a_j) \wedge x \in I_l $ divides over $A$ for all $l \leq t_j$.

Therefore we can suppose that $\phi(x) \vdash \displaystyle{\bigvee_{j <m}\psi_j(x,a_j)} \vee x \in B$ where the following is satisfied:
\begin{enumerate}
 \item $B$ is a finite subset of $\displaystyle{\bigcup_{j<m}\acl(Aa_j)}$,
 \item for all $j<m$ $\psi_j(x,a_j)$ divides over $A$, 
 \item for all $j<m$ there is a multi-interval $I_j$, definable in $M$ with parameters in $\acl(A(a_j))$ such that: $\psi_j(M, a_j) \subseteq I_j$ and $\{x \in I_j: M \models \psi_j(x,a_j)\}$ is multi-dense in $I_j$.
\end{enumerate}

\begin{claim}
There is $j<m$ and a multi-interval $J$ definable over $A$ such that $J \subseteq I_j$.
\begin{proof}
As $a \not \in \acl(A)$ and $\phi(x) \in \tp(a/A)$, by Proposition \ref{thmdensite} there exists a multi-interval $I$, definable over $A$ such that $a \in I$ and $\{x \in I: M \models \phi(x)\}$ is multi-dense in $I$.

Let $J \subseteq I$ be a multi-interval definable over $A$. 
Since $B$ is a finite set, $\phi(x) \vdash \displaystyle{\bigvee_{j <m}\psi_j(x,a_j)} \vee x \in B$, and by multi-density of $\phi(x)$ in $I$, it follows that $J \cap \displaystyle{\bigcup_{j<m}I_j} \not = \emptyset$.
There are infinitely many multi-intervals $J \subseteq I$ definable over $A$:
for all $i \in \{1, \ldots, n\}$, let $I^i$ be an $<_i$-open interval in $\clos{M}{i}$ such that $I = \displaystyle{\bigcap_{i=1}^nI^i \cap M}$. Since $I$ is definable over $A$, then $I^i = (a^i, b^i)_i:= \{x \in \clos{M}{i}: a^i <_i x <_i b^i\}$, with $a^i, b^i \in acl^{\clos{M}{i}}(A)$. Let $m \in \mathbb{N}$ be such that $(a^i+ \frac{1}{m}, b^i - \frac{1}{m})_i \subset (a^i, b^i)_i$, for all $1 \leq i \leq n$. 
Then for all $k \geq m$, ${\bigcap_{i=1}^n (a^i+ \frac{1}{k}, b^i - \frac{1}{k})_i}$ is a multi-interval in $I$, definable over $A$.

Then there exists $j<m$, such that $|I_j \cap \acl(A)| = \infty$.
Thus there is a multi-interval $J$, definable over $A$ such that $J \subseteq I_j$.

\end{proof}
\end{claim}

As $\psi_j(x, a_j)$ divides over $A$, there are $k \in \mathbb{N}$ and an indiscernible sequence over $A$, $(a_{j,l})_{l \in \omega}$ such that $a_{j,0}= a_j$ and $\{\psi_j(x, a_{j,l}): l \in \omega\}$ is $k$-inconsistent.
As $J \subseteq I_j$ and $\{x \in I_j: M \models \psi_j(x,a_{j,0})\}$ is multi-dense in $I_j$, we have that $\{x \in J: M \models \psi_j(x,a_{j,0})\}$ is multi-dense in $J$.
As $J$ is definable over $A$, by Theorem \ref{strongdensity} $\{\psi_j(x,a_{j,l}): l \in \omega\}$ is consistent.
This contradicts the $k$-inconsistency. 

 \end{proof}
\end{thm}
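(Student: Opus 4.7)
The plan is to combine the NTP$_2$ result (Theorem \ref{PRCNTP2}) with Fact \ref{NTP2forking2} so that it suffices to verify that every set $A \subseteq M$ is an extension base; then, by Corollary \ref{propforking2}, one only has to prove that no $1$-type forks over its base. So I suppose for contradiction that $a \in M$ with $a \notin \acl(A)$ and that $\tp(a/A)$ contains a formula $\phi(x)$ which forks over $A$, witnessed by formulas $\psi_j(x,a_j)$ for $j<m$ each dividing over $A$, with $\phi(x) \vdash \bigvee_{j<m}\psi_j(x,a_j)$.

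The first main step is to normalize the $\psi_j$ using the density/cell-style description of definable sets. Applying Theorem \ref{descomposition} to each $\psi_j(x,a_j)$, I decompose its solution set into a finite exceptional part (living in $\acl(Aa_j)$) plus multi-intervals $I_j$ definable over $\acl(Aa_j)$ on which $\psi_j$ is multi-dense; absorbing the exceptional parts into a single finite set $B \subseteq \bigcup_j \acl(Aa_j)$ (which can be harmlessly added as $x \in B$ since dividing is preserved under the relevant Boolean operations), I reduce to the case where each $\psi_j$ is multi-dense in a single multi-interval $I_j$ definable over $\acl(Aa_j)$.

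The second step, and the technical heart, is the claim that \emph{some} $I_j$ contains a multi-interval $J$ definable over $A$ alone. Here I apply Proposition \ref{thmdensite} to $\phi(x)$ and the realization $a \notin \acl(A)$ to get a multi-interval $I$ definable over $A$ containing $a$ in which $\phi$ is multi-dense. The key observation is that $I$ contains infinitely many sub-multi-intervals $J$ definable over $A$: writing $I = \bigcap_{i=1}^n (a^i,b^i)_i \cap M$ with $a^i,b^i \in \acl(A)$, the shrunk intervals $\bigcap_i (a^i + \tfrac{1}{k}, b^i - \tfrac{1}{k})_i$ for $k \gg 0$ give infinitely many such $J$. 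By multi-density of $\phi$ in $I$, every such $J$ must meet $\bigcup_{j<m} I_j \cup B$; since $B$ is finite and $m$ is finite, pigeonhole forces some particular $I_j$ to contain one of these $J$. I expect this combinatorial pigeonhole argument, together with ensuring that the decomposition really reduces to the multi-dense-on-multi-interval case uniformly, to be the main obstacle to making airtight.

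The third step is to derive a contradiction from dividing. Since $\psi_j(x,a_j)$ divides over $A$, there are $k$ and an $A$-indiscernible sequence $(a_{j,l})_{l \in \omega}$ with $a_{j,0}=a_j$ such that $\{\psi_j(x,a_{j,l}):l<\omega\}$ is $k$-inconsistent. But $\{x \in J : M \models \psi_j(x,a_{j,0})\}$ is multi-dense in $J$ (because $J \subseteq I_j$ and $\psi_j$ is multi-dense in $I_j$), and $J$ is definable over $A$. By Theorem \ref{strongdensity} this forces $\{\psi_j(x,a_{j,l}): l < \omega\}$ to be consistent, contradicting $k$-inconsistency. This completes the proof that $A$ is an extension base, and hence, by Fact \ref{NTP2forking2}, that forking equals dividing in $\PRCB$.
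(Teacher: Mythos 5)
Your plan reproduces the paper's proof step for step: NTP$_2$ (Theorem \ref{PRCNTP2}) plus Fact \ref{NTP2forking2} reduces the theorem to showing every set is an extension base, Corollary \ref{propforking2} reduces to $1$-types, Theorem \ref{descomposition} normalizes each $\psi_j(x,a_j)$ to be multi-dense in a single multi-interval $I_j$ modulo a finite exceptional set, and Theorem \ref{strongdensity} produces the final contradiction with $k$-inconsistency once one has an $A$-definable multi-interval $J\subseteq I_j$. Steps one and three are fine as you state them.

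The step you yourself flagged is, however, a genuine gap as written. The shrunken multi-intervals $J_k=\bigcap_{i=1}^n\bigl(a^i+\tfrac{1}{k},\,b^i-\tfrac{1}{k}\bigr)_i\cap M$ are nested, so ``each $J_k$ meets $\bigcup_{j<m}I_j\cup B$'' carries no more information than the statement for the smallest one; and in any case a pigeonhole on ``meets'' cannot yield ``contains'': each $I_j$ could clip every $J_k$ in a small piece near its boundary without containing any of them. The missing ingredient is the convexity of the coordinate intervals $I_j^i$, which is how the paper closes the claim (it deduces that some $I_j$ picks up infinitely many points of $\acl(A)$ and then takes $J$ to be the multi-interval spanned by two of them inside the convex $I_j^i$'s). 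A clean repair along your lines: instead of nested intervals, cut each $I^i$ into infinitely many pairwise disjoint consecutive subintervals with endpoints definable over $A$ in $\clos{M}{i}$, giving $A$-definable multi-intervals $J_k\subseteq I$ whose $i$-th coordinate pieces are pairwise disjoint for every $i$. By multi-density of $\phi$ in $I$ each $J_k$ meets $\phi(M)\subseteq\bigcup_j I_j\cup B$, and since the $J_k$ are disjoint and $B$ is finite, some fixed $I_j$ meets $J_k$ for all $k$ in an infinite set $S$. Now fix $k_1<k_0<k_2$ with $k_1,k_2\in S$: for each $i$ the convex interval $I_j^i$ meets $J_{k_1}^i$ and $J_{k_2}^i$, hence contains the intermediate interval $J_{k_0}^i$; therefore $J_{k_0}\subseteq I_j$, and $J_{k_0}$ is definable over $A$, which is exactly what the rest of your argument needs.
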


\begin{thm}\label{forkingPRC}
Let $M$ be a model of $\PRCB$, let $a$ be a tuple in $M$ and $A \subseteq B \subseteq M$.
Then  $a \dnfo_AB$ if and only if $a \dnfo^i_AB$, for all $1 \leq i \leq n$.
\begin{proof}
We can suppose that $M$ is sufficiently saturated, and that $A= \dcl(A)$, $B= \dcl(B)$ . 

$(\Leftarrow)$: Suppose that $a \dnfo^i_AB$, for all $1 \leq i \leq n$. 
Observe that $a\dnfo^i_AB$ implies $a\dnfo^{ACF}_AB$. By Fact \ref{propforking} and Fact \ref{RCFextensionsbases} we can suppose that $trdeg(A(a)/A)=trdeg(B(a)/B)= |a|$.

Suppose by contradiction that $tp(a/B)$ forks over $A$. 
Then there exists $b \subseteq B$ and $\phi(x,b) \in \tp(a/B)$ such that $\phi(x,b)$ forks over $A$. 
By Theorem \ref{forkequalsdiv}, $\phi(x,b)$ divides over $A$. 
Let $\kappa$ be a sufficiently large cardinal; by compactness there exists $k \in \mathbb{N}$ and an indiscernible sequence over $A$, $(b_{j})_{j \in \kappa}$ such that $b_{0}= b$ and $\{\phi(x,b_{j}): j \in \kappa\}$ is $k$-inconsistent.

As $M \models \phi(a,b)$ and  $trdeg(B(a)/B)= |a|$, by Theorem \ref{descomposition2} there is a multi-cell $C:= \displaystyle{\bigcap_{i=1}^n (C^i \cap M^{|a|})}$, such that $C^i$ is quantifier-free $\Li(b)$-definable, $a \in C$ and $\{x \in C: M \models \phi(x, b)\}$ is multi-dense in $C$.
Let $\psi(x, b)$ be a quantifier free $\LC$-formula such that $M \models x \in C \leftrightarrow \psi(x, b)$ and let $C_j= \displaystyle{\bigcap_{i=1}^n(C_j^i\cap M^{|a|})}$ be the multi-cell definable in $M$ by the formula $\psi(x,b_j)$.
Then by indiscernibility; for all $j \in \kappa$, $\{x \in C_j: M \models \phi(x,b_j)\}$ is multi-dense in $C_j$.

By hypothesis for all $1 \leq i \leq n$, $\tp^{\clos{M}{i}}(a/B)$ does not divides over $A$. As $``x \in C^i"\in \tp^{\clos{M}{i}}(a/B)$, it follows that $``x \in C^i"$ does not divide over $A$.
This implies that $\{x \in C^i_j: j \in \kappa\}$ is consistent.

Since for all $j \in \kappa$, $C^i_j$ is $<_i$-open in $M^{|a|}$, by saturation of $M$ and density of $M$ in each real closure $\clos{M}{i}$, there exists a multi-box $D^i \subseteq (\clos{M}{i})^{|a|}$ definable over $M$, such that $D^i \cap M^{|a|}  \subseteq \displaystyle{\bigcap_{j \in \kappa}(C^i_j\cap M^{|a|})}$.
Let $\alpha_i \subseteq M$ such that $D^i$ is $\Li(\alpha_i)$-definable. 

As before using Erd\H{o}s-Rado we can find a countable sequence $(c_j)_{j \in \omega}$, indiscernible over $A'= \acl(A(\alpha_i)_{i \leq n})$  and such that the first $k$ elements are in $(b_{j})_{j \in \kappa}$. So we have that $\{\phi(x, c_j)\}_{j \in \omega}$ is $k$-inconsistent.
Then $(c_{j})_{j \in \omega}$ is indiscernible over $A'$, $D:= \displaystyle{\bigcap_{i=1}^n(D^i\cap M^{|a|})}$ is definable over $A'$ and for all $j \in \omega$, $\{x \in D: M \models \phi(x, c_{j})\}$ is multi-dense in $D$.
Then by Theorem \ref{lemsq+v} $\{\phi(x,c_{j}): j \in \omega\}$ is consistent. This contradicts the $k$-inconsistency.

$(\Rightarrow)$
Suppose that $a \dnfo_{A}B$; this implies that for all $i \in \{1, \ldots, n\}$, $\tp^M_{\Li}(a/B)$ does not fork over $A$.  By Fact \ref{propforking} we can suppose that $\acl(A)=A$.
Suppose by contradiction that there exists $i \in \{1, \ldots,n\}$ such that $\tp_{\Li}^{\clos{M}{i}}(a/B)$ forks over $A$.
Then by Fact \ref{RCFextensionsbases} there is $\phi(x, b) \in \tp_{\Li}^{\clos{M}{i}}(a/B)$ which divides over $A$.
By quantifier elimination in $RCF$, we can suppose that $\phi(x,b)$ is a quantifier-free $\Li$-formula.

Then there exists $k \in \mathbb{N}$ and an $\Li$-indiscernible sequence over $A$, $(b_j)_{j \in \omega} \subseteq \clos{M}{i}$ such that:
$b_0= b$ and $\{\phi(x,b_j): j \in \omega\}$ is $k$-inconsistent.
By Lemma 5.35 of \cite{Sim} we can suppose that $b_j \dnfo^{i}_Ab_0\ldots b_{j-1}$ for all $j \in \omega$, and so $b_j \dnfo^{ACF}_Ab_0\ldots b_{j-1}$, for all $j \in \omega$.
\begin{claim}
Each $<_t$ extends to an order on $A(b_j:j \in \omega)$. 
\begin{proof}
We have that $b_0=b$, $b \in M$, and ${(b_j)}_{j \in \omega}$ is $\Li$- indiscernible. Then $b$ and $b_j$ satisfy the same algebraic formulas over $A$. This implies that we can extend each order $<_t$ in $A(b_j)$, since $A(b)$ is $<_t$-ordered. Suppose that each $<_t$ extends to an order on $A(b_0, \ldots, b_m)$. As $b_{m+1} \dnfo^{ACF}_Ab_0\ldots b_{m}$ and $A(b_{m+1})/ A$ is regular, $A(b_{m+1})$ is linearly disjoint of $A(b_0, \ldots, b_m)$ over $A$.
By Amalgamation theorem for ordered fields (\ref{Amalord}) each $<_t$ extends to $A(b_0, \ldots, b_{m+1})$.
\end{proof}
\end{claim}

Since $M/A$ and $A(b_j:j \in \omega)/A$ are regular, we can suppose that they are linearly disjoint over $A$. 
By the Amalgamation Theorem for ordered fields (\ref{Amalord}), $M(b_j:j \in \omega)$ is a totally real regular extension of $M$. 
So by Fact \ref{PRC} $M$ is existentially closed in $M(b_j:j \in \omega)$. 
Then by saturation there exists an $\Li$-indiscernible sequence $(b'_j)_{j \in \omega} \subseteq M$ over $A$ such that: $\tp(b'_j/A)= \tp(b/A)$  for all $j \in \omega$ and $\{\phi(x,b'_j): j \in \omega\}$ is $k$-inconsistent.
As $\phi(x,b)\in \tp_{\Li}^M(a/B)$, this implies that $\tp_{\Li}^M(a/B)$ fork over $A$. 
This is a contradiction.
\end{proof}
\end{thm}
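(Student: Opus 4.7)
The plan is to prove each direction separately, using the density theorems from Section \ref{DensityTheorem}, the fact that $\PRCB$ is NTP$_2$ and has forking equal to dividing (Theorem \ref{forkequalsdiv}), and careful coordination between the behavior in $M$ and in the real closures $\clos{M}{i}$.

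For $(\Leftarrow)$, suppose $a \dnfo^i_A B$ for all $i$ but that $\tp(a/B)$ forks over $A$. By Theorem \ref{forkequalsdiv} forking equals dividing, and after decomposing and simplifying (as in Fact \ref{strondisjun}), we may assume there is a single formula $\phi(x,b) \in \tp(a/B)$ with parameters $b \subseteq B$ and an indiscernible sequence $(b_j)_{j<\kappa}$ over $A$ with $b_0=b$ such that $\{\phi(x,b_j)\}$ is $k$-inconsistent. We can assume $\mathrm{trdeg}(B(a)/B)=|a|$ (otherwise $a$ is algebraic over $B$ and one reduces by Fact \ref{propforking}). Apply Theorem \ref{descomposition2} to obtain a multi-cell $C = \bigcap_{i=1}^n(C^i \cap M^{|a|})$ with $C^i$ quantifier-free $\Li(b)$-definable, $a \in C$, and $\phi(x,b)$ multi-dense in $C$. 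By indiscernibility each $b_j$ produces a corresponding $C_j^i$ definable by the same quantifier-free $\Li$-formula, with multi-density preserved. The hypothesis $a \dnfo^i_A B$ forces the formula ``$x \in C^i$'' not to divide over $A$ in $\clos{M}{i}$, so $\{x \in C^i_j : j<\kappa\}$ is consistent for each $i$. By saturation and density of $M$ in each $\clos{M}{i}$ (Fact \ref{PRCcaracte}), shrink this intersection to a multi-box $D^i$ with extremities in $M$, set $A' = \acl(A \cup \bigcup_i \alpha_i)$ for the parameters of the $D^i$, and apply Erd\H{o}s-Rado to extract a countable sequence $(c_j)_{j<\omega}$ indiscernible over $A'$ whose first $k$ terms lie in $(b_j)$, preserving $k$-inconsistency. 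Since $\phi(x,c_j)$ is multi-dense in $D := \bigcap_i D^i \cap M^{|a|}$ for each $j$, Theorem \ref{lemsq+v} gives consistency of $\{\phi(x,c_j) : j < \omega\}$, contradicting $k$-inconsistency.

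For $(\Rightarrow)$, suppose $a \dnfo_A B$ in $M$ but for some $i$, $\tp_{\Li}^{\clos{M}{i}}(a/B)$ forks over $A$. By Fact \ref{RCFextensionsbases} this type divides over $A$, and by quantifier elimination in RCF we may find a quantifier-free $\Li$-formula $\phi(x,b)$ witnessing this: an $\Li$-indiscernible sequence $(b_j)_{j<\omega} \subseteq \clos{M}{i}$ over $A$ with $b_0 = b$ and $\{\phi(x,b_j)\}$ $k$-inconsistent. By the standard strengthening (Lemma 5.35 of \cite{Sim}) we may assume $b_j \dnfo^i_A b_0\ldots b_{j-1}$, and in particular $b_j \dnfo^{ACF}_A b_0\ldots b_{j-1}$. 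The crucial algebraic step is to show that every order $<_t$ of $M$ extends to $A(b_j : j<\omega)$: proceed by induction on the length of the prefix, at each stage using ACF-independence to get linear disjointness, so that the amalgamation theorem for ordered fields \ref{Amalord} supplies the extension. Arrange $M$ and $A(b_j : j<\omega)$ to be linearly disjoint over $A$ (both are regular extensions of $A$), and apply \ref{Amalord} once more so that $M(b_j : j<\omega)$ is a totally real regular extension of $M$. By Fact \ref{PRC}, $M$ is existentially closed in $M(b_j : j<\omega)$, so by saturation we can pull back the sequence to obtain an $\Li$-indiscernible sequence $(b'_j)_{j<\omega} \subseteq M$ over $A$ with $\tp_{\Li}(b'_j/A) = \tp_{\Li}(b/A)$ and $\{\phi(x,b'_j)\}$ still $k$-inconsistent. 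Since $\phi(x,b) \in \tp_{\Li}^M(a/B)$, this shows $\tp^M(a/B)$ divides over $A$, contradicting $a \dnfo_A B$.

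The main obstacle is likely the $(\Rightarrow)$ direction: transferring a forking witness inside $\clos{M}{i}$ back down into $M$ requires extending \emph{all} $n$ orders simultaneously to $A(b_j : j<\omega)$, which is delicate because the sequence is only assumed to respect the single order $<_i$. The trick is that $\Li$-indiscernibility fixes the algebraic type of each $b_j$ over $A$, so each individual $<_t$ extends to $A(b_j)$; then ACF-independence (obtained from the Simon reduction) combined with repeated use of \ref{Amalord} lets us build up a coherent multi-order on the whole field $A(b_j : j<\omega)$, unlocking the PRC embedding.
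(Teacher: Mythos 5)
Your proposal is correct and follows essentially the same route as the paper: for $(\Leftarrow)$ the reduction via forking equals dividing, Theorem \ref{descomposition2}, non-dividing of ``$x \in C^i$'' in each $\clos{M}{i}$, the multi-box $D^i$ from saturation and density, Erd\H{o}s-Rado, and Theorem \ref{lemsq+v}; for $(\Rightarrow)$ the quantifier-free dividing witness in $\clos{M}{i}$, the Simon lemma giving ACF-independence along the sequence, the inductive extension of all orders via \ref{Amalord}, and pulling the sequence back into $M$ by existential closedness and saturation. No substantive differences from the paper's argument.
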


\subsection{Lascar types}\label{Lascarsection}

\begin{defn}Let $T$ be an $\mathcal{L}$-theory and $\mathbb{M}$ be a model monster of $T$. Let $a$ and $b$ be tuples in $\mathbb{M}$ and $A \subseteq \mathbb{M}$. We write $\Lstp(a/A) = \Lstp(b/A)$ ($a$ and $b$ \emph{have the same Lascar strong type over $A$}) if there are $n \in \omega$, $a=a_0, \ldots, a_n= b$ such that $a_i, a_{i+1}$ start a $A$-indiscernible sequence for each $i <n$.

We let $d_A(a,b)$ be the \emph{Lascar distance}, that is the smallest $n$ as in the definition or $\infty$ if it does not exist.
\end{defn}

\begin{fact}\cite[Lemma 2.9]{HrusPill}\label{distLascarNIP}
If $T$ is NIP and $A$ is an extension base, then $\Lstp(a/A)= \Lstp(b/A)$ if and only if $d_A(a,b) \leq 2$.
\end{fact}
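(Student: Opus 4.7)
The reverse direction is immediate: if $d_A(a,b)\leq 2$ then by definition there exists $c$ with $a,c$ and $c,b$ each starting an $A$-indiscernible sequence, so $\Lstp(a/A) = \Lstp(c/A) = \Lstp(b/A)$ by transitivity of the generating relation. So the content is in the forward direction.

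For the forward direction, my plan is to locate a single ``midpoint'' $c$ with $d_A(a,c)\leq 1$ and $d_A(c,b)\leq 1$. I would first produce $c$ realizing a global $A$-invariant (or, equivalently for our purposes, a coheir/nonforking) extension of $\tp(a/A)$ restricted to $Aab$. That such a global $A$-invariant extension exists uses the extension base hypothesis on $A$ in combination with NIP (in NIP a type over an extension base has a global nonforking extension that is $A$-invariant). Then automatically $c \dnfo_A ab$, hence $c \dnfo_A a$ and $c \dnfo_A b$, and $c\equiv_A a\equiv_A b$. Moreover, since $c$ is drawn from a global $A$-invariant type and such types concentrate on a single Lascar strong type, $\Lstp(c/A)=\Lstp(a/A)=\Lstp(b/A)$.

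The decisive step is then the following key NIP lemma, which I would invoke twice: in an NIP theory, if $A$ is an extension base, $\Lstp(x/A)=\Lstp(y/A)$ and $x\dnfo_A y$, then $(x,y)$ extends to an $A$-indiscernible sequence, i.e.\ $d_A(x,y)\leq 1$. Applied to $(a,c)$ this gives $d_A(a,c)\leq 1$; applied to $(c,b)$ it gives $d_A(c,b)\leq 1$; hence $d_A(a,b)\leq 2$. To prove this lemma itself, one builds a Morley sequence $(x=x_0,y=x_1,x_2,\dots)$ by letting each $x_{i+1}$ realize the restriction of the chosen global $A$-invariant extension of $\tp(x/A)$ to $Ax_0\cdots x_i$, and uses the standard NIP fact that such a Morley sequence in an invariant type is $A$-indiscernible.

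The main obstacle is exactly this supporting lemma: one must simultaneously control (i) the existence of a global $A$-invariant extension using only the extension base assumption, and (ii) the fact that Morley sequences in invariant types are $A$-indiscernible in NIP, plus (iii) that such a sequence stays inside a single Lascar strong type. Points (ii) and (iii) are the characteristic NIP input, while (i) is what the extension base assumption is designed to give. Once these ingredients are in place, the ``midpoint'' argument above reduces the Lascar distance from the \emph{a priori} finite (or infinite) quantity obtained from the definition to the sharp bound of $2$.
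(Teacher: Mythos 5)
The paper does not actually prove this statement (it is quoted from Hrushovski--Pillay), so your sketch has to be measured against the argument there. Your reverse direction is fine, but the forward direction has a genuine gap at its central point: the assertion that over an extension base $\tp(a/A)$ admits a global nonforking extension which is \emph{$A$-invariant}. In NIP one only gets invariance under $\mathrm{Autf}(\mathbb{M}/A)$, equivalently invariance over $\mathrm{bdd}(A)$, and this is strictly weaker. The gap is not cosmetic: as written, your argument uses the hypothesis $\Lstp(a/A)=\Lstp(b/A)$ only through $\tp(a/A)=\tp(b/A)$ (the step ``the invariant type concentrates on a single Lascar strong type, hence $\Lstp(c/A)=\Lstp(a/A)=\Lstp(b/A)$'' identifies that class with the class of $a$ and $b$ without justification). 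If the argument worked as written it would show that equality of types over an extension base already implies $d_A(a,b)\le 2$, which is false: in the stable theory of a single equivalence relation with exactly two infinite classes every set is an extension base, and two elements in different classes have the same type over $\emptyset$ but different Lascar strong types, hence infinite Lascar distance; here a global nonforking extension of their common type (the generic of one fixed class) is $\mathrm{bdd}(\emptyset)$-invariant but not $\emptyset$-invariant, and a realization of it can only serve as a midpoint for elements of that one class. There is also a smaller slip of direction: you establish $c\dnfo_A ab$, i.e.\ $\tp(c/Aab)$ does not fork over $A$, but your key lemma and its Morley-sequence proof are used with the independence running the other way; nonforking is not symmetric outside simple theories.

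The repair, which is essentially the cited Hrushovski--Pillay argument, is to choose the nonforking extension compatibly with the Lascar class of $a$ and to use the Lascar hypothesis genuinely for $b$. Since $A$ is an extension base, there is a small model $M_0\supseteq A$ with $\tp(a/M_0)$ nonforking over $A$; extend it to a global $p'$ nonforking over $A$, which by NIP is $\mathrm{Autf}(\mathbb{M}/A)$-invariant, hence invariant over every small model containing $A$. By $\Lstp(a/A)=\Lstp(b/A)$ pick $\sigma\in\mathrm{Autf}(\mathbb{M}/A)$ with $\sigma(a)=b$; then $\sigma(p')=p'$. Let $c\models p'$ restricted to $M_0\sigma(M_0)ab$. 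Then $(a,c)$ begins a Morley sequence of $p'$ over $M_0$ and $(b,c)=(\sigma(a),c)$ begins one over $\sigma(M_0)$, so each pair starts an $A$-indiscernible sequence and $d_A(a,b)\le 2$. (Equivalently one can work over $\mathrm{bdd}(A)$: $\tp(a/\mathrm{bdd}(A))$ does not fork over $A$ because indiscernible sequences of bounded tuples are constant and forking equals dividing over extension bases, its global nonforking extension is $\mathrm{bdd}(A)$-invariant, and the hypothesis enters as $\tp(a/\mathrm{bdd}(A))=\tp(b/\mathrm{bdd}(A))$.) These two points --- replacing $A$-invariance by $\mathrm{bdd}(A)$- or model-invariance, and routing the Lascar hypothesis through the choice of $p'$ and the automorphism $\sigma$ --- are exactly what is missing from your outline.
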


\begin{lem}\label{qfindisseq}
Let $M$ be a sufficiently saturated bounded $PRC$ field with exactly $n$ orders. We consider $M$ as an $\LC$-structure. 
Let $A \subseteq M$ and let $(a_j)_{j \in \omega}$ be a quantifier-free $\LC$-indiscernible sequence over $A$ such that $a_j \ind^{ACF}_A a_0, \ldots, a_{j-1}$ and $\tp(a_0/A)=\tp(a_1/A)$. Then there is an $\LC$-indiscernible sequence $(c_j)_{j \in \omega}$ over $A$ such that $\tp_{\LC}(c_l,c_j/A) = \tp_{\LC}(a_0,a_1/A)$ for all $l < j$.
\begin{proof}
Let $p(x,y)=\tp_{\LC}(a_0,a_1/A)$.
\begin{claim}
 There is a sequence $(b_j)_{j<\omega}$ such that $\qftp_{\LC}((b_j)_{j< \omega}/A)=\qftp_{\LC}((a_j)_{j< \omega}/A)$, and for every $l<j, (b_l,b_j)$ realizes $p$. 
\end{claim}
\begin{proof}
We will construct the sequence $(b_j)_{j<\omega}$ by induction on $j\geq 1$. For $j=1$, there is nothing to prove, we take $b_0=a_0$, $b_1=a_1$.
Assume $b_0,\ldots, b_j$ constructed; it suffices to find $b_{j+1}$ such that $\qftp_{\LC}(b_0,\ldots,b_{j+1}/A)=\qftp_{\LC}(a_0,\ldots,a_{j+1}/A)$, $\tp_{\LC}(b_{j+1}/A(b_0,\ldots,b_{j-1}))=\tp_{\LC}(b_j/A(b_0,\ldots,b_{j-1}))$ and  $\tp_{\LC}(b_j,b_{j+1}/A)=p$.

Let $c$ be such that $\qftp_{\LC}(b_0, \ldots, b_{j}, c/A)= \qftp_{\LC}(a_0, \ldots,a_j, a_{j+1}/A)$. 
Let $B:=A(b_0, \ldots, b_{j-2})$, $c$ realizes $\qftp_{\LC}(c/Bb_j) \cup \qftp_{\LC}(b_j/Bb_{j-1})$. By Theorem \ref{thamalgamation}, $\tp_{\LC}(c/Bb_j) \cup \tp_{\LC}(b_j/Bb_{j-1}) \cup \qftp_{\LC}(c/Bb_{j-1}b_{j})$ is realized by some $c^*$. Then $\tp_{\LC}(c^*/Bb_{j-1})= \tp_{\LC}(b_j/Bb_{j-1})$. 

Since $\qftp_{\LC}(b_j,c^*/B)= \qftp_{\LC}(b_{j-1},c^*/B)$, by Corollary \ref{IT2} there is $b_{j+1}$ such that: 
\begin{enumerate}
 \item $\qftp_{\LC}(b_{j+1}/Bb_{j-1}b_{j})= \qftp_{\LC}(c^*/Bb_{j-1}b_{j})$,
 \item $\tp_{\LC}(b_{j-1},b_{j+1}/B)= \tp_{\LC}(b_{j-1},c^*/B)$,
\item $\tp_{\LC}(b_j,b_{j+1}/B)= \tp_{\LC}(b_{j-1},b_{j}/A)$.
 
 \end{enumerate}

 Then  $\qftp_{\LC}(b_{j+1}/Bb_{j-1}b_{j})= \qftp_{\LC}(a_{j+1}/Aa_0, \ldots, a_{j})$,  $\tp_{\LC}(b_{j-1},b_{j+1}/B)= \tp_{\LC}(b_{j-1},b_j/B)$, and $\tp_{\LC}(b_j,b_{j+1}/A)= \tp_{\LC}(b_{0},b_{1}/A)$.
\end{proof}

By compactness we can extend the sequence of the claim to $(b_j)_{j \in \kappa}$ with $\kappa$ a sufficiently large cardinal. 
Since $\kappa$ is large enough, we can find an $\LC$-indiscernible sequence $(c_j)_{j \in \omega}$ over $A$ such that $\tp_{\LC}(c_0,c_1/A) = \tp_{\LC}(b_0,b_1/A)= \tp_{\LC}(a_0,a_1/A)$.  

\end{proof}

\end{lem}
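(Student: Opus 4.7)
The strategy is two-stage. First, I build by induction on $j \geq 1$ a sequence $b_0, b_1, \ldots$ in (an elementary extension of) $M$ satisfying, at stage $j$:
\begin{enumerate}
\item $\qftp_{\LC}(b_0, \ldots, b_j / A) = \qftp_{\LC}(a_0, \ldots, a_j / A)$, and
\item $\tp_{\LC}(b_l, b_m / A) = p := \tp_{\LC}(a_0, a_1 / A)$ for all $l < m \leq j$.
\end{enumerate}
Then I extend $(b_j)_{j<\omega}$ by compactness and saturation of $M$ to $(b_j)_{j<\kappa}$ for $\kappa$ large enough, and apply Erd\H{o}s--Rado to extract an $\LC$-indiscernible sequence $(c_j)_{j<\omega}$ over $A$; since every $2$-type $\tp_{\LC}(c_l, c_j / A)$ is realized in the sequence $(b_j)_{j<\kappa}$, condition (2) forces $\tp_{\LC}(c_l, c_j / A) = p$, which is what we want.

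\textbf{Inductive step.} Take $b_0 = a_0$, $b_1 = a_1$ (so the case $j = 1$ is immediate). Given $b_0, \ldots, b_j$ satisfying (1)--(2), I first pick $c$ realizing $\qftp_{\LC}(a_{j+1} / A(a_0, \ldots, a_j))$ transported along the $A$-isomorphism matching $(b_0, \ldots, b_j)$ with $(a_0, \ldots, a_j)$; by the ACF-independence hypothesis on $(a_j)$, this gives $c \ind^{ACF}_A \{b_0, \ldots, b_j\}$. Set $B := \acl(A(b_0, \ldots, b_{j-2}))$. Qf-indiscernibility of $(a_j)$ yields $\qftp_{\LC}(b_j, c / B) = \qftp_{\LC}(b_{j-1}, b_j / B)$, so $c$ realizes both $\qftp_{\LC}(c / B b_j)$ (trivially) and $\qftp_{\LC}(b_j / B b_{j-1})$. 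I then apply Theorem \ref{thamalgamation} with $E = B$, $(a_1, a_2) = (b_{j-1}, b_j)$, $(c_1, c_2) = (b_j, c)$ to obtain $c^*$ with $\tp_{\LC}(c^* / B b_{j-1}) = \tp_{\LC}(b_j / B b_{j-1})$, $\tp_{\LC}(c^* / B b_j) = \tp_{\LC}(c / B b_j)$ and $\qftp_{\LC}(c^* / B(b_{j-1}, b_j)) = \qftp_{\LC}(c / B(b_{j-1}, b_j))$. In particular $\qftp_{\LC}(b_j, c^* / B) = \qftp_{\LC}(b_{j-1}, c^* / B)$, so Corollary \ref{IT2} applied to $(b_{j-1}, b_j)$ and $c^*$ over $B$ furnishes $b_{j+1}$ satisfying $\tp_{\LC}(b_j, b_{j+1} / B) = \tp_{\LC}(b_{j-1}, b_j / B)$ — giving (2) for the pair $(j, j+1)$ — and $\qftp_{\LC}(b_{j+1} / B(b_{j-1}, b_j)) = \qftp_{\LC}(c^* / B(b_{j-1}, b_j))$, which extends (1). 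The pairwise condition (2) at lower indices $l < j$ then propagates from $\tp_{\LC}(b_l, b_{j+1} / B) = \tp_{\LC}(b_l, b_j / B) = p$.

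\textbf{Main obstacle.} The crux is the verification, at each stage of the induction, of the hypotheses of Theorem \ref{thamalgamation} and Corollary \ref{IT2}: the ACF-disjointness $B(b_{j-1})^{alg} \cap B(b_j)^{alg} = B^{alg}$ must be deduced from the ACF-independence of $(a_j)$ together with the regularity-and-disjointness argument of Lemma~2.1 of \cite{Cha} (as used in Claim~1 of Lemma \ref{lemsq}), while the type equality $\tp_{\LC}(b_j / B) = \tp_{\LC}(c / B)$ required as input to Theorem \ref{thamalgamation} must be boosted from qf-equality using that $B = \acl(B)$ together with the characterization of types via $\LCR$-isomorphisms of algebraic closures in \ref{typePrc2}, combined with Lemma \ref{Deforders} (which ensures $\LC$- and $\LCR$-isomorphisms agree on $\acl$). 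The essential reason both amalgamation results are needed together, rather than either alone, is that Theorem \ref{thamalgamation} upgrades quantifier-free data for $c$ to a full-type witness $c^*$, whereas Corollary \ref{IT2} then symmetrizes the resulting configuration so that the new pair $(b_j, b_{j+1})$ has exactly the same full 2-type over $A$ as the already-constructed pair $(b_{j-1}, b_j)$ — not merely a completion of its qf-part.
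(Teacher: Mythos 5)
Your overall architecture is exactly the paper's: the same induction producing $(b_j)$ with the correct quantifier-free type and all pairs realizing $p$, via Theorem \ref{thamalgamation} followed by Corollary \ref{IT2} over $B=\acl(A(b_0,\ldots,b_{j-2}))$, then compactness and Erd\H{o}s--Rado extraction. Your verification sketches of the side conditions (ACF-disjointness $B(b_{j-1})^{alg}\cap B(b_j)^{alg}=B^{alg}$ from the transported ACF-independence, the witness's independence, the propagation of the pairwise condition) are also in the spirit of what the paper leaves implicit.

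There is, however, one genuine flaw in your justification of the key hypothesis of Theorem \ref{thamalgamation}. You need $\tp_{\LC}(c_1/E)=\tp_{\LC}(c_2/E)$, i.e.\ $\tp_{\LC}(b_j/B)=\tp_{\LC}(c/B)$, and you claim this can be ``boosted'' from $\qftp_{\LC}(b_j/B)=\qftp_{\LC}(c/B)$ using $B=\acl(B)$, \ref{typePrc2} and Lemma \ref{Deforders}. That boosting is false: over an $\acl$-closed set, the quantifier-free type does \emph{not} determine the full type in a bounded PRC field. By \ref{typePrc2} the full type corresponds to an $\LCR$-isomorphism $\acl(B(c))\to\acl(B(b_j))$ over $B$ sending $c$ to $b_j$, whereas qf-equality only gives an isomorphism $B(c)\to B(b_j)$ of ordered fields; nothing forces this map to carry the relative algebraic closure $B(c)^{alg}\cap M$ onto $B(b_j)^{alg}\cap M$ (for instance, which elements of $B(c)$ become squares, or more generally which algebraic extensions of $B(c)$ embed into $M$, is not quantifier-free information — two transcendentals realizing the same cuts over $B$ in all $n$ orders can still have different full types). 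Lemma \ref{Deforders} only identifies $\LC$- with $\LCR$-data; it does not bridge qf versus full types. If qf-types over $\acl$-closed sets determined full types, most of the paper's careful bookkeeping distinguishing $\qftp$ from $\tp$ (e.g.\ in Lemma \ref{lemsq}, Corollary \ref{IT2} and Theorem \ref{thamalgamation} itself) would be unnecessary.

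The correct repair — and what the paper's terse citation of Theorem \ref{thamalgamation} tacitly presupposes — is to build the required type equality into the \emph{choice} of $c$: besides the transported quantifier-free type over $A(b_0,\ldots,b_j)=B(b_{j-1},b_j)$, require $c$ to realize the full type $\tp_{\LC}(b_j/B)$ (whose qf-part is already forced by qf-indiscernibility). The consistency of such a choice is established by a density-theorem argument of exactly the kind carried out in the Claim inside the proof of Theorem \ref{thamalgamation} and in Lemma \ref{lemqftdense} (realizing a prescribed full type over the small base inside prescribed multi-open constraints coming from the larger parameter set), and one must also keep the witness ACF-independent of $\{b_{j-1},b_j\}$ over $B$. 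With $c$ chosen this way the hypothesis $\tp_{\LC}(c_1/E)=\tp_{\LC}(c_2/E)$ holds and the rest of your induction goes through as written; without it, the appeal to Theorem \ref{thamalgamation} is not licensed, and your proposed shortcut does not close that hole.
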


\begin{thm}\label{LascarPRC}
Let $n\geq1$ and let $M$ be a sufficiently saturated bounded $PRC$ field with exactly $n$ orders. We consider $M$ as an $\LC$-structure. Let $a,b$ be tuples in $M$ and $A \subseteq M$. Then $\Lstp_{\LC}(a/A) = \Lstp_{\LC}(b/A)$ if and only if $d_A(a,b) \leq 2$ if and only if $\tp_{\LC}(a/A)= \tp_{\LC}(b/A)$. 
\begin{proof} 

Suppose that $\tp_{\LC}(a/A)= \tp_{\LC}(b/A)$. Then by Fact \ref{distLascarNIP}, for all $i \in \{1, \ldots,n\}$, $\Lstp_{\Li}^{\clos{M}{i}}(a/A) = \Lstp_{\Li}^{\clos{M}{i}}(b/A)$ and $d_A(a,b) \leq 2$ (in $\clos{M}{i}$).
Then there are $a=a^i_0, a^i_1, a^i_2= b$ such that $a^i_0, a^i_{1}$ and $a^i_1, a^i_{2}$ start an $\Li$-indiscernible sequence (in $\clos{M}{i}$) over $A$, for each $1 \leq i \leq n$.

By the proof of Proposition 5.25 of \cite{Sim} and since $Th(\clos{M}{i})$ is NIP, we can suppose that $a^i_0 \ind^{ACF}_A a^i_1$ and that $a^i_1 \ind_A^{ACF}a^i_2$.

For each $i \in \{1, \ldots,n\}$, let $(c^i_j)_{j \in \omega}$ be an $\Li$-indiscernible sequence (in $\clos{M}{i}$) over $A$, such that $c^i_0=a$, $c^i_1= a^i_1$ and  $c^i_j \ind^{ACF}_A c^i_0, \ldots, c^i_{j-1}$ (see proof of Proposition 5.25 of \cite{Sim}).

For each $i \geq 2$, let $\phi_i: A(c^i_j:j \in \omega)\rightarrow A(c^1_j: j \in \omega)$ be the bijection
 such that $\phi|_A$ is the identity and $\phi_i(c^i_j)= c^1_j$. 
Equip $A(c^1_j: j \in \omega)$ with the unique $\LC$-structure which makes each $\phi_i (i \geq 2)$ into an $\Li$-isomorphism (this not necessary coincide with the $\LC$-structure inherited from $M$).
It follows that $A(c^1_j: j \in \omega)/A$ is a totally real regular extension, and so there is $M^*\succeq M$ which contains an $\LC(A)$-isomorphic copy of $A(c^1_j: j \in \omega)$ (with this new $\LC$-structure).

Since the sequence $(c^1_j)_{j \in \omega}$ is quantifier-free $\LC$-indiscernible over $A$, by Lemma \ref{qfindisseq} we can find an $\LC$-indiscernible sequence  $(d_j)_{j \in \omega}$ over $A$, such that $\tp_{\LC}(d_0,d_1/A)= \tp_{\LC}(c_0^1, c_1^1/A)= \tp_{\LC}(a,c_1^1/A)$.

In the same way there is an $\LC$-indiscernible sequence $(e_j)_{j \in \omega}$ over $A$ such that $\tp_{\LC}(e_0,e_1/A)= \tp_{\LC}(c_1^1,b/A)$.
This implies that $d_A(a,b) \leq 2$. The rest of the assertions are clear.

\end{proof}
\end{thm}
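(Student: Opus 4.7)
\textbf{Proof proposal for Theorem \ref{LascarPRC}.}

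The implication $d_A(a,b)\leq 2 \Rightarrow \Lstp_{\LC}(a/A)=\Lstp_{\LC}(b/A)$ is immediate from the definition of Lascar distance, and $\Lstp_{\LC}(a/A)=\Lstp_{\LC}(b/A) \Rightarrow \tp_{\LC}(a/A)=\tp_{\LC}(b/A)$ is a general fact, since Lascar strong types refine types. Hence the content of the theorem is the implication $\tp_{\LC}(a/A)=\tp_{\LC}(b/A) \Rightarrow d_A(a,b)\leq 2$, which is where I would focus all the work. The plan is to produce a single element $c$ such that both $(a,c)$ and $(c,b)$ start $\LC$-indiscernible sequences over $A$.

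The starting point is that for every $i\in\{1,\dots,n\}$, the real closure $\clos{M}{i}$ satisfies RCF, which is NIP, and every set is an extension base by Fact \ref{RCFextensionsbases}. Thus Fact \ref{distLascarNIP} applies in $\clos{M}{i}$: since $\tp_{\LC}(a/A)=\tp_{\LC}(b/A)$ in particular gives $\tp_{\Li}^{\clos{M}{i}}(a/A)=\tp_{\Li}^{\clos{M}{i}}(b/A)$, we have $d_A^{\Li}(a,b)\leq 2$, so there exists $a^i_1\in \clos{M}{i}$ with the pairs $(a,a^i_1)$ and $(a^i_1,b)$ both starting $\Li$-indiscernible sequences over $A$. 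Using the standard NIP machinery (as in the proof of Proposition 5.25 of \cite{Sim}), I can prolong these two pairs into full $\Li$-indiscernible sequences $(c^i_j)_{j\in\omega}$ over $A$ with $c^i_0=a$, $c^i_1=a^i_1$, and satisfying $c^i_j \ind^{ACF}_A c^i_0,\dots,c^i_{j-1}$; I may moreover arrange that $a^i_1\ind^{ACF}_A a$ and $a^i_1\ind^{ACF}_A b$.

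The main obstacle is to fuse these $n$ sequences, one per ordering, into a single $\LC$-indiscernible sequence for the full language $\LC$. To do this, I would fix once and for all the sequence $(c^1_j)_{j\in\omega}$ as an abstract field extension $A(c^1_j:j\in\omega)$ of $A$, and then, for each $i\geq 2$, transport the ordering $<_i$ using the field isomorphism $\phi_i: A(c^i_j:j\in\omega)\to A(c^1_j:j\in\omega)$ sending $c^i_j\mapsto c^1_j$ and fixing $A$. This endows $A(c^1_j:j\in\omega)$ with a new $\LC$-structure such that each $\phi_i$ becomes an $\Li$-isomorphism. The resulting extension of $A$ is regular and, by Fact \ref{Amalord}, totally real; so the PRC property (Fact \ref{PRC}) guarantees that $M$ is existentially closed in its compositum with $M$, whence some elementary extension $M^*\succeq M$ contains an $\LC(A)$-isomorphic copy of $A(c^1_j:j\in\omega)$. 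The image of $(c^1_j)_{j\in\omega}$ in $M^*$ is then quantifier-free $\LC$-indiscernible over $A$ (it is $\Li$-indiscernible for every $i$), with ACF-independence along the sequence, and $\tp_{\LC}(c^1_0/A)=\tp_{\LC}(c^1_1/A)=\tp_{\LC}(a/A)$.

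At this point Lemma \ref{qfindisseq} (which internally invokes the amalgamation theorem, Theorem \ref{thamalgamation}, and Corollary \ref{IT2}) upgrades this quantifier-free $\LC$-indiscernible sequence to a genuine $\LC$-indiscernible sequence $(d_j)_{j\in\omega}$ over $A$ with $\tp_{\LC}(d_0,d_1/A)=\tp_{\LC}(a,a^1_1/A)$. Running exactly the same argument on the ``right half'', starting from the pairs $(a^i_1,b)$, produces another $\LC$-indiscernible sequence $(e_j)_{j\in\omega}$ over $A$ with $\tp_{\LC}(e_0,e_1/A)=\tp_{\LC}(a^1_1,b/A)$. Composing, the chain $a,\,a^1_1,\,b$ witnesses $d_A(a,b)\leq 2$, which closes the cycle. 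The delicate point — and the reason the argument requires all the earlier machinery — is the fusion step: bare $\Li$-indiscernibility in each real closure does not combine into $\LC$-indiscernibility, and the bridge is exactly the PRC existential closure property together with the quantifier-free-to-full indiscernibility upgrade provided by the amalgamation theorem.
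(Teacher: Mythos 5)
Your proposal is correct and follows essentially the same route as the paper's own proof: reducing to the implication from equality of types to Lascar distance at most two, using Fact \ref{distLascarNIP} in each real closure, prolonging via the NIP argument of Proposition 5.25 of \cite{Sim} with ACF-independence, fusing the $n$ sequences by transporting the orderings onto $A(c^1_j:j\in\omega)$, embedding this totally real regular extension into some $M^*\succeq M$ by the PRC property, and then upgrading quantifier-free to full $\LC$-indiscernibility via Lemma \ref{qfindisseq} on each of the two halves. No substantive differences from the paper's argument.
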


\begin{rem}\label{IndTheoNTP2} 
Chernikov and Ben Yaacov showed in \cite{ChBY} the following Independence Theorem for NTP$_2$ theories (Theorem 3.3 of \cite{ChBY}): \emph{Let $T$ be NTP$_2$ and $A$ an extension base. Assume that $c \ind_A ab$, $a \ind_A bb'$ and $\Lstp(b/A)=\Lstp(b'/A)$. Then there is $c'$ such that $c'\ind_A ab'$, $\tp(c'a/A)=\tp(ca/A)$ and $\tp(c'b'/A)= tp(cb/A)$.}

It is easy to see that in the case of PRC bounded fields the Amalgamation Theorem (Theorem \ref{thamalgamation}) implies the independence theorem for NTP$_2$ theories:
Suppose that $M$ is a bounded $PRC$ field with exactly $n$ orders. We consider $M$ as an $\LC$-structure. Let $A \subseteq M$, and let  $c \ind_A ab$, $a \ind_A bb'$ and $\Lstp_{\LC}(b/A)=\Lstp_{\LC}(b'/A)$. Let $\phi \in Aut(M/A)$ such that $\phi(b)=b'$.

Observe that as $Th(\clos{M}{i})$ is NIP, Theorem 3.3 of \cite{ChBY} is true in each $\clos{M}{i}$, for all $i \in \{1, \ldots,n\}$.
This implies that for all $i \in \{1, \ldots, n\}$, there is $c'_i\ind^i_A ab'$ such that $c'_i$ realizes $\tp^{\clos{M}{i}}_{\Li}(c/Aa) \cup \tp_{\Li}^{\clos{M}{i}}(\phi(c)/Ab')$.

It follows that there is $c' \in M$ which realizes $\qftp^M_{\LC}(c/Aa) \cup  \qftp^M_{\LC}(\phi(c)/Ab')$ and such that $c'\ind^i_A ab'$ for all $i \in \{1, \ldots,n\}$. In particular, $c \ind^{ACF}_A ab'$.

By Theorem \ref{thamalgamation} there is $d \ind^{ACF}_A ab'$ realizing $\tp^M_{\LC}(c/Aa) \cup  \tp^M_{\LC}(\phi(c)/Ab')$.
Then $\tp(da/A) = \tp(ca/A)$ and $\tp(db'/A)=\tp(\phi(c)b'/A)= \tp(cb/A)$.

\end{rem}

\section{Preliminaries on pseudo $p$-adically closed fields}\label{PreliminariesPpC}
\subsection{$p$-adically closed fields} 

\begin{defn}
Let $(M,v)$ be a valued field. The valuation $v$ is called  \emph{$p$-adic} if the residue field is $\mathbb{F}_p$ and $v(p)$ is the smallest positive element of the value group $v(M)$. A field $M$  which admits a $p$-adic valuation is called  \emph{formally $p$-adic}; it must be of characteristic $0$.

If $(M,v)$ is a valued field and $v$ is a $p$-adic valuation on $M$, then we say that $(M,v)$ is a  \emph{$p$-adically valued field}.
A $p$-adically valued field $(M,v)$ which has no proper $p$-adically valued algebraic extension is called  \emph{$p$-adically closed}. 
A \emph{$p$-adic closure of $(M,v)$} is an algebraic extension $(\overline{M}^p,\overline{v}^p)$, which is $p$-adically closed.
Each $p$-adically valued field $(M,v)$ has a \emph{$p$-adic closure}, but as opposed to real closures, in the $p$-adic case, in general the $p$-adic closure is not unique. However we have the following criterion: 

If $(L_1,v_1)$ and $(L_2,v_2)$ are $p$-adic closures of $(M,v)$, then $(L_1,v_1)$ and $(L_2,v_2)$ are isomorphic over $M$ if and only if for each $n \in \mathbb{N}$, $L_1^n \cap M = L_2^n\cap M$ (\cite[page 57]{PR}).

\end{defn}

\begin{para} \textbf{Quantifier elimination:}\label{defpval}
Let $\mathcal{L}_{Mac}:= \mathcal{L_R} \cup \{O_v\} \cup \{P_m: m > 1\}$, where $\mathcal{L_R}$ is the language of rings, and $O_v$ and $P_m$ are unary relation symbols. 
If we interpret $O_v$ as the valuation ring and $P_m$ as the set of $m$-th powers, then we can axiomatize the class of $p$-adically closed fields $(pCF$ fields) in the language $\mathcal{L}_{Mac}$.  
By Theorem 1 of \cite{Mac}\label{EQmac} the theory of $p$CF fields admits quantifier elimination in $\mathcal{L}_{Mac}$.

Observe that if $(M,v)$ is a $p$CF field, then the valuation ring $O_v$ is quantifier-free definable in $\mathcal{L_R} \cup \{P_m: m >1\}$:

$(*)$ $M \models O_v(a)$ if and only if $M \models P_m(1 + pa^m)$, where $m= 2$ if $p\not = 2$ and $m=3$ if $p=2$.

It follows that $O_v$ is $\mathcal{L_R}$-definable.
\end{para}

 \begin{fact}\label{aclpCF} 
Let $(M, v)$ be a $p$CF field and let $A \subseteq M$. Then $A^{alg} \cap M = acl(A)= dcl(A)\prec M$.
\begin{proof}
By Proposition 3.4 of \cite{Van1} $acl(A)=dcl(A)$, and its proof shows that $A^{alg} \cap M \models pCF$.
Since $p$CF is model complete (in the language of rings), then $A^{alg} \cap M \prec M$. As $A \subseteq A^{alg} \cap M$, then $acl(A) \subseteq A^{alg} \cap M$. 
\end{proof}
\end{fact}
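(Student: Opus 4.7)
I would let $F := A^{alg} \cap M$, equip it with the restriction of $v$, and aim to show in sequence that (i) $(F, v|_F) \models pCF$, (ii) $F \prec M$ in the language $\mathcal{L_R}$, and then deduce (iii) both equalities $\acl^M(A) = \dcl^M(A) = F$.

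For (i), the valuation $v|_F$ is still $p$-adic: the residue field remains $\mathbb{F}_p$ and $v(p)$ remains the smallest positive element of $v(F)$, since these algebraic properties are inherited from $(M,v)$. The substantive step is that $F$ has no proper $p$-adically valued algebraic extension. Any such hypothetical extension $N$ would lie inside $A^{alg}$; the obstruction to embedding $N$ into $M$ over $F$ comes from the possible non-uniqueness of $p$-adic extensions of $v|_F$ to $A^{alg}$. However, since $(M,v)$ is itself $p$-adically closed (hence henselian) and every algebraic element of $M$ over $F$ already lies in $F$ by definition, any $p$-adic valuation on $N$ extending $v|_F$ would produce a conjugate $F$-embedding of $N$ into $M$, forcing $N \subseteq F$. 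This is the content of Proposition~3.4 of \cite{Van1}, which I would cite rather than re-prove.

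For (ii), once $(F, v|_F) \models pCF$, I would invoke model completeness of $pCF$ in the language of rings. Macintyre's quantifier elimination in $\mathcal{L}_{Mac}$ (Theorem~1 of \cite{Mac}), combined with the observation $(*)$ that $O_v$ is existentially $\mathcal{L_R}$-definable by $P_m(1+pa^m)$ (and, via Hensel's lemma, each $P_m$ itself reduces to an $\mathcal{L_R}$-condition), yields model completeness in $\mathcal{L_R}$. Applied to $F \subseteq M$ with both satisfying $pCF$, this gives $F \prec M$.

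For (iii), the inclusion $F \subseteq \acl^M(A)$ is immediate since every element of $F$ is field-algebraic over $A$. For the reverse, $F \prec M$ together with $A \subseteq F$ forces $\acl^M(A) = \acl^F(A) \subseteq F$, hence $\acl^M(A) = F$. To upgrade to $\acl = \dcl$: given $a \in F$, its quantifier-free $\mathcal{L}_{Mac}$-type over $A$ is determined by its minimal polynomial over $A$ together with finitely many conditions of the form $P_m(q(a))$ for $q \in A[x]$, and by Macintyre's QE this quantifier-free type isolates the full type of $a$ over $A$, hence uniquely identifies $a$ among the finitely many roots of its minimal polynomial lying in $M$. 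Thus $a \in \dcl^M(A)$. The hardest step is (i): unlike the RCF analogue where uniqueness of real closures (and the ambient order on $M$) makes $A^{alg} \cap M$ trivially real closed, $p$-adic closures are generally non-unique and the verification that $F$ absorbs all $p$-adically valued algebraic extensions relies on henselian bookkeeping encapsulated in \cite{Van1}.
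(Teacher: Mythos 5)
Your overall route is the same as the paper's: deduce from Proposition~3.4 of \cite{Van1} that $F:=A^{alg}\cap M$ is a model of $p$CF, use model completeness of $p$CF in $\mathcal{L_R}$ (which, as you say, follows from Macintyre's quantifier elimination together with the existential $\mathcal{L_R}$-definability of $O_v$ and the $P_m$) to get $F\prec M$, and then read off $\acl^M(A)=F$ from $A\subseteq F\prec M$ plus the trivial inclusion $F\subseteq\acl^M(A)$. Up to that point the argument is fine and matches the paper.

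The gap is in your upgrade to $\acl=\dcl$. You argue: the quantifier-free $\mathcal{L}_{Mac}$-type of $a$ over $A$ isolates its full type (true, by QE), ``hence uniquely identifies $a$ among the finitely many roots of its minimal polynomial lying in $M$.'' That last inference is a non sequitur: what must be ruled out is precisely that two \emph{distinct} roots $a\neq a'$ in $M$ realize the same type over $A$, and quantifier elimination alone does not rule this out --- in ACF one has full QE and yet conjugate roots are indistinguishable over $A$, so $\acl\neq\dcl$ there. So your step (iii) silently assumes the statement it is meant to prove. The content of $\acl=\dcl$ in $p$CF is the existence of definable Skolem functions (Hensel-type separation of conjugates by $P_m$-conditions), which is exactly Proposition~3.4 of \cite{Van1}; the paper simply cites it for this equality, and you should too (you already cite it for step (i)). Alternatively, once you invoke that proposition you get $\dcl(A)\prec M$, and since an elementary submodel is relatively algebraically closed in $M$, this yields $A^{alg}\cap M\subseteq\dcl(A)$ and hence all the equalities at once, making your separate root-counting argument unnecessary.
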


\begin{defn}\label{def1cell}
Let $(M,v)$ be a $p$-adically closed field, and denote by $v(M)$ the value group.
A \emph{$1$-cell} in $M$ is either a singleton or a set of the form
\[\{x \in M: \gamma_1 < v(x-a) < \gamma_2 \wedge P_n(\lambda(x-a))\wedge \lambda(x-a)\not= 0 \}\]
where $\gamma_1, \gamma_2 \in v(M) \cup\{-\infty, +\infty\}$, $a \in M$, $n \in \mathbb{N}$ and $\lambda \in \mathbb{Z}$.  
Observe that if $V$ is a $1$-cell which is not a singleton, then $V$ is open in the topology generated by the valuation, and we call $V$ an \emph{open $1$-cell}. 

\end{defn}

\begin{fact}\cite[Lemma 4.3]{HM}\cite[Lemma 4.1]{SV} \label{cellpadics}
Let $M$ be a $p$-adically closed field and $A \subseteq M$. 
Then every $A$-definable subset of $M$ is a finite union of disjoint $1$-cells definable with parameters in $A$.
  
\end{fact}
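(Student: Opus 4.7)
The plan is to derive this one-variable cell decomposition directly from Macintyre's quantifier elimination for $p$CF in $\mathcal{L}_{Mac}$ (see \ref{defpval}). By quantifier elimination together with the identity $(*)$ that rewrites $O_v$ in terms of a single $P_m$-predicate, any $A$-definable subset of $M$ is a Boolean combination of atomic formulas of the form $p(x) = 0$ and $P_n(p(x))$, where $p \in A[x]$ and $n \geq 2$. The equations $p(x) = 0$ define finite subsets of $\acl(A)$, each a disjoint union of singleton $1$-cells, so the real content is the decomposition of sets of the form $\{x \in M : P_n(p(x))\}$.

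To handle such a set, factor $p(x) = c\prod_{i=1}^{k}(x-\alpha_i)$ over $A^{alg}$; by Fact \ref{aclpCF} the roots $\alpha_i$ lie in $\acl(A) = A^{alg}\cap M \prec M$. Then partition $M$ according to (i) which $\alpha_i$ minimises $v(x-\alpha_i)$ and (ii) where $v(x-\alpha_i)$ lies relative to the finitely many values $v(\alpha_i - \alpha_j)$. On each piece, $v(x-\alpha_j) = v(\alpha_i - \alpha_j)$ for $j \neq i$ is constant, so $v(p(x)) = v(c) + v(x-\alpha_i) + \sum_{j\neq i} v(\alpha_i-\alpha_j)$ is affine in $v(x-\alpha_i)$, and the class of $p(x)$ in $M^\times/(M^\times)^n$ is determined by the class of $(x-\alpha_i)$ times a fixed constant $\lambda_0 \in \acl(A)^\times$. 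Using that $[M^\times : (M^\times)^n]$ is finite in any $p$CF field, the condition $P_n(p(x))$ on that piece becomes a finite disjunction of conditions of the form $P_n(\lambda(x-\alpha_i))$ for coset representatives $\lambda$, yielding precisely finite disjoint unions of $1$-cells as in Definition \ref{def1cell}. Boolean combinations are handled by common refinement over the finite set of all $\alpha_i$'s arising from the finitely many polynomials in the formula; the refinement stays finite because each further $P_n$-condition only subdivides through a finite quotient.

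The main obstacle is parameter control: a priori the cells produced are only $\acl(A)$-definable, while the statement demands parameters in $A$. This is resolved by making the partition $\mathrm{Gal}(A^{alg}/A)$-equivariant: take the set of roots $\{\alpha_i\}$ closed under $\mathrm{Gal}(A^{alg}/A)$, index pieces by Galois orbits of the data "closest $\alpha_i$, value of $v(x-\alpha_i)$, $n$-th power coset of $\lambda(x-\alpha_i)$", and use that orbit-wise conditions can be written over $A$ via symmetric functions of the $\alpha_i$. Since Galois-conjugate cells in a given orbit are already disjoint (they correspond to disjoint valuative annuli around distinct $\alpha_i$), each Galois orbit packages into a finite $A$-definable disjoint union of cells in the form prescribed by Definition \ref{def1cell}, completing the decomposition.
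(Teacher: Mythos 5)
There is no proof in the paper to compare against: the statement is quoted as a Fact with references to \cite{HM} and \cite{SV}. Judged on its own, your argument has a genuine gap at its very first substantive step. You factor $p(x)=c\prod_i(x-\alpha_i)$ over $A^{alg}$ and assert that ``by Fact \ref{aclpCF} the roots $\alpha_i$ lie in $\acl(A)=A^{alg}\cap M$''. Fact \ref{aclpCF} identifies the model-theoretic algebraic closure with the relative field-theoretic closure \emph{inside} $M$; it does not make $p$ split in $M$. A polynomial over $A$ (say $x^2-u$ with $u$ a non-square unit) typically has its roots outside $M$, so your partition by ``closest root'' and your final conditions $P_n(\lambda(x-\alpha_i))$ have centers that need not belong to $M$. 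Such sets are not $1$-cells in the sense of Definition \ref{def1cell}, which requires the center $a\in M$ and $\gamma_1,\gamma_2\in v(M)\cup\{\pm\infty\}$. Converting valuative annuli around non-rational roots into cells centered in $M$ (via best approximations in $M$, Krasner-type bounds on $v(x-\alpha)$ for $x\in M$, or an induction on degree) is precisely the substantive content of the cited proofs, and it is missing here. Moreover, your closing ``parameter control'' step repairs a non-problem while leaving the real one untouched: since $\acl(A)=\dcl(A)\prec M$ by Fact \ref{aclpCF}, any cell whose data lie in $\acl(A)$ is automatically definable with parameters in $A$, so no Galois symmetrization is needed for that; what symmetrization cannot do is turn a union of annuli around conjugate centers outside $M$ into cells of the prescribed form.

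A second, independent gap: even assuming all roots lie in $M$, on the piece where $v(x-\alpha_i)>v(\alpha_i-\alpha_j)$ the constancy of $v(x-\alpha_j)=v(\alpha_i-\alpha_j)$ does not determine the class of $x-\alpha_j$ modulo $(M^{\times})^n$. For that you need $v(x-\alpha_i)$ to exceed $v(\alpha_i-\alpha_j)$ by a fixed margin depending on $n$ (roughly $2v(n)+1$ times the minimal positive element), so that $(x-\alpha_j)/(\alpha_i-\alpha_j)\in 1+p^{2v(n)+1}O_v$ is an $n$-th power by Hensel's lemma; and the regions where $v(x-\alpha_i)$ is equal to, or within the margin of, some $v(\alpha_i-\alpha_j)$ require separate treatment, since there the coset of $x-\alpha_j$ genuinely varies. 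So the partition must be refined by these shifted bounds and Hensel's lemma invoked explicitly; as written, the assertion that the class of $p(x)$ in $M^{\times}/(M^{\times})^n$ is ``determined by the class of $(x-\alpha_i)$ times a fixed constant'' is not justified. Both points are exactly where the arguments of \cite{SV} and \cite{HM} do real work, so the proposal as it stands does not constitute a proof of the Fact.
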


\subsection{Pseudo $p$-adically closed fields}

\begin{defn}A field extension $N/M$ is called \emph{totally $p$-adic} if for every $p$-adic closure $\overline{M}^p$ of $M$, there exists a $p$-adic closure $\overline{N}^p$ of $N$ such that $\overline{M}^p \subseteq \overline{N}^p$. Observe that this is equivalent to: each $p$-adic valuation on $M$ can be extended to a $p$-adic valuation on $N$.

\end{defn}

\begin{fact}\cite[Lemma 13.9]{HJ}\label{PpcExtclose}
For a field $M$ the following are equivalent:
\begin{enumerate}
\item $M$ is existentially closed (relative to $\mathcal{L_R}$) in every totally $p$-adic regular extension.
\item Every non-empty absolutely irreducible variety $V$ defined over $M$ has an $M$-rational point, provided that it has a simple rational point in each $p$-adic closure of $M$.
\end{enumerate}
\end{fact}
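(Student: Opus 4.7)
The plan is to follow the usual pattern for these ``existentially closed $\Leftrightarrow$ many rational points'' equivalences, mimicking the proof of the corresponding Fact~\ref{PRC} for PRC fields, but with $p$-adic valuations playing the role of orderings.

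For $(1)\Rightarrow(2)$, I would pass to the function field $N:=M(V)$. Since $V$ is absolutely irreducible over $M$, $N/M$ is a regular extension. To verify that $N/M$ is totally $p$-adic, fix a $p$-adic valuation $v$ on $M$ with $p$-adic closure $(\overline{M}^{p},\overline{v}^{p})$, and let $q\in V(\overline{M}^{p})$ be a simple rational point provided by the hypothesis. The local ring $\mathcal{O}_{V,q}\subseteq N$ is regular, and specialization at $q$ yields a place $\pi\colon N\to \overline{M}^{p}\cup\{\infty\}$ which is the identity on $M$. Composing the valuation $v_{\pi}$ associated to $\pi$ with $\overline{v}^{p}$ produces a valuation $w$ on $N$ extending $v$, whose value group has $\mathbb{Z}\cong\overline{v}^{p}(\overline{M}^{p})$ as its smallest non-trivial convex subgroup and whose residue field is $\mathbb{F}_{p}$; thus $w$ is $p$-adic. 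Now apply $(1)$: $M$ is existentially closed in $N$, and since the generic point of $V$ furnishes a tuple in $V(N)$ witnessing the existential statement ``$\exists\bar{x}\ \bar{x}\in V$'', the statement transfers to $M$, giving $V(M)\neq\emptyset$.

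For $(2)\Rightarrow(1)$, let $N/M$ be a totally $p$-adic regular extension and suppose $N\models\exists\bar{x}\,\phi(\bar{x},\bar{a})$ with $\bar{a}\in M$ and $\phi$ quantifier-free. Using the standard trick of adjoining fresh variables $y_{j}$ with equations $g_{j}(\bar{x})y_{j}-1=0$, we may assume $\phi$ is a conjunction of polynomial equations with zero set $W$, and that there is a witness $\bar{c}\in W(N)$. Let $V$ be the $M$-Zariski closure of $\{\bar{c}\}$ inside $W$; since $\bar{c}\in N$ and $N/M$ is regular, $M[\bar{c}]$ is a regular extension of $M$ and so $V$ is absolutely irreducible over $M$, with $\bar{c}$ its (simple) generic point. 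To invoke $(2)$ it suffices to show that $V$ has a simple point in every $p$-adic closure of $M$.

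Given a $p$-adic closure $(\overline{M}^{p},\overline{v}^{p})$ of $M$ over a $p$-adic valuation $v$, use total $p$-adicity of $N/M$ together with regularity (hence linear disjointness of $N$ and $\overline{M}^{p}$ over $M$) to extend $\overline{v}^{p}$ to a valuation on the compositum $\overline{M}^{p}N$ whose restriction to $N$ is a $p$-adic valuation extending $v$; then take a $p$-adic closure $\overline{N}^{p}$ of the result, which contains $\overline{M}^{p}$ as a $p$-adically closed subfield. By model completeness of $p\mathrm{CF}$ we have $\overline{M}^{p}\prec\overline{N}^{p}$, so the existence of the simple point $\bar{c}\in V(\overline{N}^{p})$ yields a simple point $\bar{c}^{*}\in V(\overline{M}^{p})$. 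Applying $(2)$ produces $\bar{c}'\in V(M)$, and the auxiliary variables $y_{j}$ ensure that $\bar{c}'$ also satisfies the original inequations, so $M\models\exists\bar{x}\,\phi(\bar{x},\bar{a})$.

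I expect the main obstacle to be the compatibility step in $(2)\Rightarrow(1)$: namely, arranging that for a \emph{prescribed} $p$-adic closure $\overline{M}^{p}$ of $M$ there is some $p$-adic closure of $N$ into which $\overline{M}^{p}$ embeds over $M$. Unlike real closures, $p$-adic closures of $(M,v)$ need not be $M$-isomorphic, so one cannot simply read off the desired $\overline{M}^{p}$ from an arbitrary $p$-adic closure of $N$; the argument must genuinely extend the $p$-adic valuation from $\overline{M}^{p}$ to $\overline{M}^{p}N$ in a rank-one fashion. The remaining ingredients (linear disjointness, model completeness of $p\mathrm{CF}$, and the trick for handling inequations) are standard.
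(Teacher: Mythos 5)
The paper does not prove this statement at all: it is imported verbatim as a Fact from Haran--Jarden \cite[Lemma 13.9]{HJ}, exactly as Fact \ref{PRC} is imported from Prestel. So there is no in-paper proof to compare with; what you wrote is the standard Prestel-style argument transposed from orderings to $p$-adic valuations, and in outline it is sound. In $(1)\Rightarrow(2)$ the composite-valuation computation is correct (the residue field stays $\mathbb{F}_p$ and $w(p)$ stays minimal positive because $\overline{v}^{p}(\overline{M}^{p\,*})\cong\mathbb{Z}$ sits as a convex subgroup), with one small imprecision: the local ring of the simple point $q$ lives in $\overline{M}^{p}(V)$, not in $N=M(V)$; one first produces an $\overline{M}^{p}$-rational place of $\overline{M}^{p}(V)$ centered at $q$ (the usual ``simple point yields a rational place'' lemma) and then restricts it to $N$. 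Doing it on $\overline{M}^{p}(V)$ also shows that your construction gives the paper's official, stronger form of ``totally $p$-adic'' (a $p$-adic closure of $N$ containing the prescribed $\overline{M}^{p}$), not just extendability of each $p$-adic valuation.

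In $(2)\Rightarrow(1)$, the step you single out as the main obstacle is indeed the only non-routine point, but it is lighter than you suggest: with the paper's definition of totally $p$-adic, the closure $\overline{N}^{p}\supseteq\overline{M}^{p}$ is handed to you outright, and model completeness of $p$CF in the ring language then transfers the simple generic point of $V$ from $\overline{N}^{p}$ down to $\overline{M}^{p}$, exactly as you say; no amalgamation over the compositum is needed. If one insists on starting from the weaker formulation (every $p$-adic valuation of $M$ extends to $N$), then the extension of $\overline{v}^{p}$ to $\overline{M}^{p}N$ compatibly with a $p$-adic valuation of $N$ is precisely the $p$-adic analogue of the amalgamation theorem for orderings, namely Lemma 4 of \cite{Ku}, which this paper invokes elsewhere for exactly this purpose; it is a genuine lemma (a common extension to the compositum need not keep residue field $\mathbb{F}_p$ or keep $v(p)$ minimal unless chosen correctly), but it is available in the literature rather than a gap. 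The remaining ingredients (Rabinowitsch trick, regularity of $M(\bar{c})/M$ giving absolute irreducibility of $V$, smoothness of the generic point, and first-order expressibility of ``simple point of $V$'') are all fine.
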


\begin{defn}
A field $M$ of characteristic $0$ that satisfies the conditions of Fact \ref{PpcExtclose} is called \emph{pseudo $p$-adically closed} (P$p$C). 
By Lemma 10.1 of $\cite{J4}$  the class of P$p$C fields is elementary in the language $\mathcal{L}_{\mathcal{R}}$.

\end{defn}

\begin{fact}\cite[Theorem 10.8]{J4} \label{PPcfact}
Let $M$ be a P$p$C field and let $v$ be a $p$-adic valuation on $M$. Then: 
\begin{enumerate}
\item The $p$-adic closure of $M$ with respect to $v$ is exactly its henselization. In particular all $p$-adic closures of $M$ with respect to $v$ are $M$-isomorphic.
\item $M$ is dense in the $p$-adic closure $\overline{M}^p$ with respect to $v$.
\item If $v_1$ and $v_2$ are distinct $p$-adic valuations on $M$, then $v_1$ and $v_2$ are independent (i.e. $v_1$ and $v_2$ generate different topologies).
\end{enumerate}
\end{fact}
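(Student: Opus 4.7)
The proof splits into three largely independent arguments, with part (1) carrying almost all the content and parts (2) and (3) reducing to it or to classical facts.

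For part (1), the plan is to show that the henselization $(M^h, v^h)$ of $(M, v)$ is already $p$-adically closed, whence uniqueness of the henselization as a valued extension yields uniqueness of the $p$-adic closure up to $M$-isomorphism. Since $M^h/M$ is immediate, the residue field of $M^h$ is $\mathbb{F}_p$ and the smallest positive element of $v^h(M^h) = v(M)$ is $v(p)$; what remains is (a) that $v$ is henselian on $M$, which forces $M^h = \overline{M}^p$, and (b) that $v(M)$ is a $\mathbb{Z}$-group. For (a), given $f(X) \in O_v[X]$ with a simple root $\bar c \in \mathbb{F}_p$ of $\bar f$, Hensel's lemma provides a lift $\alpha_L \in L$ in each $p$-adic closure $L$ of $(M, v)$; the plan is to construct an absolutely irreducible variety $V_f$ over $M$ whose simple rational points in each $p$-adic closure encode such a lift (e.g., by adjoining auxiliary parameters to $V(f) \subset \mathbb{A}^1$ so as to force absolute irreducibility while preserving the link with roots of $f$) and then apply the criterion of Fact \ref{PpcExtclose} to obtain an $M$-rational root. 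For (b), given $n \geq 2$ and $a \in M^*$, in each $L$ there is a unique $k \in \{0,\ldots,n-1\}$ and $b \in L$ with $v(a) = k v(p) + n v(b)$; a similar variety construction based on $Y^n = a p^{-k}$, suitably enlarged to become absolutely irreducible, should produce $b \in M$ by the PpC property, showing $v(M)/n v(M)$ is cyclic of order $n$ generated by the class of $v(p)$.

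Part (2) follows from part (1) via a classical fact: a valued field is always dense in its henselization in the $v$-adic topology, since elements of the henselization are roots of polynomials over $M$ satisfying Hensel's hypothesis and can therefore be obtained as limits of Newton iterates starting from elements of $M$, giving approximations of arbitrary precision. Part (3) is an entirely general statement about $p$-adic valuations and does not use PpC: by the standard theorem that two nontrivial valuations on a field induce the same topology if and only if they are equivalent (i.e., share the same valuation ring), two distinct $p$-adic valuations --- each normalized so that $v_i(p)$ is the smallest positive element and the residue field is $\mathbb{F}_p$ --- cannot induce the same topology, because any equivalence would have to preserve these normalizations and thus be the identity, contradicting distinctness. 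The main obstacle is the variety construction in part (1): both Hensel's lemma and the $\mathbb{Z}$-group property concern elements of $\mathbb{A}^1$, and the naive candidates $V(f)$ and $V(Y^n - a p^{-k})$ are zero-dimensional and generically not absolutely irreducible, so the criterion of Fact \ref{PpcExtclose} does not apply directly. The delicate work lies in producing absolutely irreducible varieties --- for instance via Weil restriction, fiber products with auxiliary absolutely irreducible curves, or parameter-twist constructions --- whose simple rational points in each $p$-adic closure continue to carry the root or $n$-th root data needed to finish the argument.
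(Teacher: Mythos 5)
The paper does not actually prove Fact \ref{PPcfact}: it is quoted verbatim from Jarden \cite[Theorem 10.8]{J4}, so your proposal has to be judged against the statement itself, and as written it has genuine gaps in all three parts. In (1), your step (a) sets out to prove that Hensel's lemma holds in $M$ itself, i.e.\ that $(M,v)$ is henselian. That is not what the statement asserts, and it is false in general: combined with your step (b) it would say that $M$ is $p$-adically closed, whereas the fact must hold for every P$p$C field, and the paper itself uses that a bounded P$p$C field which is not $p$-adically closed is \emph{not} henselian (proof of Corollary \ref{nPpCIPC}). Consequently no enlargement of $V(f)$ to an absolutely irreducible variety can make Fact \ref{PpcExtclose} deliver a root of $f$ in $M$; the conclusion you are aiming at simply fails, so the ``main obstacle'' you flag is not a technical one. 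What is actually needed is that the henselization $M^h$ --- henselian by construction, with residue field $\mathbb{F}_p$, $v(p)$ minimal and value group $v(M)$ --- is already $p$-adically closed, which by the Prestel--Roquette characterization reduces exactly to your item (b): $v(M)$ is a $\mathbb{Z}$-group. For (b), the naive curve $Y^n=ap^{-k}$ is indeed unusable, but the known arguments do not ``pad'' it generically: they use absolutely irreducible curves with a unit of slack, of the shape $Y^n=ap^{-k}u(1+pX)$ (linear in $X$, hence absolutely irreducible, and smooth wherever $Y\neq 0$), together with the refined P$p$C property producing $M$-points arbitrarily $v$-close to prescribed simple points in each closure (compare Fact \ref{simpointp}) so as to control $v$ of the $X$-coordinate; note that you only need $v(a)\in kv(p)+nv(M)$, never an exact $n$-th root.

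Parts (2) and (3) rest on ``classical facts'' that are false once the value group has rank greater than one, and $p$-adic valuations need not have rank one: the paper itself constructs a $p$-adic valuation with value group $v(M)\times\Gamma$, anti-lexicographically ordered, in the proof of Theorem \ref{nPpCIP}, and $p$-adically closed fields such as ultrapowers of $\mathbb{Q}_p$ have non-archimedean $\mathbb{Z}$-group value groups. A valued field need not be dense in its henselization: take $K=\mathbb{Q}(s,t)$ with $v$ the composite of the $t$-adic valuation with a rank-one $p$-adic valuation on $\mathbb{Q}(s)$ (so $v$ is $p$-adic of rank two, $p$ odd); then $\sqrt{1+ps}\in K^h$, but no $a\in K$ has $v(\sqrt{1+ps}-a)$ above the convex subgroup generated by $v(p)$, since passing to the $t$-adic coarsening would force $\sqrt{1+ps}\in\mathbb{Q}(s)$. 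Newton iteration only improves an approximation within its archimedean class, so your reduction of (2) to (1) breaks down; density of a P$p$C field in its $p$-adic closures is a genuine theorem using the P$p$C property, the exact analogue of Prestel's density theorem recorded as Fact \ref{PRCcaracte}(1). Similarly, ``same topology iff same valuation ring'' holds only for rank-one valuations; in general, same topology is equivalent to dependence (existence of a common nontrivial coarsening), and distinct $p$-adic valuations on an arbitrary field can be dependent: if $p$ splits in a number field $F$ and $K=F((t))$, the composites of the $t$-adic valuation with the two places of $F$ above $p$ are distinct $p$-adic valuations on $K$ inducing the same topology. So (3) is not a general statement about $p$-adic valuations and also needs the P$p$C hypothesis. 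In short, only your reduction of (1) to the $\mathbb{Z}$-group property of $v(M)$ is on target; steps (1)(a), (2) and (3) as proposed would fail.
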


\begin{lem}\label{PpCacl}
Let $M$ be a P$p$C field and $A \subset M$. Then $A^{alg}\cap M = acl^{M}(A)$.
If in addition $M$ has a definable $p$-adic valuation, then $acl^{M}(A)= dcl^{M}(A)$.
\begin{proof}
The proof is identical to the one of Lemma \ref{PRCacl}, we only need to replace the amalgamation Theorem of orders by Lemma 4 of \cite{Ku}.
\end{proof}
\end{lem}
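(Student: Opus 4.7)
The plan is to mimic the proof of Lemma \ref{PRCacl} step by step, substituting Lemma 4 of \cite{Ku} (amalgamation of $p$-adic valuations on linearly disjoint extensions) for the amalgamation theorem for ordered fields (\ref{Amalord}) in the key step. The ``definable $p$-adic valuation'' clause at the end is the easy half: once the main equality $\acl^M(A) = A^{alg}\cap M$ is known, Fact \ref{aclpCF} (which gives $\acl = \dcl$ inside $p$CF, together with the model-completeness of $p$CF) lets one transfer definability from the $p$-adic closure of $(A, v|_A)$---which embeds into $M$ by Fact \ref{PPcfact}---back into $M$ via the $\mathcal{L_R}$-definition of $v$.

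For the main equality, the inclusion $A^{alg}\cap M \subseteq \acl^M(A)$ is immediate. For the converse, set $A_0 = A^{alg}\cap M$ and let $\widetilde{M}$ be a copy of $M$ via an $\mathcal{L_R}(A_0)$-isomorphism $f$, arranged so that $M$ and $\widetilde{M}$ are linearly disjoint over $A_0$. Since $A_0$ is relatively algebraically closed in $M$, both $M/A_0$ and $\widetilde{M}/A_0$ are regular, and linear disjointness upgrades this to regularity of $M\widetilde{M}$ over each of $M$ and $\widetilde{M}$. The crucial step is that $M\widetilde{M}$ is totally $p$-adic over $M$, and by symmetry over $\widetilde{M}$: given any $p$-adic valuation $v$ on $M$, its transport $\widetilde{v} := v\circ f^{-1}$ is a $p$-adic valuation on $\widetilde{M}$ agreeing with $v$ on $A_0$, and Lemma 4 of \cite{Ku} produces a common $p$-adic extension of $v$ and $\widetilde{v}$ to $M\widetilde{M}$.

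Since $M$ is P$p$C, Fact \ref{PpcExtclose} now says $M$ is existentially closed in $M\widetilde{M}$, and likewise for $\widetilde{M}$; the standard chain-of-elementary-extensions construction produces $M^*$ with $M, \widetilde{M} \prec M^*$ and $M\widetilde{M} \subseteq M^*$. Given $\alpha \in \acl^M(A)$, set $\widetilde{\alpha} = f(\alpha)$; then $\tp^{M^*}(\alpha/A_0) = \tp^{M^*}(\widetilde{\alpha}/A_0)$, and $M \prec M^*$ gives $\acl^{M^*}(A) \subseteq M$, whence $\widetilde{\alpha} \in M\cap \widetilde{M} = A_0$. Since $f$ fixes $A_0$ pointwise we conclude $\alpha = \widetilde{\alpha} \in A_0$, finishing the inclusion. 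The only conceptual point that really needs checking---and it is already packaged into the cited Kuhlmann lemma---is that the amalgamated extension of valuations can be chosen to remain \emph{$p$-adic} (not merely Henselian with appropriate residue field and value group), so this is where the main technical weight sits; but given the reference, it is not an obstacle.
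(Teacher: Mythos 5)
Your treatment of the main equality $A^{alg}\cap M=\acl^{M}(A)$ is exactly the paper's: the proof given there is, word for word, the proof of Lemma \ref{PRCacl} with the amalgamation theorem for ordered fields replaced by Lemma 4 of \cite{Ku} (Kunzi, incidentally, not Kuhlmann) to show that $M\widetilde{M}$ is a totally $p$-adic regular extension of $M$ and of $\widetilde{M}$, after which existential closedness, the common elementary extension $M^{*}$, and the argument $\widetilde{\alpha}\in M\cap\widetilde{M}=A_{0}$, $\alpha=\widetilde{\alpha}$, run verbatim as you describe.

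The one misstep is in your justification of the clause $\acl^{M}(A)=\dcl^{M}(A)$: Fact \ref{PPcfact} is about $p$-adic closures of the P$p$C field $M$ itself, and the $p$-adic closure (henselization) of $(A,v|_A)$ does \emph{not} in general embed into $M$, since $M$ need not be henselian. The clause can be repaired along the lines you intend by working in $\overline{M}^{p}$ instead: for $\alpha\in A^{alg}\cap M$ one has $\alpha\in\acl^{\overline{M}^{p}}(A)=\dcl^{\overline{M}^{p}}(A)$ by Fact \ref{aclpCF}, so by Macintyre's quantifier elimination $\alpha$ is defined over $A$ in $\overline{M}^{p}$ by a quantifier-free $\mathcal{L}_{Mac}$-formula; the trace on $M$ of each predicate occurring in it is $\mathcal{L_R}(A)$-definable in $M$, namely $O_v\cap M$ by the hypothesis that $v$ is definable, and $P_m(\overline{M}^{p})\cap M$ by Hensel's lemma combined with the density of $M$ in $\overline{M}^{p}$ (Fact \ref{PPcfact}(2)). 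The paper itself treats this clause as immediate, exactly as in Lemma \ref{PRCacl}, so this does not affect the comparison on the main argument.
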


\subsection{The theory of P$p$C fields with $n$ $p$-adic valuations}
As in the case of PRC fields we are interested in bounded pseudo $p$-adically closed fields and these fields have finitely many $p$-adic valuations: If $M$ is a P$p$C field with infinitely many $p$-adic valuations, by  \ref{defpval} $(*)$ if $p \not = 2 \; (p= 2)$, $M$ would have infinitely many extensions of degree $2$ (resp degree $3$), and so $M$ would not be bounded.  
For this reason we will restrict our attention to pseudo $p$-adically closed fields with exactly $n$ $p$-adic valuations, for a fixed $n \in \mathbb{N}$.   

\begin{defn}\label{defnPpc} 
Let $M$ be a field and let $v_1, \ldots, v_n$ be $n$ $p$-adic valuations on $M$. 
The field $(M, v_1, \ldots, v_n)$ is \emph{$n$-pseudo $p$-adically closed} ($n$-P$p$C) if:
\begin{enumerate}
 \item $M$ is a P$p$C field,
 \item if $i \not= j$, then $v_i$ and $v_j$ are different valuations on $M$,
 \item  $v_1, \ldots, v_n$ are the only $p$-adic valuations of $M$. 
\end{enumerate}
\end{defn}

\begin{notation}
If $(M,v_1, \ldots, v_n)$ is an $n$-P$p$C field, we denote by $\closp{M}{i}$ a fixed $p$-adic closure of $M$ respect to $v_i$.
Recall that by Fact \ref{PPcfact} $\closp{M}{i}$ is unique up to isomorphism.
\end{notation}

\begin{fact}\cite[Lemma 3.6]{EJ}\label{simpointp}
Let $(M,v_1, \ldots, v_n)$ be an $n$-P$p$C field and let $V$ be an absolutely irreducible variety defined over $M$.
For each $1\leq i\leq n$, let $q_i \in V(\closp{M}{i})$ be a simple point. 
Then $V$ contains an $M$-rational point $q$, arbitrary $v_i$-close to $q_i$, for all $i \in \{1, \ldots,n\}$.
\end{fact}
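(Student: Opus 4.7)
The strategy is to mimic the analogous PRC result (Fact \ref{simpoint}) by combining the defining property of P$p$C fields with a local analytic argument at each simple point $q_i$. The defining property directly guarantees the existence of an $M$-rational point once we know $V$ has a simple point in each $\closp{M}{i}$; the extra input needed for the approximation claim is Hensel's lemma (the $p$-adic implicit function theorem) at each $q_i$, together with density of $M$ in each $\closp{M}{i}$ (Fact \ref{PPcfact}(2)) and independence of the topologies (Fact \ref{PPcfact}(3)).

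First I would shrink the prescribed $v_i$-neighborhoods: given any basic $v_i$-open neighborhood of $q_i$, one can find a smaller one of the form $U_i=\{x\in(\closp{M}{i})^d:v_i(x-q_i)\geq \gamma_i\}$. Since $q_i$ is a simple point, the $p$-adic implicit function theorem (applicable because $\closp{M}{i}$ is Henselian) shows that, perhaps after shrinking $\gamma_i$, every point of $(\closp{M}{i})^d$ near $q_i$ whose first $r=\dim V$ coordinates are chosen freely close to those of $q_i$ lifts uniquely to a simple point of $V(\closp{M}{i})$ inside $U_i$. Next I would pass to a well-chosen algebraic subvariety: choose $M$-rational values $b_{i,1},\ldots,b_{i,r}$ close to the first $r$ coordinates of $q_i$ (possible by density of $M$ in $\closp{M}{i}$). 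The question is how to do this simultaneously for all $i$.

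To make the choice simultaneous across the $n$ valuations, I would use the $p$-adic version of the weak approximation theorem stated in \ref{ApTh}, applicable because the $v_i$ are pairwise independent by Fact \ref{PPcfact}(3): one produces $b_1,\ldots,b_r\in M$ that are simultaneously $v_i$-close to the first $r$ coordinates of $q_i$ for every $i$. Substituting $X_1=b_1,\ldots,X_r=b_r$ into the defining equations of $V$ cuts out an auxiliary variety $V'$ over $M$ which, by the implicit function argument above, contains a simple $\closp{M}{i}$-rational point inside $U_i$ for each $i$. An induction on $\dim V$ reduces $V'$ to an absolutely irreducible variety over $M$ (by standard arguments on specialization of absolutely irreducible varieties, one can ensure absolute irreducibility after a generic enough choice of the $b_j$); then the defining property of P$p$C gives an $M$-point of $V'$, which is automatically an $M$-point of $V$ lying in each $U_i$.

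The main obstacle, as in the PRC case, is controlling absolute irreducibility of $V'$ under the specialization $X_1=b_1,\ldots,X_r=b_r$: a careless choice of the $b_j$ could drop $V'$ into a reducible or higher-codimension component. The remedy is Bertini-type arguments combined with the freedom given by weak approximation, which allows one to perturb the $b_j$ within $v_i$-open sets to avoid the (countable, lower-dimensional) bad loci while still keeping them $v_i$-close to the $q_i$. Once absolute irreducibility of the successive $V'$ is secured, the inductive step proceeds without difficulty and terminates when $\dim V'=0$, at which stage $V'$ itself is an $M$-rational point in the intersection of the neighborhoods.
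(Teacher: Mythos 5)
Your overall plan—Hensel/implicit function theorem at each $q_i$, weak approximation across the independent topologies, then cutting down the dimension until the bare P$p$C axiom applies—has a genuine gap at its core, namely at the termination of the induction. If you specialize all $r=\dim V$ "free" coordinates to values in $M$, the resulting fibre is zero-dimensional, and a zero-dimensional variety over $M$ is absolutely irreducible (in characteristic $0$) only if it is a single $M$-rational point; a generic choice of the $b_j$ produces exactly the opposite, an irreducible fibre of degree $\deg V>1$ consisting of conjugate non-rational points (already for $y^2=x^3+1$, fixing $x=b$ generic gives two conjugate points). Bertini-type genericity cannot repair this: Bertini preserves geometric irreducibility only when cutting varieties of dimension at least $2$ down to dimension at least $1$, and moreover it concerns generic hyperplanes, not fibres of a fixed coordinate projection—for the absolutely irreducible surface $y^2=x_1^3+1$ (with $x_2$ free) every fibre of the projection to $x_1$ is geometrically reducible, no matter how the value is chosen. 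So at best your induction stops at an absolutely irreducible curve passing near the $q_i$; but then the P$p$C axiom (Fact \ref{PpcExtclose}) only yields \emph{some} $M$-point of that curve, with no control on its $v_i$-position relative to the $q_i$, which is precisely the approximation statement you are trying to prove. The argument is therefore circular exactly where the new content should enter.

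For comparison, the paper does not prove this fact but cites \cite[Lemma 3.6]{EJ}, and the standard arguments there transfer the closeness conditions differently. One route works with the function field $F=M(V)$: since each $q_i$ is a simple point, there is an $\closp{M}{i}$-rational place of $F\closp{M}{i}$ over $M$ centred at $q_i$, whose composition with $v_i$ gives a $p$-adic valuation on $F$ extending $v_i$ under which the generic point is $v_i$-close to $q_i$; thus $F/M$ is a totally $p$-adic regular extension, and existential closedness of $M$ in $F$ (Fact \ref{PpcExtclose}(1)) transfers the generic point to an $M$-point, the closeness being expressed over $M$ after replacing $q_i$ by a nearby $M$-tuple using density (Fact \ref{PPcfact}(2)). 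An alternative route encodes the neighbourhood conditions into an auxiliary absolutely irreducible cover of $V$ (via the $m$-th power predicates defining the valuations) and applies the bare P$p$C property to that cover. Your Hensel and weak-approximation ingredients do appear in these proofs, but only in a supporting role; the mechanism that actually carries the closeness conditions into the application of the P$p$C axiom is missing from your sketch.
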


\begin{fact}\cite[Proposition 10.4]{J4}\label{ElenequivP}
Let $(M, v_1, \ldots, v_n)$ and $(N, w_1, \ldots, w_n)$ be $n$-P$p$C fields. Let $L$ be a common subfield of $M$ and $N$. 
Suppose that there exists and isomorphism $\varphi: G(N)\rightarrow G(M)$ such that  $res_{L^{alg}}\varphi(\sigma)= res_{L^{alg}}\sigma$ for each $\sigma \in G(N)$. 
Suppose further that $\closp{M}{i}$(resp $\closp{N}{i})$ is a $p$-adic closure of $M$ (resp $N$) with respect to $v_i$ (resp $w_i$) such that $\varphi(G(\closp{N}{i}))= G(\closp{M}{i})$, for $i \leq n$. Then $(M, v_1, \ldots, v_n) \equiv_L (N, w_1, \ldots, w_n)$. 
\end{fact}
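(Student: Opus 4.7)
The plan is to model the argument on the proof of the PRC analog, Fact \ref{embedding2}. Since elementary equivalence over $L$ can be obtained by a standard back-and-forth, it suffices to prove the following embedding statement: given any tuple $\bar a$ from $M$, there exist an elementary extension $(N^*, w_1^*, \ldots, w_n^*)$ of $(N, w_1, \ldots, w_n)$ and an $L$-embedding $f: L(\bar a) \hookrightarrow N^*$ such that $\tp^{M}_{\mathcal{L}_{\mathcal{R}}}(\bar a / L) = \tp^{N^*}_{\mathcal{L}_{\mathcal{R}}}(f(\bar a) / L)$, and symmetrically. By Fact \ref{PpcExtclose} the crux is to produce a common totally $p$-adic regular extension of $M$ and $N$ in which all the data are compatible.

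First I would reduce, using the isomorphism $\varphi$ and its compatibility with the restriction maps, to the case where $\bar a$ is algebraically independent over $L$: the algebraic part of the tuple is handled by the hypothesis $\mathrm{res}_{L^{alg}}\varphi(\sigma) = \mathrm{res}_{L^{alg}}\sigma$, which shows that $L^{alg} \cap M$ and $L^{alg} \cap N$ are identified over $L$, while the extra compatibility $\varphi(G(\closp{N}{i})) = G(\closp{M}{i})$ ensures that under this identification the restriction of $v_i$ and the restriction of $w_i$ to $L^{alg}$ agree. For the transcendental part, I would take linearly disjoint copies of $M$ and $N$ over (a suitable algebraic closure of) $L$ and form the field composite $MN$; by repeated application of the amalgamation theorem for $p$-adic valuations (Lemma 4 of \cite{Ku}, the $p$-adic analog of \ref{Amalord}), each $v_i$ can be extended to a $p$-adic valuation $u_i$ on $MN$ whose restriction to $N$ matches $w_i$ up to the identification produced by $\varphi$ on $L^{alg}$. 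Thus $MN/M$ and $MN/N$ are totally $p$-adic regular extensions.

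Since $M$ and $N$ are P$p$C, Fact \ref{PpcExtclose} shows that both are existentially closed in $MN$, so there is a common elementary extension $M^*$ of $M$ and $N$ containing $MN$. The tuple $\bar a$ then lies in $M^* \succeq N$, and iterating this construction in a back-and-forth manner (using the isomorphism $\varphi$ at each step to transport the Galois-theoretic data for newly added algebraic elements) yields the desired $L$-isomorphism of elementary extensions.

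The main obstacle is the amalgamation of the $p$-adic valuations. In the PRC case the existence of a common ordering on the composite of two ordered extensions is straightforward (\ref{Amalord}) and this is the heart of the corresponding argument for Fact \ref{embedding2}. For $p$-adic valuations the analog is more delicate: one needs to track decomposition groups through $\varphi$ and ensure that at each step of the back-and-forth the extended valuation on a larger composite has $p$-adic closure whose Galois group corresponds correctly. This is precisely where the hypothesis $\varphi(G(\closp{N}{i})) = G(\closp{M}{i})$ is used; without it the extended valuations on $MN$ might fail to be $p$-adic or might fail to restrict to the correct valuations on $M$ and $N$. Once that step is in place, the remainder of the proof is a routine back-and-forth mirroring the proof of Fact \ref{embedding2} in \cite{J1}.
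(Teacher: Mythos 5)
This statement is quoted in the paper as a Fact from \cite[Proposition 10.4]{J4} and is not proved there, so your sketch has to stand on its own; as it stands it has a genuine gap at its central step. After arranging the valuation amalgamation on the composite $MN$, you argue that since $M$ and $N$ are both existentially closed in $MN$ (via Fact \ref{PpcExtclose}), ``there is a common elementary extension $M^*$ of $M$ and $N$ containing $MN$''. That inference is invalid: mutual existential closedness in a common extension only gives an embedding of $MN$ into some elementary extension of $M$ and, separately, into some elementary extension of $N$. The existence of a single structure that is an elementary extension of both $M$ and $N$ over $L$ is, by elementary amalgamation, \emph{equivalent} to $M \equiv_L N$ — the very conclusion to be proved — so the argument is circular exactly where the real work has to happen. (Note that in the paper's Lemma \ref{PRCacl}, where a similar-sounding step occurs, $\widetilde M$ is an isomorphic copy of $M$, so the two fields are already elementarily equivalent; that situation does not transfer here.)

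The standard repair is the tower you allude to: embed $M$ over $L$ into an elementary extension (or ultrapower) $N_1$ of $N$, then $N_1$ into an elementary extension $M_1$ of $M$, and so on, and take the union of the chain. But to iterate one must re-establish at each stage the full hypothesis of the statement for the new pair — an isomorphism $\G(N_1)\rightarrow \G(M)$ commuting with restriction to $M^{alg}$ and carrying $\G(\closp{N_1}{i})$ onto $\G(\closp{M}{i})$ — and your sketch only says this is done by ``using $\varphi$ to transport the Galois-theoretic data for newly added algebraic elements''. This persistence is not automatic: the absolute Galois group of an elementary extension of $N$ is not controlled by $\G(N)$ unless the field is bounded, and the Fact is stated for arbitrary $n$-P$p$C fields; producing such compatible isomorphisms is precisely the content of Jarden's embedding-lemma machinery behind \cite[Proposition 10.4]{J4}. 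Your preliminary reductions are fine — the restriction compatibility does identify $L^{alg}\cap M$ with $L^{alg}\cap N$, the condition $\varphi(\G(\closp{N}{i}))=\G(\closp{M}{i})$ does match the valuations (and power predicates) on that common algebraic part, and Kunzi's amalgamation plus linear disjointness does make $MN$ totally $p$-adic and regular over both sides — but these only control existential formulas; the upgrade from existential closedness to full elementary equivalence is exactly what the proposal leaves unproved.
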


\begin{cor}\label{PpCel}
Let $(M, v_1, \ldots, v_n) \subseteq (N, w_1, \ldots, w_n)$ be two $n$-P$p$C fields.
If $res: G(N)\rightarrow G(M)$ is an isomorphism, then $(M, v_1, \ldots, v_n) \prec (N, w_1, \ldots, w_n)$.
\end{cor}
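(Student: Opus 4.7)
The plan is to apply the embedding criterion Fact \ref{ElenequivP} to the common subfield $L = M$ with the isomorphism $\varphi := \mathrm{res}: \G(N) \to \G(M)$ given by hypothesis. Since $L^{alg} = M^{alg}$ and $\varphi$ is by definition the restriction $\sigma \mapsto \sigma|_{M^{alg}}$, condition (a) of Fact \ref{ElenequivP} is immediate: $\mathrm{res}_{L^{alg}}\varphi(\sigma) = \varphi(\sigma) = \mathrm{res}_{L^{alg}}\sigma$ for every $\sigma \in \G(N)$.

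For condition (b), I must exhibit, for each $i \in \{1, \ldots, n\}$, a $p$-adic closure $\closp{M}{i}$ of $(M, v_i)$ and a $p$-adic closure $\closp{N}{i}$ of $(N, w_i)$ such that $\varphi(\G(\closp{N}{i})) = \G(\closp{M}{i})$. Since $(M, v_1, \ldots, v_n) \subseteq (N, w_1, \ldots, w_n)$ as $n$-P$p$C structures, each $w_i|_M$ is a $p$-adic valuation of $M$; as $M$ has only $n$ such valuations, reindexing yields $v_i = w_i|_M$ for every $i$. Fix an extension $\tilde{w}_i$ of $w_i$ to $N^{alg}$, set $\tilde{v}_i := \tilde{w}_i|_{M^{alg}}$, and take $\closp{N}{i}$ (resp.\ $\closp{M}{i}$) to be the decomposition field of $\tilde{w}_i$ in $N^{alg}$ (resp.\ of $\tilde{v}_i$ in $M^{alg}$). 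By Fact \ref{PPcfact}(1), each such decomposition field coincides with the corresponding henselization and is therefore a $p$-adic closure of the respective ground field.

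It then suffices to check that $\varphi$ carries the decomposition group $\G(\closp{N}{i})$ onto the decomposition group $\G(\closp{M}{i})$. One inclusion is automatic: restricting a $\tilde{w}_i$-preserving automorphism of $N^{alg}$ to $M^{alg}$ preserves $\tilde{v}_i$. For the reverse, any $\tau \in \G(\closp{M}{i})$ lifts uniquely under the isomorphism $\varphi$ to some $\sigma \in \G(N)$; this $\sigma$ sends $\tilde{w}_i$ to an extension of $w_i$ to $N^{alg}$ whose restriction to $M^{alg}$ is again $\tilde{v}_i$. The fact that $\varphi$ is an isomorphism -- equivalently, that $N^{alg} = M^{alg} \cdot N$ and $M^{alg} \cap N = M$, so that $N/M$ is regular -- forces this extension to coincide with $\tilde{w}_i$, giving $\sigma \in \G(\closp{N}{i})$.

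With both hypotheses of Fact \ref{ElenequivP} verified, we conclude $(M, v_1, \ldots, v_n) \equiv_M (N, w_1, \ldots, w_n)$, and since $M \subseteq N$ this is $M \prec N$. The only subtle step is the uniqueness-of-extension argument used to verify the reverse inclusion of decomposition groups; this is precisely where the full force of $\varphi$ being an isomorphism, rather than merely a homomorphism, is used. The argument is the direct $p$-adic analog of Corollary \ref{EqelemPRC}.
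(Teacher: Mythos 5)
Your overall strategy is the intended one: apply Fact \ref{ElenequivP} with $L=M$ and $\varphi=\mathrm{res}$, and produce the required compatible $p$-adic closures by extending each $w_i$ to a valuation $\tilde w_i$ on $N^{alg}$ and taking decomposition (= henselization) fields, which are $p$-adic closures by Fact \ref{PPcfact}(1). Condition (a) and the inclusion $\varphi(\G(\closp{N}{i}))\subseteq \G(\closp{M}{i})$ are fine. The gap is in the reverse inclusion. You claim that an extension of $w_i$ to $N^{alg}$ whose restriction to $M^{alg}$ is $\tilde v_i$ must coincide with $\tilde w_i$ ``because $N^{alg}=M^{alg}N$ and $M^{alg}\cap N=M$''. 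That uniqueness statement is exactly equivalent to the inclusion you are trying to prove: the extensions of $w_i$ to $N^{alg}$ form a single $\G(N)$-orbit, and $\tilde w_i\circ\sigma$ restricts to $\tilde v_i$ on $M^{alg}$ precisely when $\mathrm{res}(\sigma)$ lies in the decomposition group of $\tilde v_i$; so as written the step is circular. Moreover, as a general principle about valuations it is false under exactly the hypotheses you invoke: take $t\in N\setminus M$ and extend the $t$-adic valuation of $M(t)$ to a valuation $w$ on $N$; then $w$ is trivial on $M$, so every extension of $w$ to $N^{alg}$ restricts to the trivial valuation on $M^{alg}$, yet $w$ has many distinct extensions to $N^{alg}$ whenever $(N,w)$ is not henselian --- even though $N^{alg}=M^{alg}N$ and $N/M$ is regular. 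So the field-theoretic identities alone cannot force the coincidence; the $p$-adicity of the valuations and the P$p$C hypothesis must enter.

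A correct way to finish, keeping your choices: since $\mathrm{res}$ is a homeomorphism and $\G(\closp{N}{i})$ is closed, its image is closed and the Galois correspondence gives $\varphi(\G(\closp{N}{i}))=\Gal\bigl(M^{alg}/(\closp{N}{i}\cap M^{alg})\bigr)$, so it suffices to prove $\closp{N}{i}\cap M^{alg}=\closp{M}{i}$. The easy inclusion yields $\closp{M}{i}\subseteq \closp{N}{i}\cap M^{alg}$. Now $\closp{N}{i}\cap M^{alg}$, equipped with the restriction of $\tilde w_i$, is a subfield of the $p$-adically valued field $\closp{N}{i}$, hence is itself $p$-adically valued (its residue field is $\mathbb{F}_p$, and $\tilde w_i(p)$ remains the least positive element of the value group), and it is an algebraic extension of $\closp{M}{i}$ inducing the valuation of $\closp{M}{i}$. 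Since $\closp{M}{i}$ is $p$-adically closed, it has no proper $p$-adically valued algebraic extension, so $\closp{N}{i}\cap M^{alg}=\closp{M}{i}$, which gives $\varphi(\G(\closp{N}{i}))=\G(\closp{M}{i})$ and lets Fact \ref{ElenequivP} apply. (A minor point: the structure inclusion already gives $w_i|_M=v_i$; your counting/reindexing remark would otherwise require an argument that the restrictions $w_i|_M$ are pairwise distinct.)
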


\section{Bounded pseudo $p$-adically closed field}\label{SectionPpC}
In this section we show that the strategies used and the results obtained in section \ref{SectionPRC} for PRC bounded fields can be generalized without much difficulty for P$p$C bounded fields.
The biggest difference is the need to extend the language, since it is necessary to distinguish the $n$-th powers in each $p$-adic closure with respect to each $p$-adic valuation.
For this we work with a generalization of the language of Macintyre for fields with $n$ $p$-adic valuations.

\begin{lem}\label{DefvalP_m}
Let $K$ be a bounded P$p$C field, and let $K_0$ be a countable elementary substructure of $K$. Let $\LCRp$ be the language of rings with constant symbols for the elements of $K_0$.
Then $K$ has only finitely many $p$-adics valuations and each one is definable by an existential $\LCRp$-formula.

\begin{proof}
Let $v$ be a $p$-adic valuation on $K$, and let $\overline{K}^p$ be a $p$-adic closure of $M$ with respect to $v$. Let $\{P_m\}_{ m\in \mathbb{N}}$ be such that $K \models P_m(a)$ if and only if   $\overline{K}^p \models \exists y (y^m= a \wedge a \not = 0).$
By Fact \ref{PPcfact} (1) $\overline{K}^p$ is the henselization of $K$ with respect to the valuation $v$.
Let $K_m$ be the composite field of all the extensions of $K$ of degree $m$. Observe that as $K$ is bounded, $K_m$ is a finite extension of $K$. In $K$ we can interpret without quantifiers in the language $\LCRp$, the structure $(K_m, +, \cdot, G)$ with $G =\{ \sigma|_{K_m}: \sigma \in \G(\overline{K}^p)\}$, and then: 
\[K \models P_m(a) \; \mbox{if and only if} \; K_m\models \exists y (y^m= a \wedge a \not = 0 \wedge \forall \sigma\in G (\sigma(y)= y)).\] 

This implies that for all $m \in \mathbb{N}$, $P_m$ is definable by an existential $\LCRp$-formula, and then by \ref{defpval}$(*)$ $v$ is  definable by an existential $\LCRp$-formula. As all the $p$-adic valuations are independent and are definable in $K_2$ or $K_3$, $K$ has only finitely many $p$-adic valuations.
\end{proof}
\end{lem}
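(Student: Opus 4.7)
The plan is to reduce both claims to the existential definability of the power predicates $P_m$, and then deduce everything from an analysis of $K_m$, the finite Galois composite of all degree-$m$ extensions of $K$. First I will fix a $p$-adic valuation $v$ on $K$ and a $p$-adic closure $\overline{K}^p$. By the characterization \ref{defpval}$(*)$ the valuation ring $O_v$ is quantifier-free definable from $P_m$ alone, with $m=2$ or $m=3$, so it suffices to produce an existential $\LCRp$-formula defining $P_m$ on $K$; once this is done, existential definability of $v$ is immediate, and the argument applies uniformly to every $p$-adic valuation on $K$.

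For this step I will invoke Fact \ref{PPcfact}(1), which identifies $\overline{K}^p$ with the henselization of $(K,v)$, so that $\G(\overline{K}^p)$ embeds canonically into $\G(K)$. Since $K$ is bounded, the composite $K_m$ of all degree-$m$ extensions of $K$ is a finite Galois extension, and the coding trick used in Lemma \ref{Deforders} (justified in Appendix~1 of \cite{Cha}) lets me interpret $(K_m,+,\cdot)$ in $K$ by quantifier-free $\LCRp$-formulas, using a fixed $K_0$-basis of $K_m$ (available because $K_0\prec K$ has the same bounded Galois structure). The finite subgroup $G := \{\sigma|_{K_m} : \sigma \in \G(\overline{K}^p)\}$ of $\Gal(K_m/K)$ acts by $K$-linear maps on that basis and can be listed explicitly with parameters in $K_0$. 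An element $a \in K$ then belongs to $P_m$ exactly when there is $y$ in the interpretation of $K_m$ with $y^m = a$, $y \neq 0$, and $\sigma(y) = y$ for every $\sigma \in G$, producing the required existential $\LCRp$-formula.

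For the finiteness statement, by Fact \ref{PPcfact}(3) distinct $p$-adic valuations on $K$ induce distinct independent topologies and in particular distinct valuation rings. By \ref{defpval}$(*)$ each such valuation ring is determined by $P_m(K)$ for $m\in\{2,3\}$ fixed, and the analysis above shows this set is encoded by the finite subgroup $G$ of the finite group $\Gal(K_m/K)$. Since there are only finitely many such subgroups, there are only finitely many possibilities for $v$.

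The main technical obstacle I expect is checking carefully that the interpretation of $(K_m,+,\cdot,G)$ really can be given by quantifier-free $\LCRp$-formulas: one needs to verify that the multiplicative structure and the Galois action both become polynomial in the coordinates of a fixed basis, and that the basis elements (together with the minimal polynomial data describing $G$ and the embedding $\G(\overline{K}^p)\hookrightarrow \G(K)$) are available as parameters in $K_0$. This is where the hypothesis $K_0 \prec K$ is essential, and it explains why the resulting formulas live in $\LCRp$ rather than in the pure language of rings.
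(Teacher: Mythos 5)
Your proposal is correct and follows essentially the same route as the paper: interpret the finite extension $K_m$ (with the restricted action of $\G(\overline{K}^p)$, available since $\overline{K}^p$ is the henselization) quantifier-free in $\LCRp$, define $P_m$ existentially from it, recover $O_v$ via \ref{defpval}$(*)$, and deduce finiteness because each valuation is determined by data inside the finite extension $K_2$ or $K_3$. Your counting of subgroups of $\Gal(K_m/K)$ just makes explicit the paper's one-line finiteness argument, and your justification that the parameters can be taken in $K_0$ (via $\G(K)\cong\G(K_0)$ and $K_0^{alg}K=K^{alg}$) is the same point the paper delegates to Appendix~1 of \cite{Cha}.
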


\begin{notation}\label{PpCB}
We fix a bounded P$p$C field $K$, which is not $p$-adically closed and a countable elementary substructure $K_0$ of $K$. 
Then $K_0^{alg}K= K^{alg}$  and $\G(K_0)\cong \G(K)$.

Since $K$ is bounded, by Lemma \ref{DefvalP_m} there exists $n \in \mathbb{N}$ such that $K$ has exactly $n$ distinct $p$-adic valuations. Thus $K$ is an $n$-P$p$C field. We will suppose that $n \geq 1$.

In this section we will work over $K_0$, thus we denote by $\LCRp$ the language of rings with constant symbols for the elements of $K_0$, $\Lip:=  \LCRp \cup \{O_i\} \cup \{P^i_m\}_{m \in \mathbb{N}, m>1}$ and $\LCp:= \LCRp \cup \{O_i\}_{i \leq n} \cup \{P^i_m\}_{i \leq n, m \in \mathbb{N}, m>1}$, where $O_i$ and $P^i_m$ are unary relation symbols. We interpret $O_i$ as the valuation ring corresponding to $v_i$ and define $P_m^i$ so that:
\[K \models P^i_m(a) \; \mbox{if and only if} \; \closp{K}{i}\models \exists y y^m= a \wedge a \not = 0,\] 
where $\closp{K}{i}$ is a $p$-adic closure of $K$ with respect to $K$.

We let $\PpCB:=Th_{\LCp}(K)$. If $M$ is a model of $\PpCB$ we denote by $\closp{M}{i}$ the $p$-adic closure of $M$ with respect to $v_i$.
As in \ref{MCPRC}, using Corollary \ref{PpCel} we obtain that $\PpCB$ is model complete.
Observe that by Lemma \ref{DefvalP_m} the predicates $P^i_m$ and the valuation ring  $O_i$ are definable in the language $\LCRp$ by an existential formula.
\end{notation}
\begin{cor}\label{PpCRmc}
 $Th_{\LCRp}(K)$ is model complete.
\end{cor}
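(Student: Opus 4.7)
The plan is to proceed in exact analogy with the proof of Corollary~\ref{PRCRmc}. By \ref{PpCB}, $\PpCB=Th_{\LCp}(K)$ is already model complete in the expanded language $\LCp$. It then suffices, via the Robinson test, to check that every $\LCRp$-embedding between two models of $Th_{\LCRp}(K)$ is automatically an $\LCp$-embedding: for then such an embedding is $\LCp$-elementary, hence a fortiori $\LCRp$-elementary. This in turn reduces to showing that each of $O_i(t)$, $\neg O_i(t)$, $P^i_m(t)$, $\neg P^i_m(t)$ is existentially $\LCRp$-definable, since existential formulas are preserved by embeddings.

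The existential $\LCRp$-definability of $O_i$ and $P^i_m$ is exactly Lemma~\ref{DefvalP_m}.

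For $\neg P^i_m$, I plan to invoke the classical fact that in any $p$-adically closed field $L$ the index $[L^\times:(L^\times)^m]$ is finite. Applying this to $L=\closp{K}{i}$, let $1=c_1,c_2,\ldots,c_q$ be representatives of the cosets of $(L^\times)^m$ in $L^\times$. By Fact~\ref{PPcfact} the field $K$ is dense in $\closp{K}{i}$, and each coset is open in the $v_i$-topology, so the $c_j$ can be chosen in $K$. Moreover, since $P^i_m$ is $\LCRp$-definable and $K_0\prec_{\LCRp}K$, the $\LCp$-expansions satisfy $K_0\prec_{\LCp}K$, and the $\LCp$-sentence asserting the existence of such representatives then transfers to $K_0$, yielding $c_j\in K_0$. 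One then has
\[
 \neg P^i_m(a)\ \leftrightarrow\ a=0\ \vee\ \bigvee_{j=2}^q P^i_m(c_j^{-1}a),
\]
an existential $\LCRp$-formula. For $\neg O_i$, the Macintyre equivalence $(*)$ recalled in \ref{defpval} gives $O_i(a)\leftrightarrow P^i_m(1+pa^m)$ with $m=2$ if $p\neq 2$ and $m=3$ if $p=2$, so $\neg O_i(a)\leftrightarrow \neg P^i_m(1+pa^m)$ is existentially $\LCRp$-definable by the previous step.

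The main obstacle is the existential definability of $\neg P^i_m$; it rests on the finite-index property of $m$-th powers together with the transfer of coset representatives down to $K_0$ via the elementarity $K_0\prec_{\LCp}K$.
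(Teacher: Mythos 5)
Your proof is correct, and it follows the route the paper intends: the paper gives no explicit argument for this corollary beyond the observations in \ref{PpCB} (model completeness of $\PpCB$) and Lemma \ref{DefvalP_m} (existential $\LCRp$-definability of $O_i$ and $P^i_m$), in analogy with the one-line proof of Corollary \ref{PRCRmc}. The one point that genuinely needs an argument — that the negations $\neg P^i_m$ (and hence $\neg O_i$) are also existentially $\LCRp$-definable, so that an $\LCRp$-embedding between models is automatically an $\LCp$-embedding of their canonical expansions — is exactly what you supply, via the finite index of $((\closp{K}{i})^{\times})^m$ in $(\closp{K}{i})^{\times}$, density of $K$ in $\closp{K}{i}$ (Fact \ref{PPcfact}), and transfer of coset representatives into $K_0$; this is the standard Macintyre-style argument and is sound. (For $\neg O_i$ one could also bypass $\neg P^i_m$ entirely, since $v_i(a)<0$ is equivalent to $\exists z\,(zap=1 \wedge O_i(z))$, but your reduction through $(*)$ of \ref{defpval} works equally well.)
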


\begin{para}\label{equivtypePpC}
\textbf{Types:}
As in \ref{typePrc2}, using Fact \ref{ElenequivP}, we have a good description of the types in $\PpCB$:
Let $M$ be a model of $\PpCB$, $A$ a subfield of $M$ and $a, b$ tuples in $M$. Then $\tp(a/A)=\tp(b/A)$ if and only if there is an $\LCRp$-isomorphism $\Phi$ between $\acl(A(a))$ and $\acl(A(b))$, which sends $a$ to $b$ and is the identity on $A$. 
\end{para}

\subsection{Density theorem for bounded P$p$C fields} \label{DensityTheoremPpC}
\subsubsection{Density theorem for one variable definable sets}

\begin{defn} \label{defnmulti1cell}
Let $(M, v_1, \ldots, v_n)$ be a model of $\PpCB$.
\begin{enumerate}
 \item A subset of $M$ of the form $C= \displaystyle{\bigcap_{i=1}^n (C^i\cap M)}$, with $C^i$ a non-empty open $1$-cell in $\clos{M}{i}$ (see  definition \ref{def1cell}), is called a \emph{multi-$1$-cell}. By \ref{ApTh} (Approximation Theorem) and Fact \ref{PPcfact}(2) every multi-$1$-cell is non-empty.
 \item A multi-$1$-cell $C= \displaystyle{\bigcap_{i=1}^n (C^i\cap M)}$ such that each $C^i$ is a non-empty $v_i$-ball in $\clos{M}{i}$, is called a \emph{multi-ball}.
 \item A definable subset $S$ of a multi-$1$-cell $C= \displaystyle{\bigcap_{i=1}^n (C^i\cap M)}$ is called \emph{multi-dense} in $C$ if for any multi-ball $J \subseteq C$, $J \cap S \not = \emptyset.$
Note that multi-density implies $v_i$-density in $C^i$, for all $i \in \{1, \ldots,n\}$.
 \end{enumerate}
\end{defn}

\begin{rem}
Observe that if $(M,v)$ is a $p$-adically valued field and $(\overline{M}^p, \overline{v}^p)$ is a $p$-adic closure of $(M,v)$, then for any $1$-cell $C$ in $\overline{M}^p$ definable with parameters $\bar{a} \subseteq M$, the set $C \cap M$ is definable in $M$ by a quantifier-free $\mathcal{L}_{Mac}(\bar{a})$-formula, where $\mathcal{L}_{Mac}=\mathcal{L_R} \cup \{O_v\} \cup \{P_m: m \in \mathbb{N}\}$.
\begin{proof}
As in Remark $\ref{definM}$ using the quantifier elimination of $Th(\overline{M}^p)$ (Fact \ref{EQmac}) and Fact \ref{aclpCF} which says $\acl^{\overline{M}^p}= \dcl^{\overline{M}^p}$.	
\end{proof}
\end{rem}

\begin{prop}\label{thmdensitep}
Let $(M, v_1, \ldots, v_n)$ be a model of $\PpCB$. Let $\phi(x, \bar{y})$ be an $\LCp$-formula, $\bar{a}$ a tuple in $M$ and $b \in M$ such that $M\models \phi(b, \bar{a})$ and $b \notin \acl(\bar{a})$.
Then there is a multi-$1$-cell $C= \displaystyle{\bigcap_{i=1}^n (C^i\cap M)}$ such that:
\begin{enumerate}
\item $b \in C$,
\item $\{x \in C: M \models \phi(x, \bar{a})\}$ is multi-dense in $C$,
\item the set $C^i\cap M$ is definable in $M$ by a quantifier-free $\Lip(\bar{a})$-formula, for all $1 \leq i \leq n$.
\end{enumerate}
\begin{proof}
As in the proof of Proposition $\ref{thmdensite}$ using Theorem \ref{DefvalP_m} and the fact that $\PpCB$ is model complete, we can find $d \in \mathbb{N}$, $\bar{y_0} \in M^d$ and an absolutely irreducible variety $V$ defined over $\acl(\bar{a})$, such that $(b,\bar{y_0})\in V^{sim}(M)$ and  $\{x \in M: \exists \bar{y}  (x,\bar{y})\in V(M)\} \subseteq \phi(M, \bar{a})$.

For each $i \in \{1, \ldots, n\}$ we define:
\[A_i := \{x \in \closp{M}{i}: \exists(y_1, \ldots, y_{d}) \in {(\closp{M}{i})}^d:(x, y_1 \ldots, y_{d}) \; \mbox{is a simple point of} \; V \}.\]
Observe that $A_i$ is $\Li(\bar{a})$-definable in $\closp{M}{i}$ and $b \in A_i$.
By Lemma \ref{cellpadics} there exists a $1$-cell $C^i$ in $\closp{M}{i}$, $\Li(\bar{a})$-definable, such that $b\in C^i$ and $C^i\subseteq A_i.$

As $b \not \in \acl(\bar{a})$, $C^i$ is a $v_i$-open set.
Define $C:= \displaystyle{\bigcap_{i=1}^n (C^i\cap M)}$ and $S:= \{x \in C: M \models \phi(x, \bar{a})\}$. 
As in the proof of Proposition $\ref{thmdensite}$, using Fact $\ref{simpointp}$, we obtain that $S$ is multi-dense in $C$.

\end{proof}
\end{prop}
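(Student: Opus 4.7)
The plan is to mimic the proof of Proposition \ref{thmdensite}, replacing o-minimality of real closures by cell decomposition in $p$-adically closed fields (Fact \ref{cellpadics}) and replacing the approximation at simple points via Fact \ref{simpoint} by its P$p$C analogue, Fact \ref{simpointp}. The role played by the uniqueness of the real closure is taken over by the fact that in $\PpCB$ the $p$-adic closure $\closp{M}{i}$ is unique up to $M$-isomorphism (Fact \ref{PPcfact}(1)), and the quantifier-free definability of $1$-cells in $M$ is handed to us by the Macintyre-style language $\Lip$ together with $\acl = \dcl$ in $p$CF (Fact \ref{aclpCF}).

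First, I would use model completeness of $Th_{\LCRp}(K)$ (Corollary \ref{PpCRmc}) to write $\phi(x,\bar a)$ as an existential $\LCRp(\bar a)$-formula $\exists \bar y\, \psi(x,\bar y)$. As in the PRC case, the relation $\neq$ is existentially definable, so I can take $\psi$ positive, hence defining an algebraic set $W$ over $\acl(\bar a)$. Picking $\bar y_0\in M^d$ with $(b,\bar y_0)\in W(M)$, Lemma \ref{simple} together with Remark \ref{simplerem} supplies an absolutely irreducible variety $V$ over $\acl(\bar a)$ such that $(b,\bar y_0)$ is a simple point of $V$ and $V(M)\subseteq W(M)$.

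Next, for each $i\in\{1,\dots,n\}$, define
\[ A_i := \{x\in\closp{M}{i} : \exists \bar y\in(\closp{M}{i})^d\ (x,\bar y)\text{ is a simple point of }V\}. \]
Each $A_i$ is $\Lip$-definable in $\closp{M}{i}$ over $\acl(\bar a)$ and contains $b$. By cell decomposition in $p$CF (Fact \ref{cellpadics}), $A_i$ is a finite disjoint union of $1$-cells definable over $\acl(\bar a)$; let $C^i$ be the cell containing $b$. Since $b\notin\acl(\bar a)$, $C^i$ cannot be a singleton, so $C^i$ is an open $1$-cell. Quantifier elimination for $p$CF in the Macintyre language, combined with $\acl^{\closp{M}{i}}=\dcl^{\closp{M}{i}}$, ensures that $C^i\cap M$ is cut out by a quantifier-free $\Lip(\bar a)$-formula, giving clause (3).

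Finally, set $C := \bigcap_{i=1}^n (C^i\cap M)$; then $b\in C$ by construction. For multi-density, let $J\subseteq C$ be any multi-ball and pick $z\in J$. Since $z\in A_i$ for every $i$, there exist $\bar y^{(i)}\in(\closp{M}{i})^d$ with $q_i:=(z,\bar y^{(i)})$ a simple point of $V$. Applying Fact \ref{simpointp} to $V$, I can produce an $M$-rational simple point $q_0=(z_0,\bar y_0')\in V(M)$ which is simultaneously $v_i$-close to each $q_i$ for every $i$; in particular, by choosing the neighborhoods small enough inside $J$, I can force $z_0\in J$. Then $(z_0,\bar y_0')\in V(M)\subseteq W(M)$, so $M\models\phi(z_0,\bar a)$, which proves (2). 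The main subtlety — and the only place where the argument differs from the PRC proof in a nontrivial way — is verifying that the cells of Fact \ref{cellpadics} yield the desired \emph{open} cells, i.e.\ checking that $b\notin\acl(\bar a)$ rules out the singleton case, and that the Macintyre predicates $P_m^i$ encode exactly the $v_i$-open ball/cell structure coherently with density.
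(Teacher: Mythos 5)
Your proof is correct and follows essentially the same route as the paper's: model completeness to reduce to a positive existential condition, Lemma \ref{simple} and Remark \ref{simplerem} to obtain the absolutely irreducible variety $V$ with $(b,\bar y_0)$ simple, the sets $A_i$ in each $p$-adic closure, Fact \ref{cellpadics} to extract a $1$-cell $C^i$ containing $b$ (open because $b\notin\acl(\bar a)$), quantifier elimination plus $\acl=\dcl$ in $p$CF for clause (3), and Fact \ref{simpointp} for multi-density. No substantive differences to report.
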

 
\begin{thm} \label{descompositionp}
Let $(M, v_1, \ldots, v_n)$ be a model of $\PpCB$, let $\phi(x, \bar{y})$ be an $\LCp$-formula and let $\bar{a}$ be a tuple in $M$.
Then there are a finite set $A\subseteq \phi(M, \bar{a})$, $m \in \mathbb{N}$ and $C_1, \ldots, C_m$, with $C_j = \displaystyle{\bigcap_{i=1}^n (C^i_j\cap M)}$ a multi-$1$-cell such that:
\begin{enumerate}
\item $A \subseteq \acl(\bar{a})$,
\item $\phi(M, \bar{a}) \subseteq \displaystyle{\bigcup_{j=1}^m C_ j \cup A}$,
 \item$\{x\in C_j: M \models \phi(x, \bar{a})\}$ is multi-dense in $C_j$ for all $1  \leq j \leq m$,
 \item the set  $C^i_j \cap M$ is definable in $M$ by a quantifier-free $\Lip(\bar{a})$-formula, for all $1  \leq j \leq m$ and $1 \leq i \leq n$.
 \end{enumerate}
\end{thm}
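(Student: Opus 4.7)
The plan is to follow the proof of Theorem \ref{descomposition} almost line by line, replacing each ingredient from the PRC setting by its $p$-adic counterpart. First, using the model completeness of $Th_{\LCRp}(K)$ (Corollary \ref{PpCRmc}) together with the fact that each predicate $O_i$ and $P^i_m$ is existentially $\LCRp$-definable (Lemma \ref{DefvalP_m}), I would rewrite $\phi(x,\bar a)$ in the form $\exists\bar y\,\psi(x,\bar y,\bar a)$ with $\psi$ a positive quantifier-free $\LCRp(\bar a)$-formula. This expresses $\phi(M,\bar a)$ as the projection on the first coordinate of $W(M)$ for some affine algebraic set $W$ defined over $\acl(\bar a)$. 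Applying Lemma \ref{simple} and Remark \ref{simplerem}, I decompose $W$ into absolutely irreducible components defined over $\acl(\bar a)$; handling each component separately, I may assume there is a single absolutely irreducible variety $W$ defined over $\acl(\bar a)$ with
\[ M\models\forall x\,\bigl(\phi(x,\bar a)\leftrightarrow \exists\bar y\,(x,\bar y)\in W^{sim}(M)\bigr). \]

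Next, letting $d=|\bar y|$, for each $i\in\{1,\ldots,n\}$ I set
\[ A_i:=\{x\in\closp{M}{i} : \exists\bar y\in(\closp{M}{i})^d\ (x,\bar y)\text{ is a simple point of }W\}. \]
The set $A_i$ is $\Lip(\bar a)$-definable in the $p$-adically closed field $\closp{M}{i}$, so by $p$-adic cell decomposition (Fact \ref{cellpadics}) I may write $A_i$ as a disjoint finite union of open $1$-cells $C^i_1,\ldots,C^i_{r_i}$ together with a finite set $F_i$ of singletons. Each singleton lies in $\dcl^{\closp{M}{i}}_{\Lip}(\bar a)\subseteq \bar a^{alg}\cap M=\acl(\bar a)$ by Fact \ref{aclpCF} and Lemma \ref{PpCacl}, and each cell $C^i_j$ and each point of $F_i$ is quantifier-free $\Lip(\bar a)$-definable.

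As in the proof of Theorem \ref{descomposition}, I then set $J:=\{\sigma:\{1,\ldots,n\}\to\mathbb N\mid \sigma(i)\leq r_i\}$ and for each $\sigma\in J$ define the multi-$1$-cell
\[ C_\sigma:=\bigcap_{i=1}^n\bigl(C^i_{\sigma(i)}\cap M\bigr). \]
Setting $A:=\phi(M,\bar a)\cap\bigcup_{i=1}^n F_i$, every $b\in\phi(M,\bar a)\setminus A$ satisfies $b\in A_i$ but is not a singleton of $A_i$ for each $i$, hence lies in some $C_\sigma$; this gives $\phi(M,\bar a)\subseteq\bigcup_{\sigma\in J}C_\sigma\cup A$, with $A\subseteq\acl(\bar a)$ finite.

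The only nontrivial point is the multi-density of $\phi(M,\bar a)$ in each $C_\sigma$, and this is where the $p$-adic analogue of the PRC argument takes over. Fix $\sigma\in J$ and a multi-ball $U\subseteq C_\sigma$. For any $z\in U$, $z\in A_i$ for all $i$, so there exist $\bar y^{(i)}\in(\closp{M}{i})^d$ with $q_i:=(z,\bar y^{(i)})\in W^{sim}$. By Fact \ref{simpointp}, $W$ has an $M$-rational point $q_0=(z_0,\bar y_0)$ simultaneously $v_i$-close to $q_i$ for every $i\in\{1,\ldots,n\}$; choosing the approximations fine enough, one can arrange $z_0\in U$, whence $M\models\phi(z_0,\bar a)$. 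The main obstacle to watch for is precisely this simultaneous-approximation step: it relies crucially on Fact \ref{simpointp} and the independence of the valuations (Fact \ref{PPcfact}(3)), guaranteeing that the relevant $v_i$-open neighborhoods of $z$ intersect nontrivially inside $U$. Once this is established, multi-density follows, and the theorem is proved.
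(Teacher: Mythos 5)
Your proposal is correct and follows essentially the same route as the paper: the paper's proof of Theorem \ref{descompositionp} is literally "repeat the proof of Theorem \ref{descomposition}, replacing o-minimality by Fact \ref{cellpadics} and Fact \ref{simpoint} by Fact \ref{simpointp}", which is exactly what you have carried out (your appeal to Fact \ref{PPcfact}(3) in the last step is harmless but redundant, since Fact \ref{simpointp} already provides the simultaneous approximation).
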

\begin{proof}
Exactly the same proof as in Theorem $\ref{descomposition}$, replacing o-minimality of each real closure by Fact \ref{cellpadics} and Fact \ref{simpoint} by Fact \ref{simpointp}. 
\end{proof}

\subsubsection{Density theorem for several variable definable sets}

\begin{defn}\label{defnmulti1open}
Let $(M, v_1, \ldots, v_n)$ be a model of $\PpCB$ and $r \in \mathbb{N}$. 
\begin{enumerate}
\item  A subset of $M^r$ of the form $U= \displaystyle{\bigcap_{i=1}^n (U^i\cap M^r)}$ with $U^i$ a non-empty $v_i$-open set in $(\closp{M}{i})^r$ is called a \emph{multi-open set in $M^r$} (or only \emph{multi-open set} when $r$ is clear).
Observe that by Remark \ref{ApThC} and density of $M$ in each $\clos{M}{i}$ (Fact \ref{PPcfact}(2)) every multi-open set is not empty.
\item  A definable subset $S$ of a multi-open set $U = \displaystyle{\bigcap_{i=1}^n (U^i \cap M^r)}$ is called \emph{multi-dense} in $U$ if for any multi-open $V \subseteq U$, $V \cap S \not = \emptyset.$

Note that multi-density in $U$ implies $v_i$-density in $U^i$, for all $i \in \{1, \ldots,n\}$.

\end{enumerate}

\end{defn}

\begin{fact}\cite[2.1]{Mac} \label{EQmac2}
Let $(M,v)$ be a $p$-adically closed field. Let $r \in \mathbb{N}$ and $\phi(x_1, \ldots, x_r, \bar{a})$ be an $\mathcal{L}_{Mac}$-formula.
Then $\phi(M,\bar{a})$ is a finite union of $\mathcal{L}_{Mac}(\bar{a})$-definable sets each of which is $v$-open in $M^r$ (in the product topology) or is of the form $\{(x_1, \ldots, x_r): x \in U \wedge p(x_1, \ldots, x_r)=0\}$, where $p(x_1, \ldots, x_r) \in \acl(\bar{a})[\bar{x}], p \not = 0$ and $U$ is $v$-open in $M^r$.  
Observe that by quantifier elimination in the theory of $p$-adically closed fields these sets are in fact quantifier-free $\mathcal{L}_{Mac}(\bar{a})$-definable.
\end{fact}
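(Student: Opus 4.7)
The plan is to derive this directly from Macintyre's quantifier elimination (Fact \ref{EQmac}). First, the formula $\phi(x_1,\dots,x_r,\bar a)$ is equivalent to a quantifier-free $\mathcal{L}_{Mac}(\bar a)$-formula $\psi(\bar x,\bar a)$. I put $\psi$ in disjunctive normal form, so that $\psi(M,\bar a)=\bigcup_{j=1}^{m}\psi_j(M,\bar a)$, where each $\psi_j$ is a conjunction of literals of the forms $p(\bar x)=0$, $p(\bar x)\neq 0$, $O_v(p(\bar x))$, $\neg O_v(p(\bar x))$, $P_l(p(\bar x))$, $\neg P_l(p(\bar x))$, with $p\in\acl(\bar a)[\bar x]$.

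The next step is to verify that, after some rewriting, each literal other than an equation $p(\bar x)=0$ defines a $v$-open subset of $M^r$. Since $(M^\times)^l$ is an open subgroup of finite index in $M^\times$ (by Hensel's lemma applied to $X^l-c$ for $c$ sufficiently close to $1$), the conditions $P_l(p(\bar x))$, and more generally $P_l(\lambda^{-1}p(\bar x))$ for any fixed $\lambda$, define open sets. Consequently, letting $\Lambda$ be a finite set of coset representatives of $(M^\times)^l$ in $M^\times$, the literal $\neg P_l(p(\bar x))$ is equivalent to $(p(\bar x)=0)\vee\bigvee_{\lambda\in\Lambda\setminus\{1\}}P_l(\lambda^{-1}p(\bar x))$, which after re-distribution contributes either an equation atom or open atoms. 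The condition $O_v(p(\bar x))$ can be rewritten via the identity $O_v(a)\leftrightarrow P_m(1+p\cdot a^m)$ from \ref{defpval}$(*)$ as a $P_m$ condition (hence open), while $\neg O_v(p(\bar x))$, being equivalent to $v(p(\bar x))<0$, is also open. Finally $p(\bar x)\neq 0$ is obviously open.

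After these reductions, each remaining disjunct $\psi_j$ is a conjunction of (i)~a (possibly empty) list of polynomial equations $p_{j,1}(\bar x)=\dots=p_{j,s_j}(\bar x)=0$ and (ii)~a conjunction of open atoms defining some $v$-open set $U_j$. Let $W_j=\{\bar x\in M^r:p_{j,k}(\bar x)=0\text{ for }1\leq k\leq s_j\}$. Then $\psi_j(M,\bar a)=U_j\cap W_j$. If (i)~is empty, then $\psi_j(M,\bar a)=U_j$ is $v$-open in $M^r$. Otherwise, discarding any trivial (identically zero) equations and any clauses made inconsistent by a nonzero constant equation, at least one nonzero polynomial $p\in\acl(\bar a)[\bar x]$ occurs in (i), and $\psi_j(M,\bar a)\subseteq \{p(\bar x)=0\}\cap U_j$, which is exactly the second form in the statement. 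The final remark about quantifier-free definability is immediate from the construction, since every atom appearing was already quantifier-free in $\mathcal{L}_{Mac}(\bar a)$.

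The main technical obstacle is the bookkeeping when rewriting $\neg P_l$ atoms into disjunctions: each such rewrite must be distributed back into the disjunctive normal form, and one must keep track that all coefficients of the $P_l$ predicates remain in $\acl(\bar a)$ (which follows from Fact \ref{aclpCF}, ensuring $\acl(\bar a)\prec M$, so the coset representatives $\lambda$ may be chosen in $\acl(\bar a)$). Everything else is essentially formal manipulation of quantifier-free formulas together with the topological openness of $(M^\times)^l$ in the $p$-adic topology.
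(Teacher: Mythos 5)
The paper itself gives no argument for this Fact---it is quoted from Macintyre \cite{Mac}---so there is nothing internal to compare against; your plan of deriving it from quantifier elimination (Fact \ref{EQmac}) is the natural one, and most of it is sound: the reduction to disjunctive normal form, the openness of the sets defined by $P_l(p(\bar x))$ (for nonzero values), $O_v(p(\bar x))$, $\neg O_v(p(\bar x))$ and $p(\bar x)\neq 0$, the coset rewriting of $\neg P_l$, and the choice of the coset representatives $\lambda$ inside $\acl(\bar a)$ via $\acl(\bar a)\prec M$ are all correct.

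There is, however, one step that does not prove what is claimed. A disjunct $\psi_j$ may carry several equations, so $\psi_j(M,\bar a)=U_j\cap\{p_{j,1}=\dots=p_{j,s_j}=0\}$, which is not literally of the form $U\cap\{p=0\}$ for a single nonzero polynomial; your sentence ``$\psi_j(M,\bar a)\subseteq\{p(\bar x)=0\}\cap U_j$, which is exactly the second form'' conflates being \emph{contained in} a set of that shape with \emph{being} of that shape, and containment is of no use here, since replacing the piece by the larger set would enlarge the union beyond $\phi(M,\bar a)$. The gap is easy to close: in a $p$-adically closed field the polynomial $q(x,y)=x^2-py^2$ vanishes only at the origin (if $y\neq0$ then $2v(x/y)=v(p)$, impossible as $v(p)$ is the least positive element of the value group), so the common zero set of $p_{j,1},\dots,p_{j,s_j}$ equals the zero set of the single polynomial obtained by iterating $q$, still with coefficients in $\acl(\bar a)$; with that substitution each non-open disjunct is exactly of the stated form. (For the way the Fact is used in Theorems \ref{descompositionp2} and \ref{lemsq+vp} only containment of the non-open pieces in proper Zariski-closed sets matters, but the statement as written asks for more.) One further convention point: if, as in Macintyre's original language, $0$ satisfies $P_l$, then $P_l(p(\bar x))$ is not open and must first be split as $p(\bar x)=0\vee(p(\bar x)\neq0\wedge P_l(p(\bar x)))$; under the paper's convention, where $P^i_m(a)$ entails $a\neq0$, your treatment is consistent as written.
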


Theorem \ref{thmdensite2} of section \ref{DensityTheorem} is generalized without difficulty to the class of bounded P$p$C fields. In the proof we only need to replace o-minimality by Fact \ref{EQmac2}, real closures by $p$-adic closures, $<_i$-open cells in $M^d$ by $v_i$-open sets in $M^d$, and multi-cells by multi-open sets. We thus obtain:

\begin{thm}\label{descompositionp2} \label{thmdensite2p}
Let $(M, v_1, \ldots, v_n)$ be a  model of  $\PpCB$ and let $r \in \mathbb{N}$.
Let $\phi(x_1, \ldots, x_r, \bar{y})$ be an $\LC$-formula and $\bar{a}$ be a tuple in $M$.
Then there are a set $V$, $m \in \mathbb{N}$, and $U_1, \ldots, U_m$ with $U_j= \displaystyle{\bigcap_{i=1}^n (U^i_j\cap M^r)}$ a multi-open set such that:

\begin{enumerate}
\item $\phi(M, \bar{a}) \subseteq \displaystyle{\bigcup_{j=1}^m U_ j \cup V }$,
\item the set $V$ is contained in some proper Zariski closed subset of $M^r$, which is definable over $\acl(\bar{a})$,
\item $\{x \in U_j: \phi(\bar{x}, \bar{a})\}$ is multi-dense in $U_j$ for all $1 \leq j \leq m$,
\item the set $U^i_j$ is definable in $M$ by a quantifier-free $\Li(\bar{a})$-formula, for all $1 \leq j \leq m$, $1 \leq t \leq l_i$.
 \end{enumerate}
\end{thm}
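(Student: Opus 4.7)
The plan is to mimic the proof of Theorem \ref{descomposition2} step by step, with the o-minimal tools replaced by their $p$-adic analogues. The author essentially indicates this in the remark preceding the statement, so the task reduces to verifying that each ingredient has a counterpart in the P$p$C setting.

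First, I would reduce to the case of a single absolutely irreducible variety. By Corollary \ref{PpCRmc}, $\PpCB$ is model complete, so we may take a quantifier-free $\LCRp(\bar{a})$-formula $\psi(x_1,\ldots,x_r,\bar{y})$ with $M\models\forall\bar{x}(\phi(\bar{x},\bar{a})\leftrightarrow\exists\bar{z}\,\psi(\bar{x},\bar{z},\bar{a}))$; using $\neq$ existentially, we make $\psi$ positive so that it defines an algebraic set $W$ over $\acl(\bar{a})$. Then Lemma \ref{simple} together with Remark \ref{simplerem} decomposes $W$ into finitely many absolutely irreducible components defined over $\acl(\bar{a})$, and by treating each component separately we may assume there is a single absolutely irreducible variety $W$ over $\acl(\bar{a})$ with
\[M\models\forall\bar{x}\Big(\phi(\bar{x},\bar{a})\longleftrightarrow\exists\bar{z}\,(\bar{x},\bar{z})\in W^{sim}(M)\Big).\]

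Next, for each $i\in\{1,\ldots,n\}$ I define
\[A_i:=\{(x_1,\ldots,x_r)\in(\closp{M}{i})^r:\exists\bar{z}\in(\closp{M}{i})^{d}\ (x_1,\ldots,x_r,\bar{z})\text{ is simple on }W\},\]
which is a quantifier-free $\Lip(\bar{a})$-definable subset of $(\closp{M}{i})^r$. By Fact \ref{EQmac2} applied inside the $p$-adically closed field $\closp{M}{i}$, there are $r_i\in\mathbb{N}$, pairwise disjoint $v_i$-open quantifier-free $\Lip(\bar{a})$-definable sets $U^i_1,\ldots,U^i_{r_i}\subseteq(\closp{M}{i})^r$, and a set $V^i$ contained in a proper Zariski closed $\Lip(\bar{a})$-definable subset of $(\closp{M}{i})^r$, with $A_i=V^i\cup\bigcup_{j=1}^{r_i}U^i_j$. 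Set $V:=\bigcup_{i=1}^n(V^i\cap M^r)$; then $V$ lies in a proper Zariski closed subset of $M^r$ defined over $\acl(\bar{a})$, giving (2) and (4).

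For (1) and (3), index by $J=\{\sigma:\{1,\ldots,n\}\to\mathbb{N}:\sigma(i)\leq r_i\}$ and put $U_\sigma:=\bigcap_{i=1}^n(U^i_{\sigma(i)}\cap M^r)$. Since any $\bar{x}\in M^r$ with $M\models\phi(\bar{x},\bar{a})$ lies in $A_i$ for every $i$, we obtain $\phi(M,\bar{a})\subseteq\bigcup_{\sigma\in J}U_\sigma\cup V$. Multi-density of $\{\bar{x}\in U_\sigma:M\models\phi(\bar{x},\bar{a})\}$ in $U_\sigma$ is the only delicate point: for any multi-open $B=\bigcap_{i=1}^n(B^i\cap M^r)\subseteq U_\sigma$ and any $\bar{z}\in B$, we get simple points $q_i=(\bar{z},\bar{y}^{(i)})\in W^{sim}(\closp{M}{i})$, and then Fact \ref{simpointp} supplies an $M$-rational simple point $(\bar{z}_0,\bar{y}_0)$ of $W$ that is simultaneously $v_i$-close to each $q_i$, so in particular $\bar{z}_0$ can be placed inside $B$. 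This yields $\bar{z}_0\in B$ with $M\models\phi(\bar{z}_0,\bar{a})$.

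I do not expect any real obstacle: all the substitutions are exactly those indicated by the author (Fact \ref{EQmac2} in place of o-minimal cell decomposition, Fact \ref{simpointp} in place of Fact \ref{simpoint}, and the $v_i$-open-set notion of multi-open replacing multi-cells). The only point requiring a little care is making sure the pieces produced by Fact \ref{EQmac2} are genuinely $v_i$-open, so that the multi-open notion of Definition \ref{defnmulti1open} applies; this is explicit in Fact \ref{EQmac2}, which separates open pieces from pieces contained in a Zariski closed set. Everything else is a routine transcription of the PRC argument of Theorem \ref{descomposition2}.
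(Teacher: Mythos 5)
Your proposal is correct and is essentially the paper's own argument: the paper proves Theorem \ref{descompositionp2} precisely by transcribing the proof of Theorem \ref{descomposition2}, replacing o-minimal cell decomposition by Fact \ref{EQmac2}, real closures by $p$-adic closures, Fact \ref{simpoint} by Fact \ref{simpointp}, and multi-cells by multi-open sets, which is exactly the substitution you carry out (including the reduction to a single absolutely irreducible variety via Corollary \ref{PpCRmc}, Lemma \ref{simple} and Remark \ref{simplerem}, and the multi-density check via simultaneous $v_i$-approximation of simple points).
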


\begin{lem}\label{lemqftdensep}
Let $(M, v_1, \ldots, v_n)$ be a model of $\PpCB$.
Let $A \subseteq M$ and let $\bar{a}$ be a tuple of $M$ such that $trdeg(A(\bar{a})/A)= |\bar{a}|$.
For all $i \in \{1, \ldots,n\}$, let $\bar{b}_i \in M^{|\bar{a}|}$ be such that $\qftp_{\Li}(\bar{b_i}/A)= \qftp_{\Li}(\bar{a}/A)$, and let $U^i$ be a non-empty $<_i$-open set in $(\clos{M}{i})^{|\bar{a}|}$ such that $\bar{b_i} \in U^i$.
Then the type $p(\bar{x}):= \{\bar{x} \in \displaystyle{\bigcap_{i=1}^n U^i}\} \cup \tp_{\LC}(\bar{a}/A)$ is consistent. 
\begin{proof}
As in Lemma \ref{lemqftdense}, replace Theorem \ref{descomposition2} by Theorem \ref{descompositionp2}.
\end{proof}

\end{lem}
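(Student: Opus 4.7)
The plan is to mirror the proof of Lemma \ref{lemqftdense}, with Theorem \ref{descompositionp2} playing the role of Theorem \ref{descomposition2} and Remark \ref{ApThC} used to intersect $v_i$-open sets over $i = 1, \ldots, n$. First I would reduce the problem by compactness: it suffices to show that for every formula $\psi(\bar{x}) \in \tp_{\LCp}(\bar{a}/A)$, the set
\[
\Bigl\{\bar{x} \in \bigcap_{i=1}^n U^i : M \models \psi(\bar{x})\Bigr\}
\]
is non-empty. (Note that the conjunction of finitely many formulas of $\tp_{\LCp}(\bar{a}/A)$ is again in that type, so a single $\psi$ suffices.)

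Next I would apply Theorem \ref{descompositionp2} to $\psi(\bar{x})$ over $A$: since $trdeg(A(\bar{a})/A) = |\bar{a}|$, the element $\bar{a}$ cannot lie in the small Zariski-closed exceptional piece $V$, so $\bar{a}$ belongs to some multi-open set $C = \bigcap_{i=1}^n(C^i \cap M^{|\bar{a}|})$ with $C^i \cap M^{|\bar{a}|}$ quantifier-free $\Lip(A)$-definable and $\{\bar{x}\in C : M\models\psi(\bar{x})\}$ multi-dense in $C$. Since $C^i \cap M^{|\bar{a}|}$ is quantifier-free $\Lip(A)$-definable and $\qftp_{\Lip}(\bar{b}_i/A) = \qftp_{\Lip}(\bar{a}/A)$, the assumption $\bar{a} \in C^i \cap M^{|\bar{a}|}$ transfers to give $\bar{b}_i \in C^i \cap M^{|\bar{a}|}$. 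Setting $V^i := U^i \cap C^i \cap M^{|\bar{a}|}$, this is a non-empty $v_i$-open subset of $M^{|\bar{a}|}$ containing $\bar{b}_i$.

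Finally, by Remark \ref{ApThC} (the multi-variable Approximation Theorem, which applies since the valuation topologies on $M$ are distinct and hence independent by Fact \ref{PPcfact}(3)), $V := \bigcap_{i=1}^n V^i$ is non-empty; moreover it is contained in $C$ and each $V^i$ is $v_i$-open in $M^{|\bar{a}|}$, so $V$ is a multi-open subset of $C$. Multi-density of $\psi$ in $C$ then yields some $\bar{c} \in V$ with $M \models \psi(\bar{c})$, and since $V \subseteq \bigcap_{i=1}^n U^i$, this $\bar{c}$ is the desired realization.

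The proof is essentially routine once the parallels with the PRC case are set up; the only point requiring minor care is verifying that Theorem \ref{descompositionp2} really does produce pieces $C^i$ whose $M$-trace is quantifier-free $\Lip(A)$-definable (not merely $\Lip(\bar{a})$-definable), which follows from the Fact \ref{EQmac2}-based cell decomposition used in its proof together with the hypothesis that $\bar{a}$ is transcendence-independent over $A$. There is no genuine obstacle here beyond bookkeeping.
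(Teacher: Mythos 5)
Your proposal is correct and is exactly the paper's intended argument: the paper proves this lemma by repeating the proof of Lemma \ref{lemqftdense} with Theorem \ref{descompositionp2} in place of Theorem \ref{descomposition2}, which is precisely what you have written out (including the use of the transcendence-degree hypothesis to avoid the Zariski-closed piece, the transfer of $\bar a\in C^i\cap M^{|\bar a|}$ to $\bar b_i$ via the quantifier-free type over $A$, and Remark \ref{ApThC} with Fact \ref{PPcfact}(3) to get a non-empty multi-open subset where multi-density applies).
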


\subsection{Amalgamation theorems for bounded P$p$C fields}\label{AmalgamationTheoremPpC}

Proposition \ref{lemamalgamation} and Theorems \ref{thamalgamation} and Corollary \ref{IT2} of section \ref{AmalgamationTheorems} can be easily generalized to the class of bounded P$p$C fields. We thus obtain:

\begin{thm}\label{thamalgamationp}
 Let $(M, v_1, \ldots, v_n)$ be a model of  $\PpCB$. Let $E = \acl(E) \subseteq M$. Let $a_1, a_2, c_1,c_2$ be tuples of $M$ such that $E(a_1)^{alg}\cap E(a_2)^{alg}=E^{alg}$ and $\tp_{\LC}(c_1/E)=\tp_{\LC}(c_2/E)$. Assume that there is $c$ $ACF$-independent of $\{a_1,a_2\}$ over $E$ realizing $\qftp_{\LC}(c_1/E(a_1)) \cup \qftp_{\LC}(c_2/E(a_2))$.
Then $\tp_{\LC}(c_1/Ea_1) \cup \tp_{\LC}(c_2/Ea_2) \cup \qftp_{\LC}(c/E(a_1,a_2))$ is consistent.
\end{thm}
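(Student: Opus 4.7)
The plan is to transplant the proof of Theorem \ref{thamalgamation} into the P$p$C setting, with each ingredient replaced by its P$p$C analogue. Concretely, I would first prove a P$p$C version of Proposition \ref{lemamalgamation} (the extra hypothesis being $\tp_{\LCp}(c/E)=\tp_{\LCp}(c_1/E)$), then eliminate this extra hypothesis by a density argument based on Theorem \ref{descompositionp2}, and finally apply the P$p$C Proposition-Construction twice, exactly as in the PRC case.

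For the P$p$C analogue of Proposition \ref{lemamalgamation}, I would start by extending the partial $\LCp$-isomorphism $\Phi:E(a_1,c_1)\to E(a_1,c)$ to an $\LCRp(A_1^{alg})$-iso\-mor\-phism $\tilde\Phi:(A_1C_1)^{alg}\to (A_1C)^{alg}$ sending $C_1$ to $C$, invoking $\acl_{\Lip}=\dcl_{\Lip}$ in $p$-adic closures (Fact \ref{aclpCF}) in place of the analogous fact for real closures. The Galois-theoretic construction of the field $L=Fix(S')$ via Lemmas 2.5 and 2.6 of \cite{Cha} together with Lemma \ref{Calg} is purely algebraic and transports verbatim; what has to be redone is the analogue of Claim 3 in that proof, namely that each $p$-adic valuation $v_i$ on $F$ extends to a $p$-adic valuation on $L$. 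For this, I would replace the involution $\sigma_i$ in the PRC proof by the closed subgroup $H=\G(\closp{N}{i})$ corresponding to $v_i$, whose fixed field is precisely a $p$-adic closure of $\closp{F}{i}$; the diagram chase with $\rho$, $\tilde\rho$ and the conjugate $\widetilde H=\tilde\rho^{-1}H\tilde\rho$ then goes through without change, giving $L\subseteq Fix(\widetilde H)$ and hence a $p$-adic valuation on $L$ extending $v_i$. With all $n$ $p$-adic valuations extending, I would use the amalgamation of $p$-adic valuations on linearly disjoint extensions (Lemma 4 of \cite{Ku}, already cited for Lemma \ref{PpCacl}) to equip $LM$ with $p$-adic valuations simultaneously extending those of $L$ and of $M$; so $LM/M$ is totally $p$-adic and regular, whence by Fact \ref{PpcExtclose} there is $M^*\succeq M$ containing $LM$, and Claims 4 and 5 transpose to conclude via \ref{equivtypePpC}.

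For the reduction step, I would follow the initial Claim in the proof of Theorem \ref{thamalgamation} verbatim, replacing Theorem \ref{descomposition2} by Theorem \ref{descompositionp2}: given quantifier-free formulas $\psi_1(x,a_1)$, $\psi_2(x,a_2)$ and $\phi(x)\in\tp_{\LCp}(c_1/E)$, one obtains an $\LCp(E)$-definable multi-open $C$ containing $c_1$ in which $\phi$ is multi-dense (hence $c\in C$ since $\qftp_{\LCp}(c/E)=\qftp_{\LCp}(c_1/E)$), and $\LCp(E(a_j))$-definable multi-opens $U_j\ni c$ with $U_j\subseteq \psi_j(M,a_j)$; saturation together with Remark \ref{ApThC} applied to the $p$-adic topologies yields a nonempty multi-open $D\subseteq U_1\cap U_2\cap C$, in which multi-density of $\phi$ delivers a witness $c^*$ that is $ACF$-independent from $\{a_1,a_2\}$ over $E$ and realizes $\qftp_{\LCp}(c/E(a_1))\cup\qftp_{\LCp}(c/E(a_2))\cup\tp_{\LCp}(c_1/E)$. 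Feeding $c^*$ to the P$p$C Proposition-Construction and iterating as in the last paragraph of the proof of Theorem \ref{thamalgamation} concludes the argument.

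The main obstacle, and the only place where the proof is not a cosmetic rewriting of the PRC argument, is the analogue of Claim 3 described above. In the PRC case one has the luxury of identifying $\closp{F}{i}$ with the fixed field of a single involution, and the ordering is canonically attached via squares; in the P$p$C case $H=\G(\closp{N}{i})$ is infinite, and different conjugates of $H$ yield $p$-adic closures that are $M$-isomorphic but carry valuations one must match up. The passage from ``$\pi(\rho)$ centralizes $\pi(H)$'' to ``$\pi(\rho)\in\pi(H)$'' in Lemma \ref{Calg} still works since it is purely Galois-theoretic, but one has to verify carefully that $\widetilde H$ really corresponds to a $p$-adic closure of $F$; this is precisely the content of Fact \ref{ElenequivP}, the P$p$C surrogate of Fact \ref{embedding2}. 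Once this single step is settled, the remainder is bookkeeping mirroring the PRC case.
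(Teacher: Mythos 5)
Your proposal is correct and follows essentially the same route as the paper: the paper's own proof of Theorem \ref{thamalgamationp} simply states that the arguments of Proposition \ref{lemamalgamation} and Theorem \ref{thamalgamation} carry over verbatim with orders replaced by $p$-adic valuations and real closures by $p$-adic closures, the two non-cosmetic points being exactly those you isolate, namely the unique extension of partial isomorphisms to $p$-adic closures via $\acl=\dcl$ in $p$CF and the fact (recorded in \ref{Remvaluation}) that $L\subseteq Fix(\widetilde H)$ for a conjugate $\widetilde H$ of $\G(\closp{M}{i})$ yields a $p$-adic valuation on $L$ extending $v_i$, with Lemma 4 of \cite{Ku} replacing the amalgamation theorem for ordered fields.
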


\begin{cor}\label{IT2p}
Let $(M, v_1, \ldots, v_n)$ be a model of $\PpCB$. Let $E = \acl(E) \subseteq E$.
Let $a_1, a_2, d$ be tuples of $M$, such that $\tp_{\LCp}(a_1/E)=\tp_{\LCp}(a_2/E)$, $d$ is ACF-independent of $\{a_1,a_2\}$ over $E$ and $\qftp_{\LCp}(d,a_1/E) = \qftp_{\LCp}(d,a_2/E)$. Suppose that $E(a_1)^{alg}\cap E(a_2)^{alg}= E^{alg}$.

Then there exists a tuple $d^{*}$ in some elementary extension $M^*$ of $M$ such that:
\begin{enumerate}
  \item $\qftp_{\LC}(c^*/E(a_1,a_2)) = \qftp_{\LC}(c/E(a_1,a_2)),$
	\item $\tp_{\LCp}(d^{*}, a_1/E) =\tp_{\LCp}(d^{*}, a_2/E),$
	\item $\tp_{\LCp}(d^{*},a_1/E)= \tp_{\LCp}(d, a_1/E)$.
\end{enumerate}
\end{cor}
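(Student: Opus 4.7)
The plan is to mimic the proof of Corollary \ref{IT2} essentially verbatim, substituting Theorem \ref{thamalgamationp} for Theorem \ref{thamalgamation}. Since all the heavy lifting (in particular dealing with the $p$-adic valuations rather than orders) is already packaged inside Theorem \ref{thamalgamationp}, this final step is a short formal manipulation.

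First I would use the hypothesis $\tp_{\LCp}(a_1/E) = \tp_{\LCp}(a_2/E)$ to find a tuple $c_2$ (in a suitable elementary extension, if needed) such that $\tp_{\LCp}(c_2, a_2/E) = \tp_{\LCp}(d, a_1/E)$. In particular, $\qftp_{\LCp}(c_2, a_2/E) = \qftp_{\LCp}(d, a_1/E)$, and combining with the hypothesis $\qftp_{\LCp}(d, a_1/E) = \qftp_{\LCp}(d, a_2/E)$ we obtain $\qftp_{\LCp}(c_2/E(a_2)) = \qftp_{\LCp}(d/E(a_2))$. Thus the single tuple $d$ realizes $\qftp_{\LCp}(d/E(a_1)) \cup \qftp_{\LCp}(c_2/E(a_2))$ and is, by hypothesis, $ACF$-independent from $\{a_1, a_2\}$ over $E$.

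Now the hypotheses of Theorem \ref{thamalgamationp} are satisfied (with $c_1 := d$ and the above $c_2$), so there exists $d^*$ realizing
\[
\tp_{\LCp}(d/E(a_1)) \;\cup\; \tp_{\LCp}(c_2/E(a_2)) \;\cup\; \qftp_{\LCp}(d/E(a_1,a_2)).
\]
From $\tp_{\LCp}(d^*/E(a_1)) = \tp_{\LCp}(d/E(a_1))$ we get $\tp_{\LCp}(d^*, a_1/E) = \tp_{\LCp}(d, a_1/E)$, which is conclusion (3). Similarly, $\tp_{\LCp}(d^*/E(a_2)) = \tp_{\LCp}(c_2/E(a_2))$ yields $\tp_{\LCp}(d^*, a_2/E) = \tp_{\LCp}(c_2, a_2/E) = \tp_{\LCp}(d, a_1/E) = \tp_{\LCp}(d^*, a_1/E)$, which is conclusion (2). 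Conclusion (1) is the quantifier-free part that is directly built into the output of Theorem \ref{thamalgamationp}.

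There is essentially no obstacle here, since the only serious work is absorbed into Theorem \ref{thamalgamationp}; the argument is a purely formal rearrangement identical in shape to the PRC case. The only small point to check is that the extension $M^*$ in which $d^*$ lives can be taken to be an elementary extension of $M$, which is immediate from the model completeness of $\PpCB$ (see \ref{PpCB}) combined with the construction in the proof of Theorem \ref{thamalgamationp}.
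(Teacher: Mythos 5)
Your proposal is correct and matches the paper's own treatment: the paper proves Corollary \ref{IT2p} by repeating the proof of Corollary \ref{IT2} word for word, with Theorem \ref{thamalgamationp} in place of Theorem \ref{thamalgamation}, exactly as you do (choose $c_2$ with $\tp_{\LCp}(c_2,a_2/E)=\tp_{\LCp}(d,a_1/E)$, note $d$ realizes the two quantifier-free types, and amalgamate). The remark about passing to an elementary extension is the same standard step the paper uses, so there is nothing to add.
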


The proofs of Theorems \ref{thamalgamationp} and \ref{IT2p} are exactly the same as those of Theorems \ref{thamalgamation} and \ref{IT2} respectively.
It is only required to replace the orders by $p$-adic valuations and real closures for $p$-adic closures.
We also note the following:

\begin{para} \label{Remvaluation}
Let $(M, v_1, \ldots, v_n)$ be a model of $T$.  Denote by $\clos{M}{i}$ a fixed $p$-adic closure of $M$ for the valuation $v_i$.
Then:
\begin{enumerate}

 \item If $A,B \subseteq M$ and $\Phi:A \rightarrow B$ is an $\LC$-isomorphism, by Fact \ref{aclpCF} for each $1 \leq i \leq n$ we can extend $\Phi$ uniquely to an $\Li$-isomorphism $\Phi^{i}: \clos{A}{i} \rightarrow \clos{B}{i}$, where  $\clos{A}{i}= A^{alg} \cap \clos{M}{i}$ and $\clos{B}{i} = B^{alg} \cap \clos{M}{i}$.
 \item Let $L/M$ be an algebraic field extension. If there exists a conjugate $H$ of $\G(\clos{M}{i})$ such that $L \subseteq Fix(H)$, then the valuation $v_i$ can be extended to a $p$-adic valuation on $L$. Since all the $p$-adic closures for the valuation $v_i$ are isomorphic, we can extend the predicates $P^i_m$ to $L$ such that for all $a \in M$, $M \models P_m(a)$ if and only if $L \models P_m(a)$. Then $L$ is an $\Li$-extension of $M$. 

\end{enumerate}
\end{para}

\section{Independence property in P$p$C fields}\label{IPppC}

Theorem \ref{IPPRC} says that the complete theory of a PRC field which is neither real closed nor algebraically closed is not NIP.
Contrary to PRC fields, the algebraic extensions of a P$p$C field are not necessarily P$p$C fields.
So the proof of Theorem \ref{IPPRC} cannot be generalized to P$p$C fields.
For this reason, to prove that the theory of P$p$C fields is not NIP we will give an explicit example of a formula with the independence property.

\begin{thm}\label{nPpCIP}
Let $p>2$ be a prime number.
Let $M$ be a bounded P$p$C field with two distinct $p$-adic valuations $v_1, v_2$. 
Then the formula \[\phi(x, y):= \bigwedge_{i=1}^2 v_{i}(x)> 0 \; \wedge \; \bigwedge_{i=1}^2 v_{i}(y)> 0 \; \wedge \; \exists z (\bigwedge_{i=1}^2 v_i(z-1)> 0 \; \wedge z^2=x+y+1)\] has the independence property.

\begin{proof} Recall that by Theorem \ref{DefvalP_m}, $v_1$ and $v_2$ are existentially $\LCRp$-definable (see \ref{PpCB} for the definition of $\LCRp$). So $\phi(x,y)$ is an $\LCRp$-formula.
Let $m \in \mathbb{N}$ and $k=m+2^m$.
Let $\Gamma:= \mathbb{Z}^k$; then $\Gamma$ is an ordered abelian group with the lexicographic order. 
Let $t$ be an indeterminate, and let $M((t^\Gamma))$ be the set of elements of the form $\displaystyle{\sum_{\gamma \in \Gamma}a_\gamma t^\gamma}$, with $a_\gamma \in M$ and such that $\{\gamma \in \Gamma: a_\gamma \not = 0\}$ is well-ordered.
Then $F:=M((t^\Gamma))$ is a field and the $t$-adic valuation $v_t: F^{*} \rightarrow \Gamma$ given by $v_t(\displaystyle{\sum_{\gamma \in \Gamma}a_\gamma t^\gamma})= \min\{\gamma \in \Gamma: a_\gamma \not=  0\}$ is such that $(F,v_t)$ is Henselian.

For each $1 \leq r \leq m+2^m$, let $z_{r}:=(z_{1r}, \ldots, z_{kr}) \in \Gamma$ such that $z_{jr}= 0$ if $j \not = r$ and $z_{rr}=1$.
For each $0 \leq j< m$, let $x_j=t^{z_{j+1}} \in F$. 
Let $(A_{l})_{1 \leq l \leq 2^m}$ an enumeration of $\mathcal{P}(m)$, for each $1 \leq l \leq 2^m$ let $y_l=t^{z_{m+l}}$.
Then the elements $\{x_j,y_l:0 \leq j < m, 1 \leq l \leq 2^m\}$ are transcendental and algebraically independent over $M$, satisfying $v_t(x_j)>0$ and $v_t(y_l)>0$.

Define $M_0 := M(x_j, y_l: 0 \leq j < m, A_l\in \mathcal{P}(m))$ and $L := M_0(\sqrt{x_j+ y_l+1}:0 \leq j < m, A_l\in \mathcal{P}(m))$.
As $p \not = 2$, for all $0 \leq j<m$ and $A_l\in \mathcal{P}(m)$, $1$ is a residual simple root of $z^2= x_j+y_l+1$; as $(F,v_t)$ is Henselian there exists $z_{j,l} \in F$ such that $z_{j,l}^2= x_j+y_l+1$ and $v_t(z_{j,l}-1)>0$.
Therefore $L \subseteq F$.

Let $v$ be a $p$-adic valuation on $M$. Define the valuation $w$ on $F$ as follows:
If $a = \sum a_{\gamma}t^\gamma$ and $v_t(a)= \gamma_0$, then $w(a):= (v(a_{\gamma_0}),\gamma_0)$.
Then the value group of $w$ is $v(M)\times \Gamma$, and it is ordered with the anti-lexicographic order.
If $v(M)$ has a smallest positive element $1$, then $(1,0)$ is the smallest positive element of $w(F)$. 
This implies in particular that $w$ is also a $p$-adic valuation on $F$. Therefore $F$ is a totally $p$-adic extension of $M$, and as $L \subseteq F$ we obtain that $L$ is a totally $p$-adic extension of $M$. 
In particular $v_1, v_2$ extend to $p$-adic valuations $w_1,w_2$ on $L$ satisfying 
$w_i(x_j)>0$, $w_i(y_l)>0$, and $w_{i}(z_{j,l}-1)>0$ for all $i \in \{1,2\}$, $0 \leq j <m$ and $A_l \in \mathcal{P}(m)$.
\begin{claim}
The fields $M_0(\sqrt{x_j+ y_l+1})$, with $(j,A_l) \in m \times \mathcal{P}(m)$, are linearly disjoint over $M_0$
\begin{proof}

Let $H:= \langle x_j+y_l +1: 0 \leq j < m, A_l\in \mathcal{P}(m), {(M_0^*)}^2\rangle / {(M_0^*)}^2$, a subgroup of $M_0^*/(M_0^*)^2$.

By Kummer theory, $L:=M_0(H^{\frac{1}{2}})$ is a Galois extension of $M_0$ and $\Gal(L/M_0)\cong H$.
So we have that $[L:M_0] = |\Gal(L/M_0)| = |H|$.

Since $H$ is a $\mathbb{Z}/2\mathbb{Z}$-vector space, it is sufficient to show that for all $C \subseteq  m \times \mathcal{P}(m)$, $C \not = \emptyset$,  $\displaystyle{\prod_{(j,A_l) \in C}(x_j+y_l+1)}$ is not a square in $M_0^*$. 

Suppose there are $z \in M_0^*$ and $C \subseteq  m \times \mathcal{P}(m)$, $C \not = \emptyset$ such that: 
\[\prod_{(j,A_l) \in C}(x_j+y_l+1)= z^2.\] 

We have that $ R= M[x_j, y_l: 0 \leq j < m, A_l\in \mathcal{P}(m)]$ is integrally closed over $M_0$, hence $z$ belong to $R$ and in $R$, $(x_j+y_l+1) \mid z^2$, for all $(j, A_l)\in C$.

Let $(r, A_s)\in C$; since $x_r+y_s+1$ is irreducible in $R$, ${(x_r+y_s+1)}^2 \mid z^2$. 
Then, \[(x_r+y_s+1)^2 \mid \prod_{(j,A_l) \in C}(x_j+y_l+1),\] which gives us, by canceling, \[(x_r+y_s+1) \mid \prod_{(j,A_l) \neq (r, A_s), (j,A_l) \in C}(x_j+y_l+1).\]

This contradicts the irreducibility of the $x_r + y_s+1$ in $M_0$. 

Hence $|H|= 2^{m2^m}$, and  $[L:M_0] = 2^{m2^m}$.
\end{proof}
\end{claim}

By the Claim, $\Gal(L/M) \cong (\mathbb{Z}/2\mathbb{Z})^{m2^m}$, so there exists $\sigma \in \Gal(L/M_0)$ such that $\sigma(z_{j,l}) = z_{j,l} \Leftrightarrow j \in A_l$.
Observe that as $w_2$ is a $p$-adic valuation on $L$, $w_2 \circ \sigma$ is also a $p$-adic valuation on $L$ extending $v_2$.
Replace $w_2$ by $w_2 \circ \sigma$.
Then in $(L, w_1, \ldots, w_n)$ we have that: 

$\displaystyle{\bigwedge_{i=1}^2 w_{i}(x_j)> 0 \; \wedge \; \bigwedge_{i=1}^2 w_{i}(y_l)> 0 \; \wedge \; [\exists z (\bigwedge_{i=1}^2 w_i(z-1)> 0 \; \wedge z^2=x_j+y_l+1)}] \Leftrightarrow j \in A_l$.

Observe that since each $v_i$ is $\LCR$-definable, and $\LCR$ contains constant symbols for a fixed submodel of $M$, each $w_i$ is also $\LCR$-definable.

Since $(L, w_1, \ldots, w_n)$ is a totally $p$-adic regular extension of $(M, v_1, \ldots, v_n)$, Lemma \ref{DefvalP_m} and Fact \ref{PpcExtclose} imply that $(M, v_1, \ldots, v_n)$ is existentially closed in $(L, w_1, \ldots, w_n)$. 
Then there are $a_j$, $b_{l}$ in $M$ for every $0 \leq j<m$ and $A_l \in \mathcal{P}(m)$, such that $M\models \phi(a_j, b_l)$ if and only if $ j \in A_l$.
\end{proof}

\end{thm}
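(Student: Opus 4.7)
The plan is to exhibit the independence property by a direct construction of witnesses in a suitable totally $p$-adic regular extension of $M$, then pull them back using the defining axiom of P$p$C fields. Concretely, for each $m \in \mathbb{N}$ I aim to find parameters $x_0,\ldots,x_{m-1}$ and $y_1,\ldots,y_{2^m}$ in $M$, indexed by an enumeration $A_1,\ldots,A_{2^m}$ of $\mathcal{P}(m)$, such that $M \models \phi(x_j,y_l)$ if and only if $j \in A_l$. Because $\phi$ is symmetric in $x$ and $y$, this yields IP. Note that $\phi$ is an \emph{existential} $\LCRp$-formula: indeed, Lemma \ref{DefvalP_m} makes each $v_i$ existentially $\LCRp$-definable, so ``$v_i(x)>0$'' and ``$\exists z\,(v_i(z-1)>0 \wedge z^2 = x+y+1)$'' are existential. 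Consequently, any construction of witnesses inside a totally $p$-adic regular extension of $M$ transfers to $M$ itself.

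I would carry out the construction inside the Hahn series field $F = M((t^{\Gamma}))$, with $\Gamma = \mathbb{Z}^{m+2^m}$ under the lexicographic order. Using the standard basis of $\Gamma$, set $x_j = t^{e_{j+1}}$ ($0 \le j < m$) and $y_l = t^{e_{m+l}}$ ($1 \le l \le 2^m$); these are algebraically independent transcendentals over $M$, each of positive $t$-adic valuation. Because $F$ is Henselian, $p>2$, and $1$ is a simple residual root of $Z^2 - (x_j+y_l+1)$, Hensel's lemma yields unique $z_{j,l} \in F$ with $z_{j,l}^2 = x_j+y_l+1$ and $v_t(z_{j,l}-1)>0$. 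Let $M_0 = M(x_j,y_l : j,l)$ and $L = M_0(z_{j,l} : j,l)$. Next I extend each $p$-adic valuation $v_i$ to a $p$-adic valuation $w_i$ on $F$ (hence on $L$) by $w_i(\sum_\gamma a_\gamma t^{\gamma}) = (v_i(a_{\gamma_0}),\gamma_0)$ where $\gamma_0$ is the minimal support element, with value group $v_i(M) \times \Gamma$ under anti-lexicographic order; this is $p$-adic, with smallest positive element $(1,0)$ and residue field $\mathbb{F}_p$.

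The heart of the argument is a Kummer/Galois computation: I would show $[L:M_0] = 2^{m 2^m}$ by proving the classes $x_j+y_l+1$ are $\mathbb{F}_2$-linearly independent in $M_0^*/(M_0^*)^2$, which reduces via unique factorization in the polynomial ring $R = M[x_j,y_l]$ to the irreducibility of each linear polynomial $x_j + y_l + 1$ in $R$. Thus $\mathrm{Gal}(L/M_0) \cong (\mathbb{Z}/2\mathbb{Z})^{m \cdot 2^m}$, and I can pick $\sigma \in \mathrm{Gal}(L/M_0)$ acting on each $z_{j,l}$ by the sign $(-1)^{[j \notin A_l]}$. Replacing $w_2$ by $w_2 \circ \sigma$ (still $p$-adic, still extending $v_2$), the indicator pattern emerges: $w_1(z_{j,l}-1) > 0$ always, while $(w_2 \circ \sigma)(z_{j,l}-1) = w_2(\sigma(z_{j,l}) - 1)$ is positive exactly when $\sigma(z_{j,l})=z_{j,l}$, since otherwise $\sigma(z_{j,l})-1 = -z_{j,l}-1$ has residue $-2 \not\equiv 0$ (here $p>2$ is essential). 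The other root $-z_{j,l}$ never satisfies $w_1(z-1)>0$ for the same reason, so no alternative witness exists.

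Finally, the extension $L/M$ is totally $p$-adic and, crucially, regular: $M_0/M$ is purely transcendental, and the Kummer argument above applied over $M^{\mathrm{alg}}$ (using that $x_j + y_l + 1$ remains irreducible in $M^{\mathrm{alg}}[x_j,y_l]$) shows $[L \cdot M^{\mathrm{alg}} : M_0 \cdot M^{\mathrm{alg}}] = [L:M_0]$, giving linear disjointness of $L$ and $M^{\mathrm{alg}}$ over $M$. By Fact \ref{PpcExtclose}, $M$ is existentially closed in $L$; since the statement asserting the pattern of truth values of $\phi(x_j,y_l)$ is existential in $\LCRp$, the same witnesses can be chosen in $M$. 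The main obstacle I anticipate is precisely this regularity verification — making sure the multiple algebraic adjunctions do not secretly introduce elements of $M^{\mathrm{alg}} \setminus M$ — which forces the careful use of the UFD structure of $R$ together with its persistence under base change to $M^{\mathrm{alg}}$.
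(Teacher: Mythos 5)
Your proposal is correct and follows essentially the same route as the paper's proof: the Hahn series field $M((t^{\Gamma}))$ with the monomials $x_j,y_l$, Hensel's lemma to get the roots $z_{j,l}$, the anti-lexicographic extension of each $v_i$ to a $p$-adic valuation, the Kummer/UFD computation giving $\Gal(L/M_0)\cong(\mathbb{Z}/2\mathbb{Z})^{m2^m}$, the twist of $w_2$ by a suitable $\sigma$, and transfer to $M$ by existential closedness in the totally $p$-adic regular extension $L$. You in fact make explicit two points the paper leaves implicit (the residue $-2$ computation showing the negative instances, recast existentially via the recorded square roots, also transfer down, and the verification that $L/M$ is regular), so no gap remains.
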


\begin{rem}\label{nPpCIPRem}
 The proof for $p=2$ is similar, using instead the formula 
 \[\phi(x, y):= \bigwedge_{i=1}^2 v_{i}(x)> 0 \; \wedge \; \bigwedge_{i=1}^2 v_{i}(y)> 0 \; \wedge \; \exists z (\bigwedge_{i=1}^2 v_i(z-1)> 0 \; \wedge z^3=x+y+1)\]
 and working in $M(\omega)$, with $\omega^2 +\omega +1=0$.
\end{rem}

\begin{defn}
Let $M$ be a field and let $\mathcal{M}$ be a family of separable algebraic extensions of $M$. Assume that $\mathcal{M}$ is closed under the action of $\G(M)$.  We say that $M$ is \emph{pseudo $\mathcal{M}$-closed} $(P\mathcal{M}C)$ if every non-empty absolutely irreducible variety $V$ defined over $M$ 
with an $\overline{M}$-simple rational point for each $\overline{M} \in \mathcal{M}$, has an $M$-rational point. 
\end{defn}

\begin{cor}\label{nPpCIPC}
Let $M$ be a bounded P$p$C field which is not $p$-adically closed. Then $T=Th_{\LCRp}(M)$ (see Notation \ref{PpCB}) is not NIP.
\begin{proof}
Since $M$ is bounded there exists $n \in \mathbb{N}$ such that $M$ has exactly $n$ distinct $p$-adic valuations.
If $n\geq 2$, by Theorem \ref{nPpCIP} and Remark \ref{nPpCIPRem}; $T$ has the $IP$.
Suppose that $n= 1$, let $v$ be the unique valuation on $M$; by Theorem \ref{DefvalP_m} $v$ is definable. Since $M$ is not $p$-adically closed it is not Henselian, and there exists a finite algebraic extension $N$ of $M$ such that $N$ admits two distinct valuations $v_1$, $v_2$, extending $v$.
Observe that as $\LCRp$ contain constant symbols for a fixed submodel of $M$, in $M$ we can interpret the structure $(N,v_1, v_2)$ (see Appendix 1 of \cite{Cha} for more details).
Therefore it is enough to show that $(N, v_1, v_2)$ has the $IP$.

Let $\closp{M}{1}$ be a fixed $p$-adic closure of $(M,v)$ and let $\mathcal{M} := \{\sigma(\closp{M}{1}): \sigma \in \G(M)\}$.
Then $M$ is $P\mathcal{M}C$. By Lemma 7.2 of \cite{J4} $N$ is $P\mathcal{M}(N)C$, where $\mathcal{M}(N)= \{\sigma(\closp{M}{1})N: \sigma \in \G(M) \}$.

We do the case $p>2$, the case $p=2$ can be adapted similarly. Let $m \in \mathbb{N}$ and $k = m + 2^m$. Take $\Gamma$, $x_j$, $y_l$ for $0 \leq j < m$ and $1 \leq l \leq 2^m$  as in the proof of Theorem \ref{nPpCIP}.
Let $N_0 := N(x_j, y_l: 0 \leq j < m, A_l\in \mathcal{P}(m))$ and $L := N_0(\sqrt{x_j+ y_l+1}:0 \leq j < m, A_l\in \mathcal{P}(m))$.

Let $\{v_i\}_{i \in I}$ be the set of valuations in $N$ extending $v$. Denote by $\clos{N}{i}$ the Henselian closure of $(N,v_i)$, and let $T_i:= Th(\clos{N}{i})$.  
As in the proof of Theorem \ref{nPpCIP} we can find definable valuations $w_i$ on $L$ extending $v_i$ such that: 
\begin{enumerate}
 \item $(L,w_i) \models (T_i)_{\forall}$
 \item $L \models \displaystyle{\bigwedge_{i=1}^2 w_{i}(x_j)> 0 \; \wedge \; \bigwedge_{i=1}^2 w_{i}(y_l)> 0 \; \wedge \; [\exists z (\bigwedge_{i=1}^2 w_i(z-1)> 0 \; \wedge z^2=x_j+y_l+1)}]$ if and only if $j \in A_l$
\end{enumerate}

Observe that $N$ is existentially closed in $L$: we can suppose that $L = N(\bar{a})$, and let $V$ be an absolutely irreducible variety such that $\bar{a}$ is a generic point of $V$. Then $V$ has a simple point in each $\clos{N}{i}$, for all $i \in I$, and as $N$ is $P\mathcal{M}(N)C$, it follows that $V$ has an $N$-rational point.

Since the valuations are definable in $N$ (in the language of rings expanded by constant symbols) and $N$ is existentially closed in $L$, we obtain that $(N, v_1, v_2)$ has the $IP$.

\end{proof}

\end{cor}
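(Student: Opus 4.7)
The plan is to split on the number $n$ of distinct $p$-adic valuations on $M$, which is finite by boundedness. In the case $n \geq 2$ there is nothing new to do: Theorem \ref{nPpCIP} (and Remark \ref{nPpCIPRem} for $p=2$) already exhibits an $\LCRp$-formula with the independence property, since the two valuations are existentially $\LCRp$-definable by Theorem \ref{DefvalP_m}. So the whole problem is the case $n=1$.

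Assume now $M$ has a unique $p$-adic valuation $v$, which is still $\LCRp$-definable by Theorem \ref{DefvalP_m}. Since $M$ is not $p$-adically closed, $(M,v)$ cannot be Henselian (otherwise it would already be its own $p$-adic closure), so some finite Galois extension $N/M$ admits (at least) two distinct extensions $v_1, v_2$ of $v$; these are again $p$-adic. The structure $(N, v_1, v_2)$ is interpretable in $M$ via the standard coding of a finite Galois extension (cf.\ Appendix 1 of \cite{Cha}), where each $v_i$ can be read off from the predicates $P^i_m$ interpreted via the extensions of degree $2$ (or $3$ if $p=2$). Hence it suffices to produce an IP formula in $(N, v_1, v_2)$ and pull it back to $M$.

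I would then mimic the construction of Theorem \ref{nPpCIP} inside $N$: fix $m$, put $k = m + 2^m$, $\Gamma = \mathbb{Z}^k$ with lexicographic order, take algebraically independent transcendentals $x_0,\ldots,x_{m-1},y_1,\ldots,y_{2^m}$ realised as monomials $t^{z_j}$ in $N((t^\Gamma))$, and form
\[
N_0 = N(x_j, y_l),\qquad L = N_0\bigl(\sqrt{x_j + y_l + 1} : 0 \le j < m,\ 1 \le l \le 2^m\bigr).
\]
The Kummer-theoretic linear disjointness argument (irreducibility of $x_j + y_l + 1$ in the polynomial ring) works verbatim, yielding $\Gal(L/N_0)\cong (\mathbb{Z}/2\mathbb{Z})^{m 2^m}$. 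Composing the given extension $w_2$ with an appropriate Galois element $\sigma$ produces, on $L$, two $p$-adic valuations $w_1, w_2$ extending $v_1, v_2$ respectively and making the formula
\[
\phi(x,y):=\bigwedge_{i=1}^2\bigl(w_i(x)>0 \wedge w_i(y)>0\bigr) \wedge \exists z\Bigl(\bigwedge_{i=1}^2 w_i(z-1)>0 \wedge z^2 = x+y+1\Bigr)
\]
select exactly the desired subsets $A_l \subseteq m$ on the pairs $(x_j, y_l)$.

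The main obstacle, and the reason this does not reduce purely to Theorem \ref{nPpCIP}, is that $N$ need \emph{not} be P$p$C (algebraic extensions of P$p$C fields are generally not P$p$C), so we cannot invoke Fact \ref{PpcExtclose} to conclude that $N$ is existentially closed in $L$. To get around this I would use the pseudo-$\mathcal{M}$-closed framework: fix a $p$-adic closure $\closp{M}{1}$ of $(M,v)$ and set $\mathcal{M} = \{\sigma(\closp{M}{1}) : \sigma \in \G(M)\}$, so $M$ is trivially $P\mathcal{M}C$; then by Lemma 7.2 of \cite{J4}, $N$ is $P\mathcal{M}(N)C$ for $\mathcal{M}(N) = \{\sigma(\closp{M}{1})N : \sigma \in \G(M)\}$. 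Since each $\clos{N}{i}$ is obtained by extending $v$ to $N$, the Henselisations of the $(N,v_i)$ lie in $\mathcal{M}(N)$ up to conjugacy; the $w_i$ are chosen precisely so that $(L, w_i)$ is a model of the universal theory of the corresponding Henselian closure. This is enough to run the standard argument that $N$ is existentially closed in $L$: any element of $L$ is a generic point of an absolutely irreducible variety $V$ over $N$, and $V$ then has a simple point in each member of $\mathcal{M}(N)$, whence in $N$ by $P\mathcal{M}(N)C$. Transferring the witnesses to $N$ gives tuples $a_j, b_l \in N$ with $N \models \phi(a_j, b_l) \Leftrightarrow j \in A_l$, and since $(N, v_1, v_2)$ is interpretable in $M$ this yields an IP formula in $T$. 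For $p=2$, the same argument applies with $z^2$ replaced by $z^3$ after passing to $M(\omega)$ with $\omega^2 + \omega + 1 = 0$, as in Remark \ref{nPpCIPRem}.
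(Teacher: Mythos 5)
Your proposal is correct and follows essentially the same route as the paper: reduce the case of one valuation to a non-Henselian finite extension $N$ with two extensions of $v$, interpret $(N,v_1,v_2)$ in $M$, rerun the Kummer/Hahn-series construction of Theorem \ref{nPpCIP} over $N$, and replace the missing P$p$C property of $N$ by the $P\mathcal{M}(N)C$ property from Lemma 7.2 of \cite{J4} to get existential closedness of $N$ in $L$. No substantive differences from the paper's argument.
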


\section{P$p$C fields and their stability theoretic properties}\label{PpCClasification}

\begin{prop} \label{SQNIPp}
Let $n \geq 1$. In $n$-P$p$C every quantifier-free $\LC$-formula is NIP.  
\begin{proof}
By Lemma \ref{NIPreducts}, since by 1.32(*) every atomic formula is of the form $P^i_m(p(\bar{x}, \bar{y}))$, with $i \in \{1, \ldots, n\}$ and $p(\bar{x}, \bar{y}) \in \mathbb{Q}[\bar{x}, \bar{y}]$, and $pCF$ is NIP.
\end{proof}
\end{prop}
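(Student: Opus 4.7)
The plan is to mimic exactly the argument used for Corollary \ref{SQNIP} in the PRC setting, now applying Lemma \ref{NIPreducts} in the $p$-adic language $\LCp = \LCRp \cup \{O_i\}_{i \leq n} \cup \{P^i_m\}_{i \leq n, m > 1}$. First, I observe that by \ref{defpval}$(*)$ each predicate $O_i$ is quantifier-free definable from the $P^i_m$'s, so every atomic $\LCp$-formula is (equivalent to a Boolean combination of atomic formulas) of the form $P^i_m(p(\bar x,\bar y))$ or $p(\bar x,\bar y)=0$ with $p \in \mathbb{Q}[\bar x,\bar y]$ and $1 \leq i \leq n$.

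The key step is then: for each fixed $i \in \{1,\dots,n\}$, take $\mathcal{L}' = \Lip = \LCRp \cup \{O_i\} \cup \{P^i_m\}_{m>1}$ and $T' = Th(\closp{M}{i})$, the theory of the $p$-adic closure of $M$ with respect to $v_i$ (which is a $p$-adically closed field and so $T'$ is NIP by the classical theorem of Macintyre/quantifier elimination + NIP of $p$CF). The inclusion $M|_{\mathcal{L}'} \subseteq \closp{M}{i}$ holds by construction, since both $O_i$ and the $P^i_m$ are interpreted in $M$ as the restrictions of the corresponding relations on $\closp{M}{i}$. Hence the hypothesis of Lemma \ref{NIPreducts} is satisfied for every atomic $\Lip$-formula, and therefore for every atomic $\LCp$-formula (since each atomic formula lives in some $\Lip$).

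Applying Lemma \ref{NIPreducts} yields that every atomic $\LCp$-formula is NIP in $n$-P$p$C; since Boolean combinations of NIP formulas are NIP, every quantifier-free $\LCp$-formula is NIP. I do not expect any genuine obstacle here: the proof is essentially the verification that the reducts of an $n$-P$p$C field to the individual languages $\Lip$ embed into models of $p$CF, which is immediate from the way $O_i$ and $P^i_m$ are defined in \ref{PpCB}. The only point to be slightly careful about is confirming that the $P^i_m$ interpreted in $M$ really matches its interpretation in $\closp{M}{i}$, which is true by definition: $M \models P^i_m(a)$ iff $\closp{M}{i} \models \exists y\, y^m = a \wedge a \neq 0$.
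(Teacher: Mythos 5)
Your proposal is correct and follows essentially the same route as the paper: reduce to atomic formulas via Lemma \ref{NIPreducts} (using \ref{defpval}$(*)$ to absorb the $O_i$'s into the $P^i_m$'s), take $\mathcal{L}'=\Lip$ with $T'$ the (NIP) theory of $p$-adically closed fields, and note that the $\Lip$-reduct of an $n$-P$p$C field is a substructure of the $p$-adic closure $\closp{M}{i}$. Your write-up is just a more detailed verification of the same two-line argument, including the correct observation that the interpretations of $O_i$ and $P^i_m$ in $M$ are the restrictions of those in $\closp{M}{i}$.
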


\begin{notation}
We work with the notation of \ref{PpCB}.
Let $n \geq 1$. Fix $K$ a bounded P$p$C field with exactly $n$ $p$-adic valuations, which is not $p$-adically closed and let $\PpCB:=Th_{\LC}(K)$.
Let $M$ be a monster model $\PpCB$ and $\clos{M}{i}$ a fixed $p$-adic closure of $M$ for the valuation $v_i$. 
Denote by $a\dnfo^i_AB$ if $\tp^{\closp{M}{i}}(a/AB)$ does not fork over A and by  $a\dnfo^{ACF}_AB$ if $a$ is $ACF$-independent of $B$ over $A$.
Observe that $a\dnfo^i_AB$ implies $a\dnfo^{ACF}_AB$, for all $i \in \{1, \ldots,n\}$. 

\end{notation}

\begin{thm}\label{lemsq+vp} 
Let $E =\acl(E)\subseteq M$ and $(a_j)_{j\in \omega}$ an indiscernible sequence over $E$.
Let $r \geq 1$, let $\phi(x_1, \ldots,x_r, \bar{y})$ be an $\LC$-formula and $C$ a multi-open set in $M^r$ definable over $E$, such that $\{(x_1, \ldots,x_r) \in C: \phi(x_1, \ldots,x_r, a_0)\}$ is multi-dense in $C$.
Then $p(x_1, \ldots,x_r):= \{\phi(x_1, \ldots,x_r, a_j)\}_{j \in \omega}$ is consistent.
\begin{proof}
The proof of Lemma \ref{lemsq+v} generalizes almost immediately to our context.
The proof of the Claim is identical: replace Corollary \ref{SQNIP} by Proposition $\ref{SQNIPp}$ and observe that by Fact \ref{EQmac2} if $\psi(x_1, \ldots, x_r,\bar{a})$ is a quantifier-free $\LCp$-formula, $c \in M^r$, $M \models \psi(c,\bar{a})$ and $trdeg(\acl(\bar{a})c/ \acl(\bar{a}))=r$, then there is for all $i \in \{1, \ldots, n\}$, a $v_i$-open set $C^i$ in $(\closp{M}{i})^r$, definable over $\acl(\bar{a})$, such that:
$\displaystyle{\bigcap_{i=1}^n}(C^i\cap M^r) \subseteq \psi(M, \bar{a})$ and $c \in \displaystyle{\bigcap_{i=1}^n}(C^i\cap M^r)$.
The rest of the proof is identical, replace Theorem \ref{IT2} by Theorem \ref{IT2p}, Theorem \ref{descomposition2} by Theorem \ref{descompositionp2}, and multi-cells in $M^r$ by multi-open sets in $M^r$.
\end{proof}
\end{thm}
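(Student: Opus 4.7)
My plan is to run the proof of Theorem \ref{lemsq+v} essentially verbatim, substituting the $p$-adic analogues for each ingredient, and then to point out where the substitution is non-trivial. The argument has three layers: (i) a one-step consistency claim producing a single $c$ whose quantifier-free type over $E$ together with each $a_j$ agrees with that of $c$ together with $a_0$; (ii) an amalgamation step upgrading the claim to consistency of $\{\bar x\in J\}\cup\{\phi(\bar x,a_0)\wedge\phi(\bar x,a_1)\}$ for every multi-open $J\subseteq C$ definable over $E$; (iii) an iteration using the $p$-adic density theorem to produce a nested family of multi-open sets witnessing finite consistency of $\{\phi(\bar x,a_j):j<\omega\}$.

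For step (i), I would reduce, by compactness, to realizing the partial type
\[ \{\bar x\in J\}\cup\{p(\bar x)\neq 0\}_{p\in E(a_j:j\in\omega)[\bar x],\,p\neq 0}\cup\{\psi_l(\bar x,a_0)\leftrightarrow \psi_l(\bar x,a_j)\}_{j>0,\,1\le l\le m}\cup\{\phi(\bar x,a_0)\}.\]
Picking a witness $d$ of $\phi(\cdot,a_0)$ in $J$ which is ACF-independent of $(a_j)_{j\in\omega}$ over $E$, Proposition \ref{SQNIPp} together with Fact \ref{factNIP} forces the Boolean values $\psi_l(d,a_j)$ to stabilise for large $j$; a shift via indiscernibility of $(a_j)$ over $E$ produces $\tilde c$ satisfying $\theta(\tilde c,a_0,a_j):=\bigwedge_l(\psi_l(\tilde c,a_0)\leftrightarrow \psi_l(\tilde c,a_j))$ for all $j\in\omega$, with $\mathrm{trdeg}(E(\tilde c)/E)=r$. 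Since $\theta(\bar x,a_0,a_j)$ is quantifier-free, Fact \ref{EQmac2}, applied in each $\closp{N}{i}$, yields for each $i$ a $v_i$-open neighbourhood $B^i_j$ of $\tilde c$ contained in $\theta(N,a_0,a_j)$; by saturation I shrink to a single $v_i$-open $B^i\subseteq\bigcap_jB^i_j$ and take $B=\bigcap_i(B^i\cap N^r)\subseteq J$. Multi-density of $\phi(\bar x,a_0)$ in $C$ then refines $\tilde c$ to the desired $c\in B$. After the standard reduction $E(a_0)^{alg}\cap E(a_1)^{alg}=E^{alg}$ using Lemma \ref{algdisj} and regularity, Corollary \ref{IT2p} then produces $c^*\in J$ with $\phi(c^*,a_0)\wedge\phi(c^*,a_1)$, completing step (ii).

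Step (iii) proceeds as in Theorem \ref{strongdensity}: apply Theorem \ref{descompositionp2} to $\psi(\bar x,y_1,y_2)=\phi(\bar x,y_1)\wedge\phi(\bar x,y_2)$ to write $\psi(M,a_0,a_1)$ as a union of finitely many multi-open pieces $U_1,\dots,U_m$ plus a lower-dimensional set definable over $\acl(a_0,a_1)$; since step (ii) shows that every multi-open $J\subseteq C$ over $E$ meets $\bigcup_j U_j$, a pigeonhole argument yields some $j$ and a multi-open $\widetilde U\subseteq U_j$ definable over $E$ in which $\psi(\bar x,a_0,a_1)$ is multi-dense. Replacing $(a_j)_{j\in\omega}$ by the indiscernible pair-sequence $(a_{2j},a_{2j+1})$ and iterating the construction produces shrinking multi-open sets over $E$ witnessing consistency of any finite subset of $\{\phi(\bar x,a_j):j<\omega\}$, and compactness concludes. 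The main obstacle I foresee is in step (i): in the PRC case one uses o-minimal cell decomposition in each real closure to localise around the ACF-generic point, whereas here one must rely on Fact \ref{EQmac2}, whose conclusion separates a $v_i$-open locus from a proper Zariski closed locus; ensuring that $\tilde c$ lies in the former, uniformly in $j$, is the delicate point, and is exactly where the hypothesis $\mathrm{trdeg}(E(\tilde c)/E)=r$ is used.
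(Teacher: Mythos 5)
Your proposal is correct and follows essentially the same route as the paper: it reruns the proof of Theorem \ref{lemsq+v} with Proposition \ref{SQNIPp} in place of Corollary \ref{SQNIP}, Fact \ref{EQmac2} (with the $trdeg=r$ hypothesis isolating the $v_i$-open locus) in place of o-minimal cell decomposition, Corollary \ref{IT2p} in place of Corollary \ref{IT2}, and Theorem \ref{descompositionp2} in place of Theorem \ref{descomposition2}, which is exactly the substitution the paper makes. The delicate point you flag is the same one the paper singles out, so no further comment is needed.
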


\begin{fact}\cite[Theorem 6.6]{DLG} \label{PCFstrong} 
The theory of $p$-adically closed fields $(pCF$ fields) is strong and if $r \in \mathbb{N}$, $\bdn(x_1= x_1 \wedge \ldots \wedge, x_r= x_r)=r$.
\end{fact}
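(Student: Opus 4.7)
The plan is to prove both inequalities: a lower bound $\bdn \ge r$ coming from an explicit inp-pattern built from orthogonal valuation balls, and an upper bound $\bdn \le r$ via NIP sub-additivity once we establish dp-minimality of pCF (which gives strength for free as a by-product).

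First I would establish the lower bound by direct construction, as in the proof of Theorem~\ref{PRCstrong} and Lemma~\ref{bdntrivialtype}. Working in a saturated $p$-adically closed field $(M,v)$ with value group $v(M)$, for each coordinate $l\in\{1,\dots,r\}$ take sequences $(a_{l,j})_{j\in\omega}$ with $v(a_{l,j_1}-a_{l,j_2})=v(1)=0$ for $j_1\neq j_2$ (e.g.\ choose $a_{l,j}$ in distinct cosets mod $\mathfrak m$). Let $\phi_l(x_1,\dots,x_r,y):=v(x_l-y)>0$. Distinct balls of the same radius are disjoint in $M$, so $\{\phi_l(\bar x,a_{l,j})\}_j$ is $2$-inconsistent for each $l$. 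For any $f:\{1,\dots,r\}\to\omega$, consistency of $\{\phi_l(\bar x,a_{l,f(l)})\}_{l}$ is immediate: simply take $x_l:=a_{l,f(l)}$ coordinatewise, since the constraints only involve distinct coordinates. This gives an inp-pattern of depth $r$, hence $\bdn(\{x_1=x_1\wedge\ldots\wedge x_r=x_r\})\ge r$.

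For the upper bound I would first reduce to dp-rank. By Macintyre's quantifier elimination, every $\mathcal L_{Mac}$-formula is a Boolean combination of polynomial $v$-comparisons and $P_m$-predicates, and $pCF$ is NIP (this is classical). In any NIP theory burden coincides with dp-rank and dp-rank is sub-additive (Kaplan-Onshuus-Usvyatsov), so it suffices to prove dp-minimality, i.e.\ $\bdn(\{x=x\})\le 1$, after which $\bdn(\{x_1=x_1,\dots,x_r=x_r\})\le r$ follows and pCF is automatically strong.

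Dp-minimality is where the real work lies. Suppose toward contradiction an inp-pattern of depth $2$ in $\{x=x\}$: mutually $E$-indiscernible rows $(a_j)_j,(b_j)_j$ and formulas $\phi_1(x,y),\phi_2(x,y)$ that are $k$-inconsistent along each row while all diagonals are consistent. By cell decomposition for $p$CF (Fact~\ref{cellpadics}) I may assume $\phi_1(M,a_j)$ and $\phi_2(M,b_j)$ are single $1$-cells, of the form
\[
\bigl\{x:\gamma_1^{(\cdot)}<v(x-c^{(\cdot)})<\gamma_2^{(\cdot)}\wedge P_n\bigl(\lambda(x-c^{(\cdot)})\bigr)\bigr\},
\]
with parameters varying indiscernibly. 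The $k$-inconsistency along $(a_j)$ forces the corresponding family of balls to be eventually disjoint (or to shrink unboundedly), and similarly for $(b_j)$. The $P_n$-condition contributes only finitely many cosets (boundedly many values of $\lambda(x-c)$ mod $n$-th powers) and can be absorbed by passing to a subsequence, so the essential content is a valuation-geometric statement in the $\mathbb Z$-group $v(M)$: two families of nested/disjoint balls, coming from mutually indiscernible data, cannot simultaneously ``cut'' a single point. Using that any two balls are either nested or disjoint, one shows that one of the two row-families must already be indiscernible over $E\cup\{x\}$, contradicting the inp-pattern.

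The main obstacle is precisely this last step: controlling the mixed behaviour of the congruence part $P_n(\lambda(x-c))$ and the valuation part, and ruling out both families simultaneously separating $x$. This is the content of the Dolich–Lippel–Goodrick argument and is what prevents a direct imitation of the RCF proof (Fact~\ref{RCFstrong}); unlike the real case, one cannot invoke o-minimality, only $P$-minimality together with the rigidity of the $\mathbb Z$-valued valuation.
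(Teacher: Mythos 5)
The paper does not actually prove this statement: it is imported as a Fact, citing Dolich--Lippel--Goodrick, and the $r$-variable burden computation is understood exactly as in the real closed case (Fact \ref{RCFstrong}): dp-minimality from the literature, burden $=$ dp-rank in NIP theories, and sub-additivity of dp-rank. So your overall architecture (explicit inp-pattern for the lower bound, dp-minimality plus KOU sub-additivity for the upper bound) is the intended derivation. Two problems remain with your write-up, one small and one essential.

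The small one: your lower-bound construction as written fails. In a $p$-adically closed field the residue field is $\mathbb{F}_p$, so the valuation ring has only $p$ cosets modulo $\mathfrak{m}$ and you cannot choose infinitely many $a_{l,j}$ with $v(a_{l,j_1}-a_{l,j_2})=0$ for all $j_1\neq j_2$. The repair is easy: take, say, $a_{l,j}=p^{-j}$, so distinct centers have distinct negative values, hence $v(a_{l,j_1}-a_{l,j_2})<0$ and the balls $\{x: v(x_l-a_{l,j})>0\}$ are pairwise disjoint; this is exactly what is done (implicitly) in the proof of Theorem \ref{PpCstrong}, where only disjointness of the sets $\varphi_l(M,a_{l,j})$ is required. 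The essential one: the upper bound rests entirely on dp-minimality of pCF, and your sketch of it (cell decomposition via Fact \ref{cellpadics}, reduction to a single $1$-cell, ``one shows that one of the two row-families must already be indiscernible over $E\cup\{x\}$'') is not an argument -- as you yourself note, that step \emph{is} the Dolich--Lippel--Goodrick theorem, i.e.\ precisely the result being cited. In particular, ``the $P_n$-condition can be absorbed by passing to a subsequence'' and ``any two balls are nested or disjoint'' do not by themselves exclude an inp-pattern of depth $2$; handling the interaction of the congruence and valuation data is the whole content of their proof. So either present the statement as the paper does, as a citation (with the easy, corrected inp-pattern giving the lower bound), or supply the dp-minimality argument in full; as it stands the proposal assumes the theorem it sets out to prove.
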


\begin{thm}\label{PpCstrong}\label{PpCresilient}
The theory $\PRCB$ is strong, resilient and $\bdn(\{x=x\}) = n$.
If $r \geq 1$, and $p(\bar{x}):= \{x_1= x_1 \wedge \ldots \wedge x_r = x_r\}$, then $\bdn(p(\bar{x}))= nr$.
\begin{proof}
We work in a monster model $(M,v_1, \ldots, v_n)$ of $\PRCB$.
For $l \in \{0, \ldots,n-1\}$, define the formula $\varphi_l(x,y):= v_{l+1}(x-y )> 0$.
Find $((a_{l,j})_{j \in \omega})_{l \leq n-1}$, such that $\varphi_l(M,a_{lj_1}) \cap \varphi_l(M,a_{lj_2}) = \emptyset$, for all $j_1 \not = j_2$ and for all $0 \leq l \leq n-1$. 
Using the Approximation Theorem ($\ref{ApTh}$) we obtain that $(\bar{a_l}, \varphi_l(x,y), 2)_{l<n}$, with $\bar{a_l}= (a_{l,j})_{j\in \omega}$ is an inp-pattern of depth $n$. 
It follows that the $\bdn(\{x=x\})$ is greater or equal to $n$.
Observe that this can easily be generalized to several variables and then $\bdn(p(\bar{x}))\geq nr$.

To show that $\bdn(p(\bar{x})) \leq nr$ it is only necessary to replace in the proof of Theorem \ref{bdntrivialtype} multi-cell by multi-open set, Theorem \ref{descomposition2} by Theorem \ref{descompositionp2} and Theorem \ref{lemsq+v} by Theorem \ref{lemsq+vp}. 

The proof of resilient is as in Theorem \ref{PRCresilient}, using the fact that the $p$-adically closed fields are NIP and replacing Theorem \ref{descomposition} by Theorem \ref{descompositionp2} and Theorem \ref{strongdensity} by Theorem \ref{lemsq+vp}.

\end{proof}
\end{thm}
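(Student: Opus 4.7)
The plan is to mirror the PRC arguments of Theorems~\ref{PRCstrong}, \ref{bdntrivialtype} and \ref{PRCresilient}, with the density/amalgamation tools for PRC (Theorems~\ref{descomposition2}, \ref{lemsq+v}, \ref{IT2}) replaced by their P$p$C analogues (Theorems~\ref{descompositionp2}, \ref{lemsq+vp}, \ref{IT2p}), and with $<_i$-intervals or cells replaced by $v_i$-balls or $v_i$-open sets in the $p$-adic closures $\closp{M}{i}$. Strength follows from the burden bound, so the work concentrates on proving $\bdn(p(\bar x)) = nr$ for $p(\bar x) = \{x_1 = x_1 \wedge \cdots \wedge x_r = x_r\}$ and then proving resilience.

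For the lower bound $\bdn(p(\bar x)) \geq nr$, I would fix for each pair $(i,l) \in \{1,\ldots,n\} \times \{1,\ldots,r\}$ a sequence $(a^i_{l,j})_{j \in \omega}$ of elements of $M$ whose $v_i$-balls $\{z : v_i(z - a^i_{l,j}) > 0\}$ are pairwise disjoint (easily arranged by choosing $a^i_{l,j+1} - a^i_{l,j}$ to be a $v_i$-unit), and take $\varphi_{(i,l)}(\bar x, y) := v_i(x_l - y) > 0$. Each row is $2$-inconsistent by construction. To check path-consistency, a choice function $f$ selects a box $\bigcap_i (U^i \cap M^r)$ where each $U^i$ is a product of $v_i$-open balls in $(\closp{M}{i})^r$; the Approximation Theorem~\ref{ApTh} together with Remark~\ref{ApThC} and Lemma~\ref{lemqftdensep} provides a realization in $M^r$.

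For the upper bound $\bdn(p(\bar x)) \leq nr$, I would assume an inp-pattern $(\bar a_l, \phi_l(\bar x, \bar y), k_l)_{0 \leq l \leq nr}$ of depth $nr+1$ over a countable $E = \acl(E)$ with $\G(E) \cong \G(M)$, and by Fact~\ref{strongindis} make the rows mutually $E$-indiscernible. Using Fact~\ref{strondisjun} and Theorem~\ref{descompositionp2}, each $\phi_l(M,a_{l,j})$ may be replaced by a multi-open set $U_{l,j} = \bigcap_{i=1}^n (U^i_{l,j} \cap M^r)$ with $\phi_l(\bar x, a_{l,j})$ multi-dense in $U_{l,j}$ and each $U^i_{l,j}$ quantifier-free $\Lip(Ea_{l,j})$-definable. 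Since by Fact~\ref{PCFstrong} the burden of $p(\bar x)$ in $p$CF is $r$, for each fixed $i$ at most $r$ of the rows can have $\bigcap_{j \in \kappa} U^i_{l,j} = \emptyset$. Pigeonhole over the $n$ valuations then produces a single row $l^*$ where, for every $i$, $\bigcap_{j \in \kappa} U^i_{l^*,j} \neq \emptyset$. Saturation and density (Fact~\ref{PPcfact}(2)) yield a multi-open set $U$ definable over $M$ sitting inside that intersection; then Erd\H{o}s--Rado extracts a countable $E'$-indiscernible sequence $(c_j)_{j \in \omega}$ (with $E'$ absorbing the parameters defining $U$) whose first $k_{l^*}$ elements come from $\bar a_{l^*}$. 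By Theorem~\ref{lemsq+vp} applied to $\phi_{l^*}$, the set $U$ and the sequence $(c_j)$, the type $\{\phi_{l^*}(\bar x, c_j) : j \in \omega\}$ is consistent, contradicting $k_{l^*}$-inconsistency. Strength is then immediate since the depth of any inp-pattern is bounded by the burden of $\{x = x\}$, which is $n$, and Fact~\ref{strongari} reduces strength to $1$-variable types.

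For resilience I would follow Theorem~\ref{PRCresilient} verbatim, replacing the PRC ingredients by their P$p$C counterparts. Assume witnesses $(a_j)_{j \in \mathbb{Z}}$, $(b_l)_{l \in \kappa}$ and $\phi(x,y)$ contradicting resilience (with $|x| = 1$ via Remark~\ref{remresilient}(3)), and $E = \acl(E)$ over which everything is indiscernible. By Theorem~\ref{descompositionp}, decompose $\phi(M,a_0)$ as a finite disjunction of a small piece in $\acl(Ea_0)$ and multi-$1$-cells $I_t(a_0)$ on which $\phi(x,a_0)$ is multi-dense; by Remark~\ref{remresilient}(2) reduce to a single multi-$1$-cell $I(a_0)$, and push this decomposition along indiscernibility to all $a_j$ and $b_l$. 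Consistency of $\{\phi(x,a_j)\}_{j \in \mathbb{Z}}$ forces $\bigcap_{j} I^i(a_j) \neq \emptyset$ for each $i$; since $Th(\closp{M}{i})$ is $p$CF and hence NIP and thus resilient (Fact~\ref{factresilient}), this propagates to $\bigcap_{l \in \kappa} I^i(b_l) \neq \emptyset$. Density of $M$ in $\closp{M}{i}$ and saturation extract a multi-ball $I$ with extremities in $M$ inside $\bigcap_l I(b_l)$; Erd\H{o}s--Rado then gives a countable indiscernible sequence $(c_l)$ over $E \cup \{\text{parameters of }I\}$ whose first $k$ members come from $(b_l)$, and Theorem~\ref{lemsq+vp} delivers the contradiction. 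The main obstacle is cosmetic rather than conceptual: one must verify that the multi-open sets and multi-$1$-cells behave correctly under indiscernibility and that the defining parameters can be absorbed into a countable set $E'$ before invoking Erd\H{o}s--Rado, exactly as in the PRC proofs.
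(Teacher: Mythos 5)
Your proposal is correct and follows essentially the same route as the paper: an explicit inp-pattern of disjoint $v_i$-balls plus the Approximation Theorem for the lower bound, a transcription of the proof of Theorem \ref{bdntrivialtype} (multi-open sets, Theorem \ref{descompositionp2}, Fact \ref{PCFstrong}, Erd\H{o}s--Rado, Theorem \ref{lemsq+vp}) for the upper bound and strength, and a transcription of Theorem \ref{PRCresilient} using NIP of $p$CF for resilience. Your write-up simply spells out the substitutions that the paper leaves implicit.
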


\begin{cor}
If $M$ is a bounded P$p$C field, then $Th(M)$ is NTP$_2$.
\begin{proof}
 This follows immediately from Theorem \ref{PpCstrong} using the fact that if $M$ is bounded, then there exists $n \in \mathbb{N}$ such that $M$ has exactly $n$ $p$-adic valuations, and they are definable.
\end{proof}

\end{cor}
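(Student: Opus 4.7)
The plan is to reduce the statement to Theorem \ref{PpCstrong} via a case split on the number of $p$-adic valuations and the observation that NTP$_2$ is preserved by reducts.

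First, I would invoke the finiteness statement implicit in Lemma \ref{DefvalP_m}: since $M$ is bounded, $M$ admits only finitely many $p$-adic valuations $v_1,\ldots,v_n$, because any such valuation $v$ is definable via \ref{defpval}$(*)$ in terms of squares (or cubes if $p=2$), and two distinct $p$-adic valuations on $M$ produce distinct quadratic (resp.\ cubic) extensions in $K_2$ (resp.\ $K_3$), which would contradict boundedness if $n$ were infinite. Fix the resulting $n\in\mathbb{N}$.

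Next, split on whether $n=0$ or $n\geq 1$. If $n=0$, then $M$ has no $p$-adic valuation, so $M$ is in particular a bounded PAC field; by Corollary 4.8 of \cite{ChPi}, $Th_{\mathcal{L_R}}(M)$ is simple, hence NTP$_2$. If $n\geq 1$, then $(M,v_1,\ldots,v_n)$ is an $n$-P$p$C field in the sense of Definition \ref{defnPpc}, so it is a model of a theory of the form $\PpCB$ as in \ref{PpCB}. Theorem \ref{PpCstrong} then yields that $\PpCB=Th_{\LCp}(M)$ is strong, and in particular NTP$_2$.

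Finally, I would transfer NTP$_2$ from the enlarged language $\LCp$ to the ring language $\mathcal{L_R}$. By Lemma \ref{DefvalP_m}, each predicate $O_i$ and each predicate $P^i_m$ is existentially definable in $\LCRp$ (hence in $\mathcal{L_R}$ after interpreting the finitely many parameters from $K_0$ as new variables), so the $\mathcal{L_R}$-theory of $M$ is a reduct of $\PpCB$ up to adding finitely many parameters. Since NTP$_2$ is preserved under reducts and under naming parameters — an inp-pattern witnessing TP$_2$ in $\mathcal{L_R}$ lifts verbatim to an inp-pattern in $\LCp$ — we conclude that $Th_{\mathcal{L_R}}(M)$ is NTP$_2$.

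The only non-routine input is Theorem \ref{PpCstrong}; everything else is bookkeeping. The ``hard part'' is therefore hidden in the preceding strongness theorem, whose proof mirrors the PRC case and rests on the multi-open density theorem (\ref{descompositionp2}) together with the consistency lemma \ref{lemsq+vp}; at the level of this corollary there is no further obstacle.
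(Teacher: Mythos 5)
Your proposal is correct and follows essentially the same route as the paper: reduce to Theorem \ref{PpCstrong} using that boundedness forces finitely many $p$-adic valuations, all $\mathcal{L_R}$-definable (Lemma \ref{DefvalP_m}), so NTP$_2$ transfers to the ring language; you merely spell out the $n=0$ (PAC, hence simple) case and the reduct bookkeeping that the paper leaves implicit. The only cosmetic caveat is that for $n\geq 1$ you should also dispose of the trivial case where $M$ is itself $p$-adically closed (then $Th(M)$ is NIP, hence NTP$_2$), since the theory $\PpCB$ in \ref{PpCB}, and thus Theorem \ref{PpCstrong} as stated, presupposes $M$ is not $p$-adically closed.
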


Observe that the proof of the converse of Theorem \ref{PRCNTP2} cannot be generalized to bounded P$p$C fields, as algebraic extensions of P$p$C fields are not necessarily P$p$C.

\begin{thm} \label{bdntypesindp}
Let $r \in \mathbb{N}$ and $\bar{a}:=(a_1, \ldots, a_r) \in M^r$. Then $\bdn(\bar{a}/A)= n \cdot trdeg(A(\bar{a})/A)$.
Therefore the burden is additive $(i.e. \, \bdn(\bar{a}\bar{b}/A)= \bdn(\bar{a}/A) + \bdn(\bar{b}/A\bar{a}))$.

\begin{proof}
Exactly the same proof as for Lemma \ref{bdntypesind} and Theorem \ref{bdntypesgen}, replacing $<_i$-open interval by $1$-cell for the valuation $v_i$ and Lemma \ref{lemqftdense} by Lemma \ref{lemqftdensep}.
\end{proof}
\end{thm}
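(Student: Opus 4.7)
The statement decomposes naturally into an equality $\bdn(\bar{a}/A)=n\cdot\mathrm{trdeg}(A(\bar{a})/A)$ together with additivity, and the additivity clause is a formal consequence of the equality together with the additivity of transcendence degree. So the plan is to mimic the two-step strategy of Lemma \ref{bdntypesind} and Theorem \ref{bdntypesgen}, with the combinatorics of multi-intervals and o-minimal cells replaced by that of multi-open sets and $1$-cells for the $p$-adic valuations $v_1,\dots,v_n$.

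First I would reduce to the case where $\bar{a}$ is algebraically independent over $A$. Writing $k=\mathrm{trdeg}(A(\bar{a})/A)$ and choosing (after reordering) a transcendence basis $a_1,\dots,a_k$, the tuple $(a_{k+1},\dots,a_r)$ lies in $\acl(A(a_1,\dots,a_k))$, so $\bdn(a_{k+1},\dots,a_r/A(a_1,\dots,a_k))=0$; by sub-additivity of burden (\ref{factbdntypes}) one then has $\bdn(\bar{a}/A)\le \bdn(a_1,\dots,a_k/A)$, and equality is forced once the lower bound is established. For the upper bound in the algebraically independent case, the type $\tp(a_1,\dots,a_k/A)$ extends the partial type $\{x_1=x_1,\dots,x_k=x_k\}$, whose burden is exactly $nk$ by Theorem \ref{PpCstrong}.

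For the lower bound I would build an inp-pattern of depth $nk$ in $\tp(a_1,\dots,a_k/A)$ exactly as in Lemma \ref{bdntypesind}. Iteratively, using exchange, I construct sequences $(b_{l,j})_{j\in\omega}$ for $l=1,\dots,k$, indiscernible over $A(a_{l+1},\dots,a_k,b_{l',j}:l'<l,j\in\omega)$, so that $\tp(b_{l,j}/\cdots)=\tp(a_l/\cdots)$ and hence (by an inductive back-and-forth just as in Claim 1 of Lemma \ref{bdntypesind}) $\tp(b_{1,j_1},\dots,b_{k,j_k}/A)=\tp(a_1,\dots,a_k/A)$ for every choice of indices. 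Now fix $(i,l)\in\{1,\dots,n\}\times\{1,\dots,k\}$: since $\acl^{\clos{M}{i}}=\dcl^{\clos{M}{i}}$ and $b_{l,j}\notin \acl(A)$, in each $p$-adic closure $\clos{M}{i}$ one can find a $v_i$-ball $B^{i}_{l,j}\ni b_{l,j}$, definable over a parameter $\alpha^{i}_{l,j}\in M$, such that the family $(B^{i}_{l,j}\cap M)_{j\in\omega}$ is pairwise disjoint (shrink the balls along the indiscernible sequence). Let $\varphi_{(i,l)}(x_1,\dots,x_k,\bar y)$ be the quantifier-free $\Lip$-formula expressing $x_l\in B^{i}_{l,j}$ when $\bar y$ is instantiated as $\alpha^{i}_{l,j}$. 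Pairwise disjointness gives $2$-inconsistency of each row. For any choice function $f:\{1,\dots,n\}\times\{1,\dots,k\}\to\omega$, setting $\bar b_i=(b_{1,f(i,1)},\dots,b_{k,f(i,k)})$ we have $\bar b_i\in\prod_l B^{i}_{l,f(i,l)}$ and $\qftp_{\Lip}(\bar b_i/A)=\qftp_{\Lip}(\bar a/A)$; hence Lemma \ref{lemqftdensep} applied with $U^i=\prod_l B^{i}_{l,f(i,l)}$ shows that $\tp(\bar a/A)\cup\{\bar x\in\bigcap_i U^i\}$ is consistent, which is exactly the consistency clause of an inp-pattern. This yields $\bdn(\bar a/A)\ge nk$.

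Finally, additivity of the burden. For any tuples $\bar a,\bar b$ and any set $A$ one has $\mathrm{trdeg}(A(\bar a\bar b)/A)=\mathrm{trdeg}(A(\bar a)/A)+\mathrm{trdeg}(A(\bar a\bar b)/A(\bar a))$, and by the first part of the theorem each burden equals $n$ times the corresponding transcendence degree; so $\bdn(\bar a\bar b/A)=\bdn(\bar a/A)+\bdn(\bar b/A\bar a)$. The only real obstacle is the lower-bound construction: one must simultaneously ensure that the $v_i$-balls are pairwise disjoint (handled by the density of $M$ in each $p$-adic closure, Fact \ref{PPcfact}(2), together with indiscernibility) and that the qf-types on the $n$ copies $\bar b_i$ match $\qftp_{\Lip}(\bar a/A)$ (handled by the inductive construction of the $b_{l,j}$). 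Once both are in place, Lemma \ref{lemqftdensep} does the amalgamation automatically.
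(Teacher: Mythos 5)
Your proposal is correct and follows essentially the same route as the paper: the paper's proof is precisely the transfer of Lemma \ref{bdntypesind} and Theorem \ref{bdntypesgen} to the $p$-adic setting, replacing $<_i$-open intervals by $1$-cells (your $v_i$-balls) and Lemma \ref{lemqftdense} by Lemma \ref{lemqftdensep}, with the upper bound from Theorem \ref{PpCstrong} and sub-additivity, and additivity from additivity of transcendence degree.
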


\begin{fact}\label{PCFextensionsbases}
 In the theory of $p$-adically closed fields ($pCF$) all sets (in the real sort) are extensions bases and forking equals dividing.
\begin{proof}
If $M$ is $p$-adically closed and $A \subseteq M$, then by Proposition 3.4 of \cite{Van1} $\dcl(A)$ is $p$-adically closed.
Using Corollary 3.22 of \cite{CK} we obtain that forking equals dividing. 
\end{proof}

 \end{fact}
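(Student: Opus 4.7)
The plan is to exploit the fact that in $pCF$ definable closures of parameter sets are already models. First I would invoke Proposition 3.4 of \cite{Van1} which asserts that for any $A \subseteq M \models pCF$, the definable closure $\dcl(A)$ is itself $p$-adically closed. Since $pCF$ is model complete (in the Macintyre language, by Fact~\ref{EQmac}), this yields $\dcl(A) \prec M$, so $\dcl(A)$ is an elementary substructure.

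Next, I would use the standard fact that every model is an extension base (in any theory, a global non-forking extension of a type over a model is obtained via any invariant extension, e.g.\ a coheir). Combined with the previous step, this means $\dcl(A)$ is an extension base for every $A$. Then an application of Corollary 3.22 of \cite{CK} (which states roughly that if $\dcl(A)$ is an extension base, then so is $A$, and that forking over $A$ coincides with dividing over $A$) transfers the conclusion from $\dcl(A)$ to $A$ itself. This gives simultaneously that every set in the real sort is an extension base and that forking equals dividing over every such set.

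The only subtle point — and the one I would state explicitly — is the restriction to the real sort: $pCF$ does not fully eliminate imaginaries, so the key input Proposition 3.4 of \cite{Van1} only applies to subsets of the home sort, where $\dcl$ is interpreted as the field-theoretic/model-theoretic definable closure in the real sort. This is why the statement of the fact is phrased ``all sets (in the real sort)''. No further work is required beyond citing these two results and observing model completeness; the proof is essentially a combination of three off-the-shelf ingredients.
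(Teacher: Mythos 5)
Your proposal is correct and follows essentially the same route as the paper: the paper's proof likewise cites Proposition 3.4 of van den Dries to get that $\dcl(A)$ is $p$-adically closed (hence, by model completeness, an elementary submodel, as in Fact~\ref{aclpCF}) and then applies Corollary 3.22 of Chernikov--Kaplan to conclude that all sets in the real sort are extension bases and forking equals dividing. Your extra remarks (models are extension bases via invariant extensions; the restriction to the real sort) are exactly the implicit points behind the paper's two-line argument.
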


\begin{thm}\label{PpCextebases}
In $\PpCB$ all sets are extensions bases and forking equals dividing, and if $a$ is a tuple in $M$ and $A \subseteq B \subseteq M$, then  $a \dnfo_AB$ if and only if $a \dnfo^i_AB$, for all $1 \leq i \leq n$.
\begin{proof}
The proof of Theorems \ref{forkequalsdiv} and \ref{forkingPRC} generalizes immediately to our context. 
We need to replace multi-intervals by multi-1-cells, multi-cells and multi-boxes by multi-open sets and real closures by $p$-adic closures. It is also necessary to change Theorems \ref{descomposition} and \ref{descomposition2} by Theorems \ref{descompositionp} and \ref{descompositionp2}, Theorem \ref{strongdensity} and \ref{lemsq+v}  by Theorem \ref{lemsq+vp} and the amalgamation theorem of orders \ref{Amalord} by Lemma 4 of \cite{Ku}.
\end{proof}
\end{thm}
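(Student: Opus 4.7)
The plan is to mimic the arguments of Theorems \ref{forkequalsdiv} and \ref{forkingPRC} step by step, replacing each ingredient by its $p$-adic counterpart supplied earlier in section \ref{SectionPpC}. First observe that $\PpCB$ is NTP$_2$ (the corollary to Theorem \ref{PpCstrong}), so by Fact \ref{NTP2forking2}(2) it suffices to show every $A \subseteq M$ is an extension base; forking will then equal dividing automatically. Using Corollary \ref{propforking2}, reduce to showing that no $1$-type forks.

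For the extension base part, suppose for contradiction that $\tp(a/A)$ forks over $A$ with $a \notin \acl(A)$, witnessed by $\phi(x) \vdash \bigvee_{j<m} \psi_j(x,a_j)$ with each $\psi_j(x,a_j)$ dividing over $A$. Apply Theorem \ref{descompositionp} to each $\psi_j(x,a_j)$ to decompose its solution set as a finite union of multi-$1$-cells $I_j$ (modulo a finite exceptional set contained in $\acl(Aa_j)$) in which $\psi_j$ is multi-dense, and apply Proposition \ref{thmdensitep} to $\phi$ at $a$ to obtain a multi-$1$-cell $I$ definable over $A$ containing $a$ in which $\phi$ is multi-dense. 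As in the PRC case, $I$ contains infinitely many multi-$1$-cells definable over $A$ (shrink the $p$-adic balls along parameters in $\acl(A)$, using that the value group has cofinal $A$-definable positive elements), so by pigeonhole some $I_j$ contains a multi-$1$-cell $J$ definable over $A$. Since $\psi_j(x, a_{j,0})$ is multi-dense in $J$ along the indiscernible sequence $(a_{j,l})_{l\in\omega}$ witnessing dividing, Theorem \ref{lemsq+vp} gives consistency of $\{\psi_j(x,a_{j,l})\}_{l \in \omega}$, contradicting its $k$-inconsistency.

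For the equivalence, the direction $(\Leftarrow)$ follows the proof of Theorem \ref{forkingPRC}$(\Leftarrow)$: reduce via Fact \ref{PCFextensionsbases} and additivity of transcendence degree to the case $\mathrm{trdeg}(A(a)/A)=\mathrm{trdeg}(B(a)/B)=|a|$, apply Theorem \ref{descompositionp2} to the dividing formula to produce a multi-open set $C = \bigcap_i (C^i \cap M^{|a|})$ containing $a$ on which the formula is multi-dense, and use the hypothesis $a \dnfo^i_A B$ together with Fact \ref{PCFextensionsbases} to conclude that along the $E$-indiscernible sequence the sets $C^i_j$ have a common point. Saturation gives a multi-open subset definable over parameters in $M$, Erd\H{o}s--Rado extracts an indiscernible countable subsequence over a slightly enlarged base, and Theorem \ref{lemsq+vp} provides consistency, contradicting $k$-inconsistency.

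For $(\Rightarrow)$, assume $a \dnfo_A B$ yet $\tp_{\Lip}^{\clos{M}{i}}(a/B)$ divides over $A$; by quantifier elimination in $pCF$ take a quantifier-free $\Lip$-formula $\phi(x,b)$ and an $\Li$-indiscernible sequence $(b_j)_{j\in\omega}\subseteq \clos{M}{i}$ with $b_0=b\in B$, chosen (via Lemma~5.35 of \cite{Sim}) so that $b_{m+1} \dnfo^{ACF}_A b_0\ldots b_m$. The main obstacle is now to lift $(b_j)$ to an $\LC$-indiscernible sequence inside $M$: inductively extend each $p$-adic valuation $v_t$ to $A(b_j : j\in\omega)$ using Lemma~4 of \cite{Ku} (the $p$-adic analogue of \ref{Amalord}) together with linear disjointness, and extend the $P^i_m$-predicates as in \ref{Remvaluation}(2). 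Since $M$ and $A(b_j:j\in\omega)$ are regular over $A$, they may be taken linearly disjoint, so $M(b_j:j\in\omega)$ is a totally $p$-adic regular extension of $M$ (again by Lemma~4 of \cite{Ku}); by Fact \ref{PpcExtclose} $M$ is existentially closed in it, so by saturation there exists an $\Li$-indiscernible sequence $(b_j')_{j\in\omega}\subseteq M$ over $A$ with $\tp_{\Li}^M(b_j'/A)=\tp_{\Li}^M(b/A)$ making $\{\phi(x,b_j')\}_{j\in\omega}$ $k$-inconsistent, which witnesses that $\tp_{\Li}^M(a/B)$ divides over $A$ in $M$, contradicting $a\dnfo_A B$.
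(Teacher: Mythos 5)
Your proposal is correct and follows essentially the same route as the paper: the paper's proof of this theorem is precisely the observation that the arguments of Theorems \ref{forkequalsdiv} and \ref{forkingPRC} go through after substituting Proposition \ref{thmdensitep} and Theorems \ref{descompositionp}, \ref{descompositionp2}, \ref{lemsq+vp} for their PRC counterparts, multi-$1$-cells and multi-open sets for multi-intervals, multi-cells and multi-boxes, $p$-adic closures for real closures, and Lemma 4 of \cite{Ku} for the amalgamation theorem of orderings \ref{Amalord} — exactly the substitutions you carry out, including the use of Fact \ref{PCFextensionsbases}, Fact \ref{PpcExtclose} and \ref{Remvaluation}(2) in the two directions of the forking characterization.
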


\begin{thm}
Let $n\geq1$ and let $M$ be a sufficiently saturated bounded $PpC$ field with exactly $n$ $p$-adique valuations. We consider $M$ as a $\LC$-structure. Let $a,b$ tuples in $M$ and $A \subseteq M$. Then $\Lstp_{\LC}(a/A) = \Lstp_{\LC}(b/A)$ if and only if $d_A(a,b) \leq 2$ if and only if $\tp_{\LC}(a/A)= \tp_{\LC}(b/A)$.
\begin{proof}
As in Theorem \ref{LascarPRC}, using the fact that if $\clos{M}{i}$ is the $p$-adic closure of $M$ for the valuation $v_i$, then
$\Lstp^{\closp{M}{i}}_{\Li}(a/A) = \Lstp^{\closp{M}{i}}_{\Li}(b/A)$ if and only if $d_A(a,b) \leq 2$ if and only if $\tp_{\Li}^{\closp{M}{i}}(a/A)= \tp^{\closp{M}{i}}_{\Li}(b/A)$. 
\end{proof}

\end{thm}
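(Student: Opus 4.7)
The plan is to follow the structure of the proof of Theorem \ref{LascarPRC}, systematically replacing orders by $p$-adic valuations, real closures by $p$-adic closures, and using the P$p$C versions of the amalgamation and independence theorems established in section \ref{AmalgamationTheoremPpC}. The backward implications are immediate: if $d_A(a,b) \leq 2$ we get $\Lstp_{\LC}(a/A) = \Lstp_{\LC}(b/A)$ trivially, and the latter clearly forces $\tp_{\LC}(a/A) = \tp_{\LC}(b/A)$ since an $A$-indiscernible sequence has all its elements realizing the same type over $A$.

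For the main direction, assume $\tp_{\LC}(a/A) = \tp_{\LC}(b/A)$. Since $Th(\closp{M}{i})$ is NIP (as $p$CF is NIP) and all sets are extension bases in $p$CF (Fact \ref{PCFextensionsbases}), for each $i \in \{1, \ldots, n\}$ I apply Fact \ref{distLascarNIP} inside $\closp{M}{i}$ to obtain $a_1^i \in \closp{M}{i}$ such that $(a, a_1^i)$ and $(a_1^i, b)$ each start an $\Li$-indiscernible sequence over $A$. By the argument in the proof of Proposition 5.25 of \cite{Sim}, I may additionally assume $a \ind^{ACF}_A a_1^i$ and $a_1^i \ind^{ACF}_A b$. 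Then for each $i$ I construct an $\Li$-indiscernible sequence $(c_j^i)_{j\in\omega}$ in $\closp{M}{i}$ over $A$ with $c_0^i = a$, $c_1^i = a_1^i$, and $c_j^i \ind^{ACF}_A c_0^i, \ldots, c_{j-1}^i$ for all $j$.

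Next, mimicking the PRC argument, for $i \geq 2$ let $\phi_i : A(c_j^i : j \in \omega) \to A(c_j^1 : j \in \omega)$ be the $A$-bijection sending $c_j^i \mapsto c_j^1$. I equip $A(c_j^1 : j \in \omega)$ with the unique $\LC$-structure making each $\phi_i$ into an $\Li$-isomorphism; by \ref{Remvaluation}(2) this pulls back the $p$-adic valuation structures coherently. The resulting field is a totally $p$-adic regular extension of $A$, and since $M/A$ and $A(c_j^1 : j \in \omega)/A$ can be taken linearly disjoint over $A$ (using Lemma 4 of \cite{Ku} as the $p$-adic amalgamation analogue of \ref{Amalord}), the composite is a totally $p$-adic regular extension of $M$. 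By Fact \ref{PpcExtclose}, $M$ is existentially closed in this composite, yielding an elementary extension $M^{*}$ containing an $\LC$-isomorphic copy of the enriched $A(c_j^1 : j \in \omega)$.

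The final step requires a P$p$C analogue of Lemma \ref{qfindisseq}: given a quantifier-free $\LC$-indiscernible sequence over $A$ with ACF-independence and $\tp_{\LC}(a_0/A) = \tp_{\LC}(a_1/A)$, produce an honest $\LC$-indiscernible sequence with the same $\LC$ $2$-type over $A$. Its proof uses only Theorem \ref{thamalgamationp} and Corollary \ref{IT2p}, so it carries over by the same inductive construction as in Lemma \ref{qfindisseq}. Applying this to $(c_j^1)_{j\in\omega}$ gives an $\LC$-indiscernible sequence $(d_j)_{j\in\omega}$ over $A$ with $\tp_{\LC}(d_0, d_1/A) = \tp_{\LC}(a, a_1^1/A)$; a symmetric argument produces an $\LC$-indiscernible sequence linking $a_1^1$ to $b$, giving $d_A(a,b) \leq 2$. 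The main obstacle is verifying the P$p$C version of Lemma \ref{qfindisseq}, since its inductive step relies on repeatedly applying the amalgamation theorems—but the structure of these P$p$C versions has been set up precisely to make this translation mechanical.
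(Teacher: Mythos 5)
Your proposal is correct and follows essentially the same route as the paper: the paper's proof is literally ``as in Theorem \ref{LascarPRC}'', carried out with the same substitutions you make (orders by $p$-adic valuations, real closures by $p$-adic closures, \ref{Amalord} by Lemma 4 of \cite{Ku}, Theorem \ref{thamalgamation} and Corollary \ref{IT2} by their P$p$C analogues, and the P$p$C version of Lemma \ref{qfindisseq}), using that $p$CF is NIP with all sets extension bases so that types over $A$ in each $\closp{M}{i}$ coincide with Lascar strong types at distance at most $2$.
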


\section{Acknowledgments}
 
The contents of this paper constitutes a part of my PhD thesis, I am grateful to my advisor Zo\'e Chatzidakis for her guidance and helpful suggestions, as well as for all the corrections and advice.
I also want to thank Thomas Scanlon and Frank Wagner for their comments and corrections, and Artem Chernikov for his suggestion that a preliminary version of Theorem \ref{PRCNTP2} could be generalized to obtain strong in addition to NTP$_2$, as well as multiple discussions around the topics of the paper.  
Part of the work presented in this paper was made during the program on Model Theory,
Arithmetic Geometry and Number Theory at MSRI, Berkeley, Spring 2014. I thank the MSRI for its hospitality. 
This paper was partially supported by ValCoMo (ANR-13-BS01-0006) and the Universidad de Costa Rica.

\end{document}